\theoremstyle{plain}
\newtheorem{theorem}[subsubsection]{Theorem}
\newtheorem{lemma}[subsubsection]{Lemma}
\newtheorem{proposition}[subsubsection]{Proposition}
\newtheorem{notation}[subsubsection]{Notation}
\newtheorem{corollary}[subsubsection]{Corollary}
\theoremstyle{definition}
\newtheorem{definition}[subsubsection]{Definition}
\newtheorem{example}[subsubsection]{Example}
\theoremstyle{remark}
\newtheorem{remark}[subsubsection]{Remark}
\numberwithin{equation}{section}
\newcommand{\CC}{\mathbb{C}}
\newcommand{\RR}{\mathbb{R}}
\newcommand{\QQ}{\mathbb{Q}}
\newcommand{\ZZ}{\mathbb{Z}}
\newcommand{\OO}{\mathcal{O}}
\newcommand{\HH}{\mathrm{H}}
\newcommand{\uX}{\underline{X}}
\newcommand{\vv}{\mathbf{v}}
\newcommand{\uu}{\mathbf{u}}
\newcommand{\PP}{\mathbb{P}}
\newcommand{\Hom}{\mathrm{Hom}}
\newcommand{\Ext}{\mathrm{Ext}}
\newcommand{\CH}{\mathrm{CH}}
\newcommand{\Coh}{\mathrm{Coh}}
\newcommand{\ch}{\mathrm{ch}}
\newcommand{\Amp}{\mathrm{Amp}}
\newcommand{\NS}{\mathrm{NS}}
\newcommand{\gr}{\mathrm{gr}}
\newcommand{\cal}{\mathcal}
\newcommand{\Gm}{\mathbb{G}_{m}}
\newcommand{\LL}{\mathbf{L}}
\newcommand{\HN}{Harder-Narasimhan }
\newcommand{\Num}{\mathrm{Num}}
\newcommand{\beps}{\bm{\epsilon}}
\newcommand{\cA}{\mathcal{A}}
\newcommand{\sC}{\mathscr{C}}
\newcommand{\sX}{\mathscr{X}}
\newcommand{\cH}{\mathcal{H}}
\newcommand{\id}{\mathrm{id}}
\newcommand{\tor}{\mathrm{tor}}
\newcommand{\too}{\longrightarrow}
\newcommand{\leqp}{%
  \mathrel{\raisebox{-0.5ex}{$\scriptscriptstyle($}}%
  \leq
  \mathrel{\raisebox{-0.5ex}{$\scriptscriptstyle)$}}%
}
\begin{document}

		\title{Stability conditions on surface root stacks}
		\date{}
		\author{Yeqin Liu and Yu Shen}
  \address{Department of Mathematics, University of Michigan, 530 Church St,
Ann Arbor, MI 48109, USA}
\email{yqnl@umich.edu}
\address{Department of Mathematics, Michigan State University, 619 Red Cedar Road, East Lansing, MI 48824, USA}
  \email{shenyu5@msu.edu}

\begin{abstract}
    We construct tilt stability conditions on surface root stacks and show that they have support property with respect to the rational Chen-Ruan cohomology.
\end{abstract}

\maketitle

\setcounter{tocdepth}{1}
\tableofcontents

\section{Introduction}

Compared to projective varieties, the geometry of
Deligne-Mumford stacks are less understood.
From the perspective to study derived categories and moduli spaces of sheaves, Bridgeland stability conditions, first introduced in \cite{Dou2, Bri07}, is a powerful tool to study the underlying categories. 
Various successful applications have been made, such as studying classical moduli spaces of sheaves \cite{ABCH13, CHW17, LZ18, LZ19}, hyperK\"{a}hler geometry \cite{MS19, BLM+, LPZ22}, and several enumerative problems \cite{Tod12, BS16, LR22b}. 

In general, it is a central question in this area that whether there exist stability conditions with nice deformation properties on any given triangulated category. Several existence results have been obtained, such as projective curves \cite{Mac07}, surfaces \cite{Bri08, ABL23}, and some threefolds \cite{Mac14, MP15, BMS16}. However, in general the existence of Bridgeland stability conditions is a challenging problem. In this paper, we construct stability conditions on surface root stacks using tilt, and show that their deformation dimensions are at least the dimension of rational Chen-Ruan cohomology. Our main result is the following theorem.

\begin{theorem}[Theorem \ref{maintheorem}]\label{Main theorem}
Let $X$ be a smooth projective surface, $C\subset X$ be a smooth curve, and $\uX=\sqrt[n]{(X, C)}$ be the $n$-th root stack ramified along the curve (Definition \ref{rootstack}). 

 The tilt stability conditions (Definition \ref{tilt stability}) on $\uX$ are Bridgeland stability conditions. They have the support property with respect to the rational Chen-Ruan cohomology.    
\end{theorem}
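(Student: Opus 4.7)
The plan is to imitate Bridgeland's classical tilt construction on surfaces, with $\uX$ in place of a scheme. The stack $\uX$ is a smooth proper tame Deligne-Mumford surface, so $\Coh(\uX)$ is a Noetherian abelian category in which rank, first Chern class, and Mumford slope $\mu_\omega$ with respect to an ample class $\omega$ are well defined; this is enough to run the Harder-Narasimhan machinery for slope stability. Using the ample class $\omega$ and the twist $\beta$ built into the definition of tilt stability, I would form the torsion pair $(\mathcal{T}_{\omega,\beta}, \mathcal{F}_{\omega,\beta})$ from $\mu_\omega$-semistable pieces of positive/negative twisted slope, tilt $\Coh(\uX)$ to the heart $\mathcal{A}_{\omega,\beta}$, and equip it with the central charge $Z_{\omega,\beta}$ built from the twisted Chern character on $\uX$. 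By construction $Z_{\omega,\beta}$ factors through the rational Chen-Ruan cohomology $H^*_{CR}(\uX, \QQ)$.

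With the setup in place, the verification proceeds in four steps. First, check that $\mathcal{A}_{\omega,\beta}$ is the heart of a bounded t-structure via the tilting theorem of Happel-Reiten-Smal{\o} and establish its Noetherianity (this is needed for HN filtrations). Second, check the positivity axiom: for $0 \neq E \in \mathcal{A}_{\omega,\beta}$, $\mathrm{Im}\, Z_{\omega,\beta}(E) \geq 0$, with equality forcing $\mathrm{Re}\, Z_{\omega,\beta}(E) < 0$; this is a direct computation once the torsion pair is understood. Third, produce Harder-Narasimhan filtrations for the resulting pre-stability condition, which follows formally from Noetherianity of $\mathcal{A}_{\omega,\beta}$ and discreteness of the phase function. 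Fourth, and most importantly, produce a quadratic form $Q$ on $H^*_{CR}(\uX, \QQ)$ satisfying the support property. The standard recipe is to take $Q$ built from the Bogomolov discriminant $\Delta(E) = \ch_1(E)^2 - 2\rk(E)\ch_2(E)$, extended to Chen-Ruan cohomology. The key technical input is a Bogomolov-type inequality: any $\mu_\omega$-semistable torsion-free sheaf $E$ on $\uX$ satisfies $\Delta(E) \cdot \omega \geq 0$, possibly with twisted-sector correction terms. I would prove this either via the Biswas-Borne equivalence between $\Coh(\uX)$ and parabolic sheaves on $(X,C)$ (together with the known parabolic Bogomolov inequality), or by pulling back to a suitable cyclic $n$-fold cover ramified along $C$ and descending the classical inequality.

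The main obstacle is not the Bridgeland machinery, which is by now well oiled, but the interaction between the discriminant and the twisted sectors of Chen-Ruan cohomology. The orbifold Chern character of a coherent sheaf has components supported on the gerbe over $C$, and these components contribute correction terms to $\Delta$ whose signs are not automatic. The hard part is to choose $Q$ so that simultaneously $Q|_{\ker Z_{\omega,\beta}} \leq 0$ on the \emph{full} Chen-Ruan lattice (not merely the untwisted sector $H^*(X,\QQ)$) and $Q$ is non-negative on classes of $\mu_\omega$-semistable sheaves including their twisted components. Arranging compatibility with the signature constraints of $H^*_{CR}(\uX,\QQ)$, and showing that the inequality obtained from either the parabolic side or a cyclic cover genuinely descends to the orbifold discriminant without losing positivity, is where I expect the bulk of the work to lie.
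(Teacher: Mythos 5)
Your steps (1)--(3) --- tilting via Happel--Reiten--Smal\o{}, positivity from the Bogomolov inequality on $\uX$, and HN filtrations from Noetherianity plus discreteness --- match the paper and are sound. The gap is in step (4). You propose to obtain the support property on the full Chen-Ruan lattice from a quadratic form built out of the discriminant, with positivity supplied by a Bogomolov-type inequality for $\mu_\omega$-semistable sheaves (proved via parabolic sheaves or a cyclic cover). This cannot work as stated, because there exist $\mu_H$-semistable torsion-free sheaves whose twisted-sector invariants are unbounded relative to their central charge. Concretely, for $Z=\{p_1,\dots,p_m\}\subset C$ take $E=I_{Z\rho_0}(dH)$: then $\ch_2(E)\sim d^2H^2/2-m/n$ can be kept bounded as $d\to\infty$ by choosing $m$ appropriately, while $E_k\cong\OO_X(dH)$ for $k\geq 1$, so the ratio $\|\ch_2(E_k)\|/\|\ch_2(E)\|$ is unbounded. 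Any quadratic form that is negative definite on $\ker Z$ and non-negative on all slope-semistable classes would force a linear bound on the twisted components in terms of $|Z(E)|$, which these examples violate. Hence no inequality established only at the level of slope-semistability can descend to the required form on $H^{even}_{CR}(\uX,\QQ)$; the authors in fact state that they were unable to produce any such quadratic form and avoid that formulation entirely.

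The missing idea is that tilt-stability itself, not slope-semistability, must be used to control the twisted sectors: in the example above the inclusion $\OO_{\uX}(dH-\sC)\hookrightarrow I_{Z\rho_0}(dH)$ destabilizes $E$ in the tilted heart for $d\gg 0$, so such objects never appear among $\sigma_t$-semistable ones. The paper works with the norm formulation of the support property: it first proves support on the ordinary cohomology exactly as for surfaces, then for each $\sigma_t$-semistable $E$ forms an exact sequence $0\to P\to E\to Q\to 0$ determined by slope thresholds; the components $Q_k$ are bounded by Bogomolov-type estimates (legitimate there because the slopes of $Q$ are confined to a fixed window), while the components $P_k$ are bounded by constructing explicit subobjects $f^{*}(P_k\rho_k)$ of $E$, together with auxiliary extensions among the $\rho_j$-components, whose existence combined with the stability of $E$ forces $\|\ch(P_k)\|\leq M|Z(E)|$. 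You would need to replace your step (4) with an argument of this kind, or else exhibit the quadratic form explicitly, which is precisely the point the paper identifies as intractable.
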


There has been some study for stability conditions on stacks. In \cite{Rot19}, Rota studies stability conditions on 2-root curves. Later, Lim and Rota study stability conditions on the canonical stack associated with a projecitve surface with ADE singularities \cite{LR22}, unifying the work \cite{Bri09}.
It is worth mentioning that in \cite{CP10}, the authors constructed stability conditions on certain root stacks such as 2-root stacks over curves, $\PP^{2}$ and $ \PP^{3}$. Their stability conditions are glued from semiorthogonal components, and the stability conditions on projective spaces are Euler \cite{Mac04}, which is different from our approach. 

Our theorem is also useful in studying parabolic sheaves on surfaces \cite{BV12} and their moduli spaces, since the modified Gieseker stability (\cite{Nir08}) is naturally related to tilt stability by the following theorem known as the large volume limit. 

\begin{theorem}[Large volume limit, Theorem \ref{largevolumelimit}]
   Let $B, H \in \NS(X)_{\RR}$ with $H$ ample, $ \sigma_{B, tH}:=(Z_{B,tH}, \cA_{B ,tH}), t>0$. Suppose $E\in D^{b}(\uX)$ satisfies $r(E), \mu_{H}^{B}(E)> 0$. Then $E$ is semistable under $\sigma_{B, tH}$ for all $t\gg 0$ precisely if $E$ is a shift of a $(B,H)$-twisted semistable sheaf on $\uX$.
\end{theorem}

We would like to explain the main point of this paper.
Thanks to the Bogomolov Inequality on Deligne-Mumford surfaces \cite{JK24}, it is easily seen that the tilt stability conditions are pre-stability conditions, in the sense that nonzero stable objects have nonzero central charges (Definition \ref{pre-stab}). They also have the support property with respect to the \emph{usual cohomology}. However, the difficulty is to prove the support property with respect to the \emph{Chen-Ruan cohomology}, which is a larger lattice that reflects the numerical class of sheaves on stacky locus. 

The Chen-Ruan cohomology was first introduced in \cite{CR04} to study Gromov-Witten theory \cite{CR02}. As a refinement of the usual cohomology, there is an associative algebra structure on the Chen-Ruan cohomology \cite{AGV01}, as a generalization of intersection theory on stacks \cite{Vis89}.
Note that the Riemann-Roch for Deligne-Mumford stacks \cite{Toe99} necessarily involves Chern classes on the first inertia stacks. The numerical invariants of sheaves on a Deligne-Mumford stack naturally live in the Chen-Ruan cohomology, since the usual cohomology loses information on the stacky locus. 
To better emphasize this point, we also note that the rational $K$ group of a smooth Deligne-Mumford stack is isomorphic to the rational Chen-Ruan Chow group  \cite{ALR07}.

 We would like to mention our approach to prove the support property.
  First, we prove the support property on the ordinary cohomology (defined in \ref{lattice}). Then, we delicately construct subobjects of every stable object $E$ that reflect its orbifold Chern class. By stability of $E$, the presence of these subobjects bounds the orbifold Chern class of $E$. We illustrate the central ideas in Example \ref{exampleP} and Example \ref{exampleQ}. It is known that the support property on a lattice is equivalent to the existence of certain quadratic forms (Definition \ref{quadratic}). However in our case, it appears that finding such quadratic forms is very difficult.

We expect that the glued stability conditions from the semiorthogonal decomposition $D^{b}(\uX)=\langle D^{b}(C)\rho_{1},...,D^{b}(C)\rho_{n-1}, D^{b}(X) \rangle$ also have the support property with respect to the rational Chen-Ruan cohomology. Using the techniques in \cite{CP10}, this is already true for some special cases, such as when $X$ admits a full exceptional collection or $C\cong \PP^{1}$. It is an interesting question that whether the two different types of stability conditions lie in the same connected component of $\mathrm{Stab}(\uX)$.

\subsection{Outline of Paper} 

In section \ref{section2}, we recall the preliminaries of root stacks, their Chen-Ruan cohomology, and Bridgeland stability.
In section \ref{section3}, we recall the structure of $D^{b}(\uX)$ and compute Chern class formulas for coherent sheaves on $\uX$.
In section \ref{section4}, we construct the tilt stability functions and show that they are pre-stability conditions (Definition \ref{tilt stability}).
In section \ref{section5}, we show the tilt stability conditions have the support property with respect to the rational Chen-Ruan cohomology. 

\subsection{Acknowledgments} We thank Izzet Coskun, Andres Fernandez Herrero, Rajesh Kulkarni, Alexander Perry and Nick Rekuski for many helpful discussions and comments.

The second author was partially support by NSF grant DMS-2101761.
\subsection{Notation and conventions} All varieties are smooth projective over $\CC$.

Fix any $n\in \mathbb{Z}_{>0}$. Let $X$ be a smooth projective surface and $C\subset X$ be a smooth curve. Let $\uX=\sqrt[n]{(X, C)}$ be the $n$-th root stack along the curve (Definition \ref{rootstack}), and $\sX=[\OO_{X}(C)^{\times}/ (\Gm, n)]$ be the $\mu_{n}$-gerbe over $X$ (Definition \ref{rootgerbe}). 

Let $\mu_{m}=\ZZ/m \ZZ$ be the cyclic group of order $m$. For every group $G$, let $\mathbf{B}G=[*/G]$ be its classifying stack.

\section{Preliminaries}\label{section2}

\subsection{Root gerbe and root stack}

In this subsection we recall the definitions and properties of root gerbes and root stacks. 

\subsubsection{Constructions}\label{gerbe} 
First we recall the construction of root gerbes (see e.g. \cite{Alp23}).
\begin{definition}\label{rootgerbe}
Fix $n\in \ZZ_{>0}$.
Let $X$ be a scheme and $L$ be a line bundle on $X$, which has the classifying morphism $[L]: X\to \mathbf{B}\mathbb{G}_{m}$. Let $n:\mathbf{B}\mathbb{G}_{m}\to \mathbf{B}\mathbb{G}_{m}$ be the morphism induced from the $n$-th power map $\mathbb{G}_{m}\to \mathbb{G}_{m}: t\to t^n$. Define the $n$-th root gerbe $\mathscr{X}$ to be the fibered product 

\[ \begin{tikzcd}
\mathscr{X} \arrow{r} \arrow[swap]{d} {p}&\mathbf{B}\mathbb{G}_{m} \arrow{d}{n} \\%
X \arrow{r}{[L]}& \mathbf{B}\mathbb{G}_{m}.
\end{tikzcd}
\]
\end{definition}
Next we recall the construction of root stacks.
\begin{definition}\label{rootstack}
Using the notations in Definition \ref{rootgerbe}, let $s\in \Gamma(X, L)$ be a section. This data determines a morphism $[L,s]: X\to [\mathbb{A}^{1}/\mathbb{G}_{m}]$. Let $n: [\mathbb{A}^{1}/\mathbb{G}_{m}]\to [\mathbb{A}^{1}/\mathbb{G}_{m}] $ induced from the $n$th power map $n: \mathbb{G}_{m}\to \mathbb{G}_{m}$, given by $x\to x^{n}$, which is equivariant under the $n$-th power map $n:\mathbb{G}_{m}\to \mathbb{G}_{m}$. Define the $n$-th root stack $\uX:=\sqrt[n]{(X, C)}$ of $X$ ramified along $C$ to be the fiber product
\[ \begin{tikzcd}
\sqrt[n]{(X, C)} \arrow{r} \arrow[swap]{d}{\pi} & \lbrack \mathbb{A}^{1}/\mathbb{G}_{m} \rbrack
\arrow{d}{n} \\%
X \arrow{r}{[L,s]}& \lbrack \mathbb{A}^{1}/\mathbb{G}_{m} \rbrack.
\end{tikzcd}
\]
\end{definition}
Universal line bundles on root gerbes and root stacks are defined as follows.
\begin{definition}\label{universal}
By \cite{IU15}, there exist universal line bundles $\underline{\mathcal{M}}\in \Coh(\uX)$ and $\mathcal{M}\in \Coh(\sX)$. They satisfy the following properties:
\begin{itemize}
\item
We have $\underline{\mathcal{M}}^{\otimes n}\cong \pi^{*}L$. Let $\underline{\rho_{k}}:=\underline{\mathcal{M}}^{-k}$ for $0\leq k\leq n-1$. 
\item 
We have $\mathcal{M}^{\otimes n}\cong p^{*}L$. Let $\rho_{k}:=\mathcal{M}^{-k}$ for $0\leq k \leq n-1$. 
\end{itemize}
\end{definition}

\subsubsection{Properties}

We first collect some properties of root gerbes (e.g. \cite[Chatper 3.9.2]{Alp23}).

\begin{lemma}[{\cite[Theorem 1.5]{IU15}}]\label{decomposition}
  The category $\Coh(\sX)$ splits as the following direct sum:
$$\operatorname{Coh}(\mathscr{X})\cong \operatorname{Coh}(X)\rho_{0} \oplus \operatorname{Coh}(X)\rho_{1}...\oplus \operatorname{Coh}(X)\rho_{n-1}.$$
\end{lemma}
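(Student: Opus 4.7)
The plan is to use that $\sX \to X$ is a $\mu_n$-gerbe and decompose every coherent sheaf according to characters of the inertia group.

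First, I would unpack the construction: presenting $\sX = [L^\times / \Gm]$, where $L^\times$ is the complement of the zero section of $L$ and $\Gm$ acts fiberwise by $t \cdot v = t^n v$, exhibits $\sX$ as a $\mu_n$-gerbe over $X$, with $\mu_n \subset \Gm$ being the generic stabilizer and hence the inertia group acting on every coherent sheaf. Under this presentation, the universal line bundle $\mathcal{M}$ of Definition \ref{universal} is the line bundle associated to the tautological character $\chi : \mu_n \hookrightarrow \Gm$, so $\mu_n$ acts on $\mathcal{M}$ with weight $1$ and on $\rho_k = \mathcal{M}^{-k}$ with weight $-k \bmod n$.

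Next, since we work over $\CC$, the group $\mu_n$ is linearly reductive and its representations split canonically as direct sums of the characters $\chi^0, \chi^1, \dots, \chi^{n-1}$. Applied to coherent sheaves on $\sX$, this canonical decomposition of the inertia action gives a functorial splitting
\[
\cF \cong \bigoplus_{k=0}^{n-1} \cF_k,
\qquad \text{where } \mu_n \text{ acts on } \cF_k \text{ by } \chi^{-k}.
\]
Then $\cF_k \otimes \mathcal{M}^k$ carries the trivial $\mu_n$-action; I would argue by flat descent along the rigidification (or equivalently the smooth cover by $L^\times$) that a coherent sheaf on $\sX$ with trivial inertia action descends uniquely to a coherent sheaf on the coarse moduli space $X$. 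This yields $\cG_k \in \Coh(X)$ with $\cF_k \otimes \mathcal{M}^k \cong p^*\cG_k$, equivalently $\cF_k \cong p^*\cG_k \otimes \rho_k$, which is exactly a summand in the component $\Coh(X)\rho_k$.

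Finally, I would check that these two constructions are mutually inverse: the functor $\bigoplus_k \Coh(X) \to \Coh(\sX)$ sending $(\cG_k) \mapsto \bigoplus_k p^*\cG_k \otimes \rho_k$ is easily fully faithful because pullback is fully faithful on sheaves with matching inertia weights and $\Hom$ between different weight components vanishes (by Schur-type orthogonality of characters of $\mu_n$). Essential surjectivity is precisely the character decomposition above. The main obstacle, which is really the only nontrivial input, is the descent step — identifying inertia-trivial sheaves on the gerbe with pullbacks from $X$; all the rest is bookkeeping with characters of $\mu_n$, which is why the result is essentially character theory of the cyclic group packaged into the gerbe.
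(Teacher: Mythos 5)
Your argument is correct and is the standard character-decomposition proof for sheaves on a $\mu_{n}$-gerbe: isotypic decomposition via linear reductivity of $\mu_{n}$, untwisting each weight component by the appropriate power of $\mathcal{M}$, and descent of inertia-trivial sheaves to the coarse space. The paper itself offers no proof, citing \cite[Theorem 1.5]{IU15}, and your sketch reproduces exactly the argument underlying that reference, so there is nothing to add beyond noting that the descent step you flag is indeed the only substantive input.
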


\begin{proposition}\label{rootgerbeDM}
The root gerbe $\sX$ in Definition \ref{rootgerbe} has following properties:
\begin{enumerate}
    \item $\sX$ is a Deligne-Mumford stack.
    \item The fiber of $p: \sX \to X$ at a closed point $x\in X$ is isomorphic to $ \mathbf{B}\mu_{n}$.
    \item If $L=\mathcal{O}_{X}$ is trivial in Definition \ref{gerbe}, then $$\sX \cong [X /\mu_{n}]\cong X \times \mathbf{B}\mu_{n}, $$
where $\mu_{n}$ acts trivially on $X$.
\end{enumerate}
\end{proposition}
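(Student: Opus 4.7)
The plan is to establish part (3) first via an explicit fiber-product computation, deduce (2) as a special case, and then obtain (1) by gluing \'{e}tale charts produced by (3).

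For (3), I would observe that when $L \cong \OO_X$ the classifying map $[L]\colon X\to \mathbf{B}\Gm$ factors through the canonical atlas $\operatorname{Spec}(\CC) \to \mathbf{B}\Gm$, so the defining fiber square for $\sX$ splits as a product
\[
\sX \cong X \times \bigl(\operatorname{Spec}(\CC) \times_{\mathbf{B}\Gm,\,n} \mathbf{B}\Gm\bigr).
\]
The core calculation is the identification
\[
\operatorname{Spec}(\CC) \times_{\mathbf{B}\Gm,\,n} \mathbf{B}\Gm \;\cong\; \mathbf{B}\mu_n,
\]
which I would check on functors of points: an $S$-point of the left-hand side is a $\Gm$-torsor $P$ on $S$ together with a trivialization of $P^{\otimes n}$, and this is precisely the datum of a $\mu_n$-torsor on $S$. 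The resulting $\mu_n$-action on $X$ is trivial, so $X \times \mathbf{B}\mu_n \cong [X/\mu_n]$, finishing (3).

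Part (2) is then immediate: the fiber of $p$ over a closed point $x$ is, by base change,
\[
\operatorname{Spec}(\CC) \times_X \sX \;\cong\; \operatorname{Spec}(\CC) \times_{\mathbf{B}\Gm,\,n} \mathbf{B}\Gm \;\cong\; \mathbf{B}\mu_n,
\]
using that any line bundle is trivial at a single closed point, so the composition $\operatorname{Spec}(\CC) \to X \to \mathbf{B}\Gm$ factors through the atlas and (3) applies.

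For (1), I would cover $X$ by Zariski-open affines $U_i = \operatorname{Spec}(A_i)$ trivializing $L$. Applying (3) to each $U_i$ gives $p^{-1}(U_i) \cong [U_i/\mu_n]$ with trivial $\mu_n$-action, and since $\mu_n$ is finite \'{e}tale over $\CC$, the quotient map $U_i \to [U_i/\mu_n]$ is a finite \'{e}tale surjection from a scheme. Gluing produces an \'{e}tale surjection $\coprod_i U_i \to \sX$ from a scheme, which is one of the standard characterizations of a Deligne-Mumford stack. The main obstacle I anticipate is not any single calculation but the bookkeeping in this gluing step: I have to verify that the isomorphism in (3) is natural in pullback so that the local charts patch correctly on overlaps, which amounts to unwinding the 2-categorical universal property of the fiber product carefully, together with tracking that the distinguished atlas $\operatorname{Spec}(\CC) \to \mathbf{B}\Gm$ is only defined up to the ambient $\Gm$-action. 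This is routine but easy to garble if done sloppily.
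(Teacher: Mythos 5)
The paper does not prove this proposition; it simply cites Alper's notes (the paragraph preceding the proposition says ``We first collect some properties of root gerbes (e.g.\ [Alp23, Chapter 3.9.2])''). So there is no in-paper argument to compare against.

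Your proof is correct and is the standard one. The identification $\operatorname{Spec}(\CC) \times_{\mathbf{B}\Gm,\,n} \mathbf{B}\Gm \cong \mathbf{B}\mu_n$ via functor of points is right: a $\Gm$-torsor $P$ with a chosen trivialization of the torsor induced along $n\colon \Gm\to\Gm$ is equivalent to a $\mu_n$-torsor, because $\mu_n = \ker(n)$. Two small remarks on part (1). First, you can skip the gluing bookkeeping you were worried about: for an \'etale atlas you do not need the local identifications $p^{-1}(U_i)\cong U_i\times\mathbf{B}\mu_n$ to be compatible on overlaps, since one only needs a surjective \'etale map from a scheme and $\coprod_i U_i \to \sX$ (with each $U_i$ mapping through the \'etale chart from (3)) already suffices; compatibility of charts is a 2-categorical datum you do not have to record. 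Second, to invoke ``\'etale atlas implies Deligne--Mumford'' you should note that $\sX$ is a priori algebraic with quasi-compact separated diagonal, which here is automatic because it is a fiber product of algebraic stacks with affine diagonal; the \'etale atlas then upgrades ``algebraic'' to ``Deligne--Mumford.'' With those two points made explicit, the argument is complete.
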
    


Next we collect properties of root stacks (see e.g. \cite[Chatper 3.9.2]{Alp23}).
Let $\bar{j}: C\hookrightarrow X$ be a divisor. We have the following commutative diagram
\begin{equation}\label{square}
 \begin{tikzcd}
\mathscr{C}\arrow{r}{j} \arrow[swap]{d}{p} &\uX \arrow{d}{\pi} \\%
C \arrow{r}{\bar{j}}& X,
\end{tikzcd}
\end{equation}
where $\mathscr{C}$ is the $n$-th root gerbe over $C$ constructed by the line bundle $\mathcal{O}_{X}(C)|_{C}$.

\begin{lemma}[{\cite[Theorem 1.2]{BD23}}]\label{semiorthgonal} We have semiorthgonal decomposition as follow

$$D^{b}(\uX)=\langle D^{b}(C)\underline{\rho_{1}},...,D^{b}(C)\underline{\rho_{n-1}}, D^{b}(X) \rangle,$$

where the embeddings of $D^{b}(X)$ and $D^{b}(C)$ are respectively $\pi^{*}$ and $j_{*}p^{*}$.
\end{lemma}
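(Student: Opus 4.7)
The plan is to verify the three standard criteria defining a semiorthogonal decomposition: each embedding functor is fully faithful, the images are pairwise semiorthogonal in the prescribed order, and together they generate $D^{b}(\uX)$.

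For full faithfulness of $\pi^{*}$, the essential input is the projection formula combined with $R\pi_{*}\OO_{\uX} \cong \OO_{X}$, which I would verify étale locally using the local presentation of $\uX$ near $C$ as $[\operatorname{Spec}(A[t]/(t^{n}-f))/\mu_{n}]$. For the functor $F \mapsto j_{*}p^{*}F \otimes \underline{\rho_{k}}$ with $1 \leq k \leq n-1$, I would first invoke Lemma \ref{decomposition} for the gerbe $p\colon \mathscr{C} \to C$ to see that $p^{*}$ is fully faithful onto the weight-zero component of $D^{b}(\mathscr{C})$. The remaining issue with the closed immersion $j$ is handled by a Koszul argument: the discrepancy $j^{*}j_{*}p^{*}F \to p^{*}F$ has fiber involving the conormal line bundle shifted, which sits in a weight that is killed upon applying $\Hom(-, p^{*}F')$ once the $\underline{\rho_{k}}$ twists are accounted for.

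Semiorthogonality then reduces to a direct computation. For $1 \leq j < i \leq n-1$ and $F, F' \in D^{b}(C)$, chase through the adjunctions $(j^{*},j_{*})$ and $(\pi^{*}, R\pi_{*})$ together with the projection formula to rewrite
\[
\Hom_{\uX}\bigl(j_{*}p^{*}F \otimes \underline{\rho_{i}},\; j_{*}p^{*}F' \otimes \underline{\rho_{j}}\bigr)
\]
as a $\Hom$ on $\mathscr{C}$ in which the two arguments land in different weight components, differing by $i-j$ with $0 < i-j < n$; Lemma \ref{decomposition} then forces the vanishing. The vanishing $\Hom_{\uX}(j_{*}p^{*}F \otimes \underline{\rho_{k}}, \pi^{*}G) = 0$ for $1 \leq k \leq n-1$ is analogous, since $\pi^{*}G$ restricts on $\mathscr{C}$ to the weight-zero part, while the first argument is concentrated in weight $k \neq 0$.

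Generation is the main obstacle. My plan is to reduce to the étale local model $[\mathbb{A}^{2}/\mu_{n}]$ along $\mathscr{C}$ and glue with the open complement $\uX \setminus \mathscr{C} \cong X \setminus C$. Concretely, for any $E \in D^{b}(\uX)$, the recollement distinguished triangle
\[
j_{*}j^{!}E \to E \to Rj'_{*}(j')^{*}E
\]
associated with the open immersion $j'\colon \uX \setminus \mathscr{C} \hookrightarrow \uX$ splits $E$ into a part supported on $\mathscr{C}$ and a part extending from the open locus. The latter is built from $\pi^{*}D^{b}(X)$ up to corrections supported on $\mathscr{C}$, and the former decomposes according to the weight decomposition of Lemma \ref{decomposition} on $\mathscr{C}$ into the subcategories $D^{b}(C)\underline{\rho_{k}}$. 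Controlling the extensions that arise when assembling these pieces is the delicate step; this is exactly what the argument of Bergh-Duncan in \cite{BD23} systematizes. An alternative route I would pursue in parallel is induction on $n$ starting from the most transparent $n=2$ case, where $\uX$ is close to a stacky blowup and Orlov's blowup-type formula can serve as a template.
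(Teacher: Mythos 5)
The paper does not prove this lemma; it simply cites it as {\cite[Theorem 1.2]{BD23}}, so there is no internal argument to compare against. Your sketch of full faithfulness and of semiorthogonality is nevertheless on the right track and is consistent with what the paper does prove downstream: Lemma~\ref{cotangent bundle} (the cohomology of $\LL j^{*}j_{*}p^{*}F$ sitting in weights $k$ and $k+1$) and Lemma~\ref{circle} carry out exactly the ``Koszul plus weight-bookkeeping'' computation you describe, and the direction of the vanishing ($\Hom$ from later to earlier) is correctly identified. The gerbe splitting from Lemma~\ref{decomposition} is used exactly as you say.

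The genuine gap is the generation step, and the specific route you propose does not quite work as stated. For an open immersion $j'\colon \uX\setminus \mathscr{C}\hookrightarrow \uX$, the functor $Rj'_{*}$ does not preserve coherence or boundedness, so the recollement triangle $j_{*}j^{!}E \to E \to Rj'_{*}(j')^{*}E$ does not live inside $D^{b}(\Coh(\uX))$ and cannot be used directly to filter an arbitrary $E\in D^{b}(\uX)$ into pieces belonging to the proposed components. A working argument replaces this with a filtration that stays inside coherent complexes: for instance the parabolic/weight filtration of Proposition~\ref{link} (equivalently, the filtration $E_{0}\hookrightarrow E_{0,n-1}\hookrightarrow\cdots\hookrightarrow E$ with graded pieces $\pi^{*}\pi_{*}E$ and $G_{0,k}\rho_{k}$ supported on $\mathscr{C}$), or the ``prove the right orthogonal vanishes'' strategy. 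You flag this honestly by deferring to \cite{BD23}, but as written the proposal does not supply the argument; it reproduces the easy half and leaves the essential half of the theorem to the citation.

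Two smaller cautions. First, in the full-faithfulness step for $j_{*}p^{*}(-)\otimes\underline{\rho_{k}}$, you need $k+1\not\equiv k\pmod n$, i.e.\ $n\geq 2$, and for $k=n-1$ the shifted conormal piece lands in weight $0$ rather than weight $n$; it still misses weight $k=n-1$, but this edge case should be spelled out. Second, the semiorthogonality from $D^{b}(X)$ to $D^{b}(C)\underline{\rho_{k}}$ should be checked explicitly (you only wrote the case with both arguments supported on $\mathscr{C}$); the computation is the same in spirit — $j^{*}\pi^{*}$ lands in weight $0$, which misses weight $k\neq 0$.
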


\begin{proposition}\label{rootstackDM}
The root stack $\uX$ in Definition \ref{rootstack} has the following properties:
\begin{enumerate}
\item $\uX$ is a Deligne-Mumford stack.
\item  $\pi: \uX \to X$ is an isomorphism away from $C$.
\item The fiber of $\pi: \uX  \to X$ over a closed point $x\in C$ is isomorphic to $ \mathbf{B}\mu_{n}$.
\item If $X=\operatorname{Spec}(A)$ is an affine scheme, and $C$ is defined by an element $a\in A$ in the construction, then $$\uX \cong [\operatorname{Spec}(A[x]/(x^{n}-a))/\mu_{n}], $$
where $\mu_{n}$ acts on $\operatorname{Spec}(A[x]/(x^{n}-a))$ via $t\cdot x=tx$.
\end{enumerate}
\end{proposition}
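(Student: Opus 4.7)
The plan is to establish the étale-local model (iv) first, then read off the remaining three assertions from it. The key ingredient is the interpretation of $[\mathbb{A}^{1}/\Gm]$ as the classifying stack of pairs $(M, t)$ consisting of a line bundle and a global section; under this dictionary the morphism $[L, s]$ of Definition \ref{rootstack} classifies the pair $(L, s)$, and the $n$-th power map sends $(M, t)$ to $(M^{\otimes n}, t^{n})$.

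First I would prove (iv) by computing the 2-fiber product explicitly. In the affine situation $X = \operatorname{Spec}(A)$ with $L \cong \OO_{X}$ trivial and $s$ corresponding to $a \in A$, an $S$-point of $\uX$ is a triple $(M, t, \varphi)$ consisting of a line bundle $M$ on $S$, a section $t \in \Gamma(S, M)$, and an isomorphism $\varphi: M^{\otimes n} \xrightarrow{\sim} \OO_{S}$ carrying $t^{n}$ to $a|_{S}$. Trivialising $M$ étale-locally converts this data into an element $x \in \Gamma(S, \OO_{S})$ satisfying $x^{n} = a|_{S}$, and the ambiguity of the trivialisation is exactly a $\mu_{n}$-choice acting by $t \cdot x = tx$. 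Descent then identifies $\uX$ with $[\operatorname{Spec}(A[x]/(x^{n} - a))/\mu_{n}]$.

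Given (iv), the remaining statements follow at once. For (i), cover $X$ by affine opens on which $L$ trivialises; then (iv) presents $\uX$ étale-locally as $[\operatorname{Spec}(B)/\mu_{n}]$ with $\mu_{n}$ finite étale over $\CC$, so the stabilisers are finite and $\uX$ is Deligne-Mumford. For (ii), at a point of $X \setminus C$ the function $a$ is a unit, so $A[x]/(x^{n} - a)$ is a $\mu_{n}$-torsor over $A$ and the quotient stack collapses onto $\operatorname{Spec}(A)$ itself. For (iii), choose an affine neighbourhood of a closed point $p \in C$ in which $a$ cuts out $C$; base-changing to the residue field $k(p)$ replaces $a$ by $0$, so the scheme-theoretic fibre is $[\operatorname{Spec}(k(p)[x]/x^{n})/\mu_{n}]$, whose underlying reduced stack is $[\operatorname{Spec}(k(p))/\mu_{n}] = \mathbf{B}\mu_{n}$.

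I do not foresee a genuine obstacle here: the proof is a formal unwinding of the fibre-product definition, closely parallel to the argument for root gerbes in Proposition \ref{rootgerbeDM}. The one subtle point is pinning down the $\mu_{n}$-action on the auxiliary coordinate $x$; it is forced by demanding that $\varphi: M^{\otimes n} \simeq \OO$ be $\mu_{n}$-equivariant with respect to the scaling action on the trivialisation of $M$, which produces exactly the standard weight-one character appearing in the statement.
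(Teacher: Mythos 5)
The paper itself offers no proof of this proposition; it simply cites Alper's notes, so there is nothing to compare against line by line. Your proof is the standard one and is essentially correct: you reduce to the affine, trivialised-line-bundle case via the moduli description of $[\mathbb{A}^1/\Gm]$ as pairs (line bundle, section), compute the $2$-fibre product, exhibit $\operatorname{Spec}(A[x]/(x^n-a))$ as an \'etale atlas with $\mu_n$-groupoid structure, and then read off (i)--(iii) from the local model. Two small points worth being explicit about. First, in step (iv), after trivialising $M$ by $\psi$ the isomorphism $\varphi\circ(\psi^{\otimes n})^{-1}$ is a priori a unit $u\in\Gamma(S,\OO_S^{\times})$, so one gets $u\,x^n=a$, not $x^n=a$; you need to extract an $n$-th root of $u$, which is possible only after a further \'etale cover (harmless in characteristic $0$, but it should be said, as this is exactly where the \'etale, rather than Zariski, topology is needed). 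Second, your observation in (iii) is a genuine and worthwhile one: the scheme-theoretic fibre over a point of $C$ is the non-reduced stack $[\operatorname{Spec}(k(p)[x]/x^n)/\mu_n]$, and only its reduction (equivalently, the residual gerbe at $p$) is $\mathbf{B}\mu_n$; the proposition statement, like several sources, is silently conflating the two, and you are right to record that the isomorphism holds for the reduced fibre.
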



\subsection{Orbifold cohomology}

In this subsection, we recall the notion of Chen-Ruan cohomology, introduced by \cite{CR04}.

First we collect some properties of inertia stacks (see e.g. \cite[Chapter 6.1]{Liu11}). Let $\mathcal{X}$ be a smooth Deligne-Mumford stack, the inertia stack $\mathcal{IX}$ associated to $\mathcal{X}$ is a smooth Deligne-Mumford stack such that the following diagram is Cartesian:

\begin{center}
\begin{tikzcd}
\mathcal{IX} \arrow[r] \arrow[d] & \mathcal{X} \arrow[d, "\Delta"] \\
\mathcal{X} \arrow[r, "\Delta"]      & \mathcal{X}\times \mathcal{X},               
\end{tikzcd}

\end{center}
where $\Delta: \mathcal{X}\to \mathcal{X}\times \mathcal{X}$ is the diagonal map. The objects in the category $\mathcal{IX}$ are 
$$\operatorname{Ob}(\mathcal{IX})=\{(x,g)|x\in \mathcal{X}, g\in \operatorname{Aut}_{\mathcal{X}}(x)  \}.$$
The morphisms between two objects in the category $\mathcal{IX}$ are
$$\operatorname{Hom}_{\mathcal{IX}}((x_{1},g_{1}), (x_{2},g_{2}))=\{h\in \operatorname{Hom}_{\mathcal{X}}(x_{1},x_{2})| h\circ g_{1}=g_{2}\circ h  \}.$$
In particular, we have
$$\operatorname{Aut}_{\mathcal{IX}}(x,g)=\{h\in \operatorname{Aut}_{\mathcal{X}}(x)| h\circ g=g\circ h  \}.$$
Assume $\mathcal{X}$ is connected, let $$\mathcal{IX}=\bigsqcup_{i\in I}\mathcal{X}_{i}$$ be the disjoint union of connected components. There is a distinguished connected component $\mathcal{X}_{0}$ called the \emph{untwisted sector}, whose objects are $(x, \operatorname{id}_{x})$, where $x\in \operatorname{Ob}(\mathcal{X})$, and $\operatorname{id}_{x}\in \operatorname{Aut}(x)$ is the identity element. The other connected components are called \emph{twisted sectors}.

\begin{example}
 Let $\uX$ be the root stack in Definition \ref{rootstack}, then $$\operatorname{Ob}(\mathcal{I}\uX)=\{(x, \operatorname{id}_{x} )|x\in \operatorname{Ob}(\mathcal{X})\}\bigsqcup\{(x, \sigma )|x\in \mathscr{C}\}\bigsqcup...\bigsqcup\{(x, \sigma^{n-1}) |x\in \mathscr{C} \},$$  
where $\sigma$ is a generator of the $n$-th cyclic group $\mu_{n}.$ Since
$\operatorname{Aut}_{\mathcal{I}\uX}(x,g)=\operatorname{Aut}_{\mathcal{X}}(x),$
we have $$\mathcal{I}\uX=\uX\bigsqcup\bigsqcup_{1\leq k\leq n-1}\mathscr{C}.$$
\end{example}

 Chen and Ruan give the structure of a graded ring of orbifold cohomology \cite{CR04}. However, in this paper, we only consider its additive structure.

\begin{definition}[{\cite[Definition 3.2.3]{CR04}}]
    The \emph{Chen-Ruan orbifold cohomology} of a smooth Deligne-Mumford Stack $\mathcal{X}$ is defined as 
    $$\HH^{*}_{CR}(\mathcal{X}):=\bigoplus_{i}\HH^{*}(\mathcal{X}_{i})=\HH^{*}(\mathcal{IX}).$$
   
\end{definition}

\begin{lemma}[{\cite[Proposition 36]{Beh04}}]\label{DMrational}
    Let $\mathcal{X}$ be a Deligne-Mumford stack with coarse space $X$, then the canonical morphism $\mathcal{X}\to X$ induces isomorphisms on $\mathbb{Q}$-valued cohomology groups:$$\HH^{*}(X,\mathbb{Q})\xrightarrow{\sim} \HH^{*}(\mathcal{X}, \mathbb{Q}). $$
\end{lemma}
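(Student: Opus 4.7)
The plan is to analyze the coarse moduli map $q \colon \mathcal{X} \to X$ via the Leray spectral sequence
\[
E_2^{p,i} = H^p(X, R^i q_* \QQ_{\mathcal{X}}) \Longrightarrow H^{p+i}(\mathcal{X}, \QQ),
\]
and reduce the statement to showing that $q_* \QQ_{\mathcal{X}} \cong \QQ_X$ and $R^i q_* \QQ_{\mathcal{X}} = 0$ for $i > 0$. If these vanishings hold, then the spectral sequence degenerates at $E_2$, collapsing onto the bottom row, and the resulting edge map is exactly the pullback $H^*(X, \QQ) \to H^*(\mathcal{X}, \QQ)$.

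To establish the vanishing, I would pass to a local model. By the Keel--Mori theorem (or the local structure theorem for Deligne-Mumford stacks), for each point $x \in X$ there is an étale neighborhood $V \to X$ such that $\mathcal{X} \times_X V \cong [U/G]$ for a scheme $U$ with an action of the finite stabilizer group $G = \mathrm{Aut}_{\mathcal{X}}(\tilde{x})$, with $V \cong U/G$. The stalk of $R^i q_* \QQ$ at $x$ is therefore a colimit of groups $H^i([U/G], \QQ)$ over shrinking $G$-invariant neighborhoods $U$ of a point fixed by $G$.

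The key input is the comparison
\[
H^*([U/G], \QQ) \;\cong\; H^*(U, \QQ)^G,
\]
which holds because $\QQ[G]$ is semisimple, so averaging $e = \frac{1}{|G|}\sum_{g \in G} g$ is an idempotent and the invariants functor on $\QQ[G]$-modules is exact; hence it commutes with sheaf cohomology. Shrinking $U$ to a contractible $G$-invariant neighborhood of the fixed point, one obtains $H^0(U, \QQ)^G = \QQ$ and $H^i(U, \QQ)^G = 0$ for $i > 0$, giving the desired stalkwise vanishing.

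The main obstacle is bookkeeping: verifying that the local model $[U/G]$ can be chosen with a $G$-invariant contractible neighborhood of every fixed point, and confirming the compatibility of the Leray spectral sequence and stalk computations between the étale and classical topologies on the underlying analytic spaces. Both are standard for finite-type Deligne-Mumford stacks over $\CC$, so the rest is essentially formal.
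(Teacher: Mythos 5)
The paper does not prove this lemma; it cites it verbatim from Behrend \cite{Beh04}, so there is no in-paper argument to compare against. That said, your sketch is correct and is essentially the standard proof (and close in spirit to Behrend's own). The reduction via the Leray spectral sequence to the stalkwise vanishing of $R^i q_*\QQ$, followed by the étale-local (analytically local) quotient presentation $[U/G]$ with $G$ finite and $U$ a $G$-linearizable contractible slice around a fixed point, is exactly the route one expects.

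One small point of hygiene: the step $H^*([U/G],\QQ)\cong H^*(U,\QQ)^G$ is better phrased through the Cartan--Leray spectral sequence
$$
E_2^{p,q}=H^p\bigl(G,\,H^q(U,\QQ)\bigr)\ \Longrightarrow\ H^{p+q}([U/G],\QQ),
$$
together with the vanishing $H^p(G,M)=0$ for $p>0$ and $M$ any $\QQ[G]$-module (this is where $|G|$ invertible in $\QQ$, equivalently semisimplicity of $\QQ[G]$, is used). Your phrasing that ``the invariants functor commutes with sheaf cohomology'' is the right idea but is only literally true because the coefficients form a $\QQ[G]$-module sheaf; stating it via the spectral sequence avoids any ambiguity. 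With that reformulation, and your acknowledged bookkeeping about compatibility of étale vs.\ analytic topologies (standard over $\CC$ for finite-type Deligne--Mumford stacks), the proof is complete.
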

Since the coarse spaces of $\uX$ and $\mathscr{C}$ are $X$ and $C$ respectively, the rational Chen-Ruan cohomology of $\uX$ is computed by the following lemma. 
\begin{lemma}
$$
\HH^{*}_{CR}(\uX,\mathbb{Q})
=\HH^{*}(\uX,\mathbb{Q}) \oplus\bigoplus_{1\leq k\leq n-1}\HH^{*}(\mathscr{C},\mathbb{Q}) 
    = \HH^{*}(X,\mathbb{Q}) \oplus\bigoplus_{1\leq k\leq n-1} \HH^{*}(C,\mathbb{Q}).$$
\end{lemma}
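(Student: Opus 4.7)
The plan is to chain together the three facts that have just been set up: the definition of Chen-Ruan cohomology as ordinary cohomology of the inertia stack, the explicit description of $\mathcal{I}\uX$ given in the preceding example, and Lemma \ref{DMrational} which replaces the cohomology of a smooth Deligne-Mumford stack by that of its coarse space with $\mathbb{Q}$-coefficients.

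First I would unwind the definition: $H^{*}_{CR}(\uX,\mathbb{Q}) := H^{*}(\mathcal{I}\uX,\mathbb{Q})$. Next, by the example immediately preceding the statement, the inertia stack decomposes as a disjoint union
\[
\mathcal{I}\uX \;=\; \uX \;\sqcup\; \bigsqcup_{1\leq k\leq n-1}\mathscr{C},
\]
where the copy of $\mathscr{C}$ indexed by $k$ corresponds to objects $(x,\sigma^{k})$ with $\sigma$ a fixed generator of $\mu_{n}$. Since singular cohomology sends finite disjoint unions to direct sums, this immediately gives the first equality
\[
H^{*}_{CR}(\uX,\mathbb{Q}) \;=\; H^{*}(\uX,\mathbb{Q}) \oplus \bigoplus_{1\leq k\leq n-1} H^{*}(\mathscr{C},\mathbb{Q}).
\]

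For the second equality I would invoke Lemma \ref{DMrational}: the coarse moduli space of the root stack $\uX$ is $X$ (Proposition \ref{rootstackDM}), and the coarse moduli space of the root gerbe $\mathscr{C}$ is $C$ (Proposition \ref{rootgerbeDM}), so the canonical maps to coarse spaces induce isomorphisms $H^{*}(\uX,\mathbb{Q})\cong H^{*}(X,\mathbb{Q})$ and $H^{*}(\mathscr{C},\mathbb{Q})\cong H^{*}(C,\mathbb{Q})$. Substituting these into the previous display yields the second equality.

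There is no real obstacle here: the statement is a bookkeeping consequence of the preceding definitions and lemmas. The only thing to be slightly careful about is that Lemma \ref{DMrational} really does apply to both $\uX$ and $\mathscr{C}$, which only requires confirming that both are smooth Deligne-Mumford stacks with the stated coarse spaces, and this is recorded in Propositions \ref{rootgerbeDM} and \ref{rootstackDM}.
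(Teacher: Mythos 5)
Your proposal is correct and follows exactly the route the paper takes: the paper derives this lemma by combining the inertia-stack decomposition $\mathcal{I}\uX=\uX\sqcup\bigsqcup_{1\leq k\leq n-1}\mathscr{C}$ from the preceding example with Lemma \ref{DMrational} applied to the coarse spaces $X$ and $C$. No gaps; your extra check that both stacks are smooth Deligne-Mumford with the stated coarse spaces is the only hypothesis needed and is indeed recorded in Propositions \ref{rootgerbeDM} and \ref{rootstackDM}.
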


\begin{definition}\label{chorb}
Let $\operatorname{pr}_{i}, 1\leq i\leq n-1,$ be the $i$-th projection from 
$$
K_{0}(\uX)=K_{0}(C)\rho_{1}\oplus...\oplus K_{0}(C)\rho_{n-1}\oplus K_{0}(X)
$$
to $K_{0}(C)\rho_{i}$, where $K_{0}(-)$ is the Grothendieck group. We define
\begin{align*}
    \operatorname{ch}_{orb}: K_{0}(\uX)& \to \HH^{*}_{CR}(\uX,\mathbb{Q}) =\HH^{*}(X,\mathbb{Q})\bigoplus_{1\leq k\leq n-1} \HH^{*}(C,\mathbb{Q}) \\
    E &\mapsto (\operatorname{ch}(E), \operatorname{ch}(\operatorname{pr}_{1}(E)),...,\operatorname{ch}(\operatorname{pr}_{n-1}(E))). 
\end{align*}

\end{definition}

\subsection{Stable sheaves}
We collect some facts about stable sheaves on root surfaces. For general theory for Deligne-Mumford stacks, we recommend the readers to \cite{Nir08} and \cite{JK24} for details.

Let $X$ be a smooth projective surface, $C\subset X$ be a smooth curve, and $\uX$ be the root stack in Definition \ref{rootstack}. Note that $\uX$ is a smooth projective Deligne-Mumford stack of dimension 2, and $X$ is its coarse space.

\begin{definition}[\cite{Lie11}]
    A line bundle $L$ on $\uX$ is ample if some power of $L$ is the pullback of an ample line bundle on $X$ under the structure map $\pi: \uX \to X $.
\end{definition}

We use Vistoli's intersection theory on $\uX$ in what follows  \cite{Vis89}. In particular, Chern classes and Todd classes are defined, as well as a degree map. The Hodge index theorem still holds.

\begin{theorem}\textnormal{(Hodge index theorem)}
    Suppose $H$ is an ample divisor on $\uX$. If $D$ is a divisor such that $D\cdot H=0$ then $D^{2}\leq 0$. In particular, the intersection form on $\operatorname{NS}(\uX)\otimes{\mathbb{R}}$ is of signature $(1,  \rho-1)$.
\end{theorem}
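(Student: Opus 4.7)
The strategy is to reduce the Hodge index theorem on $\uX$ to the classical Hodge index theorem on its coarse space $X$ via the pullback $\pi^{*}$. First I would establish that $\pi^{*}\colon \NS(X)_{\QQ} \to \NS(\uX)_{\QQ}$ is an isomorphism. The universal line bundle $\underline{\mathcal{M}}$ from Definition \ref{universal} satisfies $\underline{\mathcal{M}}^{\otimes n}\cong \pi^{*}\OO_{X}(C)$, so in the rational N\'eron--Severi group the relation $n\,\underline{\mathcal{M}} = \pi^{*}[C]$ forces $\underline{\mathcal{M}} = \tfrac{1}{n}\pi^{*}[C]$; combined with the fact that $\Pic(\uX)$ is generated by $\pi^{*}\Pic(X)$ together with $\underline{\mathcal{M}}$, this yields surjectivity. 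Injectivity follows from Lemma \ref{DMrational}, which identifies $H^{*}(\uX,\QQ)$ with $H^{*}(X,\QQ)$ and in particular makes $\pi_{*}$ a left inverse to $\pi^{*}$ after tensoring with $\QQ$.

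Next, the given definition of ampleness yields a positive integer $m$ with $H^{\otimes m}\cong \pi^{*}H_{X}$ for an ample divisor $H_{X}$ on $X$. Writing any divisor $D$ on $\uX$ as $\pi^{*}D_{X}$ for some $D_{X}\in \NS(X)_{\QQ}$, Vistoli's intersection theory together with the projection formula gives
\[ D\cdot H \;=\; \tfrac{1}{m}\,\pi^{*}D_{X}\cdot \pi^{*}H_{X} \;=\; \tfrac{1}{m}\,D_{X}\cdot H_{X}, \]
the last equality coming from the fact that $\pi$ is a coarse moduli map, so $\pi_{*}[\uX]=[X]$ and the degree of a zero-cycle is preserved. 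Hence $D\cdot H = 0$ if and only if $D_{X}\cdot H_{X} = 0$.

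Applying the classical Hodge index theorem on the smooth projective surface $X$ gives $D_{X}^{2}\leq 0$, and the same degree identification yields $D^{2} = D_{X}^{2}\leq 0$. The signature statement then follows because $\pi^{*}$ is a $\QQ$-linear isomorphism intertwining the two intersection pairings, so $\NS(\uX)_{\RR}$ inherits the signature $(1,\rho-1)$ from $\NS(X)_{\RR}$ (with matching Picard number). The main point requiring care is the compatibility of Vistoli's intersection theory on $\uX$ with the classical intersection theory on the coarse space $X$ under $\pi^{*}$ and $\pi_{*}$, a standard fact for the coarse moduli map of a smooth Deligne--Mumford stack that should be invoked explicitly rather than left implicit.
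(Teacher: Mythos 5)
The paper's ``proof'' of this statement is merely a citation to \cite[Theorem 3.1.3]{Lie11}, so there is no in-text argument to compare against. Your proposal supplies a self-contained proof by reducing to the classical Hodge index theorem on the coarse space $X$, and the argument is essentially correct: $\pi^{*}$ induces an isomorphism $\NS(X)_{\QQ}\xrightarrow{\sim}\NS(\uX)_{\QQ}$ compatible with the intersection pairings (since $\pi$ is flat, proper, and generically an isomorphism, so $\pi_{*}\pi^{*}=\id$ on $\CH_{*}(X)_{\QQ}$ and $\deg_{\uX}\pi^{*}\alpha=\deg_{X}\alpha$ for zero-cycles), and an ample $H$ on $\uX$ is a positive rational multiple of $\pi^{*}H_{X}$ for an ample $H_{X}$ on $X$; both the inequality $D^{2}\leq 0$ and the signature statement then transport directly from $X$.

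Two small remarks on the exposition. First, your surjectivity argument via the structure of $\Pic(\uX)$ (generated by $\pi^{*}\Pic(X)$ and $\underline{\mathcal{M}}$, with $n\underline{\mathcal{M}}=\pi^{*}[C]$) is correct, but it is slightly redundant given that Lemma~\ref{DMrational} already gives $\pi^{*}\colon H^{2}(X,\QQ)\xrightarrow{\sim}H^{2}(\uX,\QQ)$; restricting to the image of the cycle class map yields both injectivity and surjectivity on $\NS_{\QQ}$ at once, and also identifies the Picard numbers (justifying that the $\rho$ in the signature is unambiguous). If you do keep the $\Pic$-based route, the generation statement for $\Pic$ of a root stack should be given a citation. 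Second, you are right to flag explicitly that the compatibility of Vistoli's intersection theory with the coarse-space map---in particular the projection formula and the identity $\pi_{*}[\uX]=[X]$---is what makes the degree bookkeeping go through; this should indeed be invoked with a reference rather than treated as obvious.
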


\begin{proof}
See \cite[Theorem 3.1.3]{Lie11}
\end{proof}

\begin{definition}\label{slope stability}
For every $E\in \Coh(\uX)$ and every ample divisor $H$, the slope of $E$ with respect to $H$ is 
\[
\mu_{H}(E):=\left\{ 
\begin{array}{ll}
(H\cdot \operatorname{ch}_{1}(E))/\operatorname{rk}(E), & \text{if } \operatorname{rk}(E)>0, \\
+\infty, & \text{if } \operatorname{rk}(E)=0. 
\end{array}
\right.
\]

We say that $E$ is $\mu_{H}$-(semi)stable if for any non-zero proper subsheaf $F\subseteq E$ one has $\mu_{H}(F)\leqp \mu_{H}(E/F)$.
\end{definition}

The following Bogomolov inequality on $\uX$ is an important ingredient for the construction of Bridgeland stability conditions.

\begin{theorem} [\cite{JK24}]\label{JK}
 For every torsion free $\mu_{H}$-semistable sheaf $E$ on $\uX$, we have
 $$\Delta(E):=\operatorname{ch}_{1}(E)^{2}-2\operatorname{rk}(E)\operatorname{ch}_{2}(E)\geq 0. $$  
\end{theorem}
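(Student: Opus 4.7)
My plan is to reduce the Bogomolov inequality on the root stack $\uX$ to the classical Bogomolov inequality on a smooth projective surface by passing to a finite flat cover of $\uX$ by a scheme.

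Step 1: construct a smooth scheme cover $f \colon Y \to \uX$. When $\OO_X(C)$ admits an $n$-th root $M$ on $X$ (that is, a line bundle with $M^{\otimes n} \cong \OO_X(C)$), the classical degree-$n$ cyclic cover
\[
Y := \underline{\operatorname{Spec}}_X \bigoplus_{i=0}^{n-1} M^{-i}
\]
branched along $C$ is a smooth projective surface (smoothness of $C$ forces smoothness of $Y$), carries a natural $\mu_n$-action, and satisfies $[Y/\mu_n] \cong \uX$. The quotient morphism $f \colon Y \to \uX$ is then a $\mu_n$-torsor, in particular finite étale of degree $n$. In the general case, I would first pass to a finite flat cover of $X$ on which the $n$-th root exists and run the same construction there, obtaining a finite flat surjection $f \colon Y \to \uX$ from a smooth projective surface.

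Step 2: transport the Chern data through $f$. Set $\tilde E := f^{*}E$, which is torsion free by flatness of $f$. Chern characters commute with pullback, so $\Delta(\tilde E) = f^{*}\Delta(E)$, and Vistoli's projection formula \cite{Vis89} yields $\int_Y \Delta(\tilde E) = \deg(f)\cdot \int_{\uX} \Delta(E)$. Hence to prove $\Delta(E) \geq 0$ on $\uX$ it suffices to prove $\Delta(\tilde E) \geq 0$ on $Y$.

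Step 3: argue that $\tilde E$ is $\mu_{f^{*}H}$-semistable on $Y$, then invoke classical Bogomolov. Since $f$ is Galois, the Harder--Narasimhan filtration of $\tilde E$ is unique and hence $\mu_n$-equivariant, so any subsheaf destabilizing $\tilde E$ would descend, after $\mu_n$-saturation, to a subsheaf of $E$ of strictly larger slope, contradicting $\mu_H$-semistability of $E$. The classical Bogomolov inequality on the smooth projective surface $Y$ then gives $\Delta(\tilde E) \geq 0$, which combined with Step 2 yields $\Delta(E) \geq 0$. The main obstacle is the first step in the case when $\OO_X(C)$ does not already admit an $n$-th root on $X$: constructing the auxiliary cover with a transparent Galois structure, and ensuring that the composition $Y \to \uX$ is flat with a well-behaved effect on $\Delta$, is a nontrivial but standard exercise. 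Once the cover is in place, Steps 2 and 3 are formal consequences of the projection formula and the uniqueness of Harder--Narasimhan filtrations, reducing the problem to the classical case.
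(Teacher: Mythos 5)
The paper does not actually prove this statement; it defers entirely to \cite{JK24} (``see Proposition 3.6 and Remark 3.7''), so your proposal is being compared against a citation rather than an argument. As a strategy, your reduction to the classical surface case is sound, and in the case where $\OO_{X}(C)$ admits an $n$-th root $M$ on $X$ every step checks out: the cyclic cover $Y$ is smooth precisely because $C$ is, the isomorphism $[Y/\mu_{n}]\cong \uX$ is the globalization of the local description in Proposition \ref{rootstackDM}, the atlas map $Y\to [Y/\mu_{n}]$ is a $\mu_{n}$-torsor and hence representable finite \'etale of degree $n$ in characteristic zero, $f^{*}H$ is ample on $Y$ because $f$ is finite and $H$ pulls back (up to a power) from an ample class on $X$, the maximal destabilizing subsheaf of $f^{*}E$ is canonical and therefore $\mu_{n}$-equivariant, so it descends along the torsor to a destabilizing subsheaf of $E$ (no saturation is needed), and the projection formula converts $\Delta(f^{*}E)\geq 0$ on $Y$ into $\Delta(E)\geq 0$ on $\uX$. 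This gives a self-contained proof that the paper itself does not supply, at the cost of being genuinely complete only under the hypothesis that the root exists.

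The gap is the general case, and it is slightly larger than ``a standard exercise'' as you have set things up, for two concrete reasons. First, after replacing $X$ by a Bloch--Gieseker cover $g\colon X'\to X$ on which $g^{*}\OO_{X}(C)$ has an $n$-th root, you must also arrange that $g^{-1}(C)$ is smooth, since otherwise the cyclic cover $Y$ of $X'$ is singular and the classical Bogomolov inequality is not available on it; this can be done (the Bloch--Gieseker construction allows the preimage of a fixed smooth divisor to be taken smooth), but it must be said. Second, and more importantly, the composite $Y\to \sqrt[n]{(X',g^{-1}(C))}\to \uX$ is then only finite flat, not a torsor, so the Galois-descent argument of your Step 3 no longer applies as written. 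You need instead the characteristic-zero statement that pullback of a $\mu_{H}$-semistable sheaf under a finite surjective morphism of normal polarized (stacky) surfaces is $\mu_{f^{*}H}$-semistable; its usual proof passes to a Galois closure, which need not be smooth, so one either works with reflexive sheaves on a normal surface or resolves and keeps track of the polarization. None of this is fatal, but Step 3 should be restated so that it does not hinge on $f$ being Galois, since in the general case it is not.
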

\begin{proof}
    See  proposition 3.6 and remark 3.7 of \cite{JK24}.
\end{proof}

\subsection{Bridgeland stability conditions}
In this subsection, we recall the notion of  Bridgeland stability condition. Some detailed references are \cite{Bri07,Bri08, MS17}.

\begin{definition}
    Let $\mathcal{D}$ be a triangulated category. We say that a full additive subcategory $\mathcal{A}\subset \mathcal{D}$ is the heart of a bounded $t$-structure if both of the following conditions are satisfied.
\begin{enumerate}
    \item If $i<0$ and $A,B\in \mathcal{A}$ then $\operatorname{Hom}_{\mathcal{D}}(A, B[i])=0$;
    \item For every nonzero object $E\in \mathcal{D}$ there exists integers $k_{1}>k_{2}>...>k_{m}$ and objects $E_{1},...,E_{m}\in \mathcal{D}$ that fit into the following collection of distinguished triangles
$$
\begin{tikzcd}
0=E_{0} \arrow[r]       & E_{1} \arrow[r] \arrow[ld] & E_{2} \arrow[r] \arrow[ld] & ... \arrow[r] & E_{m-1} \arrow[r]         & E_{m}=E. \arrow[ld] \\
A_{1}[k_{1}] \arrow[u, dashed] & A_{2}[k_{2}] \arrow[u, dashed]    &                        &               & A_{m}[k_{m}] \arrow[u, dashed] &          
\end{tikzcd}
$$
\end{enumerate}
\end{definition}

Note that the heart of a bounded $t$-structure is an abelian category.
Now we fix a finite rank lattice $\Lambda$ and a surjective group homomorphism $\vv: K_{0}(\mathcal{D})\twoheadrightarrow \Lambda$.

\begin{definition}\label{pre-stab}
    A \emph{pre-stability condition} (with respect to $\Lambda$) on $\mathcal{D}$ is a pair $\sigma=(Z, \mathcal{A}) $ where 
\begin{itemize}
    \item $\mathcal{A}$ is the heart of a bounded $t$-structure of $\mathcal{D}$;
    \item $Z: \Lambda\to \mathbb{C}$ is a group homomorphism, called the \emph{central charge} of $\sigma$.
\end{itemize}
They satisfy the following properties:
\begin{enumerate}
    \item For any non-zero $E\in \mathcal{A}$,

$$Z(E):= Z(\vv({E}))\in \mathbb{R}_{>0}\cdot e^{i\pi\phi}$$
with $\phi\in (0,1]$. Define the phase of $0\not =E\in \mathcal{A}$ to be $\phi(E):=\phi$. We say that $E\in \mathcal{A}$ is $\sigma$-\emph{(semi)stable} if for any non-zero proper subobject $F\in \mathcal{A}$ of $E$, $\phi(F)\leqp \phi(E)$.  We denote $\mathcal{P}(\phi)$ as the full subcategory of $\mathcal{A}$ whose objects are $\sigma$-semistable of phase $\phi$.
\item (\HN filtration) Every object $E\in \mathcal{A}$ admits a unique filtration
$$0=E_{0}\subseteq E_{1}...\subseteq E_{n-1}\subseteq E_{n}=E,$$
such that the quotients $E_{i}/E_{i-1}\in \mathcal{P}(\phi_{i})$ with $\phi_{1}>\phi_{2}>...>\phi_{m}$. Such filtration is called the \emph{Harder-Narasimhan filtration} of $E$ with respect to $\sigma$
\end{enumerate}
For every $E\in \cA$, we write $\phi_{\min}(E):=\phi_{m}$ and $\phi_{\max}(E):=\phi_{1}$. If a function $Z: \Lambda\to \mathbb{C}$ satisfies condition (i), we say $Z$ is a \emph{stability function}.
\end{definition}

\begin{definition}\label{quadratic}
    A pre-stability condition $\sigma=(Z, \cA)$ is a\emph{ stability condition} (with respect to $\Lambda$) if it additionally satisfies either of the following equivalent \emph{support property} (see e.g. \cite{MS17}):
\begin{itemize}
\item For any fixed norm $\|\cdot \|$ on $\Lambda_{\mathbb{R}}$, there is a constant $M>0$, such that for every $\sigma$-semistable object $E\in \cA$, we have 
$$ \| E \| \leq M |Z(E)|. $$
\item There exists a quadratic form $Q$ on $\Lambda_{\mathbb{R}}$ such that 
\begin{enumerate}
  \item $Q$ is negative definite on $\ker(Z)$, and
  \item $Q(E)\geq 0$ for every $\sigma$-semistable object $E\in \mathcal{A}$.
\end{enumerate}
\end{itemize}
\end{definition}

Let $\operatorname{Stab}_{\Lambda}(\mathcal{D})$ denote the set of stability conditions on $\mathcal{D}$ with respect to $\Lambda$.  This set can be given a topology as the coarsest topology such that for any $E\in \mathcal{D}$ the maps $(Z, \cA)\to Z, (Z, \cA)\to \phi_{\max}(E)$ and $(Z, \cA)\to \phi_{\min}(E)$ are continuous (see \cite{Bri07}). We have the following deformation result.

\begin{theorem}[\cite{Bri07}] \label{deformation}
Using the topology defined in \cite{Bri07}, the central charge map 
$$\mathcal{Z}: \operatorname{Stab}_{\Lambda}(\mathcal{D})\to \operatorname{Hom}(\Lambda, \mathbb{C}), \quad (Z, \mathcal{A})\mapsto Z$$
is a local homeomorphism. In particular, $\operatorname{Stab}_{\Lambda}(\mathcal{D})$ has a complex manifold structure of dimension $\operatorname{rk}(\Lambda)$.
    
\end{theorem}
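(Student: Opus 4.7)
The plan is to construct a continuous local inverse to $\mathcal{Z}$ near each point. Fix $\sigma_{0}=(Z_{0},\mathcal{A}_{0})\in\operatorname{Stab}_{\Lambda}(\mathcal{D})$; the goal is to show that for every $W\in\operatorname{Hom}(\Lambda,\mathbb{C})$ sufficiently close to $Z_{0}$ there exists a unique stability condition $\sigma_{W}$ with central charge $W$ and with heart close to $\mathcal{A}_{0}$, and that $W\mapsto\sigma_{W}$ is continuous. The essential ingredient is the support property for $\sigma_{0}$: the quadratic form $Q$ forces the classes of $\sigma_{0}$-semistable objects to lie in a cone strictly separated from $\ker Z_{0}$, so a small perturbation of the central charge cannot send any semistable class to the origin or flip its phase across the real axis.

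First I would pass to Bridgeland's equivalent slicing formulation $(Z,\mathcal{P})$, with $\mathcal{P}(\phi)$ the category of $\sigma$-semistables of phase $\phi$ and $\mathcal{A}=\mathcal{P}((0,1])$. Using the uniform support bound $\|\vv(E)\|\leq M|Z_{0}(E)|$, one finds for any $\eta\in(0,\tfrac{1}{2})$ an open neighborhood $U_{\eta}\ni Z_{0}$ such that for all $W\in U_{\eta}$ and every $\sigma_{0}$-semistable $E$ of phase $\phi$, the value $W(E)$ lies in $\mathbb{R}_{>0}\cdot e^{i\pi\psi}$ with $|\psi-\phi|<\eta$. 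In particular, $W$ defines a stability function on the tilted heart $\mathcal{A}_{W}:=\mathcal{P}_{0}((\eta,1+\eta])$, and the two hearts differ only by tilting along the finitely many phases of $\sigma_{0}$-semistables that cross out of $(0,1]$ under the shift to $W$.

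Next I would upgrade $(W,\mathcal{A}_{W})$ to a full stability condition by building a candidate slicing $\mathcal{P}_{W}$: refine the $\sigma_{0}$-HN filtration of each object by regrouping the Jordan--H\"{o}lder factors of each $\sigma_{0}$-semistable piece according to their $W$-phase. The existence of this refined HN filtration again uses the support property to guarantee that descending chains of destabilizing subobjects, measured by $Q$, must terminate. The same quadratic form $Q$, perturbed slightly to remain non-negative on $W$-semistables by the phase-shift bound above, witnesses the support property for $\sigma_{W}:=(W,\mathcal{P}_{W})$. Continuity of $W\mapsto\sigma_{W}$ in Bridgeland's topology is immediate from the construction of $\mathcal{P}_{W}$, and injectivity of $\mathcal{Z}$ near $\sigma_{0}$ follows because any stability condition with heart inside $\mathcal{P}_{0}((-\eta,1+\eta])$ and central charge $W$ must coincide with $\sigma_{W}$.

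The main obstacle is the existence of the HN filtration for the candidate slicing $\mathcal{P}_{W}$: this is the delicate Noetherian-style induction in Bridgeland's proof, where the values of $Q$ on potential destabilizers provide a discrete invariant that rules out infinite descending chains of subobjects and infinite ascending chains of quotients. Everything else is bookkeeping once this is in hand. The ``in particular'' clause is then automatic: $\operatorname{Hom}(\Lambda,\mathbb{C})\cong\mathbb{C}^{\operatorname{rk}(\Lambda)}$ is a complex manifold, and pulling back its complex structure through the local homeomorphism $\mathcal{Z}$ endows $\operatorname{Stab}_{\Lambda}(\mathcal{D})$ with a complex manifold structure of dimension $\operatorname{rk}(\Lambda)$.
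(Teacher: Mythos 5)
The paper does not prove this theorem; it is cited from \cite{Bri07} and used as a black box. So the comparison is against Bridgeland's own argument (and its support-property refinement). Your sketch gets the surrounding architecture right: the quadratic form bounds the phase shift of $\sigma_0$-semistable classes under a small perturbation of the central charge, the whole difficulty concentrates in producing HN filtrations for the perturbed datum, and the complex-manifold statement is formal once the local homeomorphism is known.

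The genuine gap is in your construction of the new slicing. You propose to ``refine the $\sigma_0$-HN filtration of each object by regrouping the Jordan--H\"{o}lder factors of each $\sigma_0$-semistable piece according to their $W$-phase.'' This cannot work. Take $E$ $\sigma_0$-\emph{stable} of phase $\phi$, so its $\sigma_0$-JH filtration is trivial and regrouping does nothing. A proper subobject $A\subset E$ has $\sigma_0$-phase strictly less than $\phi$, but the margin can be arbitrarily small; after perturbing $Z_0$ to $W$, the $W$-phase of $A$ can exceed that of $E$, making $E$ $W$-unstable with a destabilizer $A$ that is invisible to any $\sigma_0$-JH data. In Bridgeland's actual proof the new slicing $\mathcal{Q}(\phi)$ is defined \emph{ab initio} inside the quasi-abelian categories $\mathcal{P}_0\bigl((\phi-\eta,\phi+\eta)\bigr)$ as the objects with no $W$-destabilizing sub- or quotient object there, and the existence of HN filtrations with respect to this candidate slicing is a separate, nontrivial argument (Bridgeland's original induction uses local finiteness; the support-property versions, as in Bayer's short proof, instead use the negativity of $Q$ on $\ker Z$ to bound the mass of potential destabilizers). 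Your phrase about ``descending chains measured by $Q$'' gestures toward the right mechanism, but the explicit regrouping recipe you give produces the wrong candidate slicing and would not close the argument. A secondary imprecision: $\mathcal{P}_0((\eta,1+\eta])$ does not carry $W$ as a stability function, since its objects have $W$-phases spread over $(0,1+2\eta)$; the correct heart is $\mathcal{Q}((0,1])$, which is only identifiable after $\mathcal{Q}$ has been constructed.
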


\section{Calculation on root stack}\label{section3}
In this section, we recall the structure of $D^{b}(\uX)$ and compute necessary formulas for Chern classes of coherent sheaves on $\uX$. 

\subsection{Derived category of root stacks}
In this subsection we collect properties of $D^{b}(\uX)$.

Recall that we have the following embedding in (\ref{square}):
$$j: \mathscr{C}\to \uX.$$
By construction, we have $j^{*}(\underline{\mathcal{M}})\cong \mathcal{M},$  where $\underline{\mathcal{M}}$ is the universal line bundle on $\uX$ and $\mathcal{M}$ is the universal line bundle on $\mathscr{C}$ (Definition \ref{universal}). Note that $\underline{\mathcal{M}}=\mathcal{O}_{\uX}(\mathscr{C})$.
In the following, we will not distinguish $\underline{\rho_{k}}=\underline{\mathcal{M}}^{-k}$ on $\uX$ and $\rho_{k}=\mathcal{M}^{-k}$ on $\mathscr{C}$. For simplicity, we will use $\rho_{k}$ for both $\rho_{k}$ and $\underline{\rho_{k}}$.

We need the following observation.
 \begin{lemma}\label{cotangent bundle}
Using notations in (\ref{square}), let $F\in \Coh(C)$. Then \begin{equation}
    \cH^{i}(\LL j^{*}j_{*}p^{*}F )      =
    \begin{cases}
     p^{*}F & \text{$i=0$,}\\
     p^{*}F\rho_{1}  & \text{$i=-1$,}\\
     0 & \text{otherwise.}
    \end{cases}       
\end{equation}
\end{lemma}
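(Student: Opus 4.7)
The plan is to view $j\colon \mathscr{C}\hookrightarrow \uX$ as a regular closed embedding of the Cartier divisor $\mathscr{C}$, whose ideal sheaf is $\OO_{\uX}(-\mathscr{C})=\underline{\mathcal{M}}^{-1}$ and whose conormal bundle is $j^{*}\underline{\mathcal{M}}^{-1}=\mathcal{M}^{-1}=\rho_{1}$. What the lemma asks for is exactly the codimension-one self-intersection formula $\LL j^{*}j_{*}\mathcal{F}\simeq \mathcal{F}\oplus \mathcal{F}\otimes \rho_{1}[1]$ applied to $\mathcal{F}=p^{*}F$, and I will execute it by tensoring through the Koszul resolution of $j_{*}\OO_{\mathscr{C}}$.

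Concretely, let $s$ be the canonical section of $\underline{\mathcal{M}}=\OO_{\uX}(\mathscr{C})$ cutting out $\mathscr{C}$, so that one has the Koszul locally free resolution
\[
0\to \OO_{\uX}(-\mathscr{C})\xrightarrow{\,\cdot s\,}\OO_{\uX}\to j_{*}\OO_{\mathscr{C}}\to 0.
\]
Tensoring the associated distinguished triangle with $j_{*}p^{*}F$ (derived), invoking the projection formula in the form $j_{*}p^{*}F\otimes \OO_{\uX}(-\mathscr{C})\cong j_{*}(p^{*}F\otimes \rho_{1})$, and using the closed-embedding identity $G\otimes^{\LL}_{\OO_{\uX}}j_{*}\OO_{\mathscr{C}}\simeq j_{*}\LL j^{*}G$ with $G=j_{*}p^{*}F$ produces the distinguished triangle
\[
j_{*}(p^{*}F\otimes \rho_{1})\xrightarrow{\,\cdot s\,} j_{*}p^{*}F\to j_{*}\LL j^{*}j_{*}p^{*}F\to j_{*}(p^{*}F\otimes \rho_{1})[1]
\]
in $D^{b}(\uX)$. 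The crucial observation is that the leftmost map is zero: $s$ is a section of the ideal $\OO_{\uX}(-\mathscr{C})$, which annihilates every $\OO_{\uX}$-module supported on $\mathscr{C}$, in particular $j_{*}p^{*}F$.

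The long exact sequence of cohomology sheaves for the triangle then reads off $\cH^{0}(j_{*}\LL j^{*}j_{*}p^{*}F)=j_{*}p^{*}F$ and $\cH^{-1}(j_{*}\LL j^{*}j_{*}p^{*}F)=j_{*}(p^{*}F\otimes \rho_{1})$, with all other cohomology vanishing. Since $j_{*}\colon \Coh(\mathscr{C})\to \Coh(\uX)$ is exact and fully faithful, it commutes with taking cohomology sheaves and detects isomorphisms of sheaves, so we descend to $\cH^{0}(\LL j^{*}j_{*}p^{*}F)=p^{*}F$ and $\cH^{-1}(\LL j^{*}j_{*}p^{*}F)=p^{*}F\otimes \rho_{1}$ with the remaining cohomology vanishing, as claimed. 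The only step that needs any care is verifying that the closed-embedding projection formula and the identity $G\otimes^{\LL}j_{*}\OO_{\mathscr{C}}\simeq j_{*}\LL j^{*}G$ hold on the Deligne-Mumford stacks in play; both are standard facts, and once granted, the vanishing of the key differential is immediate from the support of $j_{*}p^{*}F$, so the rest of the argument is purely formal.
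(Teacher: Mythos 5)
Your argument is correct. The paper itself does not prove this lemma; it simply cites \cite[Proposition 6.1]{IU15}, so you have supplied a direct argument where the authors deferred to a reference. Your route is the standard self-intersection computation for a Cartier divisor, and every step checks out in this setting: $\mathscr{C}$ is an effective Cartier divisor on the smooth stack $\uX$ (locally $V(x)$ in $[\operatorname{Spec}(A[x]/(x^{n}-a))/\mu_{n}]$ with $x$ a nonzerodivisor because $a$ is), so the two-term Koszul resolution of $j_{*}\OO_{\mathscr{C}}$ is valid; the derived projection formula $G\otimes^{\LL}j_{*}\OO_{\mathscr{C}}\simeq j_{*}\LL j^{*}G$ holds for the affine closed immersion $j$ of Deligne--Mumford stacks; the conormal identification $j^{*}\OO_{\uX}(-\mathscr{C})=\mathcal{M}^{-1}=\rho_{1}$ matches the paper's convention $\rho_{k}=\mathcal{M}^{-k}$; and the multiplication-by-$s$ map out of $j_{*}(p^{*}F\otimes\rho_{1})$ vanishes because $j_{*}p^{*}F$ is scheme-theoretically (not merely set-theoretically) supported on $\mathscr{C}$, i.e.\ it is an $\OO_{\mathscr{C}}$-module and hence killed by the ideal sheaf --- a phrasing you should tighten slightly, since set-theoretic support alone would not suffice. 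With that map zero, the long exact sequence of cohomology sheaves gives $\cH^{0}=j_{*}p^{*}F$, $\cH^{-1}=j_{*}(p^{*}F\otimes\rho_{1})$, and exactness plus full faithfulness of $j_{*}$ lets you strip off $j_{*}$ as you say. What your approach buys is a self-contained proof whose only inputs are the Koszul resolution and the projection formula, whereas the citation to \cite{IU15} imports the statement from the orbifold-resolution context studied there.
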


\begin{proof}
    See  \cite[Proposition 6.1]{IU15}.
\end{proof}

The following lemma is a refinement of Theorem \ref{semiorthgonal}.

 \begin{lemma}\label{circle}
			We have the following:
			\begin{itemize}
			\item $\Hom_{\uX}(j_{*}D^{b}(C)\rho_{k}, D^{b}(X))=0$ for all $k\neq n-1$. 
			\item $\Hom_{\uX}(j_{*}D^{b}(C)\rho_{k}, j_{*}D^{b}(C)\rho_{l})=0$ for all $l \neq k, k+1$. 
		\end{itemize}
\end{lemma}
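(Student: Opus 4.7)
The key idea for both parts is to reduce to Hom computations in $D^{b}(\sC)$ and then invoke the orthogonal decomposition $D^{b}(\sC)=\bigoplus_{k=0}^{n-1}D^{b}(C)\rho_{k}$ coming from Lemma \ref{decomposition}, which forces $\Hom_{D^{b}(\sC)}(A,B)=0$ whenever $A$ and $B$ lie in summands indexed by distinct residues modulo $n$. The reductions use the adjunctions $j_{*}\dashv j^{!}$ and $\LL j^{*}\dashv j_{*}$ for the divisor embedding $j:\sC\hookrightarrow\uX$, whose normal bundle is $N_{\sC/\uX}=j^{*}\OO_{\uX}(\sC)=\mathcal{M}$.

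For the first claim, I apply $j_{*}\dashv j^{!}$. Since $\sC\subset\uX$ is a smooth Cartier divisor of codimension one, $j^{!}F=j^{*}F\otimes\mathcal{M}[-1]$, so by the projection formula
$$\Hom_{\uX}(j_{*}(p^{*}F\otimes\rho_{k}),\,\pi^{*}G)=\Hom_{\sC}(p^{*}F\otimes\rho_{k},\,p^{*}\bar{j}^{*}G\otimes\mathcal{M}[-1]).$$
The line bundle $\mathcal{M}$ satisfies $\mathcal{M}^{\otimes n}\cong p^{*}L$, so its $\mu_{n}$-weight is $-1\equiv n-1\pmod n$; in the decomposition of $\Coh(\sC)$ it lies in the summand $\Coh(C)\rho_{n-1}$, up to a pullback twist from $C$ that does not affect the $\rho$-component. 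Consequently the right-hand side vanishes unless $k=n-1$.

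For the second claim, I apply $\LL j^{*}\dashv j_{*}$:
$$\Hom_{\uX}(j_{*}(p^{*}F\otimes\rho_{k}),\,j_{*}(p^{*}G\otimes\rho_{l}))=\Hom_{\sC}(\LL j^{*}j_{*}(p^{*}F\otimes\rho_{k}),\,p^{*}G\otimes\rho_{l}).$$
By the projection formula and Lemma \ref{cotangent bundle}, the complex $\LL j^{*}j_{*}(p^{*}F\otimes\rho_{k})$ has only two nonzero cohomology sheaves, $p^{*}F\otimes\rho_{k}$ in degree $0$ and $p^{*}F\otimes\rho_{k+1}$ in degree $-1$ (with $k+1$ read modulo $n$, up to a pullback twist from $C$). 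Placing these in the distinguished triangle
$$p^{*}F\otimes\rho_{k+1}[1]\to \LL j^{*}j_{*}(p^{*}F\otimes\rho_{k})\to p^{*}F\otimes\rho_{k}\to p^{*}F\otimes\rho_{k+1}[2]$$
and applying $\Hom_{\sC}(-,\,p^{*}G\otimes\rho_{l})$ gives a long exact sequence in which both flanking terms vanish by $\rho$-orthogonality whenever $l\not\equiv k,k+1\pmod n$; hence the middle term vanishes.

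The only technical point to handle carefully, rather than a serious obstacle, is book-keeping of the $\mu_{n}$-weights: powers of $\mathcal{M}$ on $\sC$ are not literally equal to the $\rho_{j}$ but differ from them by pullbacks from $C$. Since the gerbe decomposition is indexed by $\mu_{n}$-weight and is insensitive to such twists, the orthogonality argument goes through cleanly. I expect the proof to be short and essentially formal once these identifications are spelled out.
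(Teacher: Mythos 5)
Your proof is correct and follows essentially the same strategy as the paper: reduce to $\Hom$ computations in $D^{b}(\sC)$ via an adjunction for the divisor embedding $j$, then invoke orthogonality of the summands $\Coh(C)\rho_{k}$. The only minor difference is in the first bullet, where you use the adjunction $j_{*}\dashv j^{!}$ with $j^{!}=j^{*}(-)\otimes\mathcal{M}[-1]$ directly, whereas the paper first applies Serre duality on $\uX$ (twisting by $\omega_{\uX}=\pi^{*}\omega_{X}\otimes\OO_{\uX}((n-1)\sC)$) and then uses $\LL j^{*}\dashv j_{*}$; these two routes are formally equivalent, and yours is arguably slightly more direct. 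The second bullet coincides with the paper's proof, resting on Lemma~\ref{cotangent bundle}. Your closing remark about book-keeping of $\mu_{n}$-weights versus the $\rho_{k}$ labels, and that pullback twists from $C$ do not move a sheaf between summands of the decomposition, is exactly the right point to flag and is handled correctly.
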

		
		\begin{proof}
        To show the first claim, by Serre duality we have 
\begin{align*}
\Hom_{\uX} (j_{*}D^{b}(C)\rho_{k}, D^{b}(X)) &=\Hom_{\uX}(D^{b}(X), j_{*}D^{b}(C)\rho_{k-(n-1)})^{*}\\
     &= \Hom_{\mathscr{C}}(\LL j^{*} D^{b}(X), D^{b}(C)\rho_{k-(n-1)})^{*}.
\end{align*}        
        Note that $\LL j^{*} D^{b}(X)\subset D^{b}(C)\rho_{0}$, the claim is proved. 
        To see the second claim, note that 
        $$\Hom_{\uX}(j_{*}D(C)\rho_{k}, j_{*}D(C)\rho_{l})=\Hom_{\mathscr{C}}(\LL j^{*} j_{*}(D^{b}(C)\rho_{k}), D^{b}(C)\rho_{l}). $$
        By Lemma \ref{cotangent bundle}, we have
        $\LL j^{*} j_{*}(D^{b}(C)\rho_{k})\subset \langle D^{b}(C)\rho_{k}, D^{b}(C)\rho_{k+1} \rangle. $
        \end{proof}

\subsection{Coherent sheaves on root stacks}
In this subsection we study structure of coherent sheaves on root stacks. 

We first show the exactness of $\pi_{*}$ and $\pi^{*}$ in the following lemmas.

\begin{lemma}[{\cite[Lemma 2.3.4]{AV02}}]
Let $\pi: \uX \to X$ be the coarse space. The functor $\pi_{*}$ maps quasicoherent sheaves to quasicoherent sheaves, coherent sheaves to coherent sheaves, and is exact. Moreover, $\pi_{*}(\mathcal{O}_{\uX})=\mathcal{O}_{X}.$

\end{lemma}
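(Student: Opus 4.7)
The plan is to reduce to the explicit local structure of the root stack furnished by Proposition \ref{rootstackDM}(iv). Working étale-locally on $X$, we may assume $X=\operatorname{Spec}(A)$ is affine and that $C$ is cut out by a single element $a\in A$, so that
$$\uX \;\cong\; [\operatorname{Spec}(B)/\mu_{n}], \qquad B := A[x]/(x^{n}-a),$$
with $\mu_{n}$ acting on $x$ by its standard character. Under this identification the category of quasi-coherent sheaves on $\uX$ is equivalent to the category of $\mu_{n}$-equivariant $B$-modules, and $\pi_{*}$ corresponds to the functor of $\mu_{n}$-invariants $M \mapsto M^{\mu_{n}}$ regarded as an $A$-module. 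Thus the three claims of the lemma translate into three standard facts about this invariant functor.

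Next I would verify these three facts in turn. First, since we work over $\CC$, the group $\mu_{n}$ is linearly reductive, so the invariant functor $(-)^{\mu_{n}}$ is exact on $\mu_{n}$-equivariant modules; this gives exactness of $\pi_{*}$. Preservation of quasi-coherence is immediate from the local description. For preservation of coherence, observe that $B=\bigoplus_{k=0}^{n-1} A\cdot x^{k}$ is a finite free $A$-module, so any coherent $\mu_{n}$-equivariant $B$-module is in particular a finitely generated $A$-module; its invariant $A$-submodule is then finitely generated because $A$ is Noetherian. Finally, to compute $\pi_{*}\OO_{\uX}$, note that $\OO_{\uX}$ corresponds to $B$ equipped with its tautological grading $\deg(x^{k})=k$, so
$$\pi_{*}\OO_{\uX}\bigl|_{\operatorname{Spec}(A)} = B^{\mu_{n}} = A\cdot 1 = A,$$
and glueing these identifications yields $\pi_{*}\OO_{\uX}=\OO_{X}$.

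The one point that requires a moment of care, and which I would state explicitly rather than recompute, is that the assertions above are étale-local on $X$: that is, formation of $\pi_{*}$ commutes with flat base change along $X'\to X$ for $\pi$ a coarse-space morphism of a finite-type Deligne-Mumford stack. This is standard for tame stacks (and in any case is part of the Abramovich-Vistoli reference being cited), and it is the only genuine obstacle in passing from the local calculation to the global statement. Once this base-change compatibility is invoked, the global statement follows from the local linear-reductivity argument above.
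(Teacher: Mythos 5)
The paper does not prove this lemma---it is cited directly from Abramovich--Vistoli \cite[Lemma 2.3.4]{AV02}---so there is no proof in the text to compare against. Your argument is a correct and essentially self-contained justification in the special case at hand: reduce via Proposition \ref{rootstackDM}(iv) to $\uX|_{U}\cong[\operatorname{Spec}(B)/\mu_{n}]$ with $B=A[x]/(x^{n}-a)$ and $B^{\mu_{n}}=A$, identify $\pi_{*}$ with $\mu_{n}$-invariants, use linear reductivity of $\mu_{n}$ in characteristic zero for exactness, finiteness of $B$ over $A$ plus Noetherianity for coherence, and $B^{\mu_{n}}=A$ for the last claim. Two small remarks. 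First, the reduction here is actually \emph{Zariski}-local, not merely \'etale-local: you only need to cover $X$ by affine opens on which $\OO_{X}(C)$ is trivialized, which is how the paper phrases the local chart in Proposition \ref{rootstackDM}(iv); the \'etale generality is not needed. Second, you are right that gluing the local computation requires compatibility of coarse-space formation with flat base change, which holds for tame stacks (and the root stack is tame over $\CC$); this is exactly the technical input that \cite{AV02} supplies in general, which is presumably why the paper simply cites it rather than spelling out the reduction you wrote.
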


\begin{lemma}
    $\pi_{}$ is flat, that is $\pi^{*}$ is exact.
\end{lemma}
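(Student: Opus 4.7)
The plan is to reduce the claim to an elementary local calculation using the étale-local description of root stacks provided by Proposition \ref{rootstackDM}(iv). Flatness of a morphism of algebraic stacks is étale-local on the target and can be tested after pulling back along a smooth (in particular faithfully flat) atlas of the source, so it suffices to produce such a local model and observe that it is flat.

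First I would choose an étale cover of $X$ by affine opens $U = \operatorname{Spec}(A)$ on which $C$ is cut out by a single regular function $a \in A$; this is possible because $C$ is a smooth (hence Cartier) divisor on the smooth surface $X$. By Proposition \ref{rootstackDM}(iv), the pullback of $\pi : \uX \to X$ along $U \to X$ is the coarse moduli map
$$[\operatorname{Spec}(B)/\mu_n] \longrightarrow \operatorname{Spec}(A), \qquad B := A[x]/(x^n - a).$$
Composing with the smooth atlas $\operatorname{Spec}(B) \to [\operatorname{Spec}(B)/\mu_n]$, the total morphism to $\operatorname{Spec}(A)$ is induced by the inclusion $A \hookrightarrow B$.

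Next I would observe that $B = A[x]/(x^n - a)$ is a free $A$-module of rank $n$ with basis $\{1, x, \dots, x^{n-1}\}$, so $A \to B$ is flat (indeed free). By smooth descent of flatness, this implies $[\operatorname{Spec}(B)/\mu_n] \to \operatorname{Spec}(A)$ is flat, and ranging over a cover of $X$ by such $U$ yields flatness of $\pi$ globally. Exactness of $\pi^*$ is then the standard reformulation of flatness in terms of the pullback functor on quasicoherent sheaves.

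There is essentially no obstacle: the entire content is the freeness of $A[x]/(x^n - a)$ as an $A$-module. The only point requiring a word of care is the appeal to faithfully flat descent to pass from the atlas $\operatorname{Spec}(B)$ to the quotient stack $[\operatorname{Spec}(B)/\mu_n]$; once that is noted, the proof is immediate.
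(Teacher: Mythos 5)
Your proof is correct and uses essentially the same approach as the paper: both reduce to the local model $[\operatorname{Spec}(A[x]/(x^{n}-a))/\mu_{n}] \to \operatorname{Spec}(A)$ from Proposition \ref{rootstackDM}(iv), observe that $A[x]/(x^{n}-a)$ is a free $A$-module, and then descend flatness along the finite \'etale atlas $\operatorname{Spec}(A[x]/(x^{n}-a)) \to [\operatorname{Spec}(A[x]/(x^{n}-a))/\mu_{n}]$. The paper states this more tersely (``$\psi$ is finite flat and $\varphi$ is finite \'etale, so $\pi$ is flat''), but the argument is the same.
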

\begin{proof}
    We check locally. By Proposition \ref{rootstackDM}, locally we have following commutative diagram
\begin{center}
\begin{tikzcd}
\operatorname{Spec}(A[x]/(x^{n}-a)) \arrow[r, "\varphi"] \arrow[rd, "\psi"] & \lbrack \operatorname{Spec}(A[x]/(x^{n}-a))/\mu_{n} \rbrack \arrow[d, "\pi"] \\
                                 & \operatorname{Spec}A.               
\end{tikzcd}
\end{center}
Since $\psi$ is finite flat and $\varphi$ is finite \'etale, $\pi$ is flat.
\end{proof}

For torsion free sheaves we have the following injective map.
\begin{lemma}\label{injective}
    If $E$ is torsion free on $\uX$, then the adjunction map $ \pi^{*}\pi_{*}E\to E$  is injective.
\end{lemma}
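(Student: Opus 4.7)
The plan is to reduce to a Zariski-local calculation using the explicit quotient description of $\uX$. Since injectivity of a map of coherent sheaves can be checked on a Zariski cover of $X$, and since $C$ is a smooth divisor, we may cover $X$ by affine opens $\operatorname{Spec}(A)$ on which $C = V(a)$ for a single element $a \in A$. By Proposition \ref{rootstackDM}(iv), the preimage of such an open in $\uX$ is $[\operatorname{Spec}(R)/\mu_n]$ with $R = A[x]/(x^n - a)$ and $\mu_n$ acting by $\zeta \cdot x = \zeta x$. Under this identification a coherent sheaf on $\uX$ corresponds to a finitely generated $\mu_n$-equivariant $R$-module $M$, which decomposes as $M = \bigoplus_{k=0}^{n-1} M_k$ by characters of $\mu_n$, and $\pi_{*}E$ corresponds to the invariant piece $M_0$.

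Since $R$ is free over $A$ on the basis $1, x, \ldots, x^{n-1}$, one obtains $\pi^{*}\pi_{*}E \cong M_0 \otimes_A R = \bigoplus_{k=0}^{n-1} M_0\cdot x^k$, and the adjunction map sends $m \otimes x^k$ to $x^k m \in M_k$. It is therefore homogeneous with respect to the $\mu_n$-grading on source and target, so its kernel decomposes as $\bigoplus_{k} \ker\bigl(x^k \colon M_0 \to M_k\bigr)$. Injectivity thus reduces to showing that multiplication by $x^k$ is injective on $M$ for every $k$.

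The one technical point is to verify that $x^k$ is a non-zero-divisor in $R$. After further shrinking we may assume $\operatorname{Spec}(A)$ is integral, so $A$ is a domain and $a \in A$ is a non-zero-divisor (because $C$ is a proper divisor of the smooth surface $X$); a short expansion in the $A$-basis $1, x, \ldots, x^{n-1}$ of $R$ then shows that $x^k$ is also a non-zero-divisor in $R$. Consequently $\ker(x^k \colon M \to M)$ is supported on $V(x) = \mathscr{C}$, which has codimension one in $\uX$, and therefore vanishes by torsion-freeness of $E$. Restricting to $M_0$ yields the desired injectivity. The argument is essentially routine once the local quotient description is in hand; there is no substantive obstacle beyond the non-zero-divisor check for $x^k$.
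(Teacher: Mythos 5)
Your proof is correct, and it takes a genuinely different route than the paper. The paper first proves that $\pi_{*}E$ is torsion-free by a Hom--adjunction argument ($\Hom_{\uX}(\pi^{*}F, E)=\Hom_{X}(F, \pi_{*}E)$ must vanish for torsion $F$), deduces that $\pi^{*}\pi_{*}E$ is torsion-free via the \'etale atlas, and then kills the kernel of the adjunction because it is a torsion subsheaf of the torsion-free $\pi^{*}\pi_{*}E$. You instead compute the local quotient model explicitly: after decomposing $M_{0}\otimes_{A}R = \bigoplus_{k} M_{0}\otimes x^{k}$ by $\mu_n$-weight, the adjunction map becomes $\bigoplus_{k}(x^{k}\colon M_{0}\to M_{k})$, so its kernel is $\bigoplus_{k}\bigl(M_{0}\cap \ker(x^{k}\colon M\to M)\bigr)$, i.e.\ it sits inside $M$ itself. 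This lets you invoke torsion-freeness of $E$ directly, bypassing the intermediate statement that $\pi_{*}E$ (and hence $\pi^{*}\pi_{*}E$) is torsion-free. That is a nice simplification. One expository wobble: your ``consequently'' suggests the non-zero-divisor verification for $x^{k}$ is what puts $\ker(x^{k}\colon M\to M)$ over $V(x)$, but that support statement is automatic (localize at any prime away from $x$, where $x^{k}$ is a unit). What the non-zero-divisor check really buys you is that $V(x)$ is a \emph{proper} closed subset of positive codimension, which is what makes the kernel genuinely torsion. This does not affect correctness, only the logical bookkeeping.
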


\begin{proof}
    First note that $\pi_{*}E$ is torsion free. Otherwise there is a torsion subsheaf $F\hookrightarrow \pi_{*}E$. Hence $$\operatorname{Hom}_{\uX}(\pi^{*}F, E)=\operatorname{Hom}_{X}(F, \pi_{*}E)\not =0.$$
However $\pi^{*}F$ is torsion, $\operatorname{Hom}_{\uX}(\pi^{*}F, E)=0$, we get a contradiction.

Now since $\psi^{*}\pi_{*}E=\varphi^{*}(\pi^{*}\pi_{*}E)$ is torsion free, $\pi^{*}\pi_{*}E$ is torsion free.
Since $\pi$ is isomorphism away from $C$, $\operatorname{Supp}(\operatorname{Ker}(\varphi))\subseteq \mathscr{C}$. So $\operatorname{Ker}(\varphi)=0$ because $\pi^{*}\pi_{*}E$ is torsion free. Thus the adjunction map $ \pi^{*}\pi_{*}E\to E$  is injective.
\end{proof}

Next we study the relation between $\Coh(\uX)$ and $\Coh(\sX)$.
Let $\mathscr{X}$ be the root gerbe in Definition \ref{rootstack}. Now we construct a morphism $f: \uX \to \mathscr{X} $. Note $X$ has an open affine cover $$X=\bigcup_{i=1}^{n}\operatorname{Spec}(A_{i}),$$ such that such that $\mathcal{O}_{X}(C)|_{U_{i}}\cong \mathcal{O}_{U_{i}}$. Choosing $a_{i}\in A_{i}$ such that $a_{i}$ defines the curve $C|_{U_{i}}$.
    By construction,  $$\uX=\bigcup_{i=1}^{n}[\operatorname{Spec}(A_{i}[x]/(x^{n}-a_{i}))/\mu_{n}],\ \mathscr{X}=\bigcup_{i=1}^{n}[\operatorname{Spec}(A_{i})/\mu_{n}]. $$

The natural map $\operatorname{Spec}(A_{i}[x]/(x^{n}-a_{i})) \to \operatorname{Spec}(A_{i})$ is $\mu_{n}$-equivariant, thus it induces a map
$$f_{i}:\bigcup_{i=1}^{n}[\operatorname{Spec}(A_{i}[x]/(x^{n}-a_{i}))/\mu_{n}]\to\bigcup_{i=1}^{n}[\operatorname{Spec}(A_{i})/\mu_{n}]. $$
Note that $\{f_{i}\}$ can be glued since they agree on the overlap, we make the following definition.
\begin{definition}\label{mapf}
   We define $f: \uX \to \mathscr{X} $ to be the morphism glued by $\{f_{i}\}$ as above.
\end{definition}
Note that we have the following commutative diagram 
\begin{center}
 \begin{tikzcd}
\uX \arrow[rr, "f"] \arrow[rd, "\pi"] &   & \mathscr{X} \arrow[ld, "p_{X}"] \\
                                  & X. &                  
\end{tikzcd}
\end{center}
By construction, $f$ is a finite flat map of degree $n$ ramified along $\mathscr{C}$. Moreover, $f_{*}(\mathcal{O}_{\uX})=\bigoplus_{k=0}^{n-1}\mathcal{O}_{\mathscr{X}}\rho_{k}.$

By \cite[Theorem 2.22]{Nir08a}, 
Serre duality holds for $\uX$ and the dualizing object is $\omega_{\uX}[2]$. By \cite[Proposition 3.4]{Nir08a}, the canonical bundle $\omega_{\uX}$ on $\uX$ is 
$$\omega_{\uX}=\pi^{*}\omega_{X}\otimes \OO_{\uX}((n-1)\mathscr{C}). $$

\begin{proposition}\label{SD}
	We have the following Serre dualities:
	\begin{itemize}
		\item $\Hom_{D^{b}(\mathscr{X})}(A, B)=\Hom_{D^{b}(\mathscr{X})}(B, A\otimes \omega_{X}[2])^{*}$. 
		\item $\Hom_{D^{b}(\uX)}(A, B)=\Hom_{D^{b}(\uX)}(B, A\otimes \omega_{X}((n-1)\mathscr{C})[2])^{*}$. 
	\end{itemize}
	In particular, we have 
	$$\pi^{!}(-)=\pi^{*}(-)\otimes \OO_{\uX}((n-1)\mathscr{C}), \quad f^{!}(-)=f^{*}(-)\otimes \OO_{\uX}((n-1)\mathscr{C}). $$
\end{proposition}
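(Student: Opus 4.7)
The plan is to reduce the proposition to Nironi's Serre duality for smooth proper Deligne--Mumford stacks and then identify the canonical bundles of $\mathscr{X}$ and $\uX$. Both stacks are smooth and proper of dimension $2$, so \cite[Theorem 2.22]{Nir08a} supplies a Serre functor $-\otimes \omega[2]$, and the two displayed isomorphisms will drop out once $\omega_{\mathscr{X}}$ and $\omega_{\uX}$ are pinned down.

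For $\mathscr{X}$, the map $p:\mathscr{X}\to X$ is a $\mu_n$-gerbe, étale-locally of the form $X\times \mathbf{B}\mu_n\to X$ by Proposition \ref{rootgerbeDM}(iii). Thus $p$ is representable-smooth with trivial relative cotangent, giving $\omega_{\mathscr{X}}\cong p^*\omega_X$, which is what the statement abbreviates to $\omega_X$. For $\uX$, the formula $\omega_{\uX}=\pi^*\omega_X\otimes \OO_{\uX}((n-1)\mathscr{C})$ is already quoted in the paper from \cite[Proposition 3.4]{Nir08a}. Substituting these into Nironi's Serre functor yields the two displayed Serre dualities.

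For the ``in particular'' clause, I would invoke the standard fact that for a proper morphism $g$ of smooth DM stacks of relative dimension zero, $g^!(-)=g^*(-)\otimes \omega_g$, where $\omega_g=\omega_{\mathrm{source}}\otimes g^*\omega_{\mathrm{target}}^{-1}$ is the relative dualizing sheaf. For $\pi:\uX\to X$ this gives $\omega_\pi=\OO_{\uX}((n-1)\mathscr{C})$; the same computation handles $f$. Alternatively, chasing Yoneda through $\Hom(E,\pi^!F)=\Hom(\pi_*E,F)=\Hom(F,\pi_*E\otimes\omega_X[2])^*=\Hom(E,\pi^*F\otimes\OO_{\uX}((n-1)\mathscr{C}))$ (using the $(\pi^*,\pi_*)$ adjunction together with Serre duality on both source and target) identifies $\pi^!F$ directly.

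I anticipate no real obstacle: every ingredient is already cited in the paper. The only point that needs brief verification is that Nironi's conventions for the dualizing complex (and the sign of the shift) match the paper's, and that the abbreviation ``$\omega_X$'' in the first bullet is genuinely shorthand for $p^*\omega_X$ when unwound.
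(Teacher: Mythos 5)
Your proof is correct, but for the first bullet it takes a genuinely different route from the paper. The paper deduces Serre duality on $\mathscr{X}$ purely formally from the splitting $D^{b}(\mathscr{X})=\bigoplus_{k=0}^{n-1}D^{b}(X)\rho_{k}$ (Lemma \ref{decomposition}): morphism spaces decompose componentwise, the twist by $p^{*}\omega_{X}$ preserves each summand, so the claim reduces to ordinary Serre duality on $X$. You instead apply Nironi's theorem directly to the gerbe and compute $\omega_{\mathscr{X}}\cong p^{*}\omega_{X}$. That works, but with two caveats: first, $p$ is \emph{not} representable (its fibers are $\mathbf{B}\mu_{n}$), so the correct justification is that $p$ is étale as a morphism of stacks, i.e.\ its relative cotangent complex vanishes — not that it is ``representable-smooth''; second, your route requires verifying that $\mathscr{X}$ satisfies the hypotheses of \cite[Theorem 2.22]{Nir08a} (it does, being a quotient stack with projective coarse space $X$), a check the paper's componentwise reduction sidesteps entirely. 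For the second bullet and the ``in particular'' clause you do essentially what the paper does: quote Nironi for $\omega_{\uX}$ and identify $\pi^{!}$, $f^{!}$ by computing the relative dualizing sheaves $\omega_{\pi}=\omega_{f}=\OO_{\uX}((n-1)\mathscr{C})$ (legitimate here since $\pi$ and $f$ are finite flat morphisms between smooth stacks of the same dimension); your alternative adjunction-plus-Serre-duality chase is also valid. Your reading of ``$\omega_{X}$'' in the displayed formulas as shorthand for its pullback to the source is indeed what the paper intends.
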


\begin{proof}
	The first claim follows from the fact that $D^{b}(\mathscr{X})=\bigoplus_{k=0}^{n-1} D^{b}(X)\rho_{k}$. Since Serre duality is true for $\uX$ and the dualizing object is $\omega_{\uX}[2]$ by \cite{Nir08a}, the second claim is true.  Then $\pi^{!}$ and $f^{!}$ are  computed by the relative dualizing objects.
\end{proof}

    \begin{lemma}\label{chain}
            Let $G$ be a sheaf in $D^{b}(X)^{\perp} \cap \operatorname{Coh}(\uX)$, that is $G$  has a filtration as $G_{k}\rho_{k}$ where $G_{k}\in \operatorname{Coh}(C)$.
            Then 
            $$f_{*}G=\bigoplus_{k=1}^{n-1} G_{k}\rho_{k}.$$
        Here, we identify the sheaf $G$ on $C$ with $\bar{j}_{*}G$ on $X$. 
            
        \end{lemma}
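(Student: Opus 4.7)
The plan is to compute $f_*$ on each graded piece of the given filtration of $G$, then use the direct sum decomposition of $\Coh(\sX)$ from Lemma \ref{decomposition} to split the induced filtration on $f_*G$.

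Since $f:\uX\to\sX$ is finite flat, the functor $f_*$ is exact, so the filtration of $G$ with graded pieces $G_k\rho_k$ pushes to a filtration of $f_*G$ in $\Coh(\sX)$ whose graded pieces are $f_*(G_k\rho_k)$ for $k=1,\ldots,n-1$. The key computation is
\[
f_*(G_k\rho_k)=G_k\rho_k \in \Coh(X)\rho_k\subset\Coh(\sX).
\]
Writing $G_k\rho_k$ on $\uX$ as $j_*p^*G_k\otimes\underline{\mathcal{M}}^{-k}$ and using the identification $f^*\mathcal{M}\cong\underline{\mathcal{M}}$ (both are canonical $n$-th roots of $\pi^*L$), the projection formula reduces this to computing $f_*j_*p^*G_k$. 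For this I would check that $f\circ j:\mathscr{C}\to\sX$ factors as $\iota\circ\tilde f$ with $\iota:p_X^{-1}(C)\hookrightarrow\sX$ the closed embedding and $\tilde f:\mathscr{C}\to p_X^{-1}(C)$ an isomorphism. Locally, with $\uX=[\operatorname{Spec}(A[x]/(x^n-a))/\mu_n]$ and $\sX=[\operatorname{Spec}(A)/\mu_n]$ (trivial $\mu_n$-action), both $\mathscr{C}=V(x)$ and $p_X^{-1}(C)=V(a)$ identify with $[\operatorname{Spec}(A/(a))/\mu_n]$ equipped with the trivial $\mu_n$-action, and $\tilde f$ is the identity on $A/(a)$. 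Hence $f_*j_*p^*G_k=\iota_*G_k$ lies in $\Coh(X)\rho_0$ (identifying $G_k$ with $\bar j_*G_k$ on $X$), and tensoring with $\rho_k$ gives the claim.

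The final step uses Lemma \ref{decomposition}: each projection functor $\pr_l:\Coh(\sX)\to\Coh(X)\rho_l$ is exact, and applying it to the pushforward filtration kills every graded piece except the one in the $\rho_l$ component, yielding $\pr_l(f_*G)=G_l\rho_l$ and hence $f_*G=\bigoplus_{k=1}^{n-1}G_k\rho_k$. The main technical obstacle is the local verification that $\tilde f$ is an isomorphism, which ensures that $f_*$ introduces no extra shift in the $\rho_k$ grading; once this is in hand, the splitting of the filtration is automatic from the orthogonality of the summands in $\Coh(\sX)$.
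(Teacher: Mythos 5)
Your proposal is correct and takes essentially the same route as the paper: compute $f_*(G_k\rho_k)$ via the projection formula and the identification $(f\circ j)_*p^*G_k \cong p_X^*\bar{j}_*G_k$, then split the pushed-forward filtration using the direct sum decomposition of $\Coh(\sX)$. The only stylistic difference is that you verify the base change identity by the local model $[\operatorname{Spec}(A/(a))/\mu_n]$ rather than invoking flat base change on the Cartesian square $\sC \to \sX$, $C\to X$, which the paper uses directly.
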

    
        \begin{proof}

The following two diagrams are commutative:
\begin{center}
\begin{tikzcd}
\sC \arrow[r, "j"] \arrow[d, "p"] & \uX \arrow[r, "f"] \arrow[d, "\pi"] & \mathscr{X} \arrow[ld, "p_{X}"] & \sC \arrow[r, "f\circ j"] \arrow[d, "p"] & \mathscr{X}, \arrow[d, "p_{X}"] \\
C \arrow[r, "\bar{j}"]                & X                               &                   & C \arrow[r, "\bar{j}"]                & X,              
\end{tikzcd}
\end{center}
   where the right-hand side diagram is a Cartesian diagram.         
Note the $G_{k}\rho_{k}$ on $\uX$ denotes $j_{*}p^{*}G_{k}\rho_{k}$, and $\rho_{k}$ on $\sC$ is the pullback of $\rho_{k}$ on $\mathscr{X}$. Since $p_{X}$ is flat,  by base change and projection formula, we have 
$$f_{*}G_{k}\rho_{k}= (f\circ j)_{*}(p^{*}G_{k}\otimes (f\circ j)^{*}\rho_{k})=p_{X}^{*}\bar{j}_{*}G_{k}\rho_{k}. $$
Since $\operatorname{Coh}(\mathscr{X})=\bigoplus \operatorname{Coh}(X)\rho_{k}$, we have 
$$f_{*}G=\bigoplus_{k=1}^{n-1}p_{X}^{*}\bar{j}_{*}G_{k}\rho_{k}=\bigoplus_{k=1}^{n-1}G_{k}\rho_{k}.$$
        \end{proof}\
        
Now we recall a well-known fact about torsion free sheaves on root stacks.
        
\begin{theorem}[\cite{BV12}]\label{parabolic}
    The category $\mathrm{TF}(\uX)$ of torsion-free sheaves on $\uX$ is equivalent to the category $\mathrm{Par}^{n}(X)$ of $n$-parabolic sheaves on $X$. 
\end{theorem}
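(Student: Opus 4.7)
The plan is to invoke the full Borne--Vistoli equivalence \cite{BV12} between coherent sheaves on the root stack and parabolic coherent sheaves, and then check that it restricts to the torsion-free subcategories on both sides. Since the statement is quoted from \cite{BV12}, my proof outline would essentially be an exposition of their argument, specialized to $\mathrm{TF}(\uX)$.

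First I would recall the functor defining the equivalence. Given $\tF \in \Coh(\uX)$, set
$$E_{k} := \pi_{*}\bigl(\tF \otimes \underline{\mathcal{M}}^{\otimes (-k)}\bigr) \in \Coh(X) \qquad (k = 0, 1, \ldots, n-1),$$
with $E_{n} := E_{0} \otimes \OO_{X}(-C)$. The isomorphism $\underline{\mathcal{M}}^{\otimes n} \cong \pi^{*}\OO_{X}(C)$ from Definition \ref{universal}, combined with the canonical section of $\pi^{*}\OO_{X}(C)$ cutting out $\mathscr{C}$, induces natural maps $E_{k} \to E_{k+1}$ whose composition is multiplication by the defining section of $C$. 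Borne--Vistoli prove that this assignment is an exact equivalence between $\Coh(\uX)$ and $\mathrm{Par}^{n}(X)$; the quasi-inverse is built by descent, recognizing a parabolic sheaf as a $\mu_{n}$-equivariant object on the canonical cover.

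Second, I would check that this equivalence preserves torsion-freeness in both directions. If $\tF$ is torsion-free, then each twist $\tF \otimes \underline{\mathcal{M}}^{\otimes (-k)}$ remains torsion-free, and since $\pi_{*}$ is exact and $\pi$ is quasi-finite, $E_{k}$ is a torsion-free $\OO_{X}$-module for every $k$. Conversely, suppose the associated parabolic sheaf $\{E_{k}\}$ has torsion-free components. Using Proposition \ref{rootstackDM}, one works \'etale-locally where $\uX \cong [\operatorname{Spec}(A[x]/(x^{n}-a))/\mu_{n}]$; then $\tF$ corresponds to a $\ZZ/n\ZZ$-graded $A[x]/(x^{n}-a)$-module whose homogeneous components are precisely the stalks of the $E_{k}$. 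A local element annihilated by a non-zero divisor would produce, after projecting to one of the graded pieces, a torsion element of some $E_{k}$; since the filtration maps $E_{k} \to E_{k+1}$ are injective on torsion-free components, this forces the element to vanish.

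The main technical obstacle is precisely this final reconciliation between torsion in $\tF$ and torsion in the parabolic components, since the notion of torsion-freeness on $\uX$ uses Vistoli's intersection theory while parabolic torsion-freeness is expressed purely on $X$. This is handled by the local model above, together with Lemma \ref{injective}, which already shows that $\pi^{*}\pi_{*}\tF \hookrightarrow \tF$ for torsion-free $\tF$; the remaining graded summands $E_{k}$ with $k \geq 1$ are controlled by multiplication by $x$ on the local cover, reducing the question to a flatness/injectivity check that is standard in commutative algebra.
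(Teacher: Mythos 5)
The paper does not prove this statement: it is cited directly from \cite{BV12}, and the text immediately following Theorem~\ref{parabolic} uses the equivalence (via the decomposition $f_{*}(E)=\bigoplus E_{k}\rho_{k}$) without reproving it. So there is no paper argument for your outline to match against; I can only assess your sketch on its own terms.

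Your overall strategy — invoke the Borne--Vistoli equivalence for all (quasi)coherent sheaves and then check that torsion-freeness is preserved in both directions — is sound and is indeed the standard way to obtain this restriction. Two points deserve care. First, your twist convention is inverted relative to the one this paper actually uses. Unwinding $f_{*}(E)=\bigoplus_{k}E_{k}\rho_{k}$ with $\rho_{k}=\mathcal{M}^{-k}$ and pushing down the $\rho_{0}$-component gives $E_{k}=\pi_{*}\bigl(E\otimes\underline{\mathcal{M}}^{\otimes k}\bigr)$ (positive power), and the periodicity is $E_{n}\cong E_{0}(C)$; see the chain displayed in~(\ref{filters}), which terminates at $E_{0}(C)$. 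You wrote $E_{k}:=\pi_{*}(\tF\otimes\underline{\mathcal{M}}^{\otimes(-k)})$ and $E_{n}:=E_{0}\otimes\OO_{X}(-C)$, which runs the filtration the opposite way and would not interface with the paper's later formulas (Lemma~\ref{chlink}, Proposition~\ref{link}) without re-indexing. Second, your converse direction (torsion-free parabolic components imply $\tF$ torsion-free) is sketched too loosely: you appeal to the injectivity of the maps $E_{k}\to E_{k+1}$, but that injectivity is a consequence of $\tF$ being torsion-free, which is precisely what you are trying to prove — in the converse you must take it as part of the hypothesis (it is built into the definition of a parabolic sheaf), and then the \'etale-local graded-module argument you indicate does close the gap. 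Flagging that dependency explicitly would make the sketch airtight; as written it reads as circular.
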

For any torsion free sheaf $E\in \Coh(\uX)$, we compute the projections with respect to the decomposition
$ D^{b}(\uX)=\langle D^{b}(C)\rho_{1}, \cdots, D^{b}(C)\rho_{n-1}, D^{b}(X) \rangle$, in terms of the parabolic structure in Theorem \ref{parabolic}.
        Consider the exact sequence
        $$ 0 \longrightarrow f_{*}(E(-\mathscr{C}) )\longrightarrow f_{*}(E) \longrightarrow f_{*}(E|_{\mathscr{C}}) \longrightarrow 0. $$
        For the rest of this paper, we write 
        \begin{equation}\label{push}
         f_{*}(E)=\bigoplus_{k=0}^{n-1} E_{k}\rho_{k}. 
        \end{equation}
        Note that $\OO_{\uX}(-\mathscr{C})=f^{*}(\OO_{\mathscr{X}}\rho_{1})$. By the projection formula, we have
        $$f_{*}(E(-\mathscr{C}) )= \bigoplus _{k=0}^{n-1}E_{k}\rho_{k+1}=E_{n-1}(-C)\rho_{0}\oplus \left( \bigoplus_{k=1}^{n-1}E_{k-1}\rho_{k} \right) . $$
        Hence we have morphisms
        $$
        e_{-1, 0}: E_{n-1}(-C) \to E_{0}, \quad e_{k-1, k}: E_{k-1} \to E_{k}, ~1 \leq k \leq n-1.
        $$
By a local computation, we see the maps $e_{k-1, k}$ are precisely those in the definition of parabolic sheaves on $\uX$ (see Theorem \ref{parabolic}). For every $n\in \ZZ$, let $e_{k+n-1, k+n}=e_{k-1, k}\otimes \OO_{X}(nC)$. For integers $j<k$, let $e_{jk}$ be the composition of morphisms
\begin{equation}\label{ejk}
e_{jk}:=e_{k-1, k}\circ \cdots e_{j+1, j+2}\circ e_{j, j+1}, \quad G_{jk}:= \mathrm{coker}(e_{jk}).
\end{equation}
For $0\leq k \leq n-1$, we define $E_{0,k}$ to be the torsion free sheaf on $\uX$ that corresponds to the parabolic sheaf
\begin{equation}\label{filters}
E_{0}\overset{\id}{\too} \cdots \overset{\id}{\too} E_{0} \overset{e_{0,k}}{\too} E_{k} \overset{e_{k, k+1}}{\too} \cdots \overset{e_{n-2, n-1}}{\too} E_{n-1} \overset{e_{n-1, n}}{\too} E_{0}(C),
\end{equation}
where $E_{k}$ is at the $k$-th position.
 Summarizing these, we have the following proposition.
\begin{proposition}\label{link}
    Let $E\in \Coh(\uX)$ be a torsion free sheaf. Then there is a filtration
    $$E_{0} \overset{f_{n, n-1}}{\hookrightarrow} E_{0,n-1} \overset{f_{n-1,n-2}}{\hookrightarrow} E_{0,n-2} \overset{f_{n-2, n-3}}{\hookrightarrow} \cdots \overset{f_{2, 1}}{\hookrightarrow} E_{0,1} = E.$$
    Using notations in (\ref{ejk}) and (\ref{filters}), we have the graded factor $\mathrm{coker}(f_{k+1, k})=G_{0,k}\rho_{k}$ for $1\leq k \leq n-1$. 
    
    For every $1\leq k \leq n-1$, the sheaf $G_{0, k}\rho_{k}$ is the projection of $E$ into $D^{b}(C)\rho_{k}$ under the semiorthogonal decomposition (Lemma \ref{semiorthgonal}). Under $f_{*}$, the map $f_{k+1, k}$ is the identity on the $\rho_{j}$ component for $j\neq k$ and is $e_{k, k+1}$ on the $\rho_{k}$ component.
\end{proposition}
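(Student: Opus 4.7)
The plan is to transport the entire construction to parabolic sheaves on $(X,C)$ via the equivalence of Theorem \ref{parabolic}, where the filtration becomes completely explicit, and then invoke uniqueness of the semiorthogonal-decomposition filtration to identify the graded pieces with the projections.

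First I would define each map $f_{k+1,k}$ by comparing parabolic data. From (\ref{filters}), the parabolic sheaves underlying $E_{0,k+1}$ and $E_{0,k}$ differ only at position $k$: the former is $E_0$ there, while the latter is $E_k$. So $f_{k+1,k}$ is taken to be the identity at every position $j\neq k$ and the natural morphism $e_{0,k}\colon E_0\to E_k$ at position $k$. The only non-trivial compatibility with the parabolic structure maps reduces to the identity $e_{k,k+1}\circ e_{0,k}=e_{0,k+1}$, which holds by the very definition of $e_{0,k+1}$ as a composition in (\ref{ejk}). By Theorem \ref{parabolic} this descends to a morphism $f_{k+1,k}\colon E_{0,k+1}\to E_{0,k}$ in $\Coh(\uX)$, and the pushforward description asserted in the statement can be read off directly from the parabolic data. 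Injectivity then follows from torsion-freeness of $E$: the inclusion $E(-\mathscr{C})\hookrightarrow E$ pushes forward to give injections $e_{k-1,k}\colon E_{k-1}\hookrightarrow E_k$ on each $\rho_k$-summand, hence each composition $e_{0,k}$ is an injection, and since the equivalence of Theorem \ref{parabolic} is exact, $f_{k+1,k}$ is an inclusion in $\Coh(\uX)$.

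For the cokernels and the conclusion, note that as a parabolic sheaf $\mathrm{coker}(f_{k+1,k})$ is zero at every position $j\neq k$ and equals $G_{0,k}=\mathrm{coker}(e_{0,k})$ at position $k$. Since the parabolic structure is trivial on $X\smallsetminus C$, the map $e_{0,k}$ is an isomorphism there, so $G_{0,k}$ is set-theoretically supported on $C$. A parabolic sheaf supported on $C$ and concentrated at a single position $k$ corresponds under the equivalence to $j_*p^*G_{0,k}\otimes\rho_k$, an object of $D^b(C)\rho_k$. Hence the filtration has subobject $E_0=\pi^*E_0\in D^b(X)$ and graded quotients $G_{0,k}\rho_k\in D^b(C)\rho_k$, matching the components of the semiorthogonal decomposition of Lemma \ref{semiorthgonal} in the prescribed order; since the filtration of an object with graded pieces in prescribed SOD-components is unique, these graded pieces are the desired semiorthogonal projections of $E$.

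The main obstacle is the bookkeeping: one must verify on the nose that a parabolic sheaf supported on $C$ and concentrated at position $k$ is $j_*p^*(-)\otimes\rho_k$ with the correct twist, and carefully align the indexing among parabolic positions, the $\rho_k$-summands of $f_*$, and the order of components in Lemma \ref{semiorthgonal}. A direct local computation on an affine chart $[\mathrm{Spec}(A[x]/(x^n-a))/\mu_n]$ from Proposition \ref{rootstackDM}(iv), together with the projection-formula computations already recorded for $f_*(E(-\mathscr{C}))$ and $f_*E$, pins down this identification.
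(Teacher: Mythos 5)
Your proposal is correct and follows essentially the same route as the paper: the proposition there is presented as a summary of the preceding construction, which likewise defines $E_{0,k}$ by its parabolic data, obtains the maps $e_{j,k}$ from $f_{*}(E(-\mathscr{C}))\to f_{*}(E)$ via the projection formula, and reads off the cokernels on the $\rho_{k}$-summands (your extra appeal to uniqueness of the semiorthogonal filtration just makes explicit why the graded pieces are the projections). One small remark: your identification of $f_{k+1,k}$ as $e_{0,k}$ on the $\rho_{k}$ component is the one consistent with $\mathrm{coker}(f_{k+1,k})=G_{0,k}\rho_{k}$; the statement's phrase ``is $e_{k,k+1}$ on the $\rho_{k}$ component'' appears to be an indexing slip.
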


To end this subsection we make the following notation simplification.
		\begin{lemma}\label{commute}
			Let $E\in D^{b}(\uX)$ and $f: \uX \to \sX$ be the map in Definition \ref{mapf}. Write $f_{*}(E)=\bigoplus_{k=0}^{n-1}E_{k}\rho_{k}$. Then $\cH^{i}(E_{k})=\cH^{i}(E)_{k}$. 
		\end{lemma}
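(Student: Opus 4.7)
The proof plan rests on two formal exactness facts that together force the identity.

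First, I would note that $f\colon \uX \to \sX$ is finite flat by construction (Definition \ref{mapf} together with the local computation $\OO_{\sX}\to f_*\OO_{\uX} = \bigoplus_{k=0}^{n-1}\OO_{\sX}\rho_k$). In particular $f$ is affine, so the derived pushforward agrees with the underived one, and $f_*\colon \Coh(\uX)\to \Coh(\sX)$ is an exact functor. Consequently, for any $E\in D^{b}(\uX)$, the pushforward commutes with taking cohomology sheaves:
\[
\cH^{i}(f_{*}E)\;\cong\; f_{*}\cH^{i}(E) \quad \text{in } \Coh(\sX).
\]

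Second, by Lemma \ref{decomposition} the abelian category $\Coh(\sX)$ decomposes as $\bigoplus_{k=0}^{n-1}\Coh(X)\rho_{k}$, so the projection onto the $k$-th summand, which I will denote $(-)_{k}$, is itself an exact functor $\Coh(\sX)\to \Coh(X)\rho_{k}$. Exact functors commute with the formation of cohomology sheaves of complexes. Applying $(-)_{k}$ to the isomorphism above gives
\[
\cH^{i}(f_{*}E)_{k} \;\cong\; \bigl(f_{*}\cH^{i}(E)\bigr)_{k} \;=\; \cH^{i}(E)_{k},
\]
where the last equality is just the definition of the notation $\cH^{i}(E)_{k}$ on the right-hand side of the lemma.

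On the other hand, writing $f_{*}E = \bigoplus_{k=0}^{n-1} E_{k}\rho_{k}$ as a direct sum of complexes in $D^{b}(\sX)$, the exactness of the projection onto the $\rho_{k}$-summand shows that
\[
\cH^{i}(f_{*}E)_{k} \;=\; \cH^{i}(E_{k}).
\]
Combining the two displays yields $\cH^{i}(E_{k})=\cH^{i}(E)_{k}$, as desired. There is no serious obstacle in this argument; the only thing to double-check is that the direct-sum decomposition of $\Coh(\sX)$ extends termwise to $D^{b}(\sX)$, which is immediate because it is an orthogonal decomposition of abelian categories induced by the idempotent action of the inertia character.
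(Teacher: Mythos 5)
Your proof is correct and follows essentially the same route the paper intends: the whole content of the lemma is that the pushforward to the gerbe is exact (so cohomology sheaves commute with it) and that the orthogonal decomposition $\Coh(\sX)=\bigoplus_k \Coh(X)\rho_k$ has exact projections. The paper's one-line proof invokes ``exactness of $\pi_*$''; you more precisely identify $f_*$ (finite, flat, affine) as the relevant functor, but since $\pi_*=p_{X*}\circ f_*$ with $p_{X*}$ exact these amount to the same observation, and your spelled-out version is cleaner.
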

		
		\begin{proof}
			The claim follows from the exactness of $\pi_{*}$. 
		\end{proof}
    In the following of this paper we will not distinguish $\cH^{i}(E_{k})$ and $\cH^{i}(E)_{k}$.

 \subsection{Chern class computations}
In this subsection we make computations for Chern classes.

First, the Chow ring of a root stack is computed by  \cite{arena2023integral} as follows.
\begin{lemma}
	The Chow ring of $\uX$ is
		\begin{equation}
				\CH^{*}(\uX)= \dfrac{\CH^{*}(X)\oplus \CH^{*}(\PP(N_{C/X}, d))e}{(i_{*}\alpha-d\alpha e)_{\forall \alpha \in \CH^{*}(C)}}= \dfrac{\CH^{*}(X)\oplus \CH^{*}(C)[t]e}{((nt-D)e, i_{*}\alpha- n\alpha e)_{\forall \alpha \in \CH^{*}(C)}}. 
			\end{equation}
			The numerical ring of $\uX$ is 
			\begin{equation}
				\Num^{*}(\uX)=\Num^{*}(X)\left[\sC, \mathbf{B}\mu_{n}\right].
			\end{equation}
			Here, $\sC=[\sC]$ is the fundamental class of the root gerbe $\sC$. Its numerical class is $\dfrac{C}{n}$. $\mathbf{B}\mu_{n}=[\mathbf{B}\mu_{n}]$ is the fundamental class of the gerbe over a point. Its numerical class is $\dfrac{\mathrm{pt}}{n}$. 
		\end{lemma}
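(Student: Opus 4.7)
The statement is a specialization of the general computation of the integral Chow ring of root stacks in \cite{arena2023integral}, so in practice the proof proceeds by direct invocation. To sketch the underlying approach from scratch, the plan is to exploit the open--closed decomposition $\uX = U \sqcup \sC$, where $U \cong X \setminus C$ via the coarse moduli map $\pi$ (Proposition \ref{rootstackDM}). The excision sequence $\CH^{*}(\sC) \xrightarrow{j_{*}} \CH^{*}(\uX) \to \CH^{*}(U) \to 0$, combined with the flat pullback $\pi^{*}: \CH^{*}(X) \to \CH^{*}(\uX)$, yields the additive decomposition $\CH^{*}(\uX) = \pi^{*}\CH^{*}(X) + j_{*}\CH^{*}(\sC)$. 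Setting $e := [\sC] \in \CH^{1}(\uX)$ and $t := c_{1}(\mathcal{M}) \in \CH^{1}(\sC)$, the Chow ring of the $\mu_{n}$-gerbe $\sC$ over $C$ is $\CH^{*}(\sC) = \CH^{*}(C)[t]/(nt - p^{*}D)$ by the standard root gerbe computation, since $\mathcal{M}^{\otimes n} \cong p^{*}\OO_{X}(C)|_{C}$. Applying $j_{*}$ then produces the summand $\CH^{*}(C)[t]\,e$ in the presentation.

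Next, one derives the two defining relations. The relation $(nt - D)e = 0$ comes from pushing the gerbe identity $nt = p^{*}D$ forward by $j_{*}$ and using the projection formula $j_{*}(j^{*}\xi \cdot \eta) = \xi \cdot j_{*}\eta$. The ramification relation $i_{*}\alpha - n\alpha e = 0$ is the key identity: it encodes $\pi^{*}\bar{j}_{*} = n \cdot j_{*}p^{*}$ along the square (\ref{square}), which is a consequence of the fundamental divisor relation $\pi^{*}\OO_{X}(C) = \OO_{\uX}(n\sC)$ together with flat base change. I expect this ramification identity to be the main technical step; it can be verified in the local charts $[\operatorname{Spec}(A[x]/(x^{n}-a))/\mu_{n}]$ by direct computation, but tracking the factor of $n$ cleanly is delicate because one must carefully weight the contribution of $\sC$ as a non-reduced fiber of $\pi$.

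For the numerical statement, the presentation collapses drastically. The class $\sC$ has numerical degree $C/n$ because $n\sC$ is numerically equivalent to $\pi^{*}C$, so $\sC \cdot \pi^{*}\beta = \tfrac{1}{n}\pi^{*}C \cdot \pi^{*}\beta$, which reduces to intersection numbers on $X$ scaled by $1/n$. The class $\mathbf{B}\mu_{n}$, being a gerbe over a point, has orbifold degree $1/n$ by Vistoli's definition of the degree map on Deligne--Mumford stacks \cite{Vis89}. The generator $t$ of the gerbe becomes redundant in $\Num^{*}$ because the relation $nt = p^{*}D$ on $\sC$ pins it down modulo numerical equivalence, and the products $t^k \cdot e$ reduce via successive application of this relation to classes already accounted for. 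Combining these observations, $\Num^{*}(\uX)$ is generated over $\Num^{*}(X)$ by the two classes $\sC$ and $\mathbf{B}\mu_{n}$ with the stated numerical degrees.
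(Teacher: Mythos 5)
The paper provides no proof here: the lemma is stated verbatim as a citation of \cite{arena2023integral}. You correctly identify this, and your supplementary sketch of the underlying argument---open--closed excision across $\sC$, the presentation $\CH^{*}(\sC) = \CH^{*}(C)[t]/(nt-p^{*}D)$ of the root gerbe, and the ramification relation $\pi^{*}\bar{j}_{*}\alpha = n\, j_{*}p^{*}\alpha$ coming from the scheme-theoretic preimage $\pi^{-1}(C)=n\sC$ being non-reduced---is consistent with the standard approach in that reference, and you are right to flag the multiplicity-$n$ bookkeeping as the delicate point. Two small imprecisions are worth noting: the excision sequence only shows that $\pi^{*}\CH^{*}(X)$ and $j_{*}\CH^{*}(\sC)$ generate $\CH^{*}(\uX)$, so identifying the exact kernel (i.e.\ proving that the two stated relations suffice) is where the real work lies; and the collapse of the $t$-powers in passing to $\Num^{*}$ is most cleanly justified by observing that each $t^{k}$ for $k\geq 1$ is $\QQ$-proportional to a class pulled back from $C$, rather than by appealing to the relation $nt=p^{*}D$ as ``pinning $t$ down,'' which by itself does not make $t$ redundant.
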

The next lemma computes the Chern classes of $E\in \Coh(\sC)$.
		\begin{lemma}\label{uch2}
	Let $E_{k}\in \Coh(C)\rho_{k}\subset \Coh(\sC)$ with $\ch_{0}(C, E_{k})=r_{k}$ and $\ch_{1}(C, E)=d_{k}$. Then we have
		\begin{equation}
			\ch(\uX, j_{*}(E_{k})\rho_{k} )= (0, r_{k}[\sC] , r_{k}\dfrac{-2k-1}{2}\sC^{2} + d_{k}[\mathrm{B}\mu_{n}]). 
		\end{equation}
	\end{lemma}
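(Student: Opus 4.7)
The plan is to reduce the computation to two elementary cases via the K-theoretic splitting $K_0(C)_\QQ \cong \QQ[\OO_C] \oplus \QQ[\OO_\pt]$, and then recover the general formula by multiplying with $\ch(\rho_k) = e^{-k\sC}$ using the projection formula. Writing $E_k = p^*F \otimes \rho_k$ on $\sC$ for some $F \in \Coh(C)$ with $\ch(F) = r_k + d_k[\pt_C]$, the projection formula gives $j_*(p^*F \otimes \rho_k) = (j_*p^*F) \otimes \rho_k$ on $\uX$, so $\ch(j_*E_k) = \ch(j_*p^*F)\cdot e^{-k\sC}$. Since $[F] = r_k[\OO_C] + d_k[\OO_\pt]$ in $K_0(C)_\QQ$ and $p$ is flat, we have $[j_*p^*F] = r_k[j_*\OO_\sC] + d_k[j_*\OO_{\B\mu_n}]$ in $K_0(\uX)_\QQ$, so it is enough to compute these two basic Chern characters.

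For $j_*\OO_\sC$, the Koszul resolution $0 \to \OO_\uX(-\sC) \to \OO_\uX \to j_*\OO_\sC \to 0$ yields $\ch(j_*\OO_\sC) = 1 - e^{-\sC} = (0, \sC, -\sC^2/2)$ in $\CH^{\leq 2}(\uX)_\QQ$. For $j_*\OO_{\B\mu_n}$, the sheaf is supported on a codimension-two regular closed substack of $\uX$, so only $\ch_2$ can be nonzero and it equals the fundamental class $[\B\mu_n]$; this is either read off from the definition of the Chow class or verified by applying GRR to the closed embedding $\B\mu_n \hookrightarrow \uX$ (normal-bundle contributions vanish since $\uX$ has dimension two). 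Assembling,
$$\ch(j_*E_k) = \bigl(0,\, r_k\sC,\, -\tfrac{r_k}{2}\sC^2 + d_k[\B\mu_n]\bigr)\cdot\bigl(1 - k\sC + \tfrac{k^2}{2}\sC^2\bigr),$$
whose degree-one part is $r_k\sC$ and whose degree-two part is $-\tfrac{r_k}{2}\sC^2 - k r_k \sC^2 + d_k[\B\mu_n] = r_k\tfrac{-2k-1}{2}\sC^2 + d_k[\B\mu_n]$, matching the claimed formula.

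The only genuine subtlety is the identification $\ch_2(j_*\OO_{\B\mu_n}) = [\B\mu_n]$ inside $\CH^2(\uX)_\QQ$, where $[\B\mu_n]$ denotes the \emph{stacky} fundamental class of the gerbe point (whose numerical class is $\pt/n$), not the pullback of the coarse-space point class. Tracking this rational factor is essential so that the answer lands in the basis $\{[\sC], [\B\mu_n]\}$ of $\Num^*(\uX)$ fixed earlier; beyond this bookkeeping, everything is formal manipulation of the exponential Chern character.
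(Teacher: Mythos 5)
Your computation is correct and lands on exactly the stated formula; the paper disposes of the lemma by a one-line citation of To\"en's Riemann--Roch for Deligne--Mumford stacks, whereas you unwind the claim into a concrete Koszul/K-theory verification. The two are cousins, but yours is the more elementary route: the decomposition $[F]=r_k[\OO_C]+d_k[\OO_{\pt}]$ in $K_0(C)_\QQ$, the Koszul sequence $0\to\OO_\uX(-\sC)\to\OO_\uX\to j_*\OO_\sC\to 0$, and the multiplicativity $\ch(\rho_k)=e^{-k\sC}$ already produce everything except the single input $\ch_2(j_*\OO_{\B\mu_n})=[\B\mu_n]$, and no appeal to the full orbifold GRR (with its inertia-stack corrections) is needed anywhere.

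The one step you label a ``subtlety'' --- identifying $\ch_2(j_*\OO_{\B\mu_n})$ with the stacky point class --- is where your write-up leans again on GRR for stacks, which is mildly circular given that bypassing it was the point. You can make this step fully elementary in the same spirit as the rest: choose a curve $D\subset X$ through the coarse point and transverse to $C$, set $D'=\pi^{*}D$, so that $\B\mu_n=\sC\cap D'$ is a transverse intersection of Cartier divisors on $\uX$. The resulting two-step Koszul resolution gives
$$\ch\bigl(j_*\OO_{\B\mu_n}\bigr)=\bigl(1-e^{-\sC}\bigr)\bigl(1-e^{-D'}\bigr),$$
whose degree-two part is $\sC\cdot D'$. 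Numerically $\sC=C/n$ and $D'=D$, so $\sC\cdot D'=(C\cdot D)/n=\pt/n=[\B\mu_n]$, matching the stacky fundamental class with no normal-bundle bookkeeping at all. With that substitution your argument is self-contained and arguably more transparent than the reference the paper gives.
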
	
\begin{proof}
    The claim follows from the Riemann-Roch on Deligne-Mumford stacks \cite{Toe99}. 
\end{proof}
 The following lemma is used multiple times in later sections.
	
		\begin{lemma}\label{chlink}
			Suppose $f_{*}(E)=\bigoplus_{k=0}^{n-1} E_{k}\rho_{k}$. Numerically, we have
			\begin{align*}
				\ch_{0}(E)& =\dfrac{1}{n}\sum_{k=0}^{n-1} \ch_{0}(E_{k})=\ch_{0}(E_{j}), \forall j, \\
				\ch_{1}(E)& = \dfrac{1}{n}\sum_{k=0}^{n-1} \ch_{1}(E_{k}), \\
				\ch_{2}(E)& =\dfrac{1}{n}\sum_{k=0}^{n-1} \ch_{2}(E_{k}) + \sum_{k=0}^{n-1} \dfrac{n-2k-1}{2n^{2}}\ch_{1}(E_{k})\cdot C.        		
			\end{align*}
		\end{lemma}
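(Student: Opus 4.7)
The plan is to verify the identities on a generating set of $K_0(\uX)_\QQ$ and then extend by additivity. Both sides of each identity are $\QQ$-linear in $[E]$: the Chern character is additive on short exact sequences, and since $f$ is finite flat, $f_*$ is exact, so the decomposition into $\rho_k$-isotypic pieces is additive as well. A convenient spanning set, suggested by the semiorthogonal decomposition of Lemma \ref{semiorthgonal}, consists of classes $\pi^*F$ with $F\in K_0(X)$ together with classes $j_*p^*G\cdot\rho_k$ with $G\in K_0(C)$ and $1\le k\le n-1$.

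For $E=\pi^*F$, the projection formula gives $f_*E = p_X^*F\otimes f_*\OO_{\uX}=\bigoplus_{k=0}^{n-1}p_X^*F\cdot\rho_k$, so $E_k=F$ for every $k$. The rank identity is then immediate. The $\ch_1$ identity reduces to $\tfrac{1}{n}\cdot n\,\ch_1(F)=\ch_1(F)$, matching $\pi^*\ch_1(F)=\ch_1(\pi^*F)$ numerically. For $\ch_2$, I plan to use the elementary identity $\sum_{k=0}^{n-1}(n-2k-1)=0$ to kill the correction term, leaving $\ch_2(F)=\ch_2(\pi^*F)$.

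For $E=j_*p^*G\cdot\rho_k$ with $1\le k\le n-1$, Lemma \ref{chain} gives $f_*E=p_X^*\bar j_*G\cdot\rho_k$, so $E_k=\bar j_*G$ and $E_j=0$ for $j\ne k$. The left-hand sides are supplied directly by Lemma \ref{uch2}, expressed in the stacky classes $\sC$ and $[\B\mu_n]$, which numerically correspond to $C/n$ and $\pt/n$. For the right-hand sides, Grothendieck--Riemann--Roch applied to the closed immersion $\bar j:C\hookrightarrow X$ yields
$$\ch(\bar j_*G)\;=\;\bigl(0,\;\rk(G)\cdot C,\;\deg(G)-\tfrac{\rk(G)}{2}C^2\bigr).$$
Substituting, the coefficient of $\rk(G)\,C^2$ on the right collapses to $-\tfrac{1}{2n}+\tfrac{n-2k-1}{2n^2}=-\tfrac{2k+1}{2n^2}$, which reproduces the coefficient of $\sC^2$ in Lemma \ref{uch2}, while the coefficient of $\deg(G)$ is $\tfrac{1}{n}$, matching the numerical value of $[\B\mu_n]$.

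There is no essential obstacle here; this is fundamentally a bookkeeping exercise. The only care required is in translating between the two presentations of $\Num^*(\uX)$ at play: the stacky presentation involving $\sC$ and $[\B\mu_n]$ used by Lemma \ref{uch2}, versus the presentation in terms of classes pulled back from $X$ used on the right-hand side of this lemma. Once that translation is fixed, each case reduces to the two short arithmetic identities above, and additivity in $K_0$ extends the formulas from the generators to all of $K_0(\uX)_\QQ$.
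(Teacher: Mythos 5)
Your proof is correct, and it takes a genuinely different route from the paper's. The paper works directly with a torsion-free sheaf $E$: it invokes the parabolic filtration of Proposition \ref{link}, whose graded factors are $E_0$ (pulled back from $X$) and $G_{0,k}\rho_k$ for $1\le k\le n-1$, computes the Chern classes of each factor via Lemma \ref{uch2}, and telescopes the sum. You instead reduce by linearity in $K_0$ to a spanning set supplied by the semiorthogonal decomposition, then verify the two cases separately: for $\pi^*F$ the projection formula gives $E_k=F$ for all $k$ and the correction term dies because $\sum_{k=0}^{n-1}(n-2k-1)=0$; for $j_*p^*G\cdot\rho_k$ you match Lemma \ref{uch2} (used for the left-hand side) against Grothendieck--Riemann--Roch for $\bar j$ (used for the right-hand side), and the coefficients $-\tfrac{1}{2n}+\tfrac{n-2k-1}{2n^2}=-\tfrac{2k+1}{2n^2}$ agree. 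Both arguments ultimately rest on Lemma \ref{uch2}; what your version buys is that it proves the statement for arbitrary classes in $K_0(\uX)$ from the start, whereas the paper's filtration argument literally applies only to torsion-free sheaves and needs the same additivity remark you make explicit to cover the general case. What the paper's version buys is slightly less machinery (no need to spell out the spanning set or check $f_*$ on each generator type), and the intermediate identities it produces (the telescoping expressions for $\ch_1(E)$ and $\ch_2(E)$ in terms of $G_{0,k}$) are reused later in Lemma \ref{ranks}.
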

		
		\begin{proof}
			We have $\ch_{0}(E_{i})=\ch_{0}(E_{j}), \forall i,j$, since $f_{*}$ is \'{e}tale away from $\sC$. Hence the first claim is proved. By Proposition \ref{link},  and Lemma \ref{uch2}, we have
			\begin{align*}
				\ch_{1}(E)&=\ch_{1}(E_{0})+\dfrac{1}{n}\sum_{k=1}^{n-1}(\ch_{1}(E_{k})-\ch_{1}(E_{0}) )
				=  \dfrac{1}{n}\sum_{k=0}^{n-1} \ch_{1}(E_{k}), \\
				\ch_{2}(E)&=\ch_{2}(E_{0})+\sum_{k=1}^{n-1}\left[\dfrac{\ch_{2}(G_{0,k})}{n}+ \dfrac{\ch_{1}(G_{0,k})\cdot C}{2n} - \dfrac{2k+1}{2n^{2}}\ch_{1}(G_{0,k})\cdot C \right]\\
				&= \dfrac{1}{n}\sum_{k=0}^{n-1} \ch_{2}(E_{k}) + \sum_{k=0}^{n-1} \dfrac{n-2k-1}{2n^{2}}\ch_{1}(E_{k})\cdot C.        
			\end{align*}
		\end{proof}
 Since Chern classes computations in this paper are all numerical, we will not distinguish $[\sC], \dfrac{C}{n}$ and $ [\mathbf{B}\mu_{n}], \dfrac{\mathrm{pt}}{n}$. 
  As consequences of Proposition \ref{link}, we have the following inequalities.

	\begin{lemma}\label{ranks}
			Let $E\in \Coh(\uX)$ be a torsion free  sheaf. Let $G_{j,k}$ be the sheaf defined in (\ref{ejk}) and $r_{j,k}:=\ch_{0}(C, G_{j,k})$, where $0\leq j, k\leq n-1$. Then 
   \begin{enumerate}

			 \item  $0\leq r_{0,1}\leq r_{0,2}\leq \cdots \leq r_{0, n-1}\leq \ch_{0}(E). $

              \item $r_{j,k}\leq \ch_{0}(E)$ for any $ 0\leq  j,k \leq n-1 . $
   \end{enumerate}
		\end{lemma}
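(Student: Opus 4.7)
The plan is to exploit the parabolic sheaf description of $E$ supplied by Proposition \ref{link}. Since $E$ is torsion free on $\uX$ and $\sC \subset \uX$ is a Cartier divisor, multiplication by its local equation is injective on $E$, so the sequence $0 \to E(-\sC) \to E \to E|_\sC \to 0$ is short exact. Applying the exact functor $f_*$ and using the identifications $f_*(E(-\sC)) = \bigoplus_k E_k \rho_{k+1}$ and $f_*E = \bigoplus_k E_k \rho_k$ from the discussion preceding Proposition \ref{link}, I would conclude that each comparison map $e_{k-1,k}: E_{k-1} \to E_k$ is an injection between torsion-free sheaves of common rank $\ch_0(E)$ on $X$, which is an isomorphism away from $C$. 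Hence, for every $j < k$, the cokernel $G_{jk} = \coker(e_{jk})$ is a torsion sheaf on $X$ set-theoretically supported on $C$, and its generic length along $C$, which I would identify with $r_{jk} = \ch_0(C, G_{jk})$, is additive on short exact sequences and therefore subadditive under inclusions.

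For claim (i), I would factor $e_{0,k+1}$ as the composition $E_0 \hookrightarrow E_k \hookrightarrow E_{k+1}$ and apply the snake lemma to produce the inclusion $G_{0,k} \hookrightarrow G_{0,k+1}$, yielding $r_{0,k} \leq r_{0,k+1}$; the bound $r_{0,1} \geq 0$ is automatic. For the upper bound I would extend the chain one more step to $E_0 \hookrightarrow E_{n-1} \hookrightarrow E_0(C)$: since $E_0$ is torsion free on a smooth surface it is locally free at the generic point of $C$, so the quotient $E_0(C)/E_0$ has generic rank $\rk(E_0) = \ch_0(E)$ on $C$, and the resulting injection $G_{0,n-1} \hookrightarrow E_0(C)/E_0$ gives $r_{0,n-1} \leq \ch_0(E)$.

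Claim (ii) then follows from the same mechanism: for $j < k$, insertion of $E_k$ into the chain $E_j \hookrightarrow E_j(C)$ produces $G_{jk} \hookrightarrow E_j(C)/E_j$, whose generic rank along $C$ is $\rk(E_j) = \ch_0(E)$, while the remaining index configurations reduce to this case under the cyclic convention $E_{k+n} := E_k(C)$. The main point requiring care is the input step: verifying that every $e_{jk}$ is actually injective and an isomorphism away from $C$, so that $G_{jk}$ really is torsion on $X$, supported on $C$, with generic rank computed by $\ch_0(C, \cdot)$; everything else is a diagram chase combined with one length computation at the generic point of $C$.
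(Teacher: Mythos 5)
Your argument is correct and follows essentially the same route as the paper: both rest on the chain of injections $E_{0}\subseteq E_{1}\subseteq \cdots \subseteq E_{n-1}\subseteq E_{0}(C)$ from Proposition \ref{link} together with the nonnegativity and additivity of the ranks along $C$ of the successive quotients $G_{k-1,k}$. The only cosmetic difference is that you obtain the total bound by a length count on $E_{0}(C)/E_{0}$ at the generic point of $C$, whereas the paper packages the same fact as the Chern-character identity $\ch_{0}(E)\sC=\ch_{1}(E|_{\sC})=\frac{1}{n}\sum_{k}\ch_{1}(G_{k-1,k})$.
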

		
		\begin{proof}
			Note $G_{0,k}$ has a composition series $G_{0,1}, G_{1,2}, \cdots, G_{k-1,k}$. By Lemma \ref{chlink}, we have 

                      \begin{align*}
                     \ch_{1}(E|_{\sC}) &= \ch_{1}(E)-\ch_{1}(E(-\sC))\\
                       &=\frac{1}{n}\sum_{k=0}^{n-1}\ch_{1}(E_{k})-\frac{1}{n}(\ch_{1}(E_{n-1}(-C))+ \sum_{k=0}^{n-2}\ch_{1}(E_{k})) \\
                       &=\frac{1}{n}(\ch_{1}(E_{n-1})-\ch_{1}(E_{n-1}(-C)))\\
                       &=\frac{1}{n}(\sum_{k=1}^{n-1}(\ch_{1}(E_{k})-\ch_{1}(E_{k-1}))+\ch_{1}(E_{0})-\ch_{1}(E_{n-1}(-C)))
                       =\frac{1}{n}(\sum_{k=0}^{n-1}\ch_{1}(G_{k-1,k})).
             \end{align*}  
             Since $G_{k-1,k}$ are supported on the curve $C$, we have following:
			$$ 
			\ch_{0}(E)\sC=\ch_{1}(E|_{\sC})=\frac{1}{n}\sum_{k=0}^{n-1} \ch_{1}(G_{k-1,k})=\frac{1}{n}\sum_{k=0}^{n-1} \ch_{0}(C, G_{k-1,k})C. $$
   Hence $r_{0,k}=\ch_{0}(C, G_{0,k}) =\sum_{j=0}^{k}\ch_{0}(C, G_{j,j+1}) \leq \ch_{0}(E)$, $r_{0,k}\leq r_{0, k+1}$, and $r_{j,k}\leq \ch_{0}(E)$.

   \end{proof}

\section{Construction of stability conditions}\label{section4}

 \begin{lemma}
     The abelian category $\operatorname{Coh}(\uX)$ is Noetherian.
 \end{lemma}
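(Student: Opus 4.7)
The plan is to reduce Noetherianity of $\operatorname{Coh}(\underline{X})$ to the analogous property on the Noetherian scheme $X$ by transporting along the finite flat morphism $f: \underline{X} \to \mathscr{X}$ constructed in Definition \ref{mapf}, and then using the decomposition of $\operatorname{Coh}(\mathscr{X})$ from Lemma \ref{decomposition}.

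First I would verify that $\operatorname{Coh}(\mathscr{X})$ is Noetherian. By Lemma \ref{decomposition}, $\operatorname{Coh}(\mathscr{X})$ decomposes as $\bigoplus_{k=0}^{n-1}\operatorname{Coh}(X)\rho_{k}$. Concretely, every subobject of $\bigoplus_{k} F_{k}\rho_{k}$ in $\operatorname{Coh}(\mathscr{X})$ must respect the decomposition and therefore has the form $\bigoplus_{k} G_{k}\rho_{k}$ with $G_{k}\subseteq F_{k}$. Thus an ascending chain in $\operatorname{Coh}(\mathscr{X})$ is the same data as ascending chains in each of the $n$ components $\operatorname{Coh}(X)\rho_{k}\cong \operatorname{Coh}(X)$, and these stabilize because the Noetherian scheme $X$ has Noetherian category of coherent sheaves.

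Next I would use the properties of $f$ recorded after Definition \ref{mapf}: $f$ is finite, flat, of degree $n$, ramified along $\mathscr{C}$. Finiteness and flatness imply that $f_{*}$ is exact, while finiteness (hence affineness) gives that $f_{*}$ is conservative, since for any nonzero $G\in \operatorname{Coh}(\underline{X})$ one has $\operatorname{Supp}(f_{*}G)=f(\operatorname{Supp}(G))\neq \emptyset$. Given any ascending chain $E_{1}\subseteq E_{2}\subseteq \cdots \subseteq E$ in $\operatorname{Coh}(\underline{X})$, apply $f_{*}$ to obtain an ascending chain in $\operatorname{Coh}(\mathscr{X})$. For every strict step $E_{i}\subsetneq E_{i+1}$ the quotient $E_{i+1}/E_{i}$ is nonzero, so by exactness and conservativity $f_{*}E_{i}\subsetneq f_{*}E_{i+1}$ is also strict. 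The pushed-forward chain stabilizes in $\operatorname{Coh}(\mathscr{X})$ by the previous paragraph, forcing the original chain to stabilize as well.

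I expect no serious obstacle here: the argument is a clean transport of Noetherianity using a finite flat cover by a much more tractable stack. If a more hands-on local argument were preferred, one could instead invoke Proposition \ref{rootstackDM}(iv) to write $\underline{X}$ étale-locally as $[\operatorname{Spec}(A[x]/(x^{n}-a))/\mu_{n}]$; since $A[x]/(x^{n}-a)$ is a finite extension of the Noetherian ring $A$, its category of finitely generated $\mu_{n}$-equivariant modules is Noetherian, and one concludes by gluing. Either route avoids any deep input and relies only on elementary properties of finite flat pushforward and finite group quotients.
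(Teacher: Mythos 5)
Your primary argument is correct but takes a genuinely different route from the paper. The paper's own proof is exactly the ``more hands-on local argument'' you sketch in your last paragraph: it covers $\uX$ by the affine pieces $V_{i}=[\operatorname{Spec}(A_{i}[x]/(x^{n}-a_{i}))/\mu_{n}]$ of Proposition~\ref{rootstackDM}(iv), observes that $\Coh(V_{i})\simeq \Coh^{\mu_{n}}(\operatorname{Spec}(A_{i}[x]/(x^{n}-a_{i})))$ is Noetherian because the underlying ring is Noetherian, and concludes. Your main argument instead works globally, transporting Noetherianity along the finite flat $f:\uX\to\sX$: you first deduce Noetherianity of $\Coh(\sX)$ from the direct-sum decomposition $\Coh(\sX)\cong\bigoplus_{k}\Coh(X)\rho_{k}$ of Lemma~\ref{decomposition}, and then use exactness and conservativity of $f_{*}$ to pull back stabilization of chains. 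Both routes are short and correct. What the paper's local proof buys is independence from the map $f$ and Lemma~\ref{decomposition}; what your global proof buys is that it avoids the (unstated in the paper) descent step of passing from Noetherianity on each affine chart back to $\uX$, replacing it with a single exact conservative functor to an already-understood category, and it reuses structure already set up in Section~3 rather than appealing to equivariant module categories directly.
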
     

 \begin{proof}
    By construction we have an affine cover
    $$\uX=\bigcup_{i=1}^{n}V_{i},$$
where $V_{i}=[\operatorname{Spec}(A_{i}[x]/(x^{n}-a_{i}))/\mu_{n}]$. Since we have the following equivalence of categories 
$$\Coh(V_{i}) \simeq \Coh^{\mu_{n}}(\operatorname{Spec}(A_{i}[x]/(x^{n}-a_{i}))),$$
the category
$\operatorname{Coh}(V_{i})$ is Noetherian. Thus $\operatorname{Coh}(\uX)$ is Noetherian. 

\end{proof}

\begin{lemma}
    For any coherent sheaf $E$ on $\uX$, it has Harder-Narasimhan filtration with respect to the slope stability (Definition \ref{slope stability}).
\end{lemma}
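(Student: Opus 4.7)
The plan is to follow the classical argument for slope stability (as in Huybrechts--Lehn), adapted to $\uX$ via the finite flat cover $f\colon \uX \to \sX$. The one nontrivial ingredient is a slope boundedness statement on $\uX$, which is reduced to the classical statement on the surface $X$.

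First, I would reduce to the torsion-free case. Every coherent sheaf $E$ on $\uX$ contains a unique maximal torsion subsheaf $T(E)\subset E$, with $E/T(E)$ torsion-free. Since $\mu_H$ equals $+\infty$ on every nonzero torsion sheaf, $T(E)$ is automatically $\mu_H$-semistable, and the full \HN filtration of $E$ is obtained by prepending $T(E)$ to a \HN filtration of $E/T(E)$. Hence it suffices to treat the case where $E$ is torsion-free.

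The key step is to show, for $E$ torsion-free, that $\{\mu_H(F) : 0\neq F\subset E\}$ is bounded above. Choose an ample class $H_X$ on $X$ with $mH = \pi^{*}H_X$ in $\NS(\uX)_{\QQ}$ for some $m\in\ZZ_{>0}$, which is possible by the definition of ample on $\uX$. Using the decomposition $\Coh(\sX) = \bigoplus_{k=0}^{n-1}\Coh(X)\rho_k$ of Lemma \ref{decomposition}, write $f_{*}E = \bigoplus_k E_k\rho_k$, where each $E_k$ is torsion-free on $X$ since $f$ is finite flat. Any inclusion $F\hookrightarrow E$ pushes forward to $f_{*}F = \bigoplus_k F_k\rho_k$ with $F_k\subset E_k$ in $\Coh(X)$. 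Lemma \ref{chlink} together with the identity $\rk F_k = \rk F$ yields
\[
\mu_H(F) \;=\; \frac{1}{mn}\sum_{k=0}^{n-1}\mu_{H_X}(F_k) \;\leq\; \frac{1}{mn}\sum_{k=0}^{n-1}\mu_{H_X}^{\max}(E_k),
\]
the right-hand side being a constant depending only on $E$. Here $\mu_{H_X}^{\max}(E_k)$ denotes the classical maximum slope among nonzero subsheaves of $E_k$, which is finite by the classical \HN theory on the smooth projective surface $X$.

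With slope boundedness in hand, the \HN filtration is constructed by the standard recipe: by Noetherianity of $\Coh(\uX)$, extract a subsheaf $F_1\subset E$ by first maximizing the slope and then the rank; such an $F_1$ is automatically $\mu_H$-semistable, and iterating on $E/F_1$ produces a filtration with strictly decreasing semistable factors that terminates in finitely many steps, again by Noetherianity. The main obstacle is the slope boundedness; once it is established, the remainder of the argument is entirely routine.
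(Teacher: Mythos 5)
Your argument is correct, but it takes a genuinely different route from the paper. The paper's proof is a two-line application of the abstract existence criterion of Macr\`i--Schmidt: it considers the stability function $Z(E)=-c_{1}(E)\cdot H+\rk(E)\,i$, notes that $\Coh(\uX)$ is Noetherian (the preceding lemma) and that the image of $\Im Z=\rk$ is the discrete set $\ZZ$, and concludes by \cite[Proposition 4.10]{MS17}. You instead run the classical Huybrechts--Lehn argument by hand, with the one genuinely stack-specific input being slope boundedness, which you obtain by pushing subsheaves forward along the finite flat cover $f\colon\uX\to\sX$ and invoking Lemma \ref{decomposition} and the identity $\mu_{H}(F)=\tfrac{1}{mn}\sum_{k}\mu_{H_{X}}(F_{k})$ from Lemma \ref{chlink}; this correctly reduces everything to classical boundedness on the surface $X$. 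The trade-off: the paper's route is shorter and sidesteps slope boundedness entirely (discreteness of the rank suffices, and it works verbatim for irrational ample $H$), whereas your route produces an explicit, quantitative bound $\mu_{H}(F)\leq\tfrac{1}{mn}\sum_{k}\mu_{H_{X}}^{\max}(E_{k})$ that is of independent use. One small point you gloss over in the ``standard recipe'': extracting $F_{1}$ by ``first maximizing the slope'' requires the supremum of slopes to be \emph{attained}, which for rational $H$ follows from discreteness of the value set $\tfrac{1}{mn\cdot\rk(E)!}\ZZ$, but for a real ample class requires the chain/saturation argument of Huybrechts--Lehn rather than discreteness; you should either restrict to rational $H$ or say a word about this.
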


\begin{proof}
Consider the function $$Z: K_{0}(\operatorname{Coh}(\uX))\to \mathbb{C}, \quad Z(E)=- c_{1}(E)\cdot H+\operatorname{rk}(E)i  .$$

Since $\operatorname{Coh}(\uX)$ is Noetherian and $\Im(Z(K_{0}(\uX)))$ is discrete, by \cite[Proposition 4.10]{MS17}, every $E\in \operatorname{Coh}(\uX)$ has the Harder-Narasimhan filtration. 
\end{proof}
 Recall that for $B\in \NS(\uX)_{\RR} = \NS(X)_{\RR}$, the \emph{twisted Chern character} is defined as  $$\ch^{B}: K_{0}(\uX) \rightarrow \HH^{*}(\uX, \RR), \quad \ch^{B}(E):= e^{-B}\cdot \ch(E). $$
        Explicitly, this is
\begin{align*}
    \ch_{0}^{B}(-) &=\ch_{0}(-),\\
\ch_{1}^{B}(-) &= \ch_{1}(-)-B\cdot \ch_{0}(-), \\
\ch_{2}^{B}(-) &= \ch_{2}(-)-B\cdot \ch_{1}(-)+\dfrac{B^{2}}{2}\cdot \ch_{0}(-). 
\end{align*}

\begin{lemma}\label{twisted chern}
    Lemma \ref{chlink} holds for twisted Chern character.
\end{lemma}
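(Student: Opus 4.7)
The plan is to reduce everything to Lemma \ref{chlink} by simply unwinding the definition $\ch^{B}=e^{-B}\cdot \ch$ componentwise. Since the Chern character and the twisted Chern character are related by a linear formula, and since $B \in \NS(X)_{\RR}$ is pulled back from $X$ (so intersecting $B$ commutes with the decomposition of $f_{*}E$), no geometry beyond Lemma \ref{chlink} should be needed.

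Concretely, the degree $0$ identity is trivial since $\ch_{0}^{B}=\ch_{0}$. The degree $1$ identity
$$ \ch_{1}^{B}(E)= \frac{1}{n}\sum_{k=0}^{n-1}\ch_{1}^{B}(E_{k}) $$
follows by subtracting $B\cdot \ch_{0}$ from both sides of the second line of Lemma \ref{chlink} and using that $\ch_{0}(E_{k})=\ch_{0}(E)$ for every $k$, so that $\frac{1}{n}\sum_{k} B \cdot \ch_{0}(E_{k}) = B\cdot \ch_{0}(E)$.

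The only step that is not immediate is the degree $2$ identity. Here I would expand
$$\ch_{2}^{B}(E_{k}) = \ch_{2}(E_{k}) - B\cdot \ch_{1}(E_{k}) + \tfrac{B^{2}}{2}\ch_{0}(E_{k}),$$
plug it into the right-hand side of the proposed formula for $\ch_{2}^{B}$, and also expand $\ch_{1}^{B}(E_{k})\cdot C = \ch_{1}(E_{k})\cdot C - B\cdot C \cdot \ch_{0}(E_{k})$. The untwisted terms reassemble into $\ch_{2}(E)$ by Lemma \ref{chlink}, the $-B\cdot \ch_{1}(E_{k})$ pieces reassemble into $-B\cdot \ch_{1}(E)$ by the degree $1$ case already handled, and the $\tfrac{B^{2}}{2}\ch_{0}(E_{k})$ pieces give $\tfrac{B^{2}}{2}\ch_{0}(E)$. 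The potentially dangerous cross term is
$$ -\sum_{k=0}^{n-1}\frac{n-2k-1}{2n^{2}}\,B\cdot C\cdot \ch_{0}(E_{k}) \;=\; -\frac{B\cdot C\cdot \ch_{0}(E)}{2n^{2}}\sum_{k=0}^{n-1}(n-2k-1),$$
and the arithmetic identity $\sum_{k=0}^{n-1}(n-2k-1)=0$ makes this term vanish. Assembling the pieces recovers $\ch_{2}^{B}(E)$.

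I do not expect any serious obstacle: the main point is the vanishing of $\sum_{k=0}^{n-1}(n-2k-1)$, which kills the extra $B$-correction that might otherwise spoil the formula. The rest is bookkeeping using Lemma \ref{chlink} and the identity $\ch_{0}(E)=\ch_{0}(E_{k})$.
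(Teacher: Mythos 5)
Your proposal is correct and is essentially the same argument as the paper's: both expand the twisted Chern character in terms of the untwisted one, substitute Lemma \ref{chlink}, and use the vanishing $\sum_{k=0}^{n-1}(n-2k-1)=0$ to kill the $B\cdot C$ cross term. The only cosmetic difference is that you work from the right-hand side of the desired identity and collapse it, whereas the paper expands $\ch_2^B(E)$ directly; the algebra is identical.
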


\begin{proof}

It is clear for $\ch_{0}^{B}$ and $\ch_{1}^{B}$ parts. For $\ch_{2}^{B}$ part, since $\sum_{k=0}^{n-1}\frac{n-2k-1}{2n^{2}}=0$,  we have following:
\begin{align*}
\ch_{2}^{B}(E) &= \ch_{2}(E)-B\cdot \ch_{1}(E)+\dfrac{B^{2}}{2}\cdot \ch_{0}(E)\\
                          &=\dfrac{1}{n}\sum_{k=0}^{n-1} \ch_{2}(E_{k}) + \sum_{k=0}^{n-1} \dfrac{n-2k-1}{2n^{2}}\ch_{1}(E_{k})\cdot C-\dfrac{1}{n}B\sum_{k=0}^{n-1} \ch_{1}(E_{k})+\frac{B^{2}}{2}\frac{1}{n}\sum_{k=0}^{n-1}\ch_{0}(E_{k})\\
                          &=\frac{1}{n}\sum_{k=0}^{n-1}(\ch_{2}(E_{k})-B\ch_{1}(E_{k})+\frac{B^{2}}{2}\ch_{0}(E_{k}))+ \sum_{k=0}^{n-1} \dfrac{n-2k-1}{2n^{2}}(\ch_{1}(E_{k})-B\ch_{0}(E_{k}))\cdot C\\
                          &= \dfrac{1}{n}\sum_{k=0}^{n-1} \ch_{2}^{B}(E_{k}) + \sum_{k=0}^{n-1} \dfrac{n-2k-1}{2n^{2}}\ch_{1}^{B}(E_{k})\cdot C.                      \end{align*}

\end{proof}

        \begin{definition}\label{heart}
        Let $B, H \in \NS(\uX)_{\RR}$ with $H \in \Amp(\uX)$ an ample class. 
        	We define the following full subcategories of $D^{b}(\uX)$:
        		\begin{align*}\label{tilt}
        			\mathcal{T}_{B, H}:= &\{E \in \Coh(\uX): \mu_{H, \min}(F)>B\cdot H \}, \\
        			\mathcal{F}_{B, H}:= &\{E \in \Coh(\uX): \mu_{H, \max}(F)\leq B\cdot H \}, \\
        			\mathcal{A}_{B, H}:= & \langle \mathcal{F}_{B, H}[1], \mathcal{T}_{B, H} \rangle. 
        		\end{align*}
        \end{definition}

\begin{lemma}
        The category $\mathcal{A}_{B, H}$ is the heart of a bounded t-structure on $D^{b}(\uX)$. 
        \end{lemma}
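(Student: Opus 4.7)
The plan is to invoke the classical tilting construction of Happel--Reiten--Smal\o. The pair $(\mathcal{T}_{B,H}, \mathcal{F}_{B,H})$ should be a torsion pair in the Noetherian abelian category $\Coh(\uX)$, and then $\mathcal{A}_{B,H} = \langle \mathcal{F}_{B,H}[1], \mathcal{T}_{B,H}\rangle$ is automatically the heart of a bounded $t$-structure on $D^{b}(\uX)$.

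First I would check that $(\mathcal{T}_{B,H},\mathcal{F}_{B,H})$ is a torsion pair. The vanishing $\Hom(T,F)=0$ for $T\in\mathcal{T}_{B,H}$ and $F\in\mathcal{F}_{B,H}$ is standard: any nonzero map $\varphi\colon T\to F$ would produce a subsheaf $\mathrm{im}(\varphi)\subseteq F$ with $\mu_{H}(\mathrm{im}(\varphi))\geq\mu_{H,\min}(T)>B\cdot H$, contradicting $\mu_{H,\max}(F)\leq B\cdot H$. For the decomposition property, I take any $E\in\Coh(\uX)$ and consider its Harder--Narasimhan filtration with respect to $\mu_{H}$, whose existence in $\Coh(\uX)$ was established in the previous two lemmas. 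Let $E_{1}\subseteq E$ be the maximal HN piece whose HN-factors all have slope $>B\cdot H$; explicitly, if the HN-slopes are $\mu_{1}>\mu_{2}>\cdots>\mu_{k}$ and $j$ is the largest index with $\mu_{j}>B\cdot H$, set $E_{1}$ to be the $j$-th HN subobject. Then $E_{1}\in\mathcal{T}_{B,H}$ by construction, and $E/E_{1}\in\mathcal{F}_{B,H}$ since all its HN-factors have slope $\leq B\cdot H$, giving the required short exact sequence $0\to E_{1}\to E\to E/E_{1}\to 0$.

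Next I apply the HRS tilting theorem (see e.g.\ \cite[Prop.~2.3.2 and Lemma~3.2]{Bri08} or \cite[Prop.~5.3]{MS17}): given a torsion pair in the heart of a bounded $t$-structure on a triangulated category, the tilted subcategory is again the heart of a bounded $t$-structure. Since $\Coh(\uX)$ itself is the heart of the standard $t$-structure on $D^{b}(\uX)$, this immediately yields the claim for $\mathcal{A}_{B,H}$.

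I do not anticipate a genuine obstacle here; the whole argument is formal once slope HN-filtrations are available, and those have been provided by the preceding lemmas using Noetherianness of $\Coh(\uX)$ together with the discreteness of $\Im Z$ on $K_{0}(\uX)$. The only minor point worth flagging is that the HN filtration must really exist for every $E$ (not merely torsion-free ones), and indeed torsion sheaves have infinite slope and hence sit entirely inside $\mathcal{T}_{B,H}$, so the decomposition step goes through uniformly.
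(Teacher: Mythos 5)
Your proposal is correct and is essentially the same as the paper's one-line proof, which simply cites Bridgeland's tilting lemma; you have just unpacked the standard verification that $(\mathcal{T}_{B,H},\mathcal{F}_{B,H})$ is a torsion pair (Hom vanishing plus the HN-filtration decomposition) before invoking Happel--Reiten--Smal\o. One tiny phrasing point: in the Hom-vanishing step you should compare $\mu_{H,\min}(\mathrm{im}(\varphi))$ rather than $\mu_H(\mathrm{im}(\varphi))$, since the image need not be semistable; but the inequality you need, $\mu_{H,\min}(\mathrm{im}(\varphi))\geq\mu_{H,\min}(T)>B\cdot H\geq\mu_{H,\max}(F)\geq\mu_{H,\max}(\mathrm{im}(\varphi))$, holds because the image is a quotient of $T$ and a subobject of $F$, giving the contradiction.
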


\begin{proof}
    The proof follows from \cite[Lemma 6.1]{Bri07}.
\end{proof}

\begin{definition}\label{lattice}
Let $\Lambda$ be the image of $K_{0}(\uX)$ in $\HH^{*}_{CR}(\uX,\mathbb{Q})$ under $\operatorname{ch}_{orb}(-)$ (Definition \ref{chorb}), and $\Lambda'$ be the image of $K_{0}(\uX)$ in $\HH^{*}_{CR}(\uX,\mathbb{Q})$ under $\ch(-)$.
\end{definition}

Define the following symmetric paring on $\HH^{*}(\uX)$: 
        \begin{equation*}
        	\langle \uu, \vv \rangle := \int_{\uX} \uu \cdot \vv.
        \end{equation*}
        The following map induces a group homomorphism on $\HH^{*}(\uX)$:   
        \begin{align*}
Z_{B, H}(E):= -\langle e^{-(B+iH)},\ch(E) \rangle
        	&=\langle (-1+\dfrac{H^{2}}{2})+iH, \ch(E)\cdot e^{-B} \rangle \\
     &=-\ch_{2}^{B}(E) + \dfrac{H^{2}}{2}\ch_{0}^{B}(E)
        	+ i H\cdot (\ch_{1}^{B}(E)).
\end{align*}       

\begin{definition}\label{tilt stability}

We call $\sigma_{B,H}:=(Z_{B, H}, \cA_{B, H} )$ a \emph{tilt stability condition}.
    
\end{definition}

        \begin{proposition}\label{stability function}
        	 The group homomorphism $Z_{B, H}$ is a stability function (Definition \ref{pre-stab}) on the heart $\mathcal{A}_{B, H}$.
        \end{proposition}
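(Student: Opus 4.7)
The plan is to verify that for every nonzero $E\in\mathcal{A}_{B,H}$ the number $Z_{B,H}(E)$ lies in $\mathbb{H}\cup\mathbb{R}_{<0}$, where $\mathbb{H}$ denotes the open upper half-plane. Since every such $E$ sits in the distinguished triangle $\mathcal{H}^{-1}(E)[1]\to E\to \mathcal{H}^{0}(E)$ with $\mathcal{H}^{-1}(E)\in\mathcal{F}_{B,H}$ and $\mathcal{H}^{0}(E)\in\mathcal{T}_{B,H}$, and $Z_{B,H}$ is additive on triangles, it suffices to verify the claim separately on $\mathcal{T}_{B,H}$ and on $\mathcal{F}_{B,H}[1]$ and then observe that a sum of elements of $\mathbb{H}\cup\mathbb{R}_{<0}$ never hits the positive real axis. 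Passing to $\mu_{H}$-HN factors then reduces the analysis to $\mu_{H}$-semistable torsion-free sheaves and to torsion sheaves of pure dimension $0$ or $1$.

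For a $\mu_{H}$-semistable torsion-free $E\in\mathcal{T}_{B,H}$ one has $\mu_{H}(E)>B\cdot H$, which is literally the statement $\operatorname{Im} Z_{B,H}(E)=H\cdot\ch_{1}^{B}(E)>0$. For a pure $1$-dimensional torsion sheaf $E$, whose support is a combination of honest divisors on $X$ and the root gerbe $\mathscr{C}$, the class $\ch_{1}(E)$ is a nonzero effective class in $\mathrm{NS}(\uX)_{\RR}$, with $[\mathscr{C}]$ contributing $C/n$ numerically via Lemma~\ref{uch2}, so once again $H\cdot\ch_{1}^{B}(E)>0$ by ampleness of $H$. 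For a $0$-dimensional sheaf one has $\ch_{0}=\ch_{1}=0$ and $\ch_{2}(E)>0$, hence $Z_{B,H}(E)=-\ch_{2}(E)\in\mathbb{R}_{<0}$.

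For $F[1]\in\mathcal{F}_{B,H}[1]$ with nonzero torsion-free $F$ satisfying $\mu_{H,\max}(F)\leq B\cdot H$ we have $\operatorname{Im} Z_{B,H}(F[1])=-H\cdot\ch_{1}^{B}(F)\geq 0$. The subtle case is equality, which forces every $\mu_{H}$-HN factor of $F$ to have slope exactly $B\cdot H$, whence $F$ itself is $\mu_{H}$-semistable. Theorem~\ref{JK} is equivalent to $(\ch_{1}^{B}(F))^{2}\geq 2\,\ch_{0}(F)\,\ch_{2}^{B}(F)$, while the Hodge index theorem applied to the real divisor class $\ch_{1}^{B}(F)$, which is orthogonal to the ample $H$, gives $(\ch_{1}^{B}(F))^{2}\leq 0$. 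Together these force $\ch_{2}^{B}(F)\leq 0$, and therefore
\begin{equation*}
\operatorname{Re} Z_{B,H}(F[1])\;=\;\ch_{2}^{B}(F)-\tfrac{H^{2}}{2}\ch_{0}(F)\;<\;0,
\end{equation*}
using $\ch_{0}(F)>0$. Combining the two analyses, whenever $\operatorname{Im} Z_{B,H}(E)=0$ both $\mathcal{H}^{-1}(E)[1]$ and $\mathcal{H}^{0}(E)$ contribute nonpositive real parts, at least one of which is strictly negative, so $Z_{B,H}(E)\in\mathbb{R}_{<0}$.

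The only genuinely stack-theoretic input in this argument is the Bogomolov inequality of \cite{JK24} encoded in Theorem~\ref{JK}; with it in hand the remainder is the standard tilt construction of \cite{Bri08}, with only minor bookkeeping to account for the fact that $\ch_{1}$ and $\ch_{2}$ live in the numerical ring of $\uX$ in which $[\mathscr{C}]$ and $[\mathbf{B}\mu_{n}]$ appear as fractional classes. I therefore do not expect any serious obstacle in this proposition; the genuinely hard work in the paper is postponed to the support property with respect to the rational Chen-Ruan cohomology $H^{*}_{CR}(\uX,\mathbb{Q})$ treated in Section~\ref{section5}.
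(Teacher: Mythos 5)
Your proof is correct and follows essentially the same route as the paper: reduce to the case $\operatorname{Im} Z_{B,H}(E)=0$, note that this forces $\mathcal{H}^{0}(E)$ to be $0$-dimensional and $\mathcal{H}^{-1}(E)$ to be $\mu_{H}$-semistable of slope $B\cdot H$, and then combine the orbifold Bogomolov inequality of Theorem~\ref{JK} with the Hodge index theorem to conclude $\ch_{2}^{B}(\mathcal{H}^{-1}(E))\leq 0$, hence $Z_{B,H}(E)\in\mathbb{R}_{<0}$. Your version spells out the reduction to HN factors and torsion pieces a little more explicitly than the paper does, but there is no substantive difference in the argument.
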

        
        \begin{proof}
        	Let $E\in \mathcal{A}_{B, H}$ be a non-zero object. By definition, $\Im(Z_{B, H}(E))\geq 0$. If $\Im(Z_{B, H}(E))=0$, then $\mathcal{H}^{0}(E)$ has zero dimensional support, and $\mathcal{H}^{-1}(E)$ is torsion free and $\mu_{H}$-semistable with slope $B\cdot H$. Now assume $E$ is such an object. 
         
         Recall that for every class $\vv\in \HH^{*}(\uX)$, $\Delta(\vv):=\ch_{1}(\vv)^{2}-2\ch_{0}(\vv)\ch_{2}(\vv)$. Since $\Delta(\OO(-B))=0$, we have
$$ \ch_{1}^{B}(\vv)^{2}-2\ch_{0}^{B}(\vv)\ch_{2}^{B}(\vv)=\Delta(\vv\otimes \OO(-B))
=\Delta(\vv)+\Delta(\OO(-B))=\Delta(\vv). $$
         By the Bogomolov inequality (Theorem \ref{JK}), we have 
        	$\Delta(\mathcal{H}^{-1}(E)) \geq 0$.
        	If $\mathcal{H}^{-1}(E)\neq 0$, it has positive rank. By the Hodge Index Theorem, we have
        	$$\ch_{2}^{B}(\mathcal{H}^{-1}(E))\leq \dfrac{\ch_{1}^{B}(\mathcal{H}^{-1}(E))^{2}}{2\ch_{0}^{B}(\mathcal{H}^{-1}(E))} 
        	\leq 0. $$
        	Hence we have 
        \begin{align*}
        Z_{B, H}(E)=&-Z_{B, H}(\mathcal{H}^{-1}(E))+Z_{B, H}(\mathcal{H}^{0}(E))\\
        =&
        	\ch_{2}^{B}(\mathcal{H}^{-1}(E)) - \dfrac{H^{2}}{2}\ch_{0}^{B}(\mathcal{H}^{-1}(E)) - \ch_{2}^{B}(\mathcal{H}^{0}(E))
         \leq \dfrac{-H^{2}}{2} \ch_{0}(\mathcal{H}^{-1}(E))-\ch_{2}(\mathcal{H}^{0}(E)).
        \end{align*}
        	By assumption, $-H^{2}<0$. Since $E\neq 0$, either $\ch_{0}(\mathcal{H}^{-1}(E))>0$ or $\ch_{2}(\mathcal{H}^{0}(E))>0$. Hence $Z_{B, H}(E)<0$. 

        \end{proof}      

\begin{corollary}\label{prestability condition}
When $B, H\in \operatorname{NS}(X)_{\mathbb{Q}}$, $\sigma_{B, H}=(Z_{B, H}, \mathcal{A}_{B, H})$ is a pre-stability condition.
\end{corollary}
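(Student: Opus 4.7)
The plan is to verify the two conditions in Definition \ref{pre-stab} for the pair $(Z_{B,H}, \mathcal{A}_{B,H})$. Condition (i) is precisely the content of Proposition \ref{stability function}, so only condition (ii), the existence of Harder-Narasimhan filtrations, remains.

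To establish HN, I would invoke the standard criterion (as in \cite[Lemma 4.11]{MS17}, going back to \cite{Bri07}): a stability function on a Noetherian heart whose central charge has discrete image in $\CC$ admits HN filtrations. Discreteness of $Z_{B,H}(\Lambda)$ is the easier point: by Lemma \ref{uch2} and Lemma \ref{chlink}, the orbifold Chern character has uniformly bounded denominators (a power of $n$ suffices), and since $B, H \in \NS(X)_\QQ$, both $\operatorname{Re} Z_{B,H}$ and $\operatorname{Im} Z_{B,H}$ take values in $\tfrac{1}{N}\ZZ$ for some fixed positive integer $N = N(n, B, H)$.

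The substantive step is the Noetherianness of $\mathcal{A}_{B,H}$. Following the template for surface tilt stability (see e.g.\ \cite[Lemma 14.1]{Bri08}), consider an ascending chain $E_1\subseteq E_2\subseteq \cdots$ in $\mathcal{A}_{B,H}$. Since $\operatorname{Im} Z_{B,H}(E_k)$ is non-negative, lies in $\tfrac{1}{N}\ZZ$, and is weakly increasing across short exact sequences in $\mathcal{A}_{B,H}$, it must stabilize. Consequently, for $k$ sufficiently large, every quotient $E_{k+1}/E_k$ has $\operatorname{Im} Z_{B,H} = 0$, which forces it to be an extension of a zero-dimensional sheaf by a shift of a $\mu_H$-semistable torsion-free sheaf of slope $B\cdot H$. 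The Bogomolov inequality (Theorem \ref{JK}) applied to the semistable piece bounds $\operatorname{Re} Z_{B,H}$ on these residual quotients from one side, and then a second discreteness argument combined with the Noetherianness of $\Coh(\uX)$ forces the chain to stabilize. The main obstacle is this final step: the orbifold setting introduces the subtlety that coherent sheaves on $\uX$ decompose into gerbe components, so the argument must keep track of the parabolic structure described in Proposition \ref{link}, and the rank inequalities of Lemma \ref{ranks} are what prevent this decomposition from producing infinite ascending chains.
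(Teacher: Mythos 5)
Your high-level strategy matches the paper's: Proposition \ref{stability function} furnishes condition (i), and the Harder--Narasimhan property is deduced from Noetherianness of $\mathcal{A}_{B,H}$ together with discreteness of $\Im Z_{B,H}$, invoking \cite[Proposition 4.10]{MS17}. Your discreteness argument is correct, and the paper makes the same observation in passing.

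Where you diverge is Noetherianness of $\mathcal{A}_{B,H}$: the paper simply cites \cite{MS17} for this (the argument there carrying over once $\Coh(\uX)$ is known to be Noetherian), whereas you attempt to sketch a proof. Two points. First, your final step --- ``a second discreteness argument combined with the Noetherianness of $\Coh(\uX)$ forces the chain to stabilize'' --- papers over the genuinely delicate part. After $\Im Z_{B,H}(E_k)$ stabilizes, $\Re Z_{B,H}(E_k)$ is weakly decreasing, but it is not a priori bounded below; the actual proof (see the relevant lemma in \cite{MS17}, going back to Abramovich--Polishchuk) handles the residual subcategory $\{\Im Z = 0\}$ by a more careful structural analysis rather than pure monotonicity. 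You do flag this as the main obstacle, which is honest, but the sketch as written does not close the gap. Second, your closing claim that the parabolic decomposition of Proposition \ref{link} and the rank inequalities of Lemma \ref{ranks} are what ``prevent this decomposition from producing infinite ascending chains'' is a red herring: once $\Coh(\uX)$ is shown to be Noetherian (which the paper does locally via the quotient-stack description), the tilted-heart Noetherianness argument is formally identical to the surface case and makes no use of the gerbe components or parabolic structure. So you correctly identify the two ingredients needed, but misdiagnose both where the difficulty lives and whether the orbifold structure enters at this stage.
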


\begin{proof}
    By \cite{MS17},  we know $\mathcal{A}_{B, H}$ is Noetherian.
By assumption, $\Im(Z(\Lambda))$ is discrete. Thus by \cite[Proposition 4.10]{MS17}, $\sigma_{B, H}$ has the Harder-Narasimhan property. Since $\Im(Z(\Lambda))$ is discrete, by \cite[Lemma 4.4]{Bri08}, $\sigma_{B, H}$ is locally finite. So $\sigma_{B, H}$ is a pre-stability condition.
\end{proof}

To end this section we introduce several notations that is convenient for later computations.
Suppose $E\in \Coh(\uX)$, we define $\mu^{B}(E)$ as $$\mu^{B}(E):=\frac{\ch_{1}^{B}(E)\cdot H}{\ch_{0}^{B}(E)}.$$
In the following, fix $B, H\in \NS(\uX)_{\QQ}$ where $H$ is ample. Let $\cA=\cA_{B, H}$, and
		\begin{equation}\label{Zt}
			Z_{t}(\vv)=Z_{B,H , t}(\vv)=-\ch_{2}^{B}(\vv) + t\ch_{0}^{B}(\vv)
			+ i H \cdot (\ch_{1}^{B}(\vv)). 
		\end{equation}
		By Proposition \ref{stability function} and Corollary \ref{prestability condition}, for every $t>0$, $\sigma_{t}=(Z_{t}, \cA)$ is a pre-stability condition.
\begin{definition}
			Let $E\in D(\uX)$ and $\sigma$ be a pre-stablity condition on $\uX$. We define
			$$
			\mu_{\sigma}(E):=-\dfrac{\Re(Z(E))}{\Im(Z(E))}. 
			$$
		\end{definition}
Note that for $E, F\in \cA$, $\phi(E)>\phi(F)$ if and only if $\mu_{\sigma}(E)> \mu_{\sigma}(F)$.

\section{Support property}\label{section5}
	In this section, we  prove the support property of pre-stability conditions $\sigma_{t}$ for $t>0$. The main theorem of this paper is the following. 
		
		\begin{theorem}\label{maintheorem}
			For every $t>0$, the pre-stability condition $\sigma_{t}$ satisfies the support property with respect to $\HH^{even}_{CR}(\uX, \QQ)$.
		\end{theorem}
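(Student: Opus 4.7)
The plan follows the two-stage strategy outlined in the introduction. First, I would establish the support property with respect to the coarser lattice $\Lambda'$ of ordinary Chern characters. Then, for each $\sigma_t$-semistable object $E$, I would construct subquotients in $\cA$ whose classes encode the extra components of $\ch_{orb}(E)$ supported on the stacky curve $\sC$, and use semistability of $E$ to bound them.

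For the first stage, I would run the standard surface tilt-stability recipe. The quadratic form
\[
Q(\vv) := (\ch_1^B(\vv))^2 - 2\,\ch_0^B(\vv)\,\ch_2^B(\vv)
\]
on $\Lambda'_{\RR}$ is negative definite on $\ker(Z_t)$ by the Hodge index theorem on $\uX$. To check $Q(E)\geq 0$ for a $\sigma_t$-semistable $E$, I would split cases according to the sign of $\ch_0^B(E)$: when $\ch_0^B(E)\neq 0$, the Bogomolov inequality on $\uX$ (Theorem \ref{JK}) applied to the tilt-semistable $\cH^{-1}(E)$ and $\cH^0(E)$ gives $Q(E)\geq 0$; when $\ch_0^B(E)=0$, the object is supported on curves and $Q$ reduces to $(\ch_1^B(E))^2\geq 0$, which follows from effectivity of the class and the Hodge index theorem. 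This yields the support property on $\Lambda'$.

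For the second stage, take a torsion-free $\sigma_t$-semistable $E\in\cA$ (the mixed case reduces to this via the cohomology sheaves $\cH^{-1}(E)[1]$ and $\cH^0(E)$). Proposition \ref{link} produces a filtration
\[
E_0 \hookrightarrow E_{0,n-1} \hookrightarrow \cdots \hookrightarrow E_{0,1} = E
\]
in $\Coh(\uX)$ whose graded pieces $G_{0,k}\rho_k$ are torsion sheaves supported on $\sC$. Since torsion sheaves sit in $\cT_{B,H}\subset \cA$, each $G_{0,k}\rho_k$ belongs to $\cA$ and appears as a quotient of the subobject $E_{0,k}\subset E$ (for $k\geq 2$ one can twist $E$ by $\OO(k\sC)$ so that the relevant piece becomes the topmost quotient of $E$, using Lemma \ref{twisted chern} to transport the argument). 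Semistability gives $\mu_{\sigma_t}(G_{0,k}\rho_k)\geq \mu_{\sigma_t}(E)$. By Lemma \ref{uch2}, the class $[G_{0,k}\rho_k]\in \Lambda$ is
\[
\bigl(0,\; r_{0,k}[\sC],\; r_{0,k}\tfrac{-2k-1}{2}\sC^2 + \deg(G_{0,k})[\mathbf{B}\mu_n]\bigr),
\]
so $\Im Z_t$ on this class is proportional to $r_{0,k}$, and $\Re Z_t$ is affine linear in $\deg(G_{0,k})$ with nonzero coefficient. Combining the semistability inequality with the bound $r_{0,k}\leq \ch_0(E)$ from Lemma \ref{ranks} and the $\Lambda'$-bound from the first stage, each $\deg(G_{0,k})$ becomes controlled. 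Since the tuples $(r_{0,k},\deg G_{0,k})$ for $k=1,\dots,n-1$ recover precisely the extra data in $\ch_{orb}(E)$ beyond $\Lambda'$, this yields the support property with respect to $\HH^{even}_{CR}(\uX,\QQ)$.

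The main obstacle is the second stage: in particular, verifying that each $E_{0,k}$ (or its appropriate twist) genuinely lies in the tilted heart $\cA$ so that $G_{0,k}\rho_k$ is a bona fide subquotient of $E$ there, and arranging the semistability inequalities so the resulting bounds on $\deg G_{0,k}$ remain uniform as $k$ varies. I would pattern the precise construction on Examples \ref{exampleP} and \ref{exampleQ}, which seem to address exactly this subtlety. A secondary difficulty is the reduction to the torsion-free case: one must argue that both $\cH^{-1}(E)[1]$ and $\cH^0(E)$ inherit enough semistability from $E$ to admit the same filtration-based bounding argument.
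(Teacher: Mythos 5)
Your two-stage plan is the same one the paper follows: first establish the support property on the ordinary lattice $\Lambda'$, then use $\sigma_t$-semistability of $E$ against carefully chosen (sub)quotients to control the extra components of $\ch_{orb}(E)$ coming from $\sC$. The first stage, via the discriminant quadratic form, is standard and fine. The second stage as you describe it has two concrete gaps, and they are not incidental: resolving them is exactly where the paper's actual argument departs from your sketch.

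First, the subsheaves $E_{0,k}$ from Proposition \ref{link} are not subobjects of $E$ in the tilted heart $\cA$ in general. A subsheaf of an object of $\cT_{B,H}$ can easily have $\mu_{\min} \leq B\cdot H$, so the exact sequences $0\to E_{0,k+1}\to E_{0,k}\to G_{0,k}\rho_k\to 0$ in $\Coh(\uX)$ need not be exact in $\cA$. You flag this as the main obstacle but offer no resolution; the paper resolves it by replacing the Proposition \ref{link} filtration with a different family of subobjects $f^*(P_k\rho_k)\to P$, and by first splitting $E$ at the slope threshold $\alpha$ (Notation \ref{cases}) so that the piece $P$ to which these maps are applied satisfies $\mu^B_{\min}(\cH^0(P)) > 2\alpha$, ensuring $f^*(P_k\rho_k)\in\cA$ (Corollary \ref{world1cor}). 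The complementary piece $Q$ has uniformly bounded slopes, and for it the naive Bogomolov estimate of your Example \ref{exampleQ} works and gives linear bounds (Propositions \ref{Q}--\ref{solutionQ}).

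Second, even granting that the $E_{0,k}$ all lie in $\cA$, for $k\geq 2$ the factor $G_{0,k}\rho_k=E_{0,k}/E_{0,k+1}$ is only a subquotient of $E$, not a quotient, so $\sigma_t$-semistability of $E$ does not yield $\mu_{\sigma_t}(G_{0,k}\rho_k)\geq\mu_{\sigma_t}(E)$. The workaround you propose, twisting $E$ by $\OO(k\sC)$, replaces $\cA_{B,H}$ by $\cA_{B+k\sC,H}$ and $Z_{B,tH}$ by a shifted central charge, so semistability of the twist is with respect to a different stability condition; you would then need uniform constants across the finite family of shifted conditions, which is more than Lemma \ref{twisted chern} provides and is not addressed. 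The paper sidesteps this entirely: it works with morphisms $f^*(P_k\rho_k)\to P$ directly, and the heart of its argument is the dichotomy in Lemma \ref{trouble1}/Lemma \ref{nonzero}/Proposition \ref{solution1}, which builds an explicit object $A''$ in $\cA$ (via iterated extensions through $\Ext^1_\uX(F\rho_k, F'\rho_{k+1})\cong\Hom_C(F,F')$) that \emph{does} map nonzero to $P$ even when $A\to P$ is zero, and uses $\sigma_t$-semistability of $E$ against that map. This construction has no analogue in your proposal and is the genuinely new ingredient you are missing; without it, the approach stalls precisely at the point you yourself identify as the main obstacle.

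A further small gap: your reduction of a mixed $E\in\cA$ (i.e., with both $\cH^{-1}(E)\neq0$ and $\cH^0(E)\neq0$) to the torsion-free case is not justified, since $\cH^{-1}(E)[1]$ and $\cH^0(E)$ need not be $\sigma_t$-semistable. The paper instead applies the $P,Q$ (and $S,T$) decomposition directly to the object $E$ of $\cA$, using that $Q$ is a quotient and $S$ a subobject of $E$ in $\cA$.
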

   We sketch the strategy to prove Theorem \ref{maintheorem}.
   It appears to the authors that writing down a quadratic form as in Definition \ref{quadratic} is very difficult.
   We use a different approach.
   
   First, by a numerical computation the problem is reduced to finding a bound for $\| \ch(E_{k}) \|, 0\leq k \leq n-1$ (Lemma \ref{normequivalence}), where the notation $E_{k}$ is defined in (\ref{push}). 
   
   We have the support property on the ordinary cohomology of $\uX$.
   For every $\sigma$-semistable object $E$, we will cut it into two parts
   $$0 \too P \too E \too Q \too 0.$$
   Here, $Q$ is a relatively simple part to deal with, however it prevents us to apply our observation directly to $E$. 
   Behaving differently, $P$ is our central problem, but we can delicately construct subobjects of $P$ so that their Chern classes are related to $P_{k}, 0\leq k \leq n-1$, where we write $f_{*}(P)=\bigoplus P_{k}\rho_{k}$ as in Proposition \ref{link}. By stability of $E$, we will get bounds for $\| \ch(P_{k}) \|$.
   Combining the bounds for $\|\ch(P_{k})\|$ and $\| \ch(Q_{k}) \|$, we get the support property on the rational Chen-Ruan cohomology as desired.
   We would like to illustrate the idea by the following two examples.

   \begin{example}\label{exampleQ}
       Let $E\in \cA$ be a $\sigma$-semistable object. For simplicity, we assume $E\in \Coh(\uX)$ and $B=0$.
       By Proposition \ref{chlink}, the support property on rational Chen-Ruan cohomology is equivalent to a good bound on $\|\ch(E_{k})\|$ for every $0\leq k \leq n-1$. Since $\ch_{0}(E_{k})=\ch_{0}(E)$, they are bounded by the support property on the ordinary cohomology. 
       By Proposition \ref{ranks}, $\ch_{1}(E_{k})$ are close to $\ch_{1}(E)$, hence we do not worry about them either. The only difficulty is to bound $\ch_{2}(E_{k})$. 

       For simplicity, we assume $E$ is $\mu_{H}$-stable for a moment. Then by Proposition \ref{ranks}, $E_{k}$ are ``almost" slope stable. A natural idea to bound $\ch_{2}$ is the Bogomolov Inequality. We have the following worst case
       $$\ch_{2}(E_{k}) \sim \dfrac{\ch_{1}(E_{k})^{2}}{\ch_{0}(E_{k})}\sim \mu_{H}(E)^{2}/\ch_{0}(E).$$
       Note that this bound is not linear.
       Hence we see that if $\mu_{H}(E)\gg 0$, this bound is not satisfactory. However, when $\mu_{H}(E)$ is bounded, we get an upper bound of $\ch_{2}(E_{k})$ for \emph{every} $0\leq k \leq n-1$. By Proposition \ref{chlink}, $\ch_{2}(E)$ is approximately the average of $\ch_{2}(E_{k})$, hence we will also get a lower bound for every $\ch_{2}(E_{k})$. This example leads to the object $Q$ in Notation \ref{cases}. 
   \end{example}

   \begin{example}\label{exampleP}
   Let $E\in \cA$ be a $\sigma$-semistable object. For simplicity assume $E\in \Coh(\uX)$ and $B=0$. As the previous Example \ref{exampleQ} points out, what if $\mu_{H}(E) \gg 0$? Note that such $\mu_{H}$-semistable sheaves do exists, and we can make the ratio $\dfrac{\| \ch_{2}(E_{k})\|}{\| \ch_{2}(E) \|}$ as large as possible, as exhibited in the following. 

   Let $p_{1}, \cdots, p_{m}\in C$ be $m$ distinct points and $Z=\{p_{1}, \cdots, p_{m}\}.$ Then we have the following exact sequence 
   $$
   0 \too I_{Z\rho_{0}}(dH) \too \OO_{\uX}(dH) \too \OO_{Z}\rho_{0} \too 0.
   $$
   Let $E=I_{Z\rho_{0}}(dH)$, then approximately
   $\ch_{2}(E)\sim \dfrac{d^{2}H^{2}}{2}-\dfrac{m}{n}$. Hence we may first take arbitrarily large $d\gg 0$ and then arrange $m$ so that $\ch_{2}(E)\sim 0$. However, applying $f_{*}$ to the sequence we see that $E_{0}\cong I_{Z}(dH)$ and $E_{k}\cong \OO_{X}(dH)$ for $1\leq k \leq n-1$. Hence the ratio $\dfrac{\| \ch_{2}(E_{k})\|}{\| \ch_{2}(E) \|}$ is unbounded. 

   The crucial observation here is that such $E$ is not $\sigma$-stable. Indeed, we have the following inclusion
   $$\OO_{\uX}(dH-\sC) \hookrightarrow I_{Z\rho_{0}}(dH).$$
   Observe that the tilt stability has central charge given by (\ref{Zt}):
   	\begin{equation*}
			Z_{t}(\vv)=-\ch_{2}(\vv) + t\ch_{0}(\vv)
			+ i H \cdot (\ch_{1}(\vv)). 
		\end{equation*}
  When $d \gg 0$ and $\ch_{2}(E)$ is bounded, this inclusion would destabilize $I_{Z\rho_{0}}(dH)$. Hence in this case such $E$ are actually not $\sigma$-stable. This example leads to the object $P$ in Notation \ref{cases}. 
   \end{example}

   \begin{remark}
       The readers may wonder why Example \ref{exampleQ} is necessary to consider. The reason why Example \ref{exampleP} cannot unify the proof is that the inclusion 
          $$\OO_{\uX}(dH-\sC) \hookrightarrow I_{Z\rho_{0}}(dH)$$
       may not happen in $\cA$. For instance, when $d$ is bounded and the curve $C$ has sufficient large degree, $\OO_{\uX}(dH-\sC)$ would have negative slope and hence not being an object in $\cA$. Hence to use the idea of Example \ref{exampleP}, we need $\mu_{\min}$ to be not too small. The threshold to distinguish these two types only depends on $\uX$, which will be defined in the following Notation \ref{cases}.
   \end{remark}

   	\begin{notation}\label{cases}
    We set the following notations for this paper.
\begin{enumerate}
\item 
Let $F\in \Coh(\uX)$ and $0=F_{0}\subseteq F_{1}...\subseteq F_{m}=F $ be the $\mu_{H}$-\HN filtration of $F$. For any $a\in \RR$, define 
\begin{itemize}
\item
$
  F_{\leq a}: =
    \begin{cases}
      F/F_{k}, & \text{if $\mu_{\min}^{B}(F)\leq a$ and $k=\min(l: \mu^{B}(F_{l+1}/F_{l})\leq a)$,}\\
      0, & \text{otherwise.}
    \end{cases}       
$
\item
$
  F_{\geq a}: =
    \begin{cases}
      F_{k}, & \text{if $\mu_{\max}^{B}(F)\geq a$ and $k=\max(l: \mu^{B}(F_{l}/F_{l-1})\geq a)$,}\\
      0, & \text{otherwise.}
    \end{cases}       
$
\end{itemize}
Define $F_{<a}$ and $F_{>a}$ similarly. 
\item 
For every $E\in \cA$, define
  \begin{itemize}
      \item $Q= \cH^{0}(E)_{\leq 2\alpha}$ and 
			$P=\ker(E \twoheadrightarrow Q)$. 
   \item $S=\cH^{-1}(E)_{\geq -2\alpha}[1]$ and $T= \mathrm{coker}(S \hookrightarrow E)$. 
  \end{itemize}
Note that by Corollary \ref{world1cor} we have $f^{*}(P_{k}\rho_{k})\in \cA$ and $f^{!}(T_{k}\rho_{k})\in \cA$.
\item
We set the constant
$$\alpha =(n-1)H\sC=\dfrac{n-1}{n}HC. $$ 
\end{enumerate}
		\end{notation}

As stated in the strategy, we first note that the support property holds for the ordinary cohomology in the following Theorem \ref{supp}. Then we will bound $\| \ch_{orb}(Q) \|$ in subsection \ref{boundQ}	and $\| \ch_{orb}(P) \|$ in subsection \ref{boundP}.	
To simplify the computations later we observe the following equivalence of norms.
		\begin{lemma}\label{normequivalence}
			Let $E \in K(\uX)$ and $\| E \|:=\| \ch_{orb}(E)\|$. Using the notaions in Proposition \ref{link}, for every fixed $B\in \mathrm{NS}^{1}(\uX)_{\RR}$, Define 
			$$\| E \|_{B}:=\| (\ch^{B}(E), \ch^{B}(G_{0,1}), \cdots, \ch^{B}(G_{0,n-1} ))\|. $$ 
			Then we have an equivalence of norms $\| \cdot \| \sim \| \cdot \|_{B}. $
		\end{lemma}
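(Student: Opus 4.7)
The plan is to exhibit both $\|\cdot\|$ and $\|\cdot\|_B$ as norms on the same finite-dimensional $\RR$-vector space $\Lambda_\RR \cong H^{even}_{CR}(\uX, \RR)$, at which point the desired equivalence is simply the standard fact that any two norms on a finite-dimensional vector space are equivalent. Under this strategy, the only substantive check is that $\|\cdot\|_B$ really does descend to and define a genuine norm on this space.

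To carry this out, I would first note that by Theorem \ref{rationalCR}, $\ch_{orb}$ induces a $\QQ$-linear isomorphism $K_0(\uX)_\QQ \xrightarrow{\sim} H^{even}_{CR}(\uX, \QQ)$, so $\|\cdot\|$ is automatically a norm on $\Lambda_\RR$ by transport of structure. For the second norm, I would introduce the assignment
\[
\Phi_B: K_0(\uX)_\QQ \longrightarrow H^*(\uX, \QQ)^{\oplus n}, \quad E \longmapsto \bigl(\ch^B(E), \ch^B(G_{0,1}\rho_1), \ldots, \ch^B(G_{0,n-1}\rho_{n-1})\bigr),
\]
and observe that $\|E\|_B = \|\Phi_B(E)\|$ by definition. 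This $\Phi_B$ is $\QQ$-linear because $\ch^B(-) = e^{-B}\ch(-)$ is linear, and because by Proposition \ref{link}, each $G_{0,k}\rho_k$ arises as the projection of $E$ onto the semiorthogonal piece $D^b(C)\rho_k$, so depends linearly on the K-theory class.

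The key step will be checking that $\Phi_B$ factors through $\ch_{orb}$ as an injection on $\Lambda_\QQ$. If $\Phi_B(E) = 0$, then since $e^{-B}$ is a unit in $H^*(\uX,\QQ)$ we immediately obtain $\ch(E) = 0$ and $\ch(G_{0,k}\rho_k) = 0$ for every $1 \leq k \leq n-1$; the latter, combined with Grothendieck--Riemann--Roch applied to the closed immersion $j: \sC \hookrightarrow \uX$, forces $\ch(G_{0,k}) = 0$ in $H^*(C, \QQ)$. Hence $\ch_{orb}(E) = 0$, which is exactly injectivity at the level of $\Lambda_\QQ$. Consequently $\|\cdot\|_B$ is a norm on the finite-dimensional space $\Lambda_\RR$, and the equivalence $\|\cdot\| \sim \|\cdot\|_B$ follows.

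I do not foresee a genuine obstacle. The entire argument rests on two soft inputs: equivalence of norms in finite dimensions, and linearity/injectivity of the Chern-character-type invariants involved. The only step that uses geometry in a nontrivial way is the linearity of $E \mapsto G_{0,k}\rho_k$ at the K-theoretic level, and that is already packaged into the semiorthogonal decomposition recorded in Proposition \ref{link}.
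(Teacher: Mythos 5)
Your proposal is correct and is essentially the paper's argument, merely repackaged: the paper reduces directly to showing $\|\ch^B(G_{0,k}\rho_k)\| \sim \|\ch(C,G_{0,k})\|$ via Lemma \ref{uch2}, while you invoke the general fact that any two norms on a finite-dimensional space coincide up to constants and reduce to checking that $\Phi_B$ descends to an injection on $\Lambda_\QQ$ — but that injectivity is the same Grothendieck--Riemann--Roch/Lemma \ref{uch2} computation. One small point worth tightening: ``factors through $\ch_{orb}$ as an injection'' requires both inclusions $\ker(\ch_{orb}) \subseteq \ker(\Phi_B)$ (well-definedness) and $\ker(\Phi_B) \subseteq \ker(\ch_{orb})$ (injectivity), and you only spell out the latter, though both follow from the same invertible linear relation between $\ch(C,G_{0,k})$ and $\ch(j_* (G_{0,k}\rho_k))$.
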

		\begin{proof}
			It suffices to show $\| \ch^{B}(G_{0,k}) \| \sim \| \ch(C, G_{0,k}) \|$. The claim follows from Lemma \ref{uch2} and a direct computation.
		\end{proof}

\subsection{Preparations} In this subsection, we collect some necessary facts and lemmas. First, we note that the support property holds for the ordinary cohomology.

		\begin{theorem}\label{supp}
			For every $t>0$, the pre-stability condition $\sigma_{t}$ satisfies the support property for the lattice $\Lambda'$ (Definition \ref{lattice}). 
		\end{theorem}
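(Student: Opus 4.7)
The plan is to exhibit a quadratic form $Q$ on $\Lambda'_{\RR}$ satisfying the two conditions of Definition \ref{quadratic}. The natural candidate is the twist-invariant discriminant
\[
Q(\vv) := \Delta(\vv) = \ch_{1}^{B}(\vv)^{2} - 2\ch_{0}^{B}(\vv)\ch_{2}^{B}(\vv),
\]
which depends only on the ordinary Chern character and is therefore well defined on $\Lambda'_{\RR}$.

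First I would verify that $Q$ is negative definite on $\ker Z_{t}$. If $\vv \in \ker Z_{t}$ then $H \cdot \ch_{1}^{B}(\vv) = 0$ and $\ch_{2}^{B}(\vv) = t\,\ch_{0}^{B}(\vv)$, so
\[
Q(\vv) = \ch_{1}^{B}(\vv)^{2} - 2t\,(\ch_{0}^{B}(\vv))^{2}.
\]
The Hodge index theorem on $\uX$ (stated in the excerpt) implies $\ch_{1}^{B}(\vv)^{2} \leq 0$, with equality forcing $\ch_{1}^{B}(\vv)=0$ numerically. Together with $-2t(\ch_{0}^{B}(\vv))^{2}\leq 0$ for $t>0$ this gives $Q \leq 0$ on $\ker Z_{t}$, and equality $Q(\vv)=0$ forces $\ch_{0}^{B}(\vv) = \ch_{1}^{B}(\vv) = 0$, whence $\ch_{2}^{B}(\vv) = t\cdot 0 = 0$ and $\vv = 0$ in $\Lambda'_{\RR}$.

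Second, I would show $Q(E)\geq 0$ for every $\sigma_{t}$-semistable object $E\in \cA_{B,H}$. The strategy is to reduce to $\mu_{H}$-semistable torsion-free sheaves on $\uX$, for which $\Delta\geq 0$ is exactly the Bogomolov inequality (Theorem \ref{JK}). Concretely, I would take the slope Harder--Narasimhan filtrations of $\cH^{-1}(E)$ and of the torsion-free quotient of $\cH^{0}(E)$; each graded factor is $\mu_{H}$-semistable and torsion-free, hence satisfies $\Delta\geq 0$. To promote this to a bound on $\Delta(E)$ itself I would argue by contradiction: assuming $\Delta(E)<0$, I would regroup the slope HN pieces of $\cH^{-1}(E)[1]$ and $\cH^{0}(E)$ inside $\cA$ to extract a subobject $F\subset E$ in $\cA$ with $\mu_{\sigma_{t}}(F)>\mu_{\sigma_{t}}(E)$, contradicting $\sigma_{t}$-semistability.

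The main obstacle will be this last step, since $\Delta$ is not additive in short exact sequences and the naive sum of individual Bogomolov inequalities loses cross terms coming from $\ch_{1}(F)\cdot \ch_{1}(E/F)$. The plan is to control these cross terms by projecting the first Chern classes onto the $H$-line and applying Hodge index to the orthogonal complement, in the same spirit as the standard surface proof of the Bogomolov--Gieseker inequality for tilt stability (Bridgeland; Bayer--Macr\`i). The argument transfers verbatim to the root stack $\uX$ because its only geometric ingredients are the Hodge index theorem and the Bogomolov inequality on smooth projective Deligne--Mumford surfaces, both of which are available here.
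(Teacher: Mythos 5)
Your verification that $\Delta$ is negative definite on $\ker Z_{t}$ is correct, and your instinct to import the standard surface argument is the right one (it is essentially what the paper does by citing \cite{MS17}). However, the explicit choice $Q=\Delta$ fails the other half of the support property: it is \emph{not} true that $\Delta(E)\geq 0$ for every $\sigma_{t}$-semistable $E\in\cA_{B,H}$. The gap is exactly in your reduction step, where you pass to slope HN factors of $\cH^{-1}(E)$ and of the \emph{torsion-free quotient} of $\cH^{0}(E)$: this silently discards the torsion part of $\cH^{0}(E)$, and that is where the inequality breaks. Concretely, take any irreducible curve $D\subset\uX$ with $D^{2}<0$ (for instance a $(-1)$-curve on $X$ disjoint from $C$, pulled back along $\pi$). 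Then $\OO_{D}$ is a pure $1$-dimensional sheaf in $\cT_{B,H}\subset\cA_{B,H}$ and is $\sigma_{t}$-semistable for $t\gg 0$ (by Theorem \ref{largevolumelimit}, as $\OO_{D}$ is Gieseker-stable on its integral support), yet $\Delta(\OO_{D})=\ch_{1}(\OO_{D})^{2}-2\cdot 0\cdot\ch_{2}(\OO_{D})=D^{2}<0$. Theorem \ref{JK} is stated only for torsion-free sheaves and has no analogue for sheaves supported on curves of negative self-intersection, so there is no repair available for $Q=\Delta$.

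The form used in the surface references is not $\Delta$ but the $H$-discriminant $\overline{\Delta}_{H}:=(H\cdot\ch_{1}^{B})^{2}-2H^{2}\,\ch_{0}^{B}\,\ch_{2}^{B}$, for which the generalized Bogomolov inequality for tilt-semistable objects does hold; in particular $\overline{\Delta}_{H}\geq 0$ is automatic when $\ch_{0}=0$. The trade-off, which your proposal would eventually run into, is that $\overline{\Delta}_{H}$ is only negative \emph{semi}-definite on $\ker Z_{t}$ inside the full lattice $\Lambda'$: it vanishes on any class with $\ch_{0}^{B}=0$, $H\cdot\ch_{1}^{B}=0$ but $\ch_{1}^{B}\neq 0$. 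One therefore either works with the rank-$3$ sublattice spanned by $(H^{2}\ch_{0},H\ch_{1},\ch_{2})$, or perturbs $\overline{\Delta}_{H}$ by a small multiple of the Hodge-index form $H^{2}(\ch_{1}^{B})^{2}-(H\cdot\ch_{1}^{B})^{2}$ and proves the perturbed form is still nonnegative on semistable objects. That nonnegativity estimate is the real content of the argument you are trying to transfer; it is not supplied by the present proposal, and the proposal's specific choice of $Q$ is wrong.
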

		\begin{proof}
  The proof is the same as that for smooth surfaces (see e.g. \cite{MS17}).
		\end{proof}

\begin{lemma}\label{Cauchy}
			Let $V$ be an $n$-dimensional real vector space with a positive definite inner product $(- \cdot -)$. Then for every $\vv_{j}\in V, a_{j}>0, 1\leq j \leq m$, we have
			$$
			(\sum_{j=1}^{m} a_{j}\vv_{j})^{2}\leq  \sum_{j=1}^{m}a_{j}^{2}\sum_{j=1}^{m}\vv_{j}^{2}.
			$$
		\end{lemma}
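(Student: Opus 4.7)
The plan is to reduce the vector-valued inequality to the classical scalar Cauchy--Schwarz by a single application of the triangle inequality. Writing $\|\vv\|^{2}=\vv^{2}$ for the squared norm coming from the inner product, the positivity of the $a_{j}$ lets me move them inside absolute values cleanly.

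First I would invoke the triangle inequality in $V$, together with homogeneity and the hypothesis $a_{j}>0$, to get
$$
\Bigl\|\sum_{j=1}^{m} a_{j}\vv_{j}\Bigr\| \;\leq\; \sum_{j=1}^{m} a_{j}\|\vv_{j}\|.
$$
Squaring both sides (both are nonnegative) yields
$$
\Bigl(\sum_{j=1}^{m} a_{j}\vv_{j}\Bigr)^{2} \;\leq\; \Bigl(\sum_{j=1}^{m} a_{j}\|\vv_{j}\|\Bigr)^{2}.
$$
Next I would apply the classical Cauchy--Schwarz inequality to the two sequences of real numbers $(a_{j})$ and $(\|\vv_{j}\|)$, giving
$$
\Bigl(\sum_{j=1}^{m} a_{j}\|\vv_{j}\|\Bigr)^{2} \;\leq\; \Bigl(\sum_{j=1}^{m} a_{j}^{2}\Bigr)\Bigl(\sum_{j=1}^{m} \|\vv_{j}\|^{2}\Bigr) \;=\; \Bigl(\sum_{j=1}^{m} a_{j}^{2}\Bigr)\Bigl(\sum_{j=1}^{m} \vv_{j}^{2}\Bigr).
$$
Chaining the two inequalities produces exactly the claimed bound.

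There is no real obstacle here; this is a purely elementary estimate used later as a technical tool in the norm bookkeeping (together with Lemma \ref{normequivalence} and Theorem \ref{supp}) for the support-property argument. If a proof avoiding the intermediate triangle-inequality step were preferred, one could expand $(\sum_{j} a_{j}\vv_{j})^{2}=\sum_{j,k} a_{j}a_{k}(\vv_{j}\cdot\vv_{k})$ directly, bound each $\vv_{j}\cdot\vv_{k}$ by $\tfrac{1}{2}(a_{k}/a_{j}\cdot\vv_{j}^{2}+a_{j}/a_{k}\cdot\vv_{k}^{2})$ (weighted AM--GM), and collapse the resulting double sum; but the triangle-inequality plus scalar Cauchy--Schwarz route is the cleanest.
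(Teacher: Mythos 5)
Your proof is correct. It differs from the paper's: the paper picks an orthogonal basis of $V$, expands everything in coordinates, and applies the scalar Cauchy--Schwarz inequality coordinatewise before reassembling; you avoid coordinates altogether by first using the triangle inequality $\bigl\|\sum_j a_j\vv_j\bigr\|\leq\sum_j a_j\|\vv_j\|$ and then scalar Cauchy--Schwarz on the real sequences $(a_j)$ and $(\|\vv_j\|)$. Your route is shorter and coordinate-free, and it makes transparent that the hypothesis $a_j>0$ is inessential (replace $a_j$ by $|a_j|$ throughout), whereas the paper's coordinate computation obscures this. The only thing the paper's expansion ``buys'' is that it stays entirely within the inner-product structure without explicitly invoking the norm triangle inequality, but since that inequality is itself an immediate consequence of Cauchy--Schwarz, there is no real gain. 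Both are correct; yours is the cleaner argument.
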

		
		\begin{proof}
		Let $e_{1}, \cdots e_{n}\in V$ be an orthogonal basis of $V$. Write $\vv_{j}=\sum_{i=1}^{n} x^{j}_{i}e_{i}, x^{j}_{i}\in \RR$. By the Cauchy-Schwarz inequality, we have
		\begin{eqnarray*}
		(\sum_{j=1}^{m} a_{j}\vv_{j})^{2}
		&=&(\sum_{i=1}^{n} \sum_{j=1}^{m} a_{j}x^{j}_{i}e_{i})^{2}=
		\sum_{i=1}^{n} (\sum_{j=1}^{m}a_{j} x^{j}_{i}e_{i})^{2}\\
		&\leq &\sum_{i=1}^{n}\sum_{j=1}^{m}a_{j}^{2}\sum_{k=1}^{m}(x_{i}^{k}e_{i})^{2}
		=\sum_{j=1}^{m}a_{j}^{2}\sum_{k=1}^{m}(\sum_{i=1}^{n}x_{i}^{k}e_{i})^{2}=\sum_{j=1}^{m}a_{j}^{2}\sum_{k=1}^{m}\vv_{k}^{2}.
	\end{eqnarray*}
		\end{proof}
		
		\begin{proposition}
			Let $F\in \Coh(\uX)$ be a torsion free sheaf such that $a\leq \mu^{B}_{\min}(F) \leq \mu^{B}_{\max}(F) \leq b$. Then we have 
			$$ \ch_{2}^{B}(F)\leq \dfrac{\max(a^{2}, b^{2})}{2H^{2}}\cdot \ch_{0}^{B}(F). $$
		\end{proposition}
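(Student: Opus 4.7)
The plan is to reduce the claim to the case of a single $\mu_H$-semistable torsion-free sheaf via the Harder–Narasimhan filtration, and then combine the Bogomolov inequality on $\uX$ (Theorem \ref{JK}) with the Hodge Index Theorem.

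First I would take the Harder–Narasimhan filtration $0=F_0\subset F_1\subset\cdots\subset F_m=F$ of $F$ with respect to $\mu_H$-stability (equivalently $\mu^B$-stability, since twisting by $B$ only shifts slopes). Each factor $G_i:=F_i/F_{i-1}$ is torsion-free and $\mu_H$-semistable. By the hypothesis $a\leq \mu^B_{\min}(F)\leq \mu^B_{\max}(F)\leq b$, every $G_i$ satisfies $a\leq \mu^B(G_i)\leq b$; in particular $\mu^B(G_i)$ is finite, so $\ch_0^B(G_i)=\ch_0(G_i)>0$. Since $\ch_0^B$, $\ch_1^B\cdot H$ and $\ch_2^B$ are all additive on short exact sequences, it suffices to prove the inequality for each $G_i$ and then sum.

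For a fixed $\mu_H$-semistable torsion-free sheaf $G$ with $\ch_0^B(G)>0$, the Bogomolov inequality (Theorem \ref{JK}) applied to $G$ gives $\Delta(G)\geq 0$, which as shown in the proof of Proposition \ref{stability function} is equivalent to $\ch_1^B(G)^2\geq 2\ch_0^B(G)\ch_2^B(G)$, hence
\[
\ch_2^B(G)\leq \frac{\ch_1^B(G)^2}{2\,\ch_0^B(G)}.
\]
By the Hodge Index Theorem on $\uX$ applied to $\ch_1^B(G)\in \NS(\uX)_\RR$ and the ample class $H$, we have $\ch_1^B(G)^2\cdot H^2\leq (\ch_1^B(G)\cdot H)^2$, so
\[
\ch_2^B(G)\leq \frac{(\ch_1^B(G)\cdot H)^2}{2H^2\,\ch_0^B(G)}=\frac{\mu^B(G)^2}{2H^2}\,\ch_0^B(G).
\]
Since $a\leq \mu^B(G)\leq b$ forces $\mu^B(G)^2\leq \max(a^2,b^2)$, this yields
\[
\ch_2^B(G)\leq \frac{\max(a^2,b^2)}{2H^2}\,\ch_0^B(G).
\]

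Finally, summing the above bound over the HN factors $G=G_i$ and using additivity,
\[
\ch_2^B(F)=\sum_{i=1}^m \ch_2^B(G_i)\leq \frac{\max(a^2,b^2)}{2H^2}\sum_{i=1}^m \ch_0^B(G_i)=\frac{\max(a^2,b^2)}{2H^2}\,\ch_0^B(F),
\]
which is the desired inequality. The only point that requires care is verifying that the Bogomolov inequality and Hodge Index Theorem on the Deligne–Mumford surface $\uX$ are available in the form needed; both are cited in the excerpt (Theorem \ref{JK} and the Hodge Index Theorem stated in Section \ref{section2}), so no additional input is necessary.
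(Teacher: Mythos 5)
Your proof is correct and follows essentially the same route as the paper: Harder--Narasimhan filtration, Bogomolov inequality on each semistable factor, and the Hodge Index Theorem. The only difference is that you apply the Hodge Index Theorem to each factor separately, whereas the paper writes $\ch_1^B(G_j)=a_jH+N_j$ with $N_j\in H^\perp$ and keeps track of the term $\dfrac{N^2}{2\,\ch_0^B(F)}$ (via a Cauchy--Schwarz lemma), thereby establishing the strictly stronger intermediate inequality (\ref{BOG}), which is reused later in the paper (e.g.\ in Lemma \ref{errorQ}); for the proposition as stated, your streamlined version suffices.
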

		
		\begin{proof}
			We prove a stronger inequality that will be used later. Let $G_{1}, \cdots, G_{m}$ be the $\mu_{H}$-\HN factors of $F$. Write $\ch_{1}^{B}(G_{j})=a_{j}H+N_{j}$, where $a_{j}=\dfrac{H\ch_{1}^{B}(G_{j})}{H^{2}}$, $N_{j}\in H^{\perp}$. Let $a=\sum_{j=1}^{m}a_{j}$ and $N=\sum_{j=1}^{m}N_{j}$, then $\ch_{1}^{B}(F)=aH+N$. By the Bogomolov inequality, for every $G_{j}$, we have 
			$$
			\ch_{2}^{B}(G_{j})\leq \dfrac{\ch_{1}^{B}(G_{j})^{2}}{2\ch_{0}^{B}(G_{j})}=\dfrac{a_{j}^{2}H^{2}+N_{j}^{2}}{2\ch_{0}^{B}(G_{j})}
			=\dfrac{\mu^{B}(G_{j})}{2}a_{j}+\dfrac{N_{j}^{2}}{2\ch_{0}^{B}(G_{j})}. 
			$$
			By Lemma \ref{Cauchy} and the Hodge Index Theorem, we have
			\begin{equation*}
				(\sum_{j=1}^{m}N_{j})^{2}=\left(\sum_{j=1}^{m} \sqrt{\ch_{0}^{B}(G_{j})} \dfrac{N_{j}}{\sqrt{\ch_{0}^{B}(G_{j})}}\right)
				\geq  \sum_{j=1}^{m}\ch_{0}^{B}(G_{j}) \sum_{j=1}^{m} \dfrac{N_{j}^{2}}{\ch_{0}^{B}(G_{j})}
				= \ch_{0}^{B}(F) \sum_{j=1}^{m} \dfrac{N_{j}^{2}}{\ch_{0}^{B}(G_{j})}.
			\end{equation*}
			Hence we have 
			\begin{align}
				\ch_{2}^{B}(F)&=\sum_{j=1}^{m}\ch_{2}^{B}(G_{j}) \leq \sum_{j=1}^{m}\dfrac{\mu^{B}(G_{j})}{2}a_{j}+\sum_{j=1}^{m}\dfrac{N_{j}^{2}}{2\ch_{0}^{B}(G_{j})}\nonumber \\
				&\leq \sum_{j=1}^{m}\dfrac{\mu^{B}(G_{j})}{2}a_{j}+ \dfrac{(\sum_{j=1}^{m}N_{j})^{2}}{2  \ch_{0}^{B}(F)}
				=\sum_{j=1}^{m}\dfrac{\mu^{B}(G_{j})^{2}}{2H^{2}}\ch_{0}^{B}(G_{j})+\dfrac{N^{2}}{2  \ch_{0}^{B}(F)}\nonumber \\
				 & \leq \dfrac{\max(a^{2}, b^{2})}{2H^{2}}\sum_{j=1}^{m}\ch_{0}^{B}(G_{j})+ \dfrac{N^{2}}{2  \ch_{0}^{B}(F)}
				= \dfrac{\max(a^{2}, b^{2})}{2H^{2}}\ch_{0}^{B}(F) + \dfrac{N^{2}}{2  \ch_{0}^{B}(F)}.
				\label{BOG}
			\end{align}
			By the Hodge Index Theorem, $N^{2}\leq 0$. Hence
			$$
			\ch_{2}^{B}(F)\leq 
			\dfrac{\max(a^{2}, b^{2})}{2H^{2}}\ch_{0}^{B}(F) + \dfrac{N^{2}}{2  \ch_{0}^{B}(F)}
			\leq 
			\dfrac{\max(a^{2}, b^{2})}{2H^{2}}\ch_{0}^{B}(F).
			$$
		\end{proof}

		\begin{lemma}\label{triangle}
			Let $\mathcal{D}$ be a triangulated category, and $\cA$ be the heart of a bounded $t$-structure. Suppose $A, B, C\in \cA$, then 
		 $A \to B \to C $ is a short exact sequence in $\cA$ if and only if it is a distinguished triangle in $\mathcal{D}$.
		\end{lemma}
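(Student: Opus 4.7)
The plan is to use the standard cohomology functors $\mathcal{H}^i \colon \mathcal{D} \to \cA$ associated with the bounded $t$-structure, and to exploit the fact that every distinguished triangle induces a long exact sequence of cohomology in the abelian category $\cA$. Both directions are formal consequences of this long exact sequence together with the vanishing $\mathcal{H}^i(X)=0$ for $i\neq 0$ whenever $X\in\cA$.

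For the ``if'' direction, suppose $A\to B\to C\to A[1]$ is a distinguished triangle with $A,B,C\in\cA$. Applying the cohomology functors to this triangle yields the long exact sequence
\begin{equation*}
\cdots \to \mathcal{H}^{-1}(C) \to \mathcal{H}^0(A) \to \mathcal{H}^0(B) \to \mathcal{H}^0(C) \to \mathcal{H}^1(A) \to \cdots
\end{equation*}
Since $C\in\cA$ gives $\mathcal{H}^{-1}(C)=0$ and $A\in\cA$ gives $\mathcal{H}^1(A)=0$, the sequence collapses to $0\to A\to B\to C\to 0$, which is exact in $\cA$ by construction.

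For the ``only if'' direction, suppose $0\to A\to B\to C\to 0$ is short exact in $\cA$. By axiom (TR1), complete the morphism $A\to B$ to a distinguished triangle $A\to B\to X\to A[1]$ in $\mathcal{D}$. The associated long exact sequence, combined with $\mathcal{H}^i(A)=\mathcal{H}^i(B)=0$ for $i\neq 0$, implies $\mathcal{H}^i(X)=0$ for $i\neq -1,0$ and reduces to
\begin{equation*}
0 \to \mathcal{H}^{-1}(X) \to A \to B \to \mathcal{H}^0(X) \to 0.
\end{equation*}
Injectivity of $A\to B$ in $\cA$ forces $\mathcal{H}^{-1}(X)=0$, so $X\in\cA$, and then $\mathcal{H}^0(X)\cong B/A \cong C$.

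The only delicate point is to check that the triangle map $B\to X$ agrees, under this isomorphism, with the given quotient map $B\to C$. This follows because the composition $A\to B\to C$ vanishes by hypothesis, so $B\to C$ factors uniquely through the cokernel of $A\to B$ in $\cA$; since $X$ represents this cokernel, the resulting isomorphism $X\xrightarrow{\sim}C$ is compatible with both arrows from $B$. This identification, which resolves the non-uniqueness of cones, is the sole subtlety of the argument.
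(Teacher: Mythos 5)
Your proof is correct and follows essentially the same strategy as the paper: form the cone of $A\to B$, apply the long exact cohomology sequence to show the cone lies in $\cA$ and is isomorphic to $C$, and transfer the triangle. The only difference is presentational—the paper first builds the comparison map $C'\to C$ via the long exact $\operatorname{Hom}(-,C)$ sequence and then compares the two cohomology sequences, whereas you deduce $X\in\cA$ directly from injectivity of $A\to B$ and then invoke uniqueness of cokernels for the compatibility of the two arrows out of $B$, which is the same subtlety the paper handles via its commutative diagram.
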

		
		\begin{proof}
              Now suppose  $A\to B\to C\to A[1]$ is a distinguished triangle.
			There are cohomology object functors $\cH_{\mathcal{A}}^{\bullet}: \mathcal{D} \to \mathcal{A} $. 
   Taking the cohomology, we get a short exact sequence $$ 0\to A \to B \to C \to 0. $$
   
            Conversely, let $0\to A \to B \to C \to 0 $ be a short exact sequence in $\cA$. Let $C'$ be the cone of $A\to B$, then we have the following exact sequence: 
             $$\operatorname{Hom}(C', C)\to \operatorname{Hom}(B,C)\to \operatorname{Hom}(A,C).$$
             Since the composition $A\to B \to C$ is 0, the map $B\to C$ factors through $C'$. Hence we have a commutative diagram: 
             \begin{center}
\begin{tikzcd}
A \arrow[r] \arrow[d, "\id"] & B \arrow[r] \arrow[d, "\id"] & C' \arrow[r] \arrow[d] & {A[1]} \\
A \arrow[r]                 & B \arrow[r]                 & C.                      &       
\end{tikzcd}

          \end{center}
Now applying the cohomology functor, we get a commutative diagram: 
\begin{center}

\begin{tikzcd}
0 \arrow[r] & \cH_{\cA}^{-1}(C') \arrow[r] & A \arrow[r] \arrow[d, "\id"] & B \arrow[r] \arrow[d, "\id"] & \cH_{\cA}^{0}(C') \arrow[r] & 0 \\
            & 0 \arrow[r] & A \arrow[r]                 & B \arrow[r]                 & C \arrow[r] & 0.
\end{tikzcd}
     \end{center}     
Hence we have $\cH_{\cA}^{-1}(C')=0$ and $\cH_{\cA}^{0}(C')=C$. Since $\cH^{i}_{\cA}(A)=\cH^{i}_{\cA}(B)=0$ for $i\neq 0$, we also know $\cH_{\cA}^{i}(C')=0$ for all $i\not = -1,0$. Hence $C'\cong C$ and we have a distinguished triangle: $$A\to B \to C\to A[1].$$

		\end{proof}

Next we observe the fact that the first Chern classes of $E_{k}$ are close to $E$. This will be used often later.
  
	\begin{proposition}\label{world1}
			Let $E\in \Coh(\uX)$ be a torsion free sheaf. Then for every $0 \leq k \leq n-1$, we have
			$$ \mu^{B}_{\min}(E)- \alpha \leq \mu^{B}_{\min}(f^{*}(E_{k}\rho_{k})) \leq  \mu^{B}_{\max}(f^{*}(E_{k}\rho_{k})) \leq \mu^{B}_{\max}(E). $$
			In particular, if $\mu^{B}_{\min}(E)> (n-1)H\sC$, then for every $0\leq k \leq n-1$ we have $f^{*}(E_{k}\rho_{k})\in \cA$.
		\end{proposition}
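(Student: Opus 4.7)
The plan is to sandwich $f^{*}(E_{k}\rho_{k})$ between two twists of $E$ by producing natural injections
\[
E(-(n-1)\sC) \overset{\psi}{\hookrightarrow} f^{*}(E_{k}\rho_{k}) \overset{\phi}{\hookrightarrow} E,
\]
both with cokernel supported on $\sC$. The map $\phi$ I obtain as the morphism $(f^{*}, f_{*})$-adjoint to the split inclusion $E_{k}\rho_{k} \hookrightarrow f_{*}E$ from Lemma \ref{decomposition}. The map $\psi$ I obtain by applying the $(f_{*}, f^{!})$-adjunction to the projection $f_{*}E \twoheadrightarrow E_{k}\rho_{k}$, using $f^{!}(-) = f^{*}(-)((n-1)\sC)$ from Proposition \ref{SD} and twisting by $\OO_{\uX}(-(n-1)\sC)$. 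On the \'etale locus $\uX \setminus \sC$ the morphism $f$ is a trivializable $\mu_{n}$-torsor, so both $\phi$ and $\psi$ restrict to isomorphisms there; their sources are torsion-free, so any kernel, being supported on $\sC$, must vanish, and the cokernels are likewise torsion sheaves supported on $\sC$.

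Granting the sandwich, the slope bounds are routine. For the upper bound, any nonzero subsheaf of $f^{*}(E_{k}\rho_{k})$ is, via $\phi$, a subsheaf of $E$, so $\mu^{B}_{\max}(f^{*}(E_{k}\rho_{k})) \leq \mu^{B}_{\max}(E)$. For the lower bound, let $Q$ be the final HN quotient of $f^{*}(E_{k}\rho_{k})$, so $Q$ is semistable of slope $\mu^{B}_{\min}(f^{*}(E_{k}\rho_{k}))$, and let $Q'$ be the image of the composite $E(-(n-1)\sC) \xrightarrow{\psi} f^{*}(E_{k}\rho_{k}) \twoheadrightarrow Q$. As a quotient of $E(-(n-1)\sC)$, $Q'$ satisfies $\mu^{B}_{\min}(Q') \geq \mu^{B}_{\min}(E(-(n-1)\sC)) = \mu^{B}_{\min}(E) - \alpha$; this uses the standard fact that $\mu^{B}_{\min}$ is non-decreasing under quotients, provable by induction on HN length using the semistability of HN factors. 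Since $Q/Q'$ is a quotient of $\mathrm{coker}(\psi)$, it is torsion, so the inclusion $Q' \hookrightarrow Q$ forces $\mu^{B}(Q') \leq \mu^{B}(Q)$. Concatenating gives $\mu^{B}_{\min}(E) - \alpha \leq \mu^{B}_{\min}(Q') \leq \mu^{B}(Q') \leq \mu^{B}(Q) = \mu^{B}_{\min}(f^{*}(E_{k}\rho_{k}))$. The ``in particular'' claim then follows from Definition \ref{heart}: $\mu^{B}_{\min}(E) > (n-1)H\sC = \alpha$ forces $\mu^{B}_{\min}(f^{*}(E_{k}\rho_{k})) > 0$, hence $f^{*}(E_{k}\rho_{k}) \in \mathcal{T}_{B, H} \subset \cA$.

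The main obstacle will be verifying the injectivity of $\phi$ and $\psi$ rigorously. If the generic-isomorphism argument above is not clean enough, I would fall back on a direct local computation in the chart $\uX = [\mathrm{Spec}(A[x]/(x^{n}-a))/\mu_{n}]$: there $f^{*}(E_{k}\rho_{k}) \cong M_{k} \otimes_{A} A[x]/(x^{n}-a)$ (with $M_{k}$ the weight-$k$ submodule of the $A[x]/(x^{n}-a)$-module $M$ underlying $E$), and $\phi$ becomes the multiplication map $m \otimes p(x) \mapsto p(x) m \in M$; its injectivity reduces to the fact that distinct powers of $x$ land in distinct $\mu_{n}$-weight components of $M$ together with $x$ being a non-zero-divisor on torsion-free $M$.
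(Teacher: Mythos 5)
Your proof is correct and takes essentially the same approach as the paper's: the paper produces the same injection $f^{*}(E_{k}\rho_{k}) \hookrightarrow E$ for the upper bound and the same adjoint map $E \to f^{!}(E_{k}\rho_{k})$ (which, twisted by $\OO_{\uX}(-(n-1)\sC)$, is exactly your $\psi$) for the lower bound, extracting the inequality by composing with the last HN quotient. The only cosmetic difference is that the paper does not bother proving $\psi$ is injective—it only needs the composition $E \to Q$ to be nonzero—whereas you argue via the image $Q'$ and torsion cokernel; both routes use the identical generic-isomorphism observation.
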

		
		\begin{proof}
			Recall we have map $f:\uX\to \mathscr{X}$. Suppose $f_{*}(E)=\bigoplus_{k=0}^{n-1} E_{k}\rho_{k}$.  The canonical projection $ f_{*}(E) \to E_{k}\rho_{k}$ induces a map 
			$$ p_{k}: E \to f^{!}(E_{k}\rho_{k})=E_{k}((n-1-k)\sC). $$
			Let $E_{k}((n-1-k)\sC) \to Q$ be the last $\mu^{B}$-\HN factor. Then $Q$ is torsion free. Since $p_{k}$ is an isomorphism away from $C$, in particular the composition map $E \to Q$ is nonzero. Since $Q$ is $\mu^{B}$-semistable, we have $\mu^{B}_{\min}(E)\leq \mu^{B}(Q)$. 
			By assumption, we have
			$$ \mu^{B}_{\min}(E)\leq \mu^{B}(Q)=\mu^{B}_{\min}(E_{k}((n-1-k)\sC)) =\mu^{B}_{\min}(E_{k})+(n-1-k)H\sC. $$
			Hence $f^{*}(E_{k}\rho_{k})\in \cA$, since we have
			$$ \mu^{B}_{\min}(f^{*}(E_{k}\rho_{k}))=\mu^{B}_{\min}(E_{k}(-k\sC))=\mu^{B}_{\min}(E_{k})-kH\sC\geq \mu^{B}_{\min}(E)- (n-1)H\sC. $$
			To see the other inequality, note that the inclusion $E_{k}\rho_{k} \to f_{*}(E)$ induces a map
			$f^{*}(E_{k}\rho_{k}) \to E$. The map is injective since it is an isomorphism away from $C$ and $E$ is torsion free. Hence we have 
			$$\mu^{B}_{\max}(f^{*}(E_{k}\rho_{k})) \leq \mu^{B}_{\max}(E).$$
		\end{proof}

		\begin{corollary}\label{world1cor}
			For every $E\in \cA$, we have
			\begin{itemize}
				\item If $\mu^{B}_{\min}(\cH^{0}(E))>\alpha $, then $f^{*}(E_{k}\rho_{k})\in \cA$ for $0\leq k \leq n-1$. 
				\item If $\mu^{B}_{\max}(\cH^{-1}(E))\leq -\alpha $, then $f^{!}(E_{k}\rho_{k})\in \cA$ for $0\leq k \leq n-1$.
			\end{itemize}
		\end{corollary}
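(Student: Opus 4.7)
The plan is to deduce the corollary directly from Proposition \ref{world1} together with the defining torsion pair for $\cA$, once we observe that both $f^*$ and $f^!$ commute with taking cohomology objects. Indeed, $f$ is finite flat, so $f^*$ is exact; Proposition \ref{SD} gives $f^!(-) = f^*(-) \otimes \cO_{\uX}((n-1)\sC)$, which is also exact and shifts every slope uniformly by $\alpha = (n-1)H\sC$. Combined with Lemma \ref{commute}, this yields
$$
\cH^i\bigl(f^*(E_k\rho_k)\bigr) = f^*\bigl(\cH^i(E)_k\rho_k\bigr),\qquad \cH^i\bigl(f^!(E_k\rho_k)\bigr) = f^!\bigl(\cH^i(E)_k\rho_k\bigr),
$$
so verifying $f^*(E_k\rho_k) \in \cA$ (resp.\ $f^!(E_k\rho_k) \in \cA$) reduces to checking $\cH^0 \in \cT_{B, H}$ and $\cH^{-1} \in \cF_{B, H}$ on the two cohomology pieces separately.

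For (i), the condition $\cH^{-1}(E) \in \cF_{B, H}$ already forces $\cH^{-1}(E)$ to be torsion free (any torsion would push $\mu_{\max}^B$ to $\infty$), so Proposition \ref{world1} gives $\mu_{\max}^B\bigl(f^*(\cH^{-1}(E)_k\rho_k)\bigr) \leq \mu_{\max}^B(\cH^{-1}(E)) \leq 0$, and the $\cH^{-1}$ condition holds independently of the hypothesis. For the $\cH^0$ part, I would split $\cH^0(E)$ as $0 \to T \to \cH^0(E) \to F \to 0$ with $T$ torsion and $F$ torsion free. Exactness of $f_*$ (finite morphism) transports this componentwise, and exactness of $f^*$ preserves it; the first term is then torsion and hence in $\cT_{B, H}$, while on the torsion-free quotient $\mu_{\min}^B(F) \geq \mu_{\min}^B(\cH^0(E)) > \alpha$ because HN factors of the quotient are HN factors of $\cH^0(E)$, so Proposition \ref{world1} places $f^*(F_k\rho_k)$ in $\cT_{B, H}$. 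Closure of $\cT_{B, H}$ under extensions completes (i).

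Part (ii) is symmetric: the $+\alpha$ shift from tensoring with $\cO((n-1)\sC)$ converts the hypothesis $\mu_{\max}^B(\cH^{-1}(E)) \leq -\alpha$ plus Proposition \ref{world1} into $\mu_{\max}^B\bigl(f^!(\cH^{-1}(E)_k\rho_k)\bigr) \leq 0$, while the standing bound $\mu_{\min}^B(\cH^0(E)) > 0$ (from $E \in \cA$), the $-\alpha$ bound of Proposition \ref{world1}, and the $+\alpha$ line-bundle shift combine to give $\mu_{\min}^B\bigl(f^!(\cH^0(E)_k\rho_k)\bigr) > 0$, after peeling off the torsion of $\cH^0(E)$ exactly as in (i).

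The only real bookkeeping obstacle is that Proposition \ref{world1} is stated only for torsion-free sheaves, so the possible torsion subsheaf of $\cH^0(E)$ must be treated separately via the short exact sequence above; everything else is a direct slope comparison against the thresholds $\pm\alpha$.
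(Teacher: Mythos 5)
Your proof is correct and follows essentially the same route as the paper: reduce to Proposition \ref{world1} via the exactness of $f^{*}$ and $f_{*}$ (so that cohomology objects are computed componentwise, as in Lemma \ref{commute}), with the $+\alpha$ shift from $f^{!}=f^{*}(-)\otimes\OO_{\uX}((n-1)\sC)$ handling the second bullet. The paper's own proof is a one-liner that silently applies Proposition \ref{world1} to $\cH^{0}(E)$ even though that proposition is stated only for torsion-free sheaves; your explicit splitting $0\to T\to\cH^{0}(E)\to F\to 0$ and the extension-closure of $\cT_{B,H}$ fill in exactly the detail the paper elides.
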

		
		\begin{proof}
			The first claim is Proposition \ref{world1}. To prove the second claim, note that $f^{*}$ and $f_{*}$ are exact. Hence $\mathcal{H}^{i}(f^{!}(E_{k}\rho_{k}))=f^{!}(\mathcal{H}^{i}(E_{k}\rho_{k}))=f^{!}(\mathcal{H}^{i}(E)_{k}\rho_{k})$, the claim follows by applying Proposition \ref{world1} to $\mathcal{H}^{-1}(E)$. 
		\end{proof}

	\subsection{Bounding $\| \ch_{orb}(Q) \|$}\label{boundQ}
The goal of this subsection is to prove the following proposition. 

\begin{proposition}\label{mainQ}
    Fix $t>0$ and let $E\in \cA$ be a $\sigma_{t}$-semistable object. Using Notation \ref{cases}, there is a constant $M_{Q}$ such that 
    $$
    \| \ch^{B}(Q_{k}) \|\leq M_{Q}|Z(E)|, \quad 0 \leq k \leq n-1.
    $$
    Similarly, there is a constant $M_{S}$ such that 
    $$
    \| \ch^{B}(S_{k}) \|\leq M_{S}|Z(E)|, \quad 0 \leq k \leq n-1.
    $$
\end{proposition}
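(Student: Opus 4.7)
The plan is to adapt the strategy of Example \ref{exampleQ} and bound $\ch^B(Q_k)$ and $\ch^B(S_k)$ via the Bogomolov inequality applied to pullbacks along $f \colon \uX \to \sX$. The key observation is that both $Q$ and $\cH^{-1}(S)$ are torsion-free sheaves with $\mu^B$-slopes confined to bounded intervals around zero: $(0, 2\alpha]$ for $Q$ (the torsion part of $\cH^0(E) \in \cT_{B,H}$ has $\mu^B=+\infty>2\alpha$, so is killed in the quotient), and $[-2\alpha, 0]$ for $\cH^{-1}(S)$. By Proposition \ref{world1}, each pullback $f^*(Q_k\rho_k)$, respectively $f^*(\cH^{-1}(S)_k\rho_k)$, is then torsion-free with $\mu^B$-slopes in $(-\alpha, 2\alpha]$, respectively $[-3\alpha, 0]$, and the Bogomolov--Hodge estimate of inequality \eqref{BOG} yields, for $0 \leq k \leq n-1$,
\[
\ch_2^B\bigl(f^*(Q_k\rho_k)\bigr) \;\leq\; \tfrac{2\alpha^2}{H^2}\,\ch_0(Q),
\]
together with the analogous bound for the components of $\cH^{-1}(S)$.

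Next, $\sigma_t$-semistability of $E$ gives the first-order inequality $\Im Z_t(Q) \leq \Im Z_t(E) \leq |Z_t(E)|$ (since $Q$ is a quotient of $E$ in $\cA$ and $\Im Z_t(P) \geq 0$), and symmetrically $\Im Z_t(S) \leq |Z_t(E)|$. Combining this with the Bogomolov estimates above and the support property on the ordinary cohomology $\Lambda'$ (Theorem \ref{supp}) applied to $E$, one controls $\|\ch^B(Q)\|_{\mathrm{ord}}$ and $\|\ch^B(S)\|_{\mathrm{ord}}$ uniformly by $|Z_t(E)|$. The numerical identities in Lemma \ref{chlink} and Lemma \ref{twisted chern} together with Lemma \ref{ranks} then transfer this control to each component: $\ch_0^B(Q_k)=\ch_0(Q)$ is constant in $k$; the differences $\ch_1^B(Q_k)-\ch_1^B(Q_0)$ are controlled via the $G_{0,k}$ of rank $\leq \ch_0(Q)$ (Lemma \ref{ranks}); and the upper bounds on each $\ch_2^B(Q_k)$ from \eqref{BOG}, combined with the averaging identity of Lemma \ref{chlink} expressing $\ch_2^B(Q)$ in terms of the $\ch_2^B(Q_k)$'s, yield matching lower bounds. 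Lemma \ref{normequivalence} then converts the per-component estimates into the required orbifold norm bound. The argument for $S$ is entirely symmetric, using that $S$ is a subobject of $E$ in $\cA$ and that the slopes of $f^*(\cH^{-1}(S)_k\rho_k)$ lie in $[-3\alpha, 0]$.

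The main technical obstacle will be bridging the quadratic Bogomolov estimates with the first-order information coming from semistability, in order to conclude that $\ch_0(Q)$ itself (and $\ch_0(\cH^{-1}(S))$) is bounded linearly by $|Z_t(E)|$. This is likely to require feeding the inequality $\mu_{\sigma_t}(E) \leq \mu_{\sigma_t}(Q)$ through the decomposition $Z_t(E)=Z_t(P)+Z_t(Q)$ while simultaneously invoking Bogomolov estimates on $P$ (available since, by Corollary \ref{world1cor}, each $f^*(P_k\rho_k)$ lies in $\cA$) as well as on $Q$, and then disentangling the linear and quadratic contributions to produce a uniform constant $M_Q$ depending only on $\uX, B, H, t$.
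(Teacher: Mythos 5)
Your overall architecture matches the paper's: first control the aggregate invariants $\ch_0^B(Q)$, $\ch_1^B(Q)$, $\ch_2^B(Q)$, then distribute that control to the components $Q_k$ via Proposition \ref{world1}, the Bogomolov estimate \eqref{BOG}, and the averaging identity of Lemma \ref{chlink}. The component-level part of your argument (upper bounds on each $\ch_2^B(Q_k)$ from the slope window, matching lower bounds from averaging, $\ch_1^B(Q_k)=\ch_1^B(Q)+b\sC$ with $b$ controlled by slopes) is exactly what the paper does in Proposition \ref{solutionQ}. However, two steps that you defer or gloss over are genuine gaps, and they are precisely where the work lies.

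First, the linear bound $\ch_0^B(Q)\leq M_2|Z(E)|$, which you flag as the ``main technical obstacle,'' is not resolved by your sketch. Invoking Bogomolov estimates on $P$ and ``disentangling linear and quadratic contributions'' is not how this goes, and it is unclear it can go that way: the quadratic Bogomolov bound on $Q$ alone gives $\ch_2^B(Q)\lesssim \ch_0^B(Q)$, which combined with $\mu_\sigma(E)\leq\mu_\sigma(Q)$ does not by itself isolate $\ch_0^B(Q)$. The paper's device is to split $Q$ by slope at a threshold $0<a<\sqrt{2tH^2}$ into $Q'=Q_{\geq a}$ and $Q''=Q_{<a}$. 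For $Q''$ the choice of $a$ forces $\ch_2^B(Q'')\leq \tfrac{a^2}{2H^2}\ch_0^B(Q'')<t\,\ch_0^B(Q'')$, so $\Re Z_t(Q'')\geq (t-\tfrac{a^2}{2H^2})\ch_0^B(Q'')>0$, and semistability of $E$ (applied to the quotient $E\twoheadrightarrow Q''$) bounds this by $\Re Z_t(E)$, giving the rank bound for $Q''$ from the \emph{real} part of the central charge. For $Q'$ the rank is bounded by the \emph{imaginary} part, via $\ch_0^B(Q')\leq a^{-1}H\ch_1^B(Q')$. Neither half uses $P$.

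Second, you claim the support property on $\Lambda'$ ``applied to $E$'' controls $\|\ch^B(Q)\|_{\mathrm{ord}}$. It does not: Theorem \ref{supp} applies to $\sigma_t$-semistable objects, and $Q$ is merely a quotient of the semistable $E$, not itself semistable. The imaginary part of $Z_t$ only sees $H\cdot\ch_1^B(Q)$; to bound the full class $\ch_1^B(Q)$ you must control its component $N\in H^{\perp}$. The paper does this in Lemma \ref{errorQ} by feeding the already-established bounds on $\ch_0^B(Q)$ and $\ch_2^B(Q)$ into the refined inequality \eqref{BOG} to get $N^2\geq -b_1|Z(E)|^2$, and then using negative definiteness of $H^{\perp}$ (Hodge index) to convert this into $\|N\|\leq b_2|Z(E)|$. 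Without this step your transfer of control to the $\ch_1^B(Q_k)$ is unsupported.
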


The following proposition is a necessary condition for Proposition \ref{mainQ} to be true. It will be used later.
 
\begin{proposition}\label{Q}
		Fix $t>0$ and let $E\in \cA$ be a $\sigma_{t}$-semistable object. Using Notation \ref{cases}, there are constants $M_{1}, M_{2}$, such that 
		$$
		|\ch_{2}^{B}(Q)|\leq M_{1}|Z(E)|, \quad 0\leq \ch_{0}^{B}(Q)\leq M_{2}|Z(E)|.
		$$
		Similarly, there are constants $M_{1}', M_{2}'$, such that 
		$$
		|\ch_{2}^{B}(S)|\leq M_{1}'|Z(E)|, \quad M_{2}' |Z(E)| \leq \ch_{0}^{B}(S)\leq 0.
		$$
	\end{proposition}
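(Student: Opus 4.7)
The plan is to derive linear-in-$|Z(E)|$ bounds on the Chern characters of $Q$ and $S$ by combining $\sigma_t$-semistability of $E$ with a refined Bogomolov--Hodge inequality that exploits the bounded Mumford slopes built into the definitions of $Q$ and $S$.

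For $Q$, I would first observe that $Q=\cH^0(E)/F_k$ is torsion-free (the torsion of $\cH^0(E)\in \cT_{B,H}$ sits inside $F_k$, having slope $+\infty > 2\alpha$), with all $\mu^B$-HN factors $G_j$ of slope in $(0,2\alpha]$. By Lemma~\ref{triangle}, the quotient fits into a short exact sequence $0\to P\to E\to Q\to 0$ in $\cA$, which gives $H\ch_1^B(Q)\leq \Im Z_t(E)\leq |Z(E)|$. Since $E$ is $\sigma_t$-semistable, I also have $\mu_{\sigma_t}(Q)\geq \mu_{\sigma_t}(E)$, i.e., $\ch_2^B(Q)-t\ch_0^B(Q)\geq H\ch_1^B(Q)\cdot \mu_{\sigma_t}(E)$.

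The key technical step is to sharpen the Bogomolov--Hodge bound. The intermediate inequality in the derivation of (\ref{BOG}), applied to $Q$, reads $\ch_2^B(Q)\leq \sum_j \tfrac{\mu^B(G_j)^2}{2H^2}\ch_0^B(G_j)$. Pulling out one factor of $\mu^B(G_j)\leq 2\alpha$ and recollecting $\sum_j \mu^B(G_j)\ch_0^B(G_j)=H\ch_1^B(Q)$ yields the \emph{linear} bound $\ch_2^B(Q)\leq \tfrac{\alpha}{H^2}H\ch_1^B(Q)$, rather than the naive quadratic $\tfrac{2\alpha^2}{H^2}\ch_0^B(Q)$. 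Combined with the semistability inequality and the elementary estimate $|\mu_{\sigma_t}(E)|\cdot H\ch_1^B(Q)\leq |Z(E)|$ (from $H\ch_1^B(Q)\leq \Im Z_t(E)$ and $|\Re Z_t(E)|\leq |Z(E)|$), this gives $t\ch_0^B(Q)\leq (1+\alpha/H^2)|Z(E)|$, producing $M_2$. The bound $|\ch_2^B(Q)|\leq M_1|Z(E)|$ then follows: the upper bound directly from the refined inequality, and the lower bound from $\ch_2^B(Q)\geq t\ch_0^B(Q)+H\ch_1^B(Q)\mu_{\sigma_t}(E)\geq -|Z(E)|$. The argument for $S$ is dual: $S[-1]\hookrightarrow \cH^{-1}(E)\in \cF_{B,H}$ has HN slopes in $[-2\alpha,0]$, the analogous refinement yields $\ch_2^B(S)\geq -\tfrac{\alpha}{H^2}H\ch_1^B(S)$, and combined with $S\hookrightarrow E$ and the reversed semistability inequality $\mu_{\sigma_t}(S)\leq \mu_{\sigma_t}(E)$, the symmetric arithmetic produces $M_1',M_2'$.

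The main obstacle I anticipate is the refined Bogomolov step. The classical bound $\ch_2^B\leq \tfrac{\mu_{\max}^2}{2H^2}\ch_0^B$ is quadratic in $\mu_{\max}$ and would only yield $M_2\sim |Z(E)|^{1/2}$, insufficient for the support property. The linear upgrade crucially requires treating each HN factor individually, replacing one power of $\mu^B(G_j)$ by the uniform bound $2\alpha$ and recollecting the remaining factor $\mu^B(G_j)\ch_0^B(G_j)$ into $H\ch_1^B(Q)$; this also clarifies why the cutoff threshold in Notation~\ref{cases} was chosen to be precisely $2\alpha$.
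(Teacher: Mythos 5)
Your proof is correct, and it takes a genuinely different route from the paper on the key step. The paper circumvents the failure of the crude bound $\ch_2^B(Q)\leq \tfrac{2\alpha^2}{H^2}\ch_0^B(Q)$ for small $t$ by first splitting $Q$ into $Q'=Q_{\geq a}$ and $Q''=Q_{<a}$ for a threshold $0<a<\sqrt{2tH^2}$: it bounds $\ch_0^B(Q'')$ from the semistability of $E$ using that $t-\tfrac{a^2}{2H^2}>0$, bounds $\ch_0^B(Q')$ directly from $\mu^B(Q')\geq a$, and only then applies Bogomolov. You instead avoid the split entirely by sharpening the Bogomolov--Hodge estimate: keeping the sum $\ch_2^B(Q)\leq \sum_j \tfrac{\mu^B(G_j)^2}{2H^2}\ch_0^B(G_j)$ over HN factors and replacing one factor $\mu^B(G_j)\leq 2\alpha$, you land on $\ch_2^B(Q)\leq \tfrac{\alpha}{H^2}H\ch_1^B(Q)$, which is linear in $\Im Z_t(Q)\leq \Im Z_t(E)$ and works uniformly in $t>0$. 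This is cleaner and makes the $t$-independence of the constants more transparent; the dual argument for $S$ goes through the same way. One small clarification: your remark that the classical bound would ``only yield $M_2\sim |Z(E)|^{1/2}$'' slightly misdiagnoses the issue --- without a split or your refinement, the naive approach does not produce a weaker power, it simply fails to close at all when $t\leq 2\alpha^2/H^2$, which is precisely what the paper's $Q'/Q''$ decomposition is designed to repair. Both arguments are valid; yours is more streamlined, the paper's is perhaps more modular and closer to the standard surface-case technique.
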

	
	\begin{proof}
		We prove the first claim, the second claim is proved similarly. 
		First note that $\ch_{0}^{B}(Q)>0$, otherwise $Q=0$ and there is nothing to prove. Let $0<a<\sqrt{2tH^{2}}$ to be any constant. Let $Q'=Q_{\geq a}$ and $Q''=Q_{< a}$. We first deal with $Q''$.
		\\
		\textbf{Bounding $Q''$:}
		 In the distinguished triangle
		\begin{equation}\label{triangle1}
	F \to E \to Q'' \to F[1],
		\end{equation}  
		we have $\cal{H}^{-1}(F)=\cal{H}^{-1}(E)$ and $\cal{H}^{0}(F)=S$. Hence $F\in \cA$. By Lemma \ref{triangle}, (\ref{triangle1}) is an exact seqeunce in $\cA$. By $\sigma_{t}$-semistability of $E$, we have $\mu_{\sigma}(E) \leq \mu_{\sigma}(Q'')$. Hence we have
		\begin{equation*}
			-\ch_{2}^{B}(Q'')+t\ch_{0}^{B}(Q'') \leq \dfrac{H\ch_{1}^{B}(Q'')}{H\ch_{1}^{B}(E)}(-\ch_{2}^{B}(E)+t\ch_{0}^{B}(E)).
		\end{equation*}
		By Proposition \ref{BOG}, we have $\ch_{2}^{B}(Q'')\leq \dfrac{a^{2}}{2H^{2}}\ch_{0}^{B}(Q'')$. Hence we have 
		\begin{equation}\label{ineq1}
			0< \left(t-\dfrac{a^{2}}{2H^{2}}\right)\ch_{0}^{B}(Q'') \leq 
			\dfrac{H\ch_{1}^{B}(Q'')}{H\ch_{1}^{B}(E)}(-\ch_{2}^{B}(E)+t\ch_{0}^{B}(E)) \leq 
			\Re(Z(E)).
		\end{equation}
		Since $t-\dfrac{a^{2}}{2 H^{2}}>0$, we may take $a_{2}=\left(t-\dfrac{a^{2}}{2H^{2}}\right)^{-1}$. Then $0\leq \ch_{0}^{B}(Q'')\leq a_{2}|Z(E)|$.
		
		By (\ref{ineq1}), if $Q''\neq 0$ then $\Re(Z(E))> 0$. Hence we have
		\begin{equation*}
			-\ch_{2}^{B}(Q'')<-\ch_{2}^{B}(Q'')+t\ch_{0}^{B}(Q'') 
			\leq 
			\dfrac{H\ch_{1}^{B}(Q'')}{H\ch_{1}^{B}(E)}(-\ch_{2}^{B}(E)+t\ch_{0}^{B}(E)) \leq 
			\Re(Z(E)).
		\end{equation*}
		By Proposition \ref{BOG}, we have 
		\begin{equation*}
		\ch_{2}^{B}(Q'')\leq \dfrac{a^{2}}{2H^{2}}\ch_{0}^{B}(Q'') \leq \dfrac{a^{2}}{2H^{2}}\cdot a_{2} |\Re(Z(E))|.
		\end{equation*}
        Hence we may take $a_{1}=\max\left(\dfrac{2\alpha^{2}}{H^{2}}\cdot a_{2}, 1\right). $ Then $\|\ch_{2}^{B}(Q'')\|\leq a_{1}|Z(E)|$.
        \\
        \textbf{Bounding $Q$:}
        Since $\mu^{B}(Q') \geq a>0$, we have $\ch_{0}^{B}(Q')\leq a^{-1}H\ch_{1}^{B}(Q') \leq a^{-1}|Z(E)|$. Let $M_{2}=a_{2}+a^{-1}$, then we have
        $$ 
        0\leq \ch_{0}^{B}(Q)\leq a_{2}|Z(E)| + a^{-1}|Z(E)|=M_{2}|Z(E)|.
        $$
        Since $0< \mu_{\min}^{B}(Q)\leq \mu_{\max}^{B}(Q) \leq 2\alpha$, we have $\ch_{2}^{B}(Q)\leq \dfrac{2\alpha^{2}}{H^{2}}\ch_{0}^{B}(Q)\leq \dfrac{2\alpha^{2}}{H^{2}}M_{2}|Z(E)|.$
        Since $E \twoheadrightarrow Q$, by stability of $E$ we have $\mu_{\sigma}(E)\leq \mu_{\sigma}(Q)$. Hence 
        $$
        			-\ch_{2}^{B}(Q)\leq -\ch_{2}^{B}(Q)+t\ch_{0}^{B}(Q) \leq \dfrac{H\ch_{1}^{B}(Q)}{H\ch_{1}^{B}(E)}(-\ch_{2}^{B}(E)+t\ch_{0}^{B}(E))\leq 
        			\dfrac{H\ch_{1}^{B}(Q)}{H\ch_{1}^{B}(E)}|Z(E)|\leq |Z(E)|.
        $$
        
	\end{proof}
	
In order to bound $\| \ch_{1}^{B}(Q_{k}) \|$, we need the following lemmas. 

\begin{lemma}\label{errorQ}
	Under the conditions of Proposition \ref{Q}, there is a constant $M_{3}$, such that 
	$$ \|\ch_{1}^{B}(Q)\|\leq M_{3} |Z(E)|. $$
	Similarly, there is a constant $M_{3}' $, such that 
	$$ \|\ch_{1}^{B}(S)\|\leq M_{3}' |Z(E)|. $$
\end{lemma}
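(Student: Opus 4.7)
My plan is to decompose $\ch_1^B(Q)$ into its $H$-component and its $H^\perp$-component and bound each separately, exploiting that the HN factors of $Q$ have $\mu^B$ confined to the bounded interval $(0, 2\alpha]$. Write $\ch_1^B(Q) = aH + N$ with $a = (H\cdot \ch_1^B(Q))/H^2$ and $N \in H^\perp$. Since $Q$ is a torsion free quotient of $\cH^0(E) \in \mathcal{T}_{B,H}$ and by construction $Q = \cH^0(E)_{\leq 2\alpha}$, every HN factor of $Q$ has slope $\mu^B \in (0, 2\alpha]$. Summing yields $0 \leq H\cdot \ch_1^B(Q) \leq 2\alpha\, \ch_0^B(Q)$, and Proposition~\ref{Q} supplies $\ch_0^B(Q) \leq M_2 |Z(E)|$, so $|a|$ is linearly bounded in $|Z(E)|$.

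For the orthogonal component $N$, the crucial tool is the refined Bogomolov estimate~\eqref{BOG} appearing inside the proof of the preceding proposition, which in our situation reads
$$
\ch_2^B(Q) \leq \dfrac{4\alpha^2}{2H^2}\ch_0^B(Q) + \dfrac{N^2}{2\ch_0^B(Q)}.
$$
Rearranging gives $-N^2 \leq (4\alpha^2/H^2)(\ch_0^B(Q))^2 - 2\ch_0^B(Q)\ch_2^B(Q)$, and Proposition~\ref{Q} controls both $\ch_0^B(Q)$ and $|\ch_2^B(Q)|$ linearly in $|Z(E)|$, hence $-N^2 \leq C|Z(E)|^2$. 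By the Hodge Index Theorem the intersection form is negative definite on $H^\perp \subset \NS(\uX)_\RR$, so $\vv \mapsto \sqrt{-\vv^2}$ is a norm there, equivalent to the chosen lattice norm; this upgrades the bound to $\|N\| \leq C'|Z(E)|$. Combining the bounds on $|a|$ and $\|N\|$ yields $\|\ch_1^B(Q)\| \leq M_3|Z(E)|$.

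For $S$, the argument is entirely symmetric: $S[-1] = \cH^{-1}(E)_{\geq -2\alpha} \subseteq \cH^{-1}(E) \in \mathcal{F}_{B,H}$ has HN factors with slopes in $[-2\alpha, 0]$, and the same decomposition plus~\eqref{BOG} plus Hodge Index applies, with the bounds on $\ch_0^B(S)$ and $\ch_2^B(S)$ supplied by Proposition~\ref{Q}. The sign $\ch_0^B(S) \leq 0$ is harmless because the interval bound uses $\max(a_{\min}^2, a_{\max}^2)$. The main conceptual step is the transition from control of $\ch_2^B$ alone (which is what the crude Bogomolov inequality provides) to control of the full class $\ch_1^B$: the refined form~\eqref{BOG} retains the $N^2$ term explicitly on the right, and coupled with Hodge Index this is precisely what makes $\|N\|$, and hence $\|\ch_1^B\|$, accessible.
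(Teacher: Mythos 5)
Your argument is correct and follows essentially the same strategy as the paper's own proof: decompose $\ch_1^B(Q)$ into its $H$-direction coefficient $a$ and its $H^\perp$-component $N$, bound $N^2$ from below via the refined Bogomolov estimate~\eqref{BOG} together with Proposition~\ref{Q}, then invoke the negative definiteness of the intersection form on $H^\perp$ (Hodge Index) to convert that into a linear bound on $\|N\|$. The only point of divergence is the bound on $a$: you deduce $0 \le H\cdot\ch_1^B(Q) \le 2\alpha\ch_0^B(Q)$ from the slope window $(0,2\alpha]$ and then apply the rank bound $\ch_0^B(Q)\le M_2|Z(E)|$ from Proposition~\ref{Q}, whereas the paper observes more directly that $Q$ is a quotient of $E$ in $\cA$, hence $a = H\ch_1^B(Q)/H^2 \le H\ch_1^B(E)/H^2 = \Im(Z(E))/H^2$, bypassing the rank bound. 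Both routes are valid and give linear control of $|a|$; the paper's is slightly cleaner, yours is equally rigorous.
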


\begin{proof}
	We prove the first claim, the second claim is proved similarly. 
	Let $\gr_{1}(Q), \cdots,\gr_{m}(Q) $ be the $\mu^{B}$-\HN factors of $Q$. Then we may write $$\ch_{1}^{B}(\gr_{j}(Q))=a_{j}H+N_{j}, a_{j}= \dfrac{H\ch_{1}^{B}(\gr_{j}(Q))}{H^{2}}, N_{j}\in H^{\perp}. $$
	Let $a=\sum_{j=1}^{m}a_{j}=\dfrac{H\ch_{1}^{B}(Q)}{H^{2}}$ and $N=\sum_{j=1}^{m} N_{j}$. Then $N\in H^{\perp}$, and $\ch_{1}^{B}(Q)=aH+N.$
	By a modified version of Bogomolov inequality (\ref{BOG}), we have
	\begin{equation}
		N^{2}\geq 2 \ch_{2}^{B}(Q)\ch_{0}^{B}(Q)-\dfrac{4\alpha^{2}}{H^{2}}\ch_{0}^{B}(Q)^{2}.
	\end{equation}
	By Proposition \ref{Q}, there is a constant $b_{1}>0$, such that 
	\begin{equation}
		N^{2}\geq -b_{1}|Z(E)|^{2}.
	\end{equation} 
	Since $N\in H^{\perp}$ and by the Hodge Index Theorem $H^{\perp}$ is negative definite, there is a constant $b_{2}>0$ such that 
	$\|N\| \leq b_{2}|Z(E)|$. 
	Note that $Q$ is a quotient of $E$ in $\cA$, 
	$$ a=\dfrac{H\ch_{1}^{B}(Q)}{H^{2}} \leq \dfrac{H\ch_{1}^{B}(E)}{H^{2}}= \dfrac{\Im(Z(E))}{H^{2}}.$$
	Hence we have 
	$$
	\|\ch_{1}^{B}(Q)\|=\|aH+N\|\leq \|aH\|+\|N\|\leq \dfrac{\| H \|}{H^{2}}|\Im(Z(E))|+b_{2}|Z(E)|.
	$$
	
\end{proof}

	\begin{lemma}\label{c1close2}
			There exists a constant $b>0$, such that for every torsion free sheaf $E\in \Coh(\uX)$, we have
			$$\ch_{1}^{B}(E_{k})C\leq \ch_{1}^{B}(E)C +b\ch_{0}^{B}(E). $$
		\end{lemma}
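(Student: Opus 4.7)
The plan is to carry out a direct computation comparing $\ch_1(E_k) \cdot C$ with $\ch_1(E) \cdot C$ using the parabolic sheaf description (Theorem \ref{parabolic}, Proposition \ref{link}) combined with the rank bounds from Lemma \ref{ranks}. The key observation is that since $\ch_0^{B}(E_k) = \ch_0(E_k) = \ch_0(E) = \ch_0^B(E)$, the twist by $B$ disappears in the difference $\ch_1^B(E_k) - \ch_1^B(E) = \ch_1(E_k) - \ch_1(E)$, so we may work with untwisted Chern classes throughout.

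First I would use the parabolic description: the inclusions $e_{0,k}: E_0 \hookrightarrow E_k$ have cokernel $G_{0,k}$ supported on $C$ with $C$-rank $r_{0,k}$, so that as classes on $X$
\[
\ch_1(E_k) = \ch_1(E_0) + r_{0,k}\, C, \qquad 0 \leq k \leq n-1,
\]
where by convention $r_{0,0} = 0$. Applying Lemma \ref{chlink} and setting $\bar r := \tfrac{1}{n}\sum_{k=0}^{n-1} r_{0,k}$, we get
\[
\ch_1(E) = \frac{1}{n}\sum_{k=0}^{n-1}\ch_1(E_k) = \ch_1(E_0) + \bar r\, C,
\]
and consequently
\[
\ch_1^B(E_k) - \ch_1^B(E) = (r_{0,k} - \bar r)\, C.
\]

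Intersecting both sides with $C$ gives
\[
\ch_1^B(E_k)\cdot C - \ch_1^B(E)\cdot C = (r_{0,k}-\bar r)\, C^{2}.
\]
By Lemma \ref{ranks}, $0 \leq r_{0,k} \leq \ch_0(E)$ for every $k$, so also $0 \leq \bar r \leq \ch_0(E)$ and $|r_{0,k}-\bar r| \leq \ch_0(E)$. Therefore
\[
\ch_1^B(E_k)\cdot C \leq \ch_1^B(E)\cdot C + |C^{2}|\cdot \ch_0(E).
\]
Taking $b := \max(|C^{2}|,1)$, which depends only on the geometric data $X$, $C$, $n$ and not on $E$, yields the stated inequality.

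There is no real obstacle here; the lemma is a bookkeeping consequence of the decomposition $f_*E = \bigoplus E_k\rho_k$ together with the bounds on $r_{0,k}$ already extracted in Lemma \ref{ranks}. The only points requiring care are to confirm the $B$-twist drops out (because all $E_k$ share the rank of $E$) and to handle the sign of $C^{2}$, which is why the universal constant must be taken proportional to $|C^{2}|$ rather than to $C^{2}$ itself.
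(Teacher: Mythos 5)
Your proof is correct and is essentially the same argument as the paper's: both reduce the problem to bounding a multiple of $C^2$ using Lemma \ref{ranks}, via Lemma \ref{chlink}. The only cosmetic difference is that you express $\ch_1(E_k)-\ch_1(E)$ as $(r_{0,k}-\bar r)C$ using the filtration by $G_{0,k}$, whereas the paper rewrites the same quantity as $\tfrac{1}{n}\sum_{j\neq k}\ch_1(G_{jk})C$; these are the same bookkeeping.
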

		
		\begin{proof}
			By Lemma \ref{chlink}, we have
			\begin{align*}
				\ch_{1}^{B}(E_{k})C =\ch_{1}^{B}(E)C-\ch_{1}^{B}(E)C+\ch_{1}^{B}(E_{k})C
                    &=\ch_{1}^{B}(E)C+ \dfrac{1}{n}\sum_{j=0}^{n-1}(\ch_{1}(E_{k}) -\ch_{1}(E_{j}))C \\
				&=\ch_{1}^{B}(E)C+ \dfrac{1}{n} \sum_{0\leq j \neq k \leq n-1}\ch_{1}(G_{jk})C.
                         \end{align*}
			By Lemma \ref{ranks}, we have 
   $$\ch_{1}^{B}(E)C+\dfrac{1}{n} \sum_{0\leq j \neq k \leq n-1}\ch_{0}(C, G_{jk})C^{2} 
                \leq \ch_{1}^{B}(E)C+ \dfrac{n-1}{n}|C^{2}|\ch_{0}^{B}(E)$$
		\end{proof}  

  Now we are ready to bound $\| \ch_{1}^{B}(Q_{k}) \|$.

\begin{proposition}\label{solutionQ}
	Under the condition of Proposition \ref{Q}, write $f_{*}(Q)=\bigoplus_{k=0}^{n-1}Q_{k}\rho_{k}$ and $f_{*}(S)=\bigoplus_{k=0}^{n-1}S_{k}\rho_{k}$. Then there are constants $M_{4}, M_{5}$, such that
	$$\|\ch_{2}^{B}(Q_{k})\|\leq M_{4}|Z(E)|, \|\ch_{1}^{B}(Q_{k})\|\leq M_{5}|Z(E)| \quad 0\leq k \leq n-1. $$
	Similarly, there is a constant $M_{4}'$, such that 
	$$\|\ch_{2}^{B}(S_{k})\|\leq M_{4}' |Z(E)|, \|\ch_{1}^{B}(S_{k})\|\leq M_{5}'|Z(E)| \quad 0\leq k \leq n-1. $$
\end{proposition}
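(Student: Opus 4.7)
The plan is to bound the Chen-Ruan components $\ch_j^B(Q_k)$ for $j = 1, 2$ (and similarly for $S_k$) in terms of $|Z(E)|$, reusing Proposition \ref{Q} and Lemma \ref{errorQ}, which bound $\ch_0^B$, $\ch_2^B$, and $\|\ch_1^B\|$ of the aggregate objects $Q$ and $S$, together with the structural results Proposition \ref{world1} and Lemma \ref{chlink} linking each $Q_k$ back to $Q$.

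For $\ch_1^B(Q_k)$: by Proposition \ref{link}, the successive quotients $G_{j-1,j}$ of the filtration of $Q$ are sheaves supported on $C$, so numerically $\ch_1(Q_k) - \ch_1(Q_j)$ is a multiple of $C$, and by Lemma \ref{chlink} the same is true of $\ch_1^B(Q_k) - \ch_1^B(Q)$; Lemma \ref{ranks} forces the coefficient of $C$ to be bounded by a constant times $\ch_0^B(Q)$. Combined with Lemma \ref{errorQ} and Proposition \ref{Q}, this yields $\|\ch_1^B(Q_k)\| \leq M_5|Z(E)|$. Since $Q$ has $\mu^B$-slopes in $(0, 2\alpha]$, Proposition \ref{world1} shows that the torsion free sheaf $f^*(Q_k\rho_k) \in \Coh(\uX)$ has slopes in $(-\alpha, 2\alpha]$, and the Bogomolov-type inequality (\ref{BOG}) gives
\begin{equation*}
\ch_2^B\bigl(f^*(Q_k\rho_k)\bigr) \leq \dfrac{2\alpha^2}{H^2}\ch_0^B(Q),
\end{equation*}
which is linear in $|Z(E)|$ by Proposition \ref{Q}. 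Expanding $\ch_2^B(f^*(Q_k\rho_k))$ in terms of $\ch_j^B(Q_k)$ via $\ch(f^*(Q_k\rho_k)) = \pi^*\ch(Q_k) \cdot e^{-k[\sC]}$—so the correction terms are multiples of $\ch_1^B(Q_k) \cdot C$ and $\ch_0^B(Q_k)C^2$, both already controlled—produces an upper bound $\ch_2^B(Q_k) \leq M_4|Z(E)|$.

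For the lower bound on $\ch_2^B(Q_k)$, I would invoke Lemma \ref{chlink} in its twisted form (Lemma \ref{twisted chern}):
\begin{equation*}
\ch_2^B(Q) = \dfrac{1}{n}\sum_{k=0}^{n-1}\ch_2^B(Q_k) + \sum_{k=0}^{n-1}\dfrac{n-2k-1}{2n^2}\ch_1^B(Q_k) \cdot C.
\end{equation*}
The left side is bounded linearly by Proposition \ref{Q} and the second sum on the right by the first step, so $\bigl|\sum_k \ch_2^B(Q_k)\bigr|$ is linearly bounded in $|Z(E)|$; combined with the individual upper bounds just obtained, this forces a matching lower bound on each $\ch_2^B(Q_k)$. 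The case of $S$ is symmetric: $S[-1] = \cH^{-1}(E)_{\geq -2\alpha}$ is a torsion free sheaf with slopes in $[-2\alpha, 0]$, so Corollary \ref{world1cor} and (\ref{BOG}) apply, and the second (dual) statements of Proposition \ref{Q} and Lemma \ref{errorQ} supply the analogues of the preparatory bounds. The main obstacle is purely bookkeeping: every correction term arising from the failure of $\ch_j$ to commute with $f_*$ (the $\ch_1\cdot C$ term in Lemma \ref{chlink} and the $[\sC]$-twists in expanding $\ch(f^*(Q_k\rho_k))$) must be tracked so that the final estimates remain strictly linear in $|Z(E)|$.
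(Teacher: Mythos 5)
Your proposal is correct and follows essentially the same strategy as the paper: use Proposition \ref{world1} to bound the slopes of the components $Q_k$ in a band of width $O(\alpha)$, then apply the Bogomolov-type bound (\ref{BOG}) together with Proposition \ref{Q} for an upper bound on $\ch_2^B(Q_k)$, and finally invert the averaging identity of Lemma \ref{chlink} to deduce the matching lower bound; the $\ch_1^B(Q_k)$ bound comes from the fact that $\ch_1^B(Q_k) - \ch_1^B(Q)$ is a multiple of $\sC$ with controlled coefficient. The only minor deviations are bookkeeping: you bound the $\sC$-coefficient via the rank inequalities of Lemma \ref{ranks}, whereas the paper deduces it from the slope band of Proposition \ref{world1}, and you work with $f^*(Q_k\rho_k)$ and then untwist by $e^{-k[\sC]}$, whereas the paper applies (\ref{BOG}) to $Q_k$ directly (also deriving the needed control on $\ch_1^B(Q_k)\cdot C$ from Lemma \ref{c1close2} rather than from the already-proved $\ch_1$ bound); these variants are logically interchangeable.
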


\begin{proof}
	We prove the first claim, the second claim is proved similarly. 
	\\
	\textbf{Bounding $\ch_{2}(Q_{k})$:}
	
	By Proposition \ref{world1}, we have 
	\begin{equation*}
		-\alpha\leq \mu_{\min}^{B}(Q)-\alpha \leq \mu_{\min}^{B}(Q_{k}) \leq \mu_{\max}^{B}(Q_{k}) \leq \mu_{\max}^{B}(Q)+\alpha \leq 3\alpha. 
	\end{equation*}
	Hence by Proposition \ref{BOG} and Proposition \ref{Q}, we have 
	\begin{equation}\label{equation 5.11}
		\ch_{2}^{B}(Q_{k})\leq \dfrac{9\alpha^{2}}{2H^{2}} \ch_{0}^{B}(Q_{k})=
		\dfrac{9\alpha^{2}}{2H^{2}} \ch_{0}^{B}(Q)\leq \dfrac{9\alpha^{2}}{2H^{2}} M_{1}|Z(E)|, \quad 0\leq k \leq n-1. 
	\end{equation}
	By Proposition \ref{chlink}, we have 
	\begin{equation*}
	\ch_{2}^{B}(Q) =\dfrac{1}{n}\sum_{k=0}^{n-1} \ch_{2}^{B}(Q_{k}) + \sum_{k=0}^{n-1} \dfrac{n-2k-1}{2n^{2}}\ch_{1}^{B}(Q_{k})\cdot C. 
	\end{equation*}
	By Lemma \ref{c1close2}, there are constants $B_{0},...,B_{k}$  
         \begin{align*} 
\ch_{2}^{B}(Q) & \leq\dfrac{1}{n}\sum_{k=0}^{n-1}\ch_{2}^{B}(Q_{k})+ \sum_{k=0}^{n-1} \dfrac{n-2k-1}{2n^{2}}(\ch_{1}^{B}(Q)C+B_{k}\ch_{0}^{B}(Q)) \\ 
           &=  \dfrac{1}{n}\sum_{k=0}^{n-1}\ch_{2}^{B}(Q_{k}) +\sum_{k=0}^{n-1} \dfrac{n-2k-1}{2n^{2}}B_{k}\ch_{0}^{B}(Q).
        \end{align*} 
 Thus there is a constant $B'>0$, such that 
	\begin{equation*}
		\ch_{2}^{B}(Q)\leq \dfrac{1}{n}\sum_{k=0}^{n-1}\ch_{2}^{B}(Q_{k})+B'\ch_{0}^{B}(Q).
	\end{equation*}
	By Proposition \ref{chlink}, Proposition \ref{Q} and (\ref{equation 5.11}), there is a constant $b_{3}>0$, such that 
	\begin{equation*}
		\ch_{2}^{B}(Q_{j})\geq n\ch_{2}^{B}(Q)-\sum_{k\neq j}\ch_{2}^{B}(Q_{k})-B'\ch_{0}^{B}(Q)\geq -b_{3}|Z(E)|. 
	\end{equation*}
	Hence we may take $M_{4}=\max(\dfrac{9\alpha^{2}}{2H^{2}} M_{1}, b_{3})$.
	\\
	\textbf{Bounding $\ch_{1}(Q_{k})$:}
	
	By Proposition \ref{link}, we may write $\ch_{1}^{B}(Q_{k})=\ch_{1}^{B}(Q)+b\sC$ for some $b\in \ZZ$. By Lemma \ref{errorQ}, it suffices to bound $b$. By Proposition \ref{world1}, we have 
	$$-\alpha<\mu_{\min}^{B}(Q)-\alpha\leq \mu^{B}(Q_{k})=\mu^{B}(Q)+b\dfrac{H\sC}{\ch_{0}^{B}(Q)} \leq \mu_{\max}^{B}(Q)+\alpha \leq 3\alpha. $$
	Hence by Proposition \ref{Q}, we have
	$$
	|b| \leq \dfrac{3\alpha}{H\sC} \ch_{0}^{B}(Q) \leq \dfrac{3\alpha}{H\sC}M_{2}|Z(E)|.
	$$

 \begin{proof}[Proof of Proposition \ref{mainQ}]
     We prove the first claim, the second claim is similar.
     Note that $\ch_{0}(Q_{k})=\ch_{0}(Q)$, by Proposition \ref{Q} and Theorem \ref{supp} it is bounded. The proposition now follows from Proposition \ref{solutionQ}.
 \end{proof}

\end{proof}

\subsection{Bounding $\| \ch_{orb}(P) \|$}\label{boundP}

By Proposition \ref{mainQ}, if we have a bound for $\| \ch^{B}(P_{k}) \|$, then we will get a bound for $\| \ch^{B}(E_{k}) \|$. The goal of this subsection is the following proposition. 

\begin{proposition}\label{mainP}
    Fix $t>0$ and let $E\in \cA$ be a $\sigma_{t}$-semistable object. Using Notation \ref{cases}, there is a constant $M_{E}$ such that 
    $$
    \| \ch^{B}(E_{k}) \|\leq M_{E}|Z(E)|, \quad 0 \leq k \leq n-1.
    $$
\end{proposition}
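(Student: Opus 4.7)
The plan is to reduce to bounding $P$ via Proposition \ref{mainQ}: since $\ch^B(E_k)=\ch^B(P_k)+\ch^B(Q_k)$ and $\|\ch^B(Q_k)\|$ is already controlled, it suffices to bound $\|\ch^B(P_k)\|$ for each $0\leq k\leq n-1$. Using the canonical triangle $0\to\cH^{-1}(E)[1]\to P\to \cH^0(P)\to 0$ in $\cA$ (with $\cH^0(P)=\cH^0(E)_{>2\alpha}$ by definition of $Q$), additivity of the Chern character gives
\[
\ch^B(P_k)\;=\;-\ch^B(\cH^{-1}(E)_k)+\ch^B(\cH^0(P)_k),
\]
and a further splitting of $\cH^{-1}(E)$ along the cut $S=\cH^{-1}(E)_{\geq-2\alpha}[1]$ reduces the first summand to $-\ch^B(S_k[-1])-\ch^B(\cH^{-1}(T)_k)$. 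The $S_k$-part is already bounded by Proposition \ref{mainQ}, so the remaining task is to control the two extreme pieces: the \emph{high} $\cH^0$-part $\cH^0(P)_k$ and the \emph{low} $\cH^{-1}$-part $\cH^{-1}(T)_k$.

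For $\cH^0(P)_k$, I would exploit that $\mu_{\min}^B(\cH^0(P))>2\alpha>\alpha$. By Corollary \ref{world1cor} the twist $f^*(\cH^0(P)_k\rho_k)=\cH^0(P)_k\otimes\OO_{\uX}(-k\sC)$ lies in $\cA$. The direct-summand inclusion $\cH^0(P)_k\rho_k\hookrightarrow f_*\cH^0(P)$ gives, by adjunction, a morphism $f^*(\cH^0(P)_k\rho_k)\to \cH^0(P)$ which is an isomorphism away from $\sC$; since the source is torsion-free it is an injection of sheaves (as in Lemma \ref{injective}). Pulling back along $P\twoheadrightarrow\cH^0(P)$ lifts it to a subobject $F_k\hookrightarrow P\hookrightarrow E$ in $\cA$ with $\ch^B(F_k)=-\ch^B(\cH^{-1}(E))+\ch^B(f^*(\cH^0(P)_k\rho_k))$. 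Then $\sigma_t$-semistability forces $\mu_\sigma(F_k)\leq\mu_\sigma(E)$, which yields an upper bound on $\ch_2^B(\cH^0(P)_k)$ proportional to $|Z(E)|$. A matching lower bound comes from the Bogomolov-type estimate (\ref{BOG}) applied to $\cH^0(P)_k(-k\sC)$, whose slopes lie in a bounded window by Proposition \ref{world1}. Lemma \ref{c1close2} and the rank bound from Theorem \ref{supp} then control $\ch_1^B(\cH^0(P)_k)$ and $\ch_0^B(\cH^0(P)_k)$, giving the full bound $\|\ch^B(\cH^0(P)_k)\|\leq M|Z(E)|$.

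The bound for $\cH^{-1}(T)_k$ is the Serre-dual mirror of the above. Since $\mu_{\max}^B(\cH^{-1}(T))<-2\alpha<-\alpha$, Corollary \ref{world1cor} puts $f^!(\cH^{-1}(T)_k\rho_k)[1]=\cH^{-1}(T)_k((n-1-k)\sC)[1]$ in $\cA$. The projection $f_*\cH^{-1}(T)\twoheadrightarrow\cH^{-1}(T)_k\rho_k$ produces, via $f^!$-adjunction and torsion-freeness of the target, a surjection $\cH^{-1}(T)\twoheadrightarrow f^!(\cH^{-1}(T)_k\rho_k)$ in $\Coh(\uX)$; shifting and composing with the $\cA$-surjections $E\twoheadrightarrow T\twoheadrightarrow \cH^{-1}(T)[1]$ realizes $f^!(\cH^{-1}(T)_k\rho_k)[1]$ as a quotient of $E$ in $\cA$. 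Semistability then gives $\mu_\sigma(E)\leq\mu_\sigma(f^!(\cH^{-1}(T)_k\rho_k)[1])$, which, paired again with (\ref{BOG}) and Lemma \ref{c1close2}, bounds $\|\ch^B(\cH^{-1}(T)_k)\|$. Combined with the bounds for $S_k$ and $Q_k$ from Proposition \ref{mainQ}, this yields $\|\ch^B(E_k)\|\leq M_E|Z(E)|$.

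The main obstacle will be the first step: the twist $\OO_{\uX}(-k\sC)$ enters all of $\ch_0^B,\ch_1^B,\ch_2^B$ of the subobject $F_k$ and introduces $k$-dependent correction terms along $\sC$, so the single inequality $\mu_\sigma(F_k)\leq\mu_\sigma(E)$ does not immediately produce a clean linear-in-$|Z(E)|$ upper bound for $\ch_2^B(\cH^0(P)_k)$. The correction must be absorbed by combining the Bogomolov-type inequality (\ref{BOG}) for $\cH^0(P)_k(-k\sC)$, the close-to-$E$ control on $\ch_1^B$ supplied by Lemma \ref{c1close2}, and the slope window from Proposition \ref{world1}, simultaneously for every $k$; this book-keeping is what makes the argument delicate, and it is exactly why the strategy in Example \ref{exampleP} is needed rather than a direct quadratic form as in Definition \ref{quadratic}.
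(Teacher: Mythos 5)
Your plan shares the paper's high-level decomposition (split $E$ into a $P$-part and a $Q$-part, bound $\ch^B(Q_k)$ by Proposition \ref{mainQ}, then attack $\ch^B(P_k)$), but the mechanism you propose for the $P$-part replaces the paper's core technical construction with a simple sub-object comparison, and that replacement does not survive scrutiny.

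\textbf{The sub-object inequality cannot disentangle $\cH^{0}(P)_k$ from $\cH^{-1}(E)$.}
Granting the existence of $F_k\hookrightarrow E$ with $\ch^B(F_k)=-\ch^B(\cH^{-1}(E))+\ch^B(f^{*}(\cH^{0}(P)_k\rho_k))$, stability gives
$-\Re Z(F_k)\leq\mu_{\sigma}(E)\,\Im Z(F_k)$,
which after rearranging becomes
\[
\bigl[-\Re Z(f^{*}(\cH^{0}(P)_k\rho_k))-\mu_{\sigma}(E)\,\Im Z(f^{*}(\cH^{0}(P)_k\rho_k))\bigr]\;\leq\;-\bigl[-\Re Z(\cH^{-1}(E)[1])-\mu_{\sigma}(E)\,\Im Z(\cH^{-1}(E)[1])\bigr].
\]
The right-hand side is nonnegative because $\cH^{-1}(E)[1]\hookrightarrow E$, but it is \emph{not} bounded by $|Z(E)|$: it contains $\ch_{2}^{B}(\cH^{-1}(E))$, which is precisely one of the quantities still to be controlled. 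The ordinary support property only controls $\ch_{2}^{B}(E)=\ch_{2}^{B}(\cH^{0}(E))-\ch_{2}^{B}(\cH^{-1}(E))$, so the two cohomology sheaves can grow in tandem while their difference stays small. Your proposed ``Serre-dual mirror'' bound on $\cH^{-1}(T)_k$ has exactly the mirror version of this same entanglement, so the two halves of your argument each presuppose the other. This is a genuine circularity, not just bookkeeping with the $k\sC$ twist as you suggest at the end.

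\textbf{What the paper does instead, and why it avoids the circularity.}
The paper bounds $\mu_{\sigma}(f^{*}(P_k\rho_k))$ \emph{directly} (Proposition \ref{solutionP}), not through a sub-object $F_k$ whose central charge mixes in the unknown $\cH^{-1}(E)$. The bound is obtained by a case analysis on the first $\sigma_t$-HN factor $A$ of $f^{*}(P_k\rho_k)$: if $A\to P$ is nonzero one gets the bound for free, and if it is zero one builds the auxiliary ``destabilizing object'' $A''$ by iteratively extending $A$ against $A_k\rho_{k-1},\dots,A_k\rho_0$ with identity extension classes, then uses the semiorthogonal vanishing (Lemma \ref{circle}) to lift the inclusion $A\hookrightarrow P_0$ across these extensions and land a nonzero map $A''\to P$ (Lemma \ref{trouble1}, Proposition \ref{solution1}). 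Combined with the dichotomy that either $\cH^{-1}(E)_{>-a}[1]=0$ or $\cH^{0}(E)_{<a}=0$ (used in Propositions \ref{solutionch1}, \ref{solutionch2}), this produces a bound on $\mu_{\sigma}(f^{*}(P_k\rho_k))$ with no reference to an uncontrolled $\ch_{2}^{B}(\cH^{-1}(E))$ term. You do not use either of these devices, and your argument has no substitute for them.

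\textbf{A secondary gap: the torsion-free claim.}
You assert that $f^{*}(\cH^{0}(P)_k\rho_k)\hookrightarrow\cH^{0}(P)$ is an injection ``since the source is torsion-free (as in Lemma \ref{injective}).'' But $\cH^{0}(P)\in\cT$ may have torsion along $\sC$ (torsion sheaves always lie in $\cT$ and the HN cut $Q=\cH^{0}(E)_{\leq 2\alpha}$ removes only torsion-free low-slope pieces). When $\cH^{0}(P)$ has torsion supported on $\sC$, the sheaf $f^{*}(\cH^{0}(P)_k\rho_k)$ need not be torsion-free, and the adjunction map $f^{*}(\cH^{0}(P)_k\rho_k)\to\cH^{0}(P)$ can fail to be injective (already visible for $\cH^{0}(P)=\OO_{\sC}$, where $f^{*}((\OO_{\sC})_0\rho_0)=\OO_{n\sC}\to\OO_{\sC}$ has a kernel). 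So the sub-object $F_k\hookrightarrow P$ you build need not exist. The paper's Lemma \ref{trouble1} is designed precisely to handle the possibility that the natural map fails to inject as a map in $\cA$, by instead analyzing the first HN factor $A$ and what happens when its image in $P$ is zero.

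In short: the decomposition into $P$, $Q$, $S$, $T$ and the supporting bookkeeping (Bogomolov-type estimate \eqref{BOG}, Lemmas \ref{c1close2}, \ref{world1}) are correctly identified, but the heart of the proof---the destabilizing-object construction of Proposition \ref{solution1} and the HN dichotomy in Propositions \ref{solutionch1}/\ref{solutionch2}---is missing, and the shortcut proposed in its place has a structural circularity plus an unjustified torsion-freeness claim.
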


We first prove some lemmas.
\begin{lemma}\label{zerocomponent}
	Let $0 \to E \to F \to G \to 0$ be a short exact sequence in $\cA$. For every $0\leq k \leq n-1$, if the induced map $E_{k} \to F_{k}$ on $\rho_{k}$ component is zero, then $E\in \Coh(\uX)$. Furthermore, either 
	\begin{enumerate}
		\item $E_{k}=0$, or
		\item $E_{k}$ is torsion free with $\mu_{\max}^{B}(E_{k}) \leq \mu_{\max}^{B}(\cH^{-1}(G_{k}))$.  
	\end{enumerate}
\end{lemma}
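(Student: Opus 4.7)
The plan is to apply $f_*$ to the short exact sequence $0 \to E \to F \to G \to 0$ in $\cA$ and take the $\rho_k$-component of the resulting distinguished triangle in $D^b(\sX)$. By Lemma \ref{commute} and the functoriality of the isotypic decomposition, this yields a distinguished triangle
$$E_k \to F_k \to G_k \to E_k[1]$$
in $D^b(X)$. The hypothesis that the first map vanishes forces the triangle to split, so
$$G_k \cong F_k \oplus E_k[1] \quad \text{in } D^b(X).$$
This splitting is the engine of the proof; both conclusions are extracted by taking cohomology sheaves.

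First, applying $\cH^{-2}$ to the splitting gives $0 = \cH^{-2}(G_k) = \cH^{-1}(E_k)$, since $\cH^{-2}(G) = 0$ and $f_*$ is exact. By Lemma \ref{commute}, this says $\cH^{-1}(E)_k = 0$. Since $\cH^{-1}(E) \in \mathcal{F}_{B, H}$ is torsion-free on $\uX$, the rank equality $\ch_0(M) = \ch_0(M_k)$ recorded in Lemma \ref{chlink}, applied to $M = \cH^{-1}(E)$, forces $\ch_0(\cH^{-1}(E)) = 0$, and thus $\cH^{-1}(E) = 0$. Hence $E \in \Coh(\uX)$.

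Second, applying $\cH^{-1}$ of the splitting gives
$$\cH^{-1}(G)_k \cong \cH^{-1}(F)_k \oplus E_k.$$
Because $f\colon \uX \to \sX$ is finite and flat, pushing forward the torsion-free sheaf $\cH^{-1}(G)$ on $\uX$ produces a torsion-free sheaf on $\sX$; consequently each isotypic component $\cH^{-1}(G)_k$ is torsion-free on $X$. If $E_k \neq 0$, it is a direct summand of a torsion-free sheaf and hence torsion-free, and the inclusion $E_k \hookrightarrow \cH^{-1}(G)_k$ furnished by the splitting immediately yields the slope bound $\mu_{\max}^B(E_k) \leq \mu_{\max}^B(\cH^{-1}(G)_k)$. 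The only delicate interpretive point, which is where I expect the main (albeit minor) obstacle to lie, is to read the hypothesis ``$E_k \to F_k$ is zero'' as a statement in $D^b(X)$ rather than merely on cohomology, since this stronger form is what licenses the splitting step; with only cohomology-level vanishing one would instead recover $E_k$ as a quotient of $\cH^{-1}(G)_k$, and torsion-freeness would fail in general.
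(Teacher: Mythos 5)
Your proof is correct and takes essentially the same approach as the paper: apply $f_*$ (exact, hence triangulated), extract the $\rho_k$-isotypic triangle $E_k \to F_k \to G_k$, and use the fact that the first map is the zero morphism in $D^b(X)$ to split the cone $G_k \cong F_k \oplus E_k[1]$, from which both conclusions follow by taking cohomology. The paper organizes it slightly differently — it first runs the long exact cohomology sequence (where $\cH^{-1}(E_k) \to \cH^{-1}(F_k)$ is injective because $\cH^{-2}(G_k)=0$ and zero by hypothesis) to get $\cH^{-1}(E_k)=0$, and only then invokes the cone splitting for the torsion-freeness statement, whereas you extract both from the splitting directly — but the content is the same, and your observation that the hypothesis must be read in $D^b(X)$ rather than merely on cohomology is exactly the point needed to justify the splitting in either version.
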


\begin{proof}
	We prove the lemma for $k=0$, the other cases are similar. 
  By Lemma \ref{triangle}, we have a distinguished triangle in $D^{b}(\uX)$: $$E\to F \to G \to E[1].$$
Since $f_{*}$ is exact, applying the functor $f_{*}=Rf_{*}$, we have a distinguished triangle in $D^{b}(\mathscr{X})$:
$$f_{*}E\to f_{*}F \to f_{*} G \to f_{*}E[1].$$
Hence we get a distinguished triangle in $D^{b}(X)$:
$$E_{0}\to F_{0} \to G_{0}\to E_{0}[1].$$

 Consider the long exact sequence of cohomology sheaves 
	\begin{equation*}
		0 \to \cH^{-1}(E_{0}) \to \cH^{-1}(F_{0}) \to \cH^{-1}(G_{0}) \to \cH^{0}(E_{0}) \to \cH^{0}(F_{0}).
	\end{equation*}
 
	Since $\cH^{-1}(E_{0}) \to \cH^{-1}(F_{0})$ is an injective zero map, we have $\cH^{-1}(E_{0})=0$ and $\cH^{0}(E_{0})=E_{0}$. Since $\cH^{-1}(E)$ is torsion free and $\cH^{-1}(E_{0})=0$, we have $\cH^{-1}(E)=0$ and $E\in \Coh(\uX)$. 
 
Note $G_{0}\cong \operatorname{Cone}(E_{0}\to F_{0})=E_{0}[1]\oplus F_{0}$ in $D^{b}(X)$. Taking the cohomology,  we have 
	$\cH^{-1}(G_{0})\cong \cH^{-1}(F_{0}) \oplus E_{0}$. Note that $\cH^{-1}(G_{0})$ is torsion free, hence either $E_{0}=0$, or $E_{0}$ is torsion free and 
	$ \mu_{\max}^{B}(E_{0})\leq \mu_{\max}^{B}(\cH^{-1}(G_{0})). $
\end{proof}

\begin{lemma}\label{trouble1}
	Using notation \ref{cases}, fix any $0\leq k \leq n-1$ and let $A$ be the first $\sigma_{t}$-\HN factor of $f^{*}(P_{k}\rho_{k}).$ If the composed morphism $A \to f^{*}(P_{k}\rho_{k}) \to P$ is zero, then $A\in \Coh(\uX)$, and either
	\begin{enumerate}
		\item $A_{k}=0$, or
		\item For every $0\leq j \leq n-1$, $-\alpha\leq \mu_{\min}^{B}(f^{*}(A_{j}\rho_{j}))\leq \mu_{\max}^{B}(f^{*}(A_{j}\rho_{j})) \leq \alpha$. In addition, $A_{k}$ is torsion free.
	\end{enumerate}
	Similarly, let $A'$ be the last $\sigma_{t}$-\HN factor of $f^{!}(T_{k}\rho_{k}).$ If the composed morphism $P \to f^{!}(T_{k}\rho_{k}) \to A'$ is zero, then $\cH^{0}(A')$ is torsion, and either
	\begin{enumerate}[label=(\alph*)]
		\item $A'_{k}=0$, or
		\item For every $0\leq j \leq n-1$, $-\alpha\leq \mu_{min}^{B}(f^{!}(A'_{j}\rho_{j}))\leq \mu_{\max}^{B}(f^{!}(A'_{j}\rho_{j})) \leq \alpha$. In addition, $\cH^{0}(A_{k})=0$.
	\end{enumerate}
\end{lemma}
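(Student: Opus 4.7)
The plan is to prove the first assertion concerning $A$; the dual statement about $A'$ follows by the symmetric argument using $f^!$ in place of $f^*$, $T$ in place of $P$, and reversing slope signs. I would proceed in three stages: first use Lemma \ref{zerocomponent} to place $A$ in $\Coh(\uX)$ and obtain the dichotomy on $A_k$; then, assuming $A_k \neq 0$, establish torsion-freeness of $A$ and the bound $\mu_{\max}^B(A) \leq \alpha$; and finally apply Proposition \ref{world1} to transfer the slope bound on $A$ to each $f^*(A_j \rho_j)$.

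First I would verify the hypothesis of Lemma \ref{zerocomponent} for the short exact sequence $0 \to A \to f^*(P_k \rho_k) \to G \to 0$ in $\cA$, namely that the induced $\rho_k$-component map $A_k \to (f^*(P_k \rho_k))_k$ vanishes in $D^b(X)$. By the projection formula, $f_* f^*(P_k \rho_k) = \bigoplus_l P_k \rho_{k+l}$, so $(f^*(P_k \rho_k))_k = P_k$. The adjunction morphism $f^*(P_k \rho_k) \to P$ corresponds to the inclusion $P_k \rho_k \hookrightarrow f_* P$; applying $f_*$ and using $\Hom_{\sX}(\rho_a, \rho_b) = 0$ for $a \neq b$, the resulting map $\bigoplus_l P_k \rho_{k+l} \to \bigoplus_l P_l \rho_l$ is diagonal with $\rho_k$-to-$\rho_k$ component equal to $\id_{P_k}$. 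Projecting the vanishing composition $A \to f^*(P_k \rho_k) \to P$ onto the $\rho_k$-slot therefore gives that $A_k \to P_k \xrightarrow{\id} P_k$ vanishes, which forces $A_k \to P_k$ to be zero. Lemma \ref{zerocomponent} then yields $A \in \Coh(\uX)$ with either $A_k = 0$ (case (1)) or $A_k$ torsion free.

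Assume now $A_k \neq 0$. By Proposition \ref{world1} applied to a torsion-free $A$, the bounds in case (2) will follow once I establish $\mu_{\min}^B(A) > 0$ and $\mu_{\max}^B(A) \leq \alpha$; the former is immediate from $A \in \cA \cap \Coh(\uX) = \mathcal{T}_{B,H}$. Torsion-freeness of $A$ should follow by ruling out a nonzero torsion subsheaf $T_A \subseteq A$: since $A_k$ is torsion free, $(T_A)_k = 0$, and combining the vanishing of $T_A \to P$ on every $\rho_l$-slot with the long exact cohomology sequence of $0 \to T_A \to f^*(P_k \rho_k) \to G'' \to 0$ forces $T_A = 0$, using that $\cH^0(f^*(P_k \rho_k)) = f^*(\cH^0(P)_k \rho_k)$ is torsion free (because $\cH^0(P) = \cH^0(E)_{>2\alpha}$ is).

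The main obstacle is the upper bound $\mu_{\max}^B(A) \leq \alpha$. My plan is to argue by contradiction: if the first $\mu^B$-HN factor $B \subseteq A$ has $\mu^B(B) > \alpha$, then Proposition \ref{world1} places each $f^*(B_j \rho_j)$ inside $\mathcal{T}_{B,H}$, and the adjunctions produce injections $f^*(B_j \rho_j) \hookrightarrow B \hookrightarrow A \hookrightarrow f^*(P_k \rho_k)$ in $\cA$. Since $A$ is the first $\sigma_t$-HN factor of $f^*(P_k \rho_k)$, this forces $\mu_{\sigma_t}(f^*(B_j \rho_j)) \leq \mu_{\sigma_t}(A)$ for every $j$, and combining with the Bogomolov inequality (Theorem \ref{JK}) for the $\mu^B$-semistable sheaf $B$ should produce the contradiction. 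I expect this to be the most delicate step, as it requires interlocking the $\sigma_t$-slope comparison with the geometric constraint that $\alpha = (n-1)H\sC$ controls the spread of $\mu^B$-slopes between $A$ and its $\rho_j$-components on $\uX$.
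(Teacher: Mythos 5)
Your opening step is correct and matches the paper: computing $f_*f^*(P_k\rho_k)\cong\bigoplus_l P_k\rho_{k+l}$ by the projection formula and observing that the $\rho_k$-to-$\rho_k$ component of $f_*(f^*(P_k\rho_k)\to P)$ is $\id_{P_k}$ is exactly how one reduces to Lemma \ref{zerocomponent}. But after that the proposal goes wrong in two places.

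First, the torsion-freeness argument rests on the claim that $\cH^0(P)=\cH^0(E)_{>2\alpha}$ is torsion free, and this is false. Torsion sheaves have $\mu_H=+\infty$, so the torsion subsheaf of $\cH^0(E)$ sits at the top of the $\mu_H$-Harder--Narasimhan filtration and is therefore contained in $\cH^0(E)_{>2\alpha}$, not excluded from it. So $\cH^0(f^*(P_k\rho_k))$ need not be torsion free, and your mechanism for ruling out $T_A$ breaks. (The paper in fact does not claim $A$ itself is torsion free in the statement of the lemma, only $A_k$; in the follow-up Corollary \ref{solution2} it explicitly works with $A_{\mathrm{tf}}$.)

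Second, and more importantly, you invoke only the qualitative half of Lemma \ref{zerocomponent} --- ``either $A_k=0$ or $A_k$ is torsion free'' --- and drop its quantitative conclusion $\mu_{\max}^B(A_k)\leq\mu_{\max}^B(\cH^{-1}(G_k))$. That bound is the entire engine of the paper's proof. Since the cokernel $G=\mathrm{coker}(A\hookrightarrow f^*(P_k\rho_k))$ lies in $\cA$, we have $\cH^{-1}(G)\in\mathcal{F}_{B,H}$, hence $\mu_{\max}^B(\cH^{-1}(G_k))\leq\mu_{\max}^B(\cH^{-1}(G))\leq 0$, giving $\mu_{\max}^B(A_k)\leq 0$; the generic isomorphism $A_{\mathrm{tf}}\hookrightarrow f^!(A_k\rho_k)=A_k((n-1-k)\sC)$ then yields $\mu_{\max}^B(A)\leq\mu_{\max}^B(A_k)+(n-1-k)H\sC\leq\alpha$, and Proposition \ref{world1} finishes. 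Your proposed replacement --- a contradiction argument combining the $\sigma_t$-HN property of $A$ with the Bogomolov inequality --- does not have access to the slope bound $\mu_{\max}^B(A_k)\leq 0$ and therefore has nothing to push against: knowing that $A_k$ is merely torsion free, and that various $f^*(B_j\rho_j)$ land in $\mathcal{T}_{B,H}$ with controlled $\sigma_t$-slope, does not by itself produce any upper bound on $\mu_{\max}^B(A)$. You correctly flag this as ``the most delicate step,'' but it is not a detail to be filled in later; the structural inequality from Lemma \ref{zerocomponent} is the missing idea, and without it the argument does not close.
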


\begin{proof}
	We prove the first claim, the second claim is proved similarly. 
	For simplicity, we assume $k=0$, other cases are proved similarly. 
	
	If $E\not\in \Coh(\uX)$, then $\cH^{-1}(P)=\cH^{-1}(E)\neq 0$ is torsion free. Let $R$ be the cokernel of $A \to f^{*}(P_{0}\rho_{0})=\pi^{*}(P_{0})$ in $\cA$. Apply $\pi_{*}$ to the short exact sequence 
	$$
	0 \to A \to \pi^{*}(P_{0}) \to R \to 0, 
	$$
	we have the following exact triangle
	\begin{equation*}
		A_{0} \to P_{0} \to R_{0} \to A_{0}[1].
	\end{equation*}
	Apply $\pi_{*}$ to the composition $A \to f^{*}(P_{0}\rho_{0}) \to P$, we have the composition 
	$ A_{0} \to P_{0} \overset{\id}{\to} P_{0}$. If $A \to P$ is zero, then $A_{0} \to P_{0}$ is zero. 
	By Lemma \ref{zerocomponent}, we have $A\in \Coh(\uX)$. Furthermore, either $A_{0}=0$, or $A_{0}$ is torsion free and 
	$$\mu_{\max}^{B}(A_{0})\leq \mu_{\max}^{B}(\cH^{-1}(R_{0})) \leq \mu_{\max}^{B}(\cH^{-1}(R)) \leq 0. $$
	Since $\mu^{B}(\cH^{0}(A))>0$, by Proposition \ref{world1}, we have 
	$$ -\alpha< \mu^{B}(A_{0})=\mu^{B}(\cH^{0}(A_{0}))\leq 0. $$
	By Proposition \ref{world1}, we have
	\begin{equation}\label{tf}
	0<\mu_{\min}^{B}(A)\leq \mu_{\max}^{B}(A) \leq \mu_{\max}^{B}(A_{0})+\alpha\leq \alpha.
	\end{equation}
	Hence by Proposition \ref{world1}, for every $0\leq j \leq n-1$, we have 
	$$-\alpha\leq \mu_{\min}^{B}(f^{*}(A_{j}\rho_{j}))\leq \mu_{\max}^{B}(f^{*}(A_{j}\rho_{j}))\leq  \alpha. $$
\end{proof}

\begin{corollary}\label{solution2}
	Fix $t>0$. Then there is a constant $M_{6}$ which only depends on $\uX$ and $t$, such that 
	\begin{itemize}
		\item In case (ii) of Lemma \ref{trouble1}, for every $0\leq k \leq n-1$ we have $\mu_{\sigma}(f^{*}(P_{k}\rho_{k}))\leq M_{6}$.
		\item In case (b) of Lemma \ref{trouble1}, for every $0\leq k\leq n-1$ we have $\mu_{\sigma}(f^{!}(T_{k}\rho_{k}))\geq -M'_{6}$. 
	\end{itemize} 
\end{corollary}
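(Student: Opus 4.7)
The plan is to bound $\mu_{\sigma}(A)$ from above in case (ii), since $A$ being the first $\sigma_t$-HN factor of $f^*(P_k\rho_k)$ gives $\mu_{\sigma}(f^*(P_k\rho_k)) \leq \mu_{\sigma}(A)$. The proof of Lemma~\ref{trouble1} actually establishes (via (\ref{tf})) that $A$ is torsion free with $0 < \mu_{\min}^{B}(A) \leq \mu_{\max}^{B}(A) \leq \alpha$, so I can work with the $\mu^B$-HN filtration of $A$ whose factors $\gr_i$ are $\mu^B$-semistable torsion free sheaves with slopes $\nu_i \in (0, \alpha]$, each lying in $\cT_{B,H} \subset \cA$.

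For each such factor the key computation combines the Bogomolov inequality on $\uX$ (Theorem~\ref{JK}, in its twisted form $\ch_1^B(\gr_i)^2 - 2\ch_0(\gr_i)\ch_2^B(\gr_i) \geq 0$) with the Hodge Index Theorem:
\begin{equation*}
\ch_2^B(\gr_i) \;\leq\; \frac{(\ch_1^B(\gr_i))^2}{2\ch_0(\gr_i)} \;\leq\; \frac{(H\ch_1^B(\gr_i))^2}{2H^2\,\ch_0(\gr_i)} \;=\; \frac{\nu_i^2\, \ch_0(\gr_i)}{2H^2}.
\end{equation*}
Since $H\ch_1^B(\gr_i) = \nu_i\ch_0(\gr_i) > 0$, dividing in the definition of $\mu_\sigma$ yields
\begin{equation*}
\mu_{\sigma}(\gr_i) \;=\; \frac{\ch_2^B(\gr_i) - t\ch_0(\gr_i)}{H\ch_1^B(\gr_i)} \;\leq\; \frac{\nu_i}{2H^2} - \frac{t}{\nu_i}.
\end{equation*}
The crucial observation is that $g(\nu) := \nu/(2H^2) - t/\nu$ has positive derivative $1/(2H^2) + t/\nu^2$ on $(0,\infty)$, so it is strictly increasing, and therefore attains its maximum on $(0, \alpha]$ at the right endpoint $\nu = \alpha$. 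Hence $\mu_{\sigma}(\gr_i) \leq g(\alpha) = \alpha/(2H^2) - t/\alpha$ for every $i$.

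To finish the first bullet I would note that $Z(A) = \sum_i Z(\gr_i)$ with $\Im Z(\gr_i) = \nu_i\ch_0(\gr_i) > 0$ strictly, so $\mu_{\sigma}(A)$ is a positively weighted average of the $\mu_{\sigma}(\gr_i)$ and thus is bounded by their maximum. Taking $M_6 := \alpha/(2H^2) - t/\alpha$---a constant depending only on $\uX$ (through $\alpha$ and $H$) and on $t$---completes case (ii). The second bullet would follow by the symmetric argument: the slope bounds in case (b) force $\cH^0(A'_j)=0$ for every $j$ (otherwise $f^!(A'_j\rho_j)$ would contain a torsion subsheaf, making $\mu_{\max}^B = +\infty$), so $A' \cong F[1]$ for a torsion free sheaf $F$ with $\mu^B$-HN slopes $\nu \in [-\alpha, 0)$. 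Applying the same Bogomolov--Hodge estimate to each factor $\gr'_j$ of $F$, and using that division by the \emph{negative} quantity $H\ch_1^B(\gr'_j) < 0$ reverses the inequality, one obtains $\mu_\sigma(\gr'_j[1]) \geq g(-\alpha) = -\alpha/(2H^2) + t/\alpha$, whence the required lower bound on $\mu_\sigma(f^!(T_k\rho_k))$ with $M_6' = M_6$.

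The main obstacle I anticipate is the case (b) reduction: one needs to check that the slope-boundedness hypothesis on $f^!(A'_j\rho_j)$ really kills every torsion contribution in $\cH^0(A')$, so that $A'$ genuinely reduces to a shifted torsion free sheaf and no torsion HN factor---which would have undefined $\mu_\sigma$---appears in the weighted-average step. Once that structural reduction is in hand, the remainder is a one-variable calculus exercise on $g(\nu)$.
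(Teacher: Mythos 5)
Your approach is essentially the same as the paper's: bound $\mu_\sigma(f^{*}(P_{k}\rho_{k}))$ by $\mu_\sigma(A)$ (since $A$ is the maximal-phase HN factor), then control $\mu_\sigma(A)$ by applying the Bogomolov--Hodge estimate to the $\mu^{B}$-HN factors. Your observation that $g(\nu)=\nu/(2H^{2})-t/\nu$ is increasing and hence maximized at the right endpoint is a cleaner, one-shot way of doing what the paper does by hand when it splits $A_{\mathrm{tf}}$ into the pieces $(A_{\mathrm{tf}})_{\geq a}$ and $(A_{\mathrm{tf}})_{<a}$ and bounds each separately; the two computations produce the same kind of constant.

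There is, however, a gap in the first bullet that the paper is careful to avoid. You assert that $A$ is torsion free, citing (\ref{tf}); but Lemma~\ref{trouble1}(ii) only records that $A_{k}$, at the specific index $k$, is torsion free --- $A$ itself can still carry torsion supported on the orbifold components $\rho_{j}$ for $j\neq k$, and the derivation of (\ref{tf}) invokes Proposition~\ref{world1}, which itself hypothesizes that the sheaf is torsion free, so it does not establish torsion-freeness of $A$ as an independent fact. The paper anticipates this: its proof first passes to the exact sequence $0\to A_{\tor}\to A\to A_{\mathrm{tf}}\to 0$ in $\cA$ and then uses the $\sigma_{t}$-semistability of $A$ to conclude $\mu_{\sigma}(A)\leq \mu_{\sigma}(A_{\mathrm{tf}})$ before applying Bogomolov to $A_{\mathrm{tf}}$. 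Your weighted-average step silently presumes all HN factors are torsion free; if $A_{\tor}$ has a one-dimensional piece, that factor has positive $\Im Z$ but no a priori bound on $\mu_{\sigma}$, and the average is not controlled by $g(\alpha)$. (One can repair this by invoking $\sigma_{t}$-semistability of $A$ to get $\mu_{\sigma}(A_{\tor})\leq\mu_{\sigma}(A)$ and then rearranging the weighted average, but that is exactly the extra input the paper packages into the $A\twoheadrightarrow A_{\mathrm{tf}}$ step, and you do not supply it.) Your case (b) reduction --- claiming $A'\cong F[1]$ for a torsion-free $F$ --- has the same flaw: the lemma gives $\cH^{0}(A'_{k})=0$ at the one index $k$, and that $\cH^{0}(A')$ is torsion, but not that $\cH^{0}(A')$ vanishes, so the torsion part of $\cH^{0}(A')$ must again be split off and handled via semistability of $A'$ rather than assumed away.
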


\begin{proof}
	We prove the first claim, the second claim is proved similarly. 
	We have a short exact sequence in $\cA$:
	$$
	0 \to A_{\tor} \to A \to A_{\mathrm{tf}} \to 0. 
	$$
	By (\ref{tf}), $A_{\mathrm{tf}}$ is a torsion free sheaf in $\cA$ with $\mu_{\max}(A_{\mathrm{tf}})\leq 2\alpha$. 

 	Fix a constant $0 < a <\sqrt{2tH^{2}}$. Let $F'=(A_{\mathrm{tf}})_{\geq a}, F''=(A_{\mathrm{tf}})_{<a}$. By Proposition \ref{BOG}, we have $\ch_{2}^{B}(F'')\leq \dfrac{a^{2}}{2H^{2}}\ch_{0}^{B}(F'')<t\ch_{0}^{B}(F'')$. Then we have 
	$$
	\mu_{\sigma}(F'')=\dfrac{\ch_{2}^{B}(F'')-t\ch_{0}^{B}(F'')}{H\ch_{1}^{B}(F'')}<0. 
	$$
	Since $a\leq \mu_{\min}(F') \leq \mu_{\max}(F') \leq 2\alpha $, by Proposition \ref{BOG}, we have 
	$\ch_{2}^{B}(F')\leq \dfrac{2\alpha^{2}}{H^{2}}$, and
	$$
	\mu_{\sigma}(F')
	=\dfrac{\ch_{2}^{B}(F')-t\ch_{0}^{B}(F')}{H\ch_{1}^{B}(F')}\leq \left(\dfrac{2\alpha^{2}}{H^{2}}-t\right)\dfrac{\ch_{0}^{B}(F')}{H\ch_{1}^{B}(F')}
	=\left(\dfrac{2\alpha^{2}}{H^{2}}-t\right)\dfrac{1}{\mu^{B}(F')}
	\leq |\dfrac{2\alpha^{2}}{H^{2}}-t | a^{-1}.
	$$
	Hence 
	$\mu_{\sigma}(F)\leq \max(\mu_{\sigma}(F'), \mu_{\sigma}(F'')) \leq \max(0,  |\dfrac{2\alpha^{2}}{H^{2}} -t | a^{-1})$.

 By stability of $A$, there is a constant $M_{6}$ that only depends on $\uX$ and $t$, such that
	$$
	\mu_{\sigma}(f^{*}(P_{k}\rho_{k})) \leq \mu_{\sigma}(A)\leq \mu_{\sigma}(A_{\mathrm{tf}})\leq M_{6}. 
	$$
\end{proof}

Now we deal with case (i) and (a) of Lemma \ref{trouble1}. First we need some lemmas. 

\begin{lemma}\label{minslope}
 Let $F\in \Coh(C)$ and fix $0\leq k \leq n-1$. For every 
 $0\leq j, k \leq n-1$, if $J\in \cA$ is a torsion sheaf such that $F\rho_{j} \twoheadrightarrow J$ attains  $\phi_{\min}(F\rho_{j})$, then we have
 $$
 \mu_{\sigma, \min}(F\rho_{j}) \geq \mu_{\sigma, \min}(F\rho_{k})+\dfrac{(k-j)\sC^{2}}{H\sC}.
 $$
\end{lemma}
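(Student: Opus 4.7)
The plan is to twist the given surjection $F\rho_j \twoheadrightarrow J$ by the line bundle $L := \OO_{\uX}((j-k)\sC)$, converting it into a surjection $F\rho_k \twoheadrightarrow J\otimes L$, and then read off the resulting slope shift from a direct Chern character calculation. Since $\underline{\mathcal{M}} = \OO_{\uX}(\sC)$ and $j^{*}(\underline{\mathcal{M}}) = \mathcal{M}$, the projection formula gives $j_{*}(F\rho_j)\otimes L \cong j_{*}(F\rho_k)$. Tensoring the short exact sequence $0 \to K \to F\rho_j \to J \to 0$ in $\Coh(\uX)$ by $L$ produces the analogous short exact sequence $0 \to K\otimes L \to F\rho_k \to J\otimes L \to 0$, still in $\Coh(\uX)$.

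The next step is to verify that this latter sequence is exact in $\cA$, so that $\sigma$-stability of $F\rho_k$ can be invoked. Because $J$ is a torsion sheaf supported on $\sC$, the sheaves $K\otimes L$ and $J\otimes L$ are also torsion with the same support; each has rank zero, so $\mu_{H,\min} = +\infty > B\cdot H$, and all three sheaves lie in $\mathcal{T}_{B,H} \subset \cA$. Together with Lemma \ref{triangle}, this shows that $J\otimes L$ is a genuine quotient of $F\rho_k$ in $\cA$.

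It remains to compute $\mu_\sigma(J\otimes L)$. Writing $\ch_1(J) = m[\sC]$ for some $m \in \ZZ_{>0}$ (nonvanishing because $J$ attains a finite $\phi_{\min}$) and expanding $\ch(J\otimes L) = \ch(J)\cdot e^{(j-k)\sC}$, one obtains $\ch_2^B(J\otimes L) = \ch_2^B(J) + (j-k)m\sC^2$ and $H\ch_1^B(J\otimes L) = mH\sC$. Since $\ch_0 = 0$, the $t$-term drops out of $\mu_\sigma$, and
$$\mu_\sigma(J\otimes L) = \frac{\ch_2^B(J\otimes L)}{H\ch_1^B(J\otimes L)} = \mu_\sigma(J) + (j-k)\frac{\sC^2}{H\sC}.$$
Combining with $\mu_{\sigma,\min}(F\rho_k) \leq \mu_\sigma(J\otimes L)$ and $\mu_\sigma(J) = \mu_{\sigma,\min}(F\rho_j)$ gives the desired inequality. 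I do not anticipate a serious obstacle: the only delicate point is staying inside $\cA$ after twisting by $L$, which is automatic here because all sheaves involved have torsion support.
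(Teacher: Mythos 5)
Your proof is correct and follows essentially the same route as the paper: both arguments twist the destabilizing quotient by $\OO_{\uX}((j-k)\sC)$ to transport a $\phi_{\min}$-quotient of $F\rho_{j}$ to a quotient of $F\rho_{k}$, and then read off the explicit slope shift $\frac{(j-k)\sC^{2}}{H\sC}$ from the Chern character (the paper writes the twist as $\OO_{\uX}(-(k-j)\sC)$, which is the same line bundle). The only cosmetic difference is that the paper first takes the kernel in $\cA$ and shows it is also a kernel in $\Coh(\uX)$, whereas you take the sheaf kernel directly and observe all terms are torsion, hence already in $\mathcal{T}_{B,H}\subset\cA$; the justification for $m>0$ via $\phi_{\min}(F\rho_{j})<1$ is fine (and the degenerate case $m=0$ makes the stated inequality vacuous anyway).
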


\begin{proof}
Let $S$ be the kernel of $F\rho_{j} \twoheadrightarrow J$ in $\cA$. By taking cohomology sheaves, we see that 
$$ 0 \to S \to F\rho_{j} \to J \to 0$$
is also an exact sequence in $\Coh(\uX)$. Hence $J\in \Coh(C)\rho_{j}$, and by Lemma \ref{uch2} we have
$$
\mu_{\sigma}(J)=\mu_{\sigma}(J\otimes \OO_{\uX}(-(k-j)\sC))+\dfrac{(k-j)\sC^{2}}{H\sC}.
$$ 
Twisting by $\OO_{\uX}(-(k-j)\sC)$, we get an exact sequence in $\cA$:
$$
0 \to S\otimes \OO_{\uX}(-(k-j)\sC) \to F\rho_{k} \to J\otimes \OO_{\uX}(-(k-j)\sC) \to 0.
$$
Hence 
$\mu_{\sigma}(J\otimes \OO_{\uX}(-(k-j)\sC))\geq  \mu_{\sigma, \min}(F\rho_{k})$, and we have
$$
\mu_{\sigma, \min}(F\rho_{j})=\mu_{\sigma}(J)=\mu_{\sigma}(J\otimes \OO_{\uX}(-(k-j)\sC))+\dfrac{(k-j)\sC^{2}}{H\sC}
\geq \mu_{\sigma, \min}(F\rho_{k})+ \dfrac{(k-j)\sC^{2}}{H\sC}.
$$
\end{proof}

\begin{lemma}\label{nonzero}
	Assume $A, k$ are in case (i) of Lemma \ref{trouble1}, write $f_{*}(A)=\bigoplus_{j=0}^{n-1} A_{j}\rho_{j}$. Then for every $A_{j}\neq 0$, the induced morphism $A_{j} \to P_{k}$ in $D^{b}(X)$ is nonzero. 
	
	Similarly, assume $A', k$ are in case (a) of Lemma \ref{trouble1}, write $f_{*}(A')=\bigoplus_{j=0}^{n-1} A'_{j}\rho_{j}$. Then for every $A'_{j}\neq 0$, the induced morphism $T_{k} \to A'_{j} $ in $D^{b}(X)$ is nonzero.
\end{lemma}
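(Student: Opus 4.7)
Plan. I will prove the first claim by contradiction; the second follows by a dual argument using the Grothendieck duality adjunction $f_* \dashv f^!$ from Proposition \ref{SD}.

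Suppose that for some $j$ we have $A_j \neq 0$ yet the induced morphism $(f_*\iota)_j : A_j \to P_k$ vanishes in $D^b(X)$, where $\iota : A \hookrightarrow F := f^*(P_k\rho_k)$ denotes the HN subobject inclusion. Via the adjunction $f^* \dashv f_*$, the summand inclusion $A_j\rho_j \hookrightarrow f_*A$ corresponds to a morphism $\phi : f^*(A_j\rho_j) \to A$ in $D^b(\uX)$, which is nonzero by bijectivity of the adjunction and the hypothesis $A_j \neq 0$. Using the decomposition of Lemma \ref{decomposition}, which implies that morphisms on $\sX$ between distinct $\rho$-weighted summands vanish, the composition $\iota \circ \phi$ corresponds under the same adjunction to exactly $(f_*\iota)_j$, hence $\iota \circ \phi = 0$ in $D^b(\uX)$.

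The remaining task is to derive $\phi = 0$, giving a contradiction. Since $\iota$ is a monomorphism in $\cA$ with cone $B \in \cA$, the vanishing $\iota\phi = 0$ forces $\phi$ to factor through the connecting morphism $B[-1] \to A$. Via the truncation triangle for $B[-1]$ and the vanishing $\Ext^{-1}(f^*(A_j\rho_j), \cH^0(B)) = 0$, this factorization reduces to $\phi$ having image in the subsheaf $K := \ker(A \to \cH^0(F)) = \mathrm{im}(\cH^{-1}(B) \to A)$. The case (i) hypothesis is used decisively here: by Lemma \ref{ranks}, a torsion-free $E \in \Coh(\uX)$ with $\ch_0(E) > 0$ has $E_l \neq 0$ for every $l$, so $A_k = 0$ forces $A$ to be a torsion sheaf. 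The triangle identity for $f^* \dashv f_*$ implies that the image of $f_*\phi$ at the $\rho_j$-component contains $A_j$, so $K_j \supseteq A_j$; together with $K \subseteq A$ this yields $K_j = A_j$.

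Finally, because $A$ is torsion, every subsheaf of $A$ lies in $\cT_{B, H}$ and is in particular a subobject of $A$ in $\cA$. Applying the monomorphicity of $\iota$ in $\cA$ to the subsheaf $K \subseteq A$ forces $K = 0$, contradicting $K_j = A_j \neq 0$. The principal technical obstacle lies in this last step: passing from ``$K \to F$ vanishes on $\cH^0$'' to ``$K \to F$ vanishes in $D^b(\uX)$'' requires controlling the factorization of $K \to F$ through $\cH^{-1}(F)[1]$, which relies on the rank analysis of the exact sequence $0 \to \cH^{-1}(F) \to \cH^{-1}(B) \to K \to 0$ in $\cF_{B, H}$ (these must have equal ranks since $K \subseteq A$ is torsion) together with the specific form of $\cH^{-1}(F) = \pi^*\cH^{-1}(P_k)(-k\sC)$. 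I expect an analogous bookkeeping, with $\cH^0(A')$ torsion replacing $A$ torsion, to dispatch the claim for $A'$.
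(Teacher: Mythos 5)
Your proposal takes a genuinely different and substantially more elaborate route than the paper, and it has a real gap that you yourself flag. Let me compare the two and then pinpoint the gap.

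The paper's proof is short and avoids all of the adjunction/triangulated-category gymnastics. It simply applies Lemma~\ref{zerocomponent} (with the given index $j$, not just $j=k$) to the short exact sequence $0 \to A \to f^*(P_k\rho_k) \to R \to 0$ in $\cA$. The hypothesis ``$A_j \to P_k$ is zero'' is exactly the hypothesis of Lemma~\ref{zerocomponent} for the $\rho_j$-component, which then yields that $A_j$ is either zero or torsion free. Combined with the observation that $A_k = 0$ forces $\ch_0(A) = \ch_0(A_k) = 0$, so $A$ (and hence each $A_j$) is torsion, one concludes $A_j = 0$ because a torsion torsion-free sheaf vanishes. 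That is the whole argument, and the dichotomy ``$A_j$ zero or torsion-free'' coming from Lemma~\ref{zerocomponent} is the key step your proof never invokes.

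Your argument instead transports the problem through the adjunction $f^* \dashv f_*$: you obtain $\phi : f^*(A_j\rho_j) \to A$ with $\iota\circ\phi = 0$, factor $\phi$ through $B[-1]$, use the vanishing of $\Ext^{-1}$ into $\cH^0(B)$ to factor further through $\cH^{-1}(B)$, and hence through $K = \mathrm{im}(\cH^{-1}(B) \to A)$, arriving at $K_j = A_j$. Up to here the reasoning is sound (though the torsion observation should be cited to Lemma~\ref{chlink}, which gives $\ch_0(A_j) = \ch_0(A)$ for all $j$, rather than Lemma~\ref{ranks}). The gap is where you try to conclude: you need the composite $K \hookrightarrow A \xrightarrow{\iota} F$ to vanish in $D^b(\uX)$ in order to invoke the monomorphicity of $\iota$, but what you actually know is only that $K \to \cH^0(F)$ vanishes and that $\iota\circ\kappa\circ\phi' = 0$ for the particular factorization $\phi = \kappa\circ\phi'$; neither of these gives $\iota\circ\kappa = 0$. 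As you acknowledge, the factorization of $K \to F$ through $\cH^{-1}(F)[1]$ produces an a priori nonzero class in $\Ext^1_{\uX}(K, \cH^{-1}(F))$, and the rank bookkeeping you sketch does not obviously kill it. Since the paper's route via Lemma~\ref{zerocomponent} dispatches the lemma in a few lines, I would recommend replacing your argument with the paper's: apply Lemma~\ref{zerocomponent} at the index $j$ to get that $A_j$ is zero or torsion-free, then use that $A$ is torsion.
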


\begin{proof}
We prove the first claim for $k=0$, other cases are similar. 
Assume for some $j$ the induced morphism $A_{j} \to P_{0}$ is zero. By Lemma \ref{zerocomponent}, we have either $A_{j}=0$ or $A_{j}$ is torsion free. By assumption of case (i), we have $A_{k}=0$. Hence $A$ is torsion and $A_{j}$ must be zero. 
\end{proof}

Now we prove the crucial technical proposition. 

\begin{proposition}\label{solution1}
	Fix any $t>0$ and assume $A$ is in case (i) of Lemma \ref{trouble1}. Then there is a constant $M_{7}$ that only depends on $\uX$ and $t$, such that 
	$$
	\mu_{\sigma}(f^{*}(P_{k}\rho_{k}))\leq \max(0, \mu_{\sigma}(E))+M_{7}.
	$$
	Similarly, assume $A'$ is in case (a) of Lemma \ref{trouble1}. Then there is a constant $M_{7}'$ that only depends on $\uX$ and $t$, such that 
	$$
	\mu_{\sigma}(f^{!}(P_{k}\rho_{k}))\geq \min (0, \mu_{\sigma}(E))-M_{7}'.
	$$
\end{proposition}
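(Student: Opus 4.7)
The plan is to exploit the nonzero maps $A_j \to P_k$ in $D^b(X)$ provided by Lemma~\ref{nonzero} to control $\mu_\sigma(A) = \mu_{\sigma,\max}(f^*(P_k\rho_k))$ via the $\sigma_t$-semistability of $E$. The crucial structural observation is that the natural morphism $\psi\colon f^*(P_k\rho_k) \to P$, coming from the inclusion $P_k\rho_k \hookrightarrow f_*(P)$ together with the adjunction counit, is an isomorphism away from $\sC$, since $f$ is \'{e}tale of degree $n$ off $\sC$. Consequently $K := \ker(\psi)$ in $\cA$ is a coherent sheaf on $\uX$ supported on $\sC$ (its $\cH^{-1}$ vanishes since any torsion-free sheaf supported on $\sC$ is zero). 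The hypothesis of case (i) of Lemma~\ref{trouble1} --- that the composition $A \to f^*(P_k\rho_k) \to P$ vanishes --- forces the inclusion $A \hookrightarrow f^*(P_k\rho_k)$ to factor through $K$, giving $A \hookrightarrow K$ in $\cA$. Since $A$ is $\sigma_t$-semistable as the first HN factor, this yields $\mu_\sigma(A) \leq \mu_{\sigma,\max}(K)$.

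The next step is to bound $\mu_{\sigma,\max}(K)$. Applying the exact functor $f_*$ to the short exact sequence $0 \to K \to f^*(P_k\rho_k) \to P \to 0$ in $\cA$, and using $f_*f^*(P_k\rho_k) = \bigoplus_l P_k\rho_{k+l}$ together with the parabolic description $f_*(P) = \bigoplus_l P_l\rho_l$ from Proposition~\ref{link}, the connecting morphism $\bigoplus_l P_k\rho_{k+l} \to \bigoplus_l P_l\rho_l$ is given componentwise by the parabolic structure maps $e_{k,l}\colon P_k \to P_l$, up to twists by $\OO_X(-C)$ when the index wraps around $k + l \geq n$. The components of $f_*(K)$ are therefore kernels of such parabolic maps, with slopes controlled by those of $P$. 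By the semistability of $E$ and the inclusion $P \hookrightarrow E$, we have $\mu_{\sigma,\max}(P) \leq \mu_\sigma(E)$. Combining this with Proposition~\ref{world1} and Lemma~\ref{minslope} --- which convert slope bounds on $f_*(K)$-components back to bounds on $\mu_{\sigma,\max}(K)$ on $\uX$ --- yields $\mu_{\sigma,\max}(K) \leq \mu_\sigma(E) + C$ for a universal constant $C$ depending only on $\uX$ and $t$. Taking $M_7 = C$ and using $\mu_\sigma(E) \leq \max(0, \mu_\sigma(E))$ gives the claimed bound. The second claim is proved symmetrically using case (a) of Lemma~\ref{trouble1}: replace $K = \ker(\psi)$ by the cokernel of the analogous natural map $T \hookrightarrow f^!(T_k\rho_k)$, and $\mu_{\sigma,\max}(P) \leq \mu_\sigma(E)$ by $\mu_{\sigma,\min}(T) \geq \mu_\sigma(E)$.

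The main obstacle is making the description of $K$ quantitative enough to extract a sharp slope bound on $\mu_{\sigma,\max}(K)$. Specifically, one must carefully track the twists by $\sC$ through the decomposition $f_*f^*(P_k\rho_k) = \bigoplus_l P_k\rho_{k+l}$ --- especially the wrap-around $k + l \geq n$, which introduces a twist by $\OO_X(-C)$ that shifts slopes by $HC/n$ --- and then use Lemma~\ref{minslope} to convert slopes of the resulting torsion components on $\sC$ into an upper bound on $\mu_{\sigma,\max}(K)$ on $\uX$. The role of Lemma~\ref{nonzero} in the argument is to guarantee that the kernels of the relevant parabolic maps $e_{k,l}$ are not pathologically small relative to $A$, so that the inclusion $A \hookrightarrow K$ propagates slope information faithfully.
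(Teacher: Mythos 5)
Your reduction is circular. You let $K=\ker\bigl(f^{*}(P_{k}\rho_{k})\to P\bigr)$ and argue $A\hookrightarrow K$, hence $\mu_{\sigma}(A)\leq \mu_{\sigma,\max}(K)$; but $K$ is itself a subobject of $f^{*}(P_{k}\rho_{k})$, whose maximal destabilizing subobject is $A$, so $\mu_{\sigma,\max}(K)=\mu_{\sigma}(A)$ exactly. Nothing has been gained: the entire content of the proposition is deferred to "bound $\mu_{\sigma,\max}(K)$," which is the same statement. The proposed method for that step then fails for a structural reason: you never produce a nonzero morphism between $A$ (or $K$, or anything whose $\sigma$-slope controls $\mu_{\sigma}(A)$) and $E$ or one of its subquotients, so the $\sigma_{t}$-semistability of $E$ never actually enters. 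Case (i) is precisely the situation in which the natural map $A\to P$ vanishes, so the inclusion $P\hookrightarrow E$ and the bound $\mu_{\sigma,\max}(P)\leq\mu_{\sigma}(E)$ say nothing about $K$: the components of $f_{*}(K)$ are kernels (not quotients) of the parabolic maps $e_{k,l}$, and there is no mechanism bounding the $\mu_{\sigma,\max}$ of a sub\emph{object} by HN data of the ambient object. This is exactly the difficulty of Example \ref{exampleP}: torsion pieces supported on $\sC$ can have $\ch_{2}$ far larger than $|Z(E)|$ unless one exhibits an actual destabilizing sub- or quotient object of $E$.

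The paper's proof supplies the missing morphism by a construction your proposal has no analogue of. Writing $k'$ for the minimal index with $A_{k'}\neq 0$, one builds an extension $0\to A\to A'\to A''\to 0$ in which $A''$ has graded pieces $A_{k'}\rho_{k'-1},\dots,A_{k'}\rho_{0}$ glued by identity extension classes. Semiorthogonality (Lemma \ref{circle}) gives $\Hom_{\uX}(A'',P_{0})=\Ext^{1}_{\uX}(A'',P_{0})=0$, hence $\Hom(A',P_{0})\cong\Hom(A,P_{0})$, so the inclusion $A\hookrightarrow \pi^{*}(P_{0})$ extends to $\iota'\colon A'\to P_{0}$; Lemma \ref{nonzero} is used precisely to show the $\rho_{0}$-component $\iota'_{0}$ is nonzero (not, as you suggest, to control sizes of kernels), whence $A'\to P$ is nonzero and, since $A\to P$ is zero, one gets a nonzero map $\gamma\colon A''\to P\subset E$. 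Stability of $E$ then bounds $\mu_{\sigma,\min}(A'')$ from above by $\mu_{\sigma}(E)$, and Lemma \ref{minslope} converts bounds on $\mu_{\sigma,\min}(A_{k'}\rho_{j})$ for $j<k'$ into a bound on $\mu_{\sigma,\min}(A_{k'}\rho_{k'})\geq\mu_{\sigma}(A)$ up to the universal constant $\max_{a\leq b}\frac{(a-b)\sC^{2}}{H\sC}$ (with an extra case analysis when the destabilizing quotient $J$ of $A_{k'}\rho_{j}$ is not a sheaf). Without this extension-and-rotation step, or some substitute producing a nonzero map into $E$, the argument cannot close.
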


\begin{proof}
	We prove the proposition for $k=0$, the other cases are similar. Since $A\in \Coh(\uX)$ and $A_{0}=0$, we have $A\in D^{b}(X)^{\perp}\cap \Coh(\uX)$. By Lemma \ref{chain}, $A$ admits a filtration whose graded factors are 
	$$
	A_{n-1}\rho_{n-1}, \cdots, A_{1}\rho_{1}.
	$$
	Let $k$ be the minimal index so that $A_{k}\neq 0$. 
	Note that $A_{k}\rho_{k}$ is a quotient of $A$. Hence 
	\begin{equation}\label{AA}
		\mu_{\sigma, \min}(A_{k}\rho_{k}) \geq \mu_{\sigma}(A)\geq \mu_{\sigma}(P_{0}).
	\end{equation}
	\textbf{Step 1: Construct the destabilizing object.}	
	
	Recall that for every $j\in \ZZ$ and $F, F'\in \Coh(C)$, we have 
	$$\Ext^{1}_{\uX}(F\rho_{k}, F'\rho_{k+1})\cong \Hom_{C}(F, F'). $$
	We construct an object $A'$ whose graded factors are 
	$$ A_{n-1}\rho_{n-1}, \cdots, A_{k}\rho_{k}, A_{k}\rho_{k-1}, \cdots, A_{k}\rho_{1}, A_{k}\rho_{0}, $$
	by attaching $A_{k}\rho_{k-1}, \cdots, A_{k}\rho_{1}, A_{k}\rho_{0}$ to the previous objects in order, and the corresponding extension classes are the identity morphisms. Then we have a short exact sequence in both $\cA$ and $\Coh(\uX)$: 
	$$
	0 \to A \to A' \to A'' \to 0,
	$$
	where $A''$ admits a filtration whose factors are 
	\begin{equation}\label{a''}
		A_{k}\rho_{k-1}, \cdots, A_{k}\rho_{0}.
		\end{equation}
		 Note that $k-1<n-1$, hence by Lemma \ref{circle}, we have $\Hom_{\uX}(A'', P_{0})=\Ext^{1}_{\uX}(A'', P_{0})=0$. There is a short exact sequence
	$$
	0=\Hom_{\uX}(A'', P_{0}) \to \Hom_{\uX}(A', P_{0}) \to \Hom(A, P_{0}) \to \Ext^{1}_{\uX}(A'', P_{0})=0,
	$$
	hence $\Hom_{\uX}(A', P_{0}) \cong \Hom(A, P_{0})$. Let $\iota: A \hookrightarrow P_{0}$ be the inclusion of $A$ and $\iota'\in \Hom_{\uX}(A', P_{0})$ be the corresponding morphism. Then for $0\leq j \leq k$, the $\rho_{j}$ projection of $\iota'$ in $\Coh(X)$ is 
	$$ \iota'_{j}=\iota_{k}: A_{k} \to P_{0}. $$
	By Lemma \ref{nonzero}, we have $\iota'_{j}$ are all nonzero. In particular, $\iota'_{0}\neq 0$. Hence the composition morphism 
	$$
	A' \to P_{0} \to P
	$$
	is nonzero, because its $\rho_{0}$ component is the composition
	$
	A' \overset{\iota'_{0}}{\longrightarrow} P_{0} \overset{\id}{\longrightarrow} P. 
	$
	Since $A \to P$ is zero, we get a nonzero morphism 
	\begin{equation*}
		\gamma: A'' \too P.
	\end{equation*}
	
	\textbf{Step2: Obtain inequalities.}
 
	Since $\gamma: A'' \to P$ is nonzero, and $P$ is a subobject of the stable object $E$, by (\ref{a''}) we have
	\begin{equation}\label{J<}
		\min \left(\mu_{\sigma, \min}(A_{k}\rho_{k-1}), \cdots, \mu_{\sigma, \min}(A_{k}\rho_{0}) \right) \leq \mu_{\sigma, \min}(A'') \leq \mu_{\sigma}(E).
	\end{equation}
	Fix the $0\leq j \leq k-1$
	so that 
	$$
	\mu_{\sigma, \min}(A_{k}\rho_{j})=\min \left(\mu_{\sigma, \min}(A_{k}\rho_{k-1}), \cdots, \mu_{\sigma, \min}(A_{k}\rho_{0}) \right).
	$$
	 Let $J$ be a minimal $\sigma$-destablizing quotient of $A_{k}\rho_{j}$, and $S$ be its kernel. Then either $J\in \Coh(\uX)$ or not. 
	\\
	\textit{Case 1: $J\in \Coh(\uX)$:}
	Note that $A_{k}\rho_{j} \twoheadrightarrow J$ in both $\cA$ and $\Coh(\uX)$, by (\ref{J<}), (\ref{AA}), and Lemma \ref{minslope}, such that 
	\begin{equation}\label{torsionJ}
	\mu_{\sigma}(E)\geq	\mu_{\sigma, \min}(A_{k}\rho_{j}) \geq \mu_{\sigma, \min}(A_{k}\rho_{k})+\dfrac{(k-j)\sC^{2}}{H\sC} \geq \mu_{\sigma}(P_{0})+\dfrac{(k-j)\sC^{2}}{H\sC}. 
	\end{equation}
	\textit{Case 2: $J\not\in \Coh(\uX)$:} 
	In the long exact sequence
	$$
	0 \to \cH^{-1}(J) \to S \to A_{k}\rho_{j} \to \cH^{0}(J),
	$$
	we take the $\rho_{n-1}$ component. 
	Since $j\neq n-1$, we have $\cH^{-1}(J_{n-1}) \cong S_{n-1}$. By Proposition \ref{world1}, we have
	\begin{equation}\label{S<}
		-\alpha<\mu_{\min}^{B}(S)-\alpha\leq \mu_{\min}^{B}(S_{n-1})=\mu_{\min}^{B}(\cH^{-1}(J_{n-1})) \leq 
		 \mu_{\max}^{B}(\cH^{-1}(J))+ \alpha\leq \alpha. 
	\end{equation}
    Since $S_{\tor} \to A_{k}\rho_{j}$ is injective, we have $S_{\tor} \in \Coh(C)\rho_{j}$. Write $S_{\tor}=B \rho_{j}$. Taking quotient of $S_{\tor}$, we have a short exact sequence in $\cA$:
	\begin{equation*}
		0 \too S_{\mathrm{tf}} \too (A_{k}/B) \rho_{j}\too J \too 0. 
	\end{equation*}
	By (\ref{J<}), we have $\mu_{\sigma}(J)=\mu_{\sigma, \min}(A_{k}\rho_{j}) \leq \mu_{\sigma}(E)$. 
	By (\ref{S<}) and Lemma \ref{vertical}, we have $\mu_{\sigma}(S_{\mathrm{tf}}) \leq 0$. Since $J\not \in \Coh(\uX)$, we have $S_{\mathrm{tf}}\neq 0$, $J\neq 0$. Hence $(A_{k}/ R)\rho_{j}\neq 0$, and we have 
	\begin{equation}\label{A<}
		\mu_{\sigma}((A_{k}/R) \rho_{j}) \leq \max(\mu_{\sigma}(S_{\mathrm{tf}}), \mu_{\sigma}(J))\leq \max(0, \mu_{\sigma}(E)).
	\end{equation}
    By Lemma \ref{uch2} we have
	$$
	\mu_{\sigma}((A_{k}/R) \rho_{k})=\mu_{\sigma}((A_{k}/R) \rho_{j}\otimes \OO_{\uX}(j-k)\sC)=
	\mu_{\sigma}((A_{k}/R) \rho_{j})+\dfrac{(j-k)\sC^{2}}{H\sC}.
	$$
	Let $ M_{7}=\max_{0\leq a \leq b \leq n-1} \left(\dfrac{(a-b)\sC^{2}}{H\sC}\right) $.
	By (\ref{A<}) we have
	$$
 \mu_{\sigma}((A_{k}/R) \rho_{j})+\dfrac{(j-k)\sC^{2}}{H\sC}\leq \max(0, \mu_{\sigma}(E))+M_{7}.
    $$	
    Hence we have
    \begin{equation}\label{nontorsionJ}
    	\mu_{\sigma}(P_{0})\leq \mu_{\sigma}(A)\leq \mu_{\sigma, \min}(A_{k}\rho_{k})\leq 
    \mu_{\sigma}((A_{k}/R) \rho_{k}) \leq \max(0, \mu_{\sigma}(E))+M_{7}.
\end{equation}
Combining (\ref{torsionJ}) and (\ref{nontorsionJ}), the proposition is proved.
\end{proof}
Hence we have the following consequence.
\begin{proposition}\label{solutionP}
	Fix any $t>0$. Using Notation \ref{cases}, there is a constant $M_{8}$ that only depends on $\uX$ and $t$, such that for every $0\leq k \leq n-1$ we have
	$$
	\mu_{\sigma}(f^{*}(P_{k}\rho_{k})) \leq \max(0, \mu_{\sigma}(E))+M_{8}.
	$$
	Similarly, there is a constant $M_{8}'$ that only depends on $\uX$, such that 
	$$
	\mu_{\sigma}(f^{!}(T_{k}\rho_{k})) \geq \min(0, \mu_{\sigma}(E))- M_{8}'.
	$$
\end{proposition}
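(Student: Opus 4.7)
The plan is to combine Lemma~\ref{trouble1}, Corollary~\ref{solution2}, and Proposition~\ref{solution1} via a case analysis on whether a canonical morphism vanishes. I describe the argument for the first assertion in detail; the second is proved dually.

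Fix $0 \leq k \leq n-1$ and let $A$ denote the first $\sigma_t$-Harder--Narasimhan factor of $f^*(P_k\rho_k)$, so that
\[
\mu_\sigma(f^*(P_k\rho_k)) \;\leq\; \mu_{\sigma,\max}(f^*(P_k\rho_k)) \;=\; \mu_\sigma(A).
\]
The inclusion $P_k\rho_k \hookrightarrow f_*(P)$ in $\Coh(\sX)$ produces, by adjunction, a canonical morphism $f^*(P_k\rho_k) \to P$, and we consider the composition
\[
\gamma \colon A \hookrightarrow f^*(P_k\rho_k) \too P.
\]

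If $\gamma \neq 0$, then $A \to P \hookrightarrow E$ is a nonzero morphism in $\cA$ from the $\sigma_t$-semistable object $A$ into the $\sigma_t$-semistable object $E$, whence $\phi(A) \leq \phi(E)$ and so $\mu_\sigma(A) \leq \mu_\sigma(E) \leq \max(0, \mu_\sigma(E))$. If instead $\gamma = 0$, then Lemma~\ref{trouble1} applies, and we split into its two alternatives: case (i) is handled by Proposition~\ref{solution1}, which gives $\mu_\sigma(f^*(P_k\rho_k)) \leq \max(0, \mu_\sigma(E)) + M_7$; case (ii) is handled by Corollary~\ref{solution2}, which gives $\mu_\sigma(f^*(P_k\rho_k)) \leq M_6$. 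Setting $M_8 := \max(M_6, M_7)$ yields the desired uniform bound in all three subcases; note that $M_6$ and $M_7$ depend only on $\uX$ and $t$, so $M_8$ does too.

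For the second assertion the argument is dual. Take $A'$ to be the last $\sigma_t$-HN factor of $f^!(T_k\rho_k)$, so that $\mu_\sigma(f^!(T_k\rho_k)) \geq \mu_\sigma(A')$. A parallel case split on whether the composition $P \to f^!(T_k\rho_k) \twoheadrightarrow A'$ (where the first map is induced by $P \hookrightarrow E \twoheadrightarrow T$ together with the adjunction isomorphism $\Hom(f_*T, T_k\rho_k) \cong \Hom(T, f^!(T_k\rho_k))$) vanishes, combined with the dual halves of Lemma~\ref{trouble1}, Corollary~\ref{solution2}, and Proposition~\ref{solution1}, furnishes the required lower bound $\mu_\sigma(f^!(T_k\rho_k)) \geq \min(0, \mu_\sigma(E)) - M_8'$ with $M_8' := \max(M_6', M_7')$. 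The substantive technical work has already been carried out in those three preceding results; Proposition~\ref{solutionP} itself is essentially a packaging step and presents no new obstacle, the only mildly delicate point being to invoke $P \hookrightarrow E$ (resp.\ $T$ as a quotient of $E$) correctly so that semistability of $E$ can be applied in the nonzero subcase.
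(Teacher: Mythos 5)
Your proof is correct and follows the same structure as the paper's, which is a one-line citation of Corollary \ref{solution2} and Proposition \ref{solution1}. You are more explicit than the paper: those two results only apply when the composition $\gamma \colon A \to f^*(P_k\rho_k) \to P$ vanishes (that hypothesis comes from Lemma \ref{trouble1}), and you correctly add the complementary subcase $\gamma \neq 0$, handled directly by $\sigma_t$-semistability of $E$ applied to the resulting nonzero map $A \to P \hookrightarrow E$ between semistable objects; the paper's terse proof leaves this step implicit. Your choice $M_8 = \max(M_6, M_7)$ and the dual treatment of the $T_k$ bound are both correct.
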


\begin{proof}
	The claim follows from Corollary \ref{solution2} and Proposition \ref{solution1}.
\end{proof}  

Now we can start to compute bounds.

\begin{lemma}\label{vertical}
	Fix $t>0$ and $ 0< a<\sqrt{2tH^{2}}$.
	\begin{itemize}
	\item Let $F $ be a torsion free sheaf with $0< \mu_{\min}^{B}(F)\leq \mu_{\max}^{B}(F)\leq a$. Then $\mu_{\sigma_{t}}(F)<0$. 
	\item Let $F'$ be a torsion free sheaf with 
	$- a \leq \mu_{\min}^{B}(F') \leq \mu_{\max}^{B}(F')\leq 0$. Then $\mu_{\sigma_{t}}(F'[1])>0$. 
	\end{itemize}
\end{lemma}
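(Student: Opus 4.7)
The plan is to reduce both statements to a direct computation of the real and imaginary parts of $Z_{t}$, invoking the Bogomolov-type estimate already packaged in inequality (\ref{BOG}). The assumption $a<\sqrt{2tH^{2}}$ is exactly the numerical calibration needed to force the right sign.

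For the first bullet, since $F$ is torsion free with $\mu^{B}_{\min}(F)>0$, every $\mu_{H}$-HN factor of $F$ has positive $H\cdot \ch_{1}^{B}$, so $\Im Z_{t}(F) = H\cdot\ch_{1}^{B}(F)>0$. Then I would apply inequality (\ref{BOG}) with $\max(a^{2}, 0^{2})=a^{2}$ to obtain $\ch_{2}^{B}(F)\leq \frac{a^{2}}{2H^{2}}\ch_{0}^{B}(F)$, which gives
\[
\Re Z_{t}(F) \;=\; -\ch_{2}^{B}(F)+t\,\ch_{0}^{B}(F) \;\geq\; \left(t-\tfrac{a^{2}}{2H^{2}}\right)\ch_{0}^{B}(F) \;>\; 0,
\]
since $a^{2}<2tH^{2}$ and $\ch_{0}^{B}(F)>0$. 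Therefore $\mu_{\sigma_{t}}(F)=-\Re/\Im<0$.

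For the second bullet, the shift by $[1]$ flips all Chern character components: $\ch_{i}^{B}(F'[1])=-\ch_{i}^{B}(F')$. The hypothesis $\mu^{B}_{\max}(F')\leq 0$ gives $H\cdot\ch_{1}^{B}(F')\leq 0$, so $\Im Z_{t}(F'[1])\geq 0$. Applying (\ref{BOG}) once more with $\max(a^{2},0^{2})=a^{2}$ yields $\ch_{2}^{B}(F')\leq \frac{a^{2}}{2H^{2}}\ch_{0}^{B}(F')$, and hence
\[
\Re Z_{t}(F'[1]) \;=\; \ch_{2}^{B}(F')-t\,\ch_{0}^{B}(F') \;\leq\; \left(\tfrac{a^{2}}{2H^{2}}-t\right)\ch_{0}^{B}(F') \;<\; 0,
\]
again using $a^{2}<2tH^{2}$ and $\ch_{0}^{B}(F')>0$. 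Thus $\mu_{\sigma_{t}}(F'[1])>0$, with the usual convention that the slope is $+\infty$ if $\Im Z_{t}(F'[1])=0$ (which is precisely the case when $F'$ is $\mu_{H}$-semistable of slope $0$).

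There is no real obstacle: the statement is a short calculation once the modified Bogomolov inequality is in hand, and both parts are symmetric under the involution $F\mapsto F'[1]$. The only mild subtlety is handling the boundary case where $\Im Z_{t}$ vanishes, which is harmless because the corresponding $\Re Z_{t}$ is strictly of the required sign.
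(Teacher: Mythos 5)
Your proof is correct and follows essentially the same route as the paper: apply the modified Bogomolov estimate (\ref{BOG}) with the slope bounds to get $\ch_{2}^{B}\leq \frac{a^{2}}{2H^{2}}\ch_{0}^{B}$, and conclude from $a^{2}<2tH^{2}$ that $\Re Z_{t}$ and $\Im Z_{t}$ have the signs forcing $\mu_{\sigma_{t}}<0$ (resp.\ $>0$). The paper only writes out the first bullet and declares the second ``similar,'' whereas you spell out the shift computation and the boundary case $\Im Z_{t}(F'[1])=0$ explicitly, which is a small but welcome addition.
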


\begin{proof}
	We prove the first claim, the second claim is similar.
	Since $0< \mu_{\min}^{B}(F)\leq \mu_{\max}^{B}(F) \leq a$, by Proposition \ref{BOG} we have $\ch_{2}^{B}(F)\leq \dfrac{a^{2}}{2H^{2}}\ch_{0}^{B}(F)$. Hence 
	\begin{equation*}
		-\dfrac{\Re(Z(F))}{\Im(Z(F))}=\dfrac{\ch_{2}^{B}(F)-t\ch^{B}_{0}(F)}{H\ch^{B}_{1}(F)}\leq 
		\left(\dfrac{a^{2}}{2H^{2}}-t\right)\dfrac{\ch_{0}^{B}(F)}{H\ch_{1}^{B}(F)}
		<0.
	\end{equation*}
\end{proof}

\begin{proposition}\label{solutionch1}
     Fix any $t> 0$. 
     There is a constant $M_{9}$ that only depends on $\uX$ and $t$, such that for every $0\leq k \leq n-1$, we have
     $$\|\ch_{1}^{B}(E_{k})\| \leq M_{9}|Z(E)|. $$
\end{proposition}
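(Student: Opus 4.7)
Plan. The strategy is to reduce the problem to bounding $\|\ch_1^B(P_k)\|$ and then mimic the argument used for $Q_k$ in the proof of Proposition \ref{solutionQ}, substituting the $\mu_\sigma$-estimates of Proposition \ref{solutionP} for the a priori slope bounds that were available there. Applying the exact functor $f_*$ to the short exact sequence $0 \to P \to E \to Q \to 0$ in $\cA$ and projecting onto the $\rho_k$-summand gives $\ch_1^B(E_k) = \ch_1^B(P_k) + \ch_1^B(Q_k)$. Since Proposition \ref{solutionQ} bounds $\|\ch_1^B(Q_k)\|$ by $M_5|Z(E)|$, the remaining task is to bound $\|\ch_1^B(P_k)\|$.

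Following the template of Proposition \ref{solutionQ}, I plan to write $\ch_1^B(P_k) = \ch_1^B(P) + b_P\sC$ for some $b_P \in \RR$ coming from the parabolic structure (Proposition \ref{link}), and bound $\|\ch_1^B(P)\|$ and $|b_P|$ separately. The first is immediate: $\ch_1^B(P) = \ch_1^B(E) - \ch_1^B(Q)$, and both terms are bounded by $|Z(E)|$ via Theorem \ref{supp} and Lemma \ref{errorQ}. Thus the core step is to bound $|b_P|$.

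For $Q_k$ in Proposition \ref{solutionQ} one used the a priori inequality $\mu_{\max}^B(Q) \leq 2\alpha$, which forced $|b_Q| \leq (3\alpha/H\sC)\ch_0^B(Q)$. Since $\mu_{\max}^B(\cH^0(P))$ is not a priori bounded, I replace this step with Proposition \ref{solutionP}: by Corollary \ref{world1cor}, $f^*(P_k\rho_k) \in \cA$, and $\mu_\sigma(f^*(P_k\rho_k)) \leq \max(0, \mu_\sigma(E)) + M_8$. Clearing denominators and using the elementary inequality $|\mu_\sigma(E)| \cdot \Im Z(E) \leq |Z(E)|$ converts this into a linear relation among $\Re Z(f^*(P_k\rho_k))$, $\Im Z(f^*(P_k\rho_k))$, and $|Z(E)|$. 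Combined with the explicit formula $\Im Z(f^*(P_k\rho_k)) = H\cdot\ch_1^B(P_k) - k\ch_0^B(P)H\sC$ arising from the twist $f^*(P_k\rho_k) = \pi^*(P_k) \otimes \OO_\uX(-k\sC)$, the bound $\ch_0^B(P) \leq O(|Z(E)|)$ from Theorem \ref{supp}, and a Bogomolov-style bound (\ref{BOG}) on $\ch_2^B$ of the cohomology sheaves of $P_k$, this will yield an upper bound on $H\cdot\ch_1^B(P_k)$ linear in $|Z(E)|$, hence on $|b_P|\cdot H\sC$. The complementary lower bound is supplied symmetrically by $f^!(T_k\rho_k)$ and the $T$-side of Proposition \ref{solutionP}.

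The main obstacle is that $P$ is a complex rather than a sheaf, so the parabolic-rank machinery of Lemma \ref{ranks} and the slope propagation of Proposition \ref{world1} apply only after splitting $P$ into its cohomology pieces $\cH^0(P) = \cH^0(E)_{>2\alpha}$ and $\cH^{-1}(P) = \cH^{-1}(E)$. The analysis must be run on each piece separately, and the resulting cross-terms involving $\ch_0^B(\cH^{-1}(E))$ are absorbed using the bound on $\ch_0^B(S)$ from Proposition \ref{Q} together with the $T$-side estimate of Proposition \ref{solutionP}, which together control the contribution of $\cH^{-1}(E)_{\geq -2\alpha}$ and $\cH^{-1}(E)_{<-2\alpha}$ respectively.
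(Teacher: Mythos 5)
Your reduction is the right one: split $\ch_1^B(E_k)=\ch_1^B(P_k)+\ch_1^B(Q_k)$, handle $Q_k$ by Proposition \ref{solutionQ}, write $\ch_1^B(f^{*}(P_k\rho_k))=\ch_1^B(P)+b_P\sC$, bound $\|\ch_1^B(P)\|$ by Theorem \ref{supp} and Lemma \ref{errorQ}, and reduce everything to bounding $b_P$. But the mechanism you propose for $b_P$ does not work. Clearing denominators in Proposition \ref{solutionP} gives
\[
\ch_2^B(f^{*}(P_k\rho_k))-t\ch_0^B(f^{*}(P_k\rho_k))\ \leq\ \bigl(\max(0,\mu_{\sigma}(E))+M_8\bigr)\, H\ch_1^B(f^{*}(P_k\rho_k)),
\]
which bounds $\ch_2^B$ from above \emph{in terms of} $H\ch_1^B$; it yields no upper bound on $H\ch_1^B$ itself unless you also have a lower bound on $\ch_2^B(f^{*}(P_k\rho_k))$ growing superlinearly in $H\ch_1^B$. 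No such bound exists: the estimate (\ref{BOG}) only bounds $\ch_2^B$ of the cohomology sheaves from \emph{above}, and $\ch_2^B$ of a slope-semistable sheaf can be arbitrarily negative (ideal sheaves of many points), so the displayed inequality is satisfied vacuously by objects with enormous $H\ch_1^B$. The same objection defeats your plan to control $\ch_0^B(\cH^{-1}(E))$ via ``the $T$-side estimate of Proposition \ref{solutionP}'': a bound on the ratio $\mu_{\sigma}$ gives no bound on $\ch_0^B$. Proposition \ref{solutionP} is the correct input for the $\ch_2$ bound (Proposition \ref{solutionch2}), not for $\ch_1$.

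The step you are missing is a direct comparison of $H\ch_1^B(f^{*}(P_k\rho_k))$ with $H\ch_1^B(P)$ using only classical slopes. Fix $0<a<\sqrt{2tH^2}$. By Lemma \ref{vertical} and $\sigma_t$-stability of $E$, either $\cH^{-1}(E)_{>-a}[1]=0$ or $\cH^{0}(E)_{<a}=0$; in the first case $\mu^{B}(\cH^{-1}(P))\leq -a$, and applying Proposition \ref{world1} to the two cohomology sheaves of $P$ gives
\[
H\ch_1^B(f^{*}(P_k\rho_k))\ \leq\ \Bigl(1+\tfrac{\alpha}{a}\Bigr)H\ch_1^B(P),
\]
because the error term $\alpha\,\ch_0^B(\cH^{-1}(P))$ coming from the $\cH^{-1}$ piece is absorbed via $\ch_0^B(\cH^{-1}(P))\leq -H\ch_1^B(\cH^{-1}(P))/a\leq H\ch_1^B(P)/a$. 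This dichotomy on $\cH^{-1}(E)$, which your proposal never invokes, is what makes the cross-term involving $\ch_0^B(\cH^{-1}(E))$ controllable; without it (or an equivalent use of Lemma \ref{ranks} together with the bound $\ch_0^B(\cH^{-1}(P))=O(|Z(E)|)$ obtained from its negative slope), the bound on $b_P$ does not close.
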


\begin{proof}
	Fix any constant $0<a<\sqrt{2tH^{2}}$. 
	Note that $\cH^{-1}(E)_{>-a}[1] \hookrightarrow E$ and $E \twoheadrightarrow \cH^{0}(E)_{<a}$. By Lemma \ref{vertical}, we have 
	$$\mu_{\sigma}(\cH^{-1}(E)_{>-a}[1])>0, \mu_{\sigma}(\cH^{0}(E)_{<a})<0. $$
	 By stability of $E$, either $\cH^{-1}(E)_{>-a}[1]=0$ or $\cH^{0}(E)_{<a}=0$. We assume $\cH^{-1}(E)_{>-a}[1]=0$, the other case is proved similarly. We also assume $P \neq 0$, otherwise the claim follows from Proposition \ref{Q}.
	
	By Proposition \ref{Q}, it suffices to find a constant $b$ such that $\|\ch_{1}(P_{k})\|\leq b|Z(E)|. $
	Since $f^{*}(P_{k}\rho_{k}) \to P$ is a generic isomorphism, we have $\ch_{1}(f^{*}(P_{k}\rho_{k}))=\ch_{1}(P)+n\sC$ for some $n\in \ZZ$. By Proposition \ref{Q} and Theorem \ref{supp}, there is a constant $b_{4}$ such that $\|\ch_{1}(P)\|\leq b_{4}|Z(E)|$. Hence it suffices to bound $n$.
	By Proposition \ref{world1}, we have 
	\begin{align*}
		H\ch_{1}^{B}(f^{*}(P_{k}\rho_{k}))=&H\ch_{1}^{B}(\cH^{0})(f^{*}(P_{k}\rho_{k})) -H\ch_{1}^{B}(\cH^{-1}(f^{*}(P_{k}\rho_{k})))\\
		\leq & H\ch_{1}^{B}(\cH^{0}(P))+H\ch_{1}^{B}(\cH^{-1}(P))-\alpha\ch_{0}^{B}(\cH^{-1}(P))\\
		= & H\ch_{1}^{B}(P)-\alpha\ch_{0}^{B}(\cH^{-1}(P)).
	\end{align*}
	Since $\cH^{-1}(E)_{>-a}[1]=0$, we have $\mu_{H}(\cH^{-1}(P))\leq -a$. Hence 
	$$
	\dfrac{H\ch_{1}^{B}(f^{*}(P_{k}\rho_{k}))}{H\ch_{1}^{B}(P)}
	\leq \dfrac{H\ch_{1}^{B}(P)-\alpha\ch_{0}^{B}(\cH^{-1}(P))}{H\ch_{1}^{B}(P)}
	= 1-\alpha\dfrac{1}{\mu^{B}(\cH^{-1}(P))}\leq 1+\dfrac{\alpha}{a}. 
	$$
	Therefore $n\leq \dfrac{\alpha}{aH\sC}H\ch_{1}^{B}(P) \leq \dfrac{\alpha}{aH\sC}|Z(E)|$. Hence we showed that there is a constant $b_{5}$, such that $\|\ch_{1}^{B}(f^{*}(P_{k}\rho_{k}))\| \leq b_{5}|Z(E)|$. 
	Note that 
	$$
	\ch_{1}^{B}(f^{*}(P_{k}\rho_{k}))=\ch_{1}^{B}(P_{k})-k\sC\ch_{0}^{B}(P).
	$$
	By Proposition \ref{Q} and Theorem \ref{supp}, there is a constant $b_{6}$, such that $\|\ch_{0}^{B}(P)\|\leq b_{6}|Z(E)|$. The proposition is proved. 
\end{proof}

\begin{proposition}\label{solutionch2}
	Fix any $t> 0$. 
	 There is a constant $M_{10}$ that only depends on $\uX$ and $t$, such that for every $0\leq k \leq n-1$, we have
	$$ \|\ch_{2}^{B}(E_{k})\| \leq M_{10}|Z(E)|. $$
\end{proposition}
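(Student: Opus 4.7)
The plan is to produce matching upper and lower bounds on $\ch_{2}^{B}(E_{k})$ by separately analyzing the two pieces of the two short exact sequences $0 \to P \to E \to Q \to 0$ and $0 \to S \to E \to T \to 0$. Since Proposition \ref{mainQ} already gives $\|\ch_{2}^{B}(Q_{k})\|, \|\ch_{2}^{B}(S_{k})\| \leq C|Z(E)|$, and $f_{*}$ is exact so $\ch_{2}^{B}(E_{k}) = \ch_{2}^{B}(P_{k}) + \ch_{2}^{B}(Q_{k}) = \ch_{2}^{B}(S_{k}) + \ch_{2}^{B}(T_{k})$, it suffices to bound $\ch_{2}^{B}(P_{k})$ from above and $\ch_{2}^{B}(T_{k})$ from below.

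For the upper bound, I would first observe that $\mu_{\min}^{B}(\cH^{0}(P)) > 2\alpha > \alpha$ and $\cH^{-1}(P) = \cH^{-1}(E)$, so by Corollary \ref{world1cor}, $f^{*}(P_{k}\rho_{k}) \in \cA$ and in particular $H\ch_{1}^{B}(f^{*}(P_{k}\rho_{k})) \geq 0$. Multiplying the slope bound of Proposition \ref{solutionP} by this nonnegative quantity yields
$$\ch_{2}^{B}(f^{*}(P_{k}\rho_{k})) \leq (\max(0, \mu_{\sigma}(E)) + M_{8}) \cdot H\ch_{1}^{B}(f^{*}(P_{k}\rho_{k})) + t\ch_{0}^{B}(f^{*}(P_{k}\rho_{k})).$$
The key trick to control the right-hand side is the identity $\mu_{\sigma}(E)\Im(Z(E)) = -\Re(Z(E))$, so $|\mu_{\sigma}(E)| \cdot H\ch_{1}^{B}(f^{*}(P_{k}\rho_{k})) \leq C|Z(E)|$ once we know $H\ch_{1}^{B}(f^{*}(P_{k}\rho_{k})) \leq C'\Im(Z(E))$, which follows as in the proof of Proposition \ref{solutionch1}. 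The remaining terms $t\ch_{0}^{B}(P)$ and the $M_{8}$-contribution are controlled by Proposition \ref{Q} and Proposition \ref{solutionch1}. This produces $\ch_{2}^{B}(f^{*}(P_{k}\rho_{k})) \leq C|Z(E)|$, and then expanding $f^{*}(P_{k}\rho_{k}) = \pi^{*}P_{k} \otimes \OO_{\uX}(-k\sC)$ via $\ch_{2}^{B}(f^{*}(P_{k}\rho_{k})) = \ch_{2}^{B}(P_{k}) - k\sC\cdot\ch_{1}^{B}(P_{k}) + \tfrac{k^{2}\sC^{2}}{2}\ch_{0}^{B}(P_{k})$ and using the already established bounds on $\ch_{0}^{B}(P)$ and $\ch_{1}^{B}(P_{k})$ transfers the estimate to $\ch_{2}^{B}(P_{k})$.

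For the lower bound I would proceed symmetrically with $T$. Since $\mu_{\max}^{B}(\cH^{-1}(T)) \leq -2\alpha < -\alpha$, Corollary \ref{world1cor} gives $f^{!}(T_{k}\rho_{k}) = \pi^{*}T_{k}((n-1-k)\sC) \in \cA$, so $H\ch_{1}^{B}(f^{!}(T_{k}\rho_{k})) \geq 0$. Multiplying the second inequality of Proposition \ref{solutionP} by this nonnegative quantity gives a lower bound on $\ch_{2}^{B}(f^{!}(T_{k}\rho_{k})) - t\ch_{0}^{B}(f^{!}(T_{k}\rho_{k}))$ of the form $(\min(0,\mu_{\sigma}(E)) - M_{8}')H\ch_{1}^{B}(f^{!}(T_{k}\rho_{k}))$; again, when $\mu_{\sigma}(E) < 0$, the identity $\mu_{\sigma}(E)\Im(Z(E)) = -\Re(Z(E))$ keeps the product bounded by $C|Z(E)|$ in absolute value. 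Translating back to $T_{k}$ via the line-bundle twist formula yields $\ch_{2}^{B}(T_{k}) \geq -C|Z(E)|$, and combining with the bound on $\ch_{2}^{B}(S_{k})$ from Proposition \ref{mainQ} gives the required lower bound on $\ch_{2}^{B}(E_{k})$.

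The main technical obstacle, as in Proposition \ref{solutionch1}, is verifying the key estimate $H\ch_{1}^{B}(f^{*}(P_{k}\rho_{k})) \leq C\cdot\Im(Z(E))$ (and its analogue for $f^{!}(T_{k}\rho_{k})$), which is what makes the term $\mu_{\sigma}(E) \cdot H\ch_{1}^{B}$ comparable to $|\Re(Z(E))|$ rather than blowing up. Once this is in place, the proposition follows from elementary bookkeeping of the cases $\mu_{\sigma}(E) \geq 0$ and $\mu_{\sigma}(E) < 0$, together with the bounds already proved in Propositions \ref{mainQ}, \ref{Q}, \ref{solutionP}, and \ref{solutionch1}.
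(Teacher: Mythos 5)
Your plan is a genuinely different route from the paper's. The paper only works with the $0\to P\to E\to Q\to 0$ sequence: it proves the one-sided bound $\ch_{2}^{B}(P_{k})\leq b\,|Z(E)|$ and then obtains the matching lower bound on $\ch_{2}^{B}(P_{k})$ from the averaging identity of Lemma \ref{chlink}, since $\ch_{2}^{B}(P)$ is essentially the average of the $\ch_{2}^{B}(P_{j})$ and is itself two-sided bounded by Theorem \ref{supp} and Proposition \ref{Q}. You instead propose to use the second sequence $0\to S\to E\to T\to 0$ together with the second half of Proposition \ref{solutionP} for the lower bound. That is a natural symmetric idea, and it would let you skip the Lemma \ref{chlink} manipulation entirely --- but as written it has a gap that the averaging step is precisely there to circumvent.

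The gap is in the key estimate, and it is not the dichotomy $\mu_{\sigma}(E)\gtrless 0$ that you name but the dichotomy of Lemma \ref{vertical}. From the twist formula one gets
$H\ch_{1}^{B}(f^{*}(P_{k}\rho_{k}))\leq H\ch_{1}^{B}(P)+\alpha\,\ch_{0}^{B}(\cH^{-1}(P))$ with $\cH^{-1}(P)=\cH^{-1}(E)$, and dually
$H\ch_{1}^{B}(f^{!}(T_{k}\rho_{k}))\leq H\ch_{1}^{B}(T)+\alpha\,\ch_{0}^{B}(\cH^{0}(T))$ with $\cH^{0}(T)=\cH^{0}(E)$. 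So the estimate $H\ch_{1}^{B}(f^{*}(P_{k}\rho_{k}))\leq C\,\Im(Z(E))$ needs $\ch_{0}^{B}(\cH^{-1}(E))\lesssim\Im(Z(E))$, which the proof of Proposition \ref{solutionch1} obtains only when $\cH^{-1}(E)_{>-a}[1]=0$, i.e.\ when $\cH^{-1}(E)$ has slopes bounded away from $0$; symmetrically the estimate for $f^{!}(T_{k}\rho_{k})$ needs $\ch_{0}^{B}(\cH^{0}(E))\lesssim\Im(Z(E))$, which requires the complementary assumption $\cH^{0}(E)_{<a}=0$. Lemma \ref{vertical} guarantees one of these two, but not both, for a given $\sigma$-semistable $E$. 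Concretely, in the regime $\cH^{-1}(E)_{>-a}[1]=0$ with $\mu_{\sigma}(E)<0$ your upper bound on $\ch_{2}^{B}(P_{k})$ goes through but your lower bound on $\ch_{2}^{B}(T_{k})$ does not (the term $\mu_{\sigma}(E)\,H\ch_{1}^{B}(f^{!}(T_{k}\rho_{k}))$ is not controlled by $|Z(E)|$), and the mirror failure occurs in the other case with $\mu_{\sigma}(E)>0$. To finish you would have to reinstate the Lemma \ref{chlink} averaging step to supply the missing side in whichever case you are in --- which is exactly what the paper does --- or prove the $T$-estimate in the regime where $\cH^{0}(E)$ has HN factors of slope near $0$, which does not follow from the tools developed here.
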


\begin{proof}
	We first prove that there is a constant $b$, such that for every $0\leq k \leq n-1$, we have
	$$
	\ch_{2}^{B}(P_{k})\leq b|Z(E)|. 
	$$
	We use Notation \ref{cases}.
	Fix any constant $0<a<\sqrt{2tH^{2}}$. 
	Note that $\cH^{-1}(E)_{>-a}[1] \hookrightarrow E$ and $E \twoheadrightarrow \cH^{0}(E)_{<a}$. By Lemma \ref{vertical}, we have 
	$$\mu_{\sigma}(\cH^{-1}(E)_{>-a}[1])>0, \mu_{\sigma}(\cH^{0}(E)_{<a})<0. $$
	By stability of $E$, either $\cH^{-1}(E)_{>-a}[1]=0$ or $\cH^{0}(E)_{<a}=0$. We assume $\cH^{-1}(E)_{>-a}[1]=0$, the other case is proved similarly.
	 We also assume $P\neq 0$, otherwise the claim follows from Proposition \ref{Q}.
	
	Note that $|\Re(Z(E))| \leq |Z(E)|$. By Proposition \ref{solutionP}, we have 
	$$
	\mu_{\sigma}(f^{*}(P_{k}\rho_{k}))=
	\dfrac{\ch_{2}^{B}(f^{*}(P_{k}\rho_{k}))-t\ch_{0}^{B}(f^{*}(P_{k}\rho_{k}))}{H\ch_{1}^{B}(f^{*}(P_{k}\rho_{k}))}
	\leq -\dfrac{\Re(Z(E))}{H\ch_{1}^{B}(E) }\leq \dfrac{|Z(E)|}{H\ch_{1}^{B}(E) }.
	$$
	Hence 
	$$
	\ch_{2}^{B}(f^{*}(P_{k}\rho_{k}))\leq \dfrac{H\ch_{1}^{B}(f^{*}(P_{k}\rho_{k}))}{H\ch_{1}^{B}(E)}|Z(E)|+t\ch_{0}^{B}(P).
	$$
    By Proposition \ref{world1}, we have 
    \begin{align*}
H\ch_{1}^{B}(f^{*}(P_{k}\rho_{k}))=&H\ch_{1}^{B}(\cH^{0}(f^{*}(P_{k}\rho_{k}))) -H\ch_{1}^{B}(\cH^{-1}(f^{*}(P_{k}\rho_{k})))\\
 \leq & H\ch_{1}^{B}(\cH^{0}(P))+H\ch_{1}^{B}(\cH^{-1}(P))-\alpha\ch_{0}^{B}(\cH^{-1}(P))\\
 = & H\ch_{1}^{B}(P)-\alpha\ch_{0}^{B}(\cH^{-1}(P)).
    \end{align*}
    Since $\cH^{-1}(E)_{>-a}[1] =0$, we have $\mu^{B}(\cH^{-1}(P))\leq -a$. Hence 
    $$
    \dfrac{H\ch_{1}^{B}(f^{*}(P_{k}\rho_{k}))}{H\ch_{1}^{B}(E)}
    \leq \dfrac{H\ch_{1}^{B}(P)-\alpha\ch_{0}^{B}(\cH^{-1}(P))}{H\ch_{1}^{B}(E)}
    \leq 1-\alpha\dfrac{1}{\mu^{B}(\cH^{-1}(P))}\leq 1+\dfrac{\alpha}{a}. 
    $$
    By Proposition \ref{Q} and Theorem \ref{supp}, there is a constant $a_{1}$, such that $\ch_{0}^{B}(P)\leq a_{1}|Z(E)|$. Hence 
    $$
    \ch_{2}^{B}(f^{*}(P_{k}\rho_{k}))\leq (1+\dfrac{\alpha}{a}) |Z(E)|+ta_{1}|Z(E)|=(1+\dfrac{\alpha}{a} +a_{1})|Z(E)|.
    $$
    Since $f^{*}(P_{k}\rho_{k})=P_{k}(-k\sC)$, we have
    $$
    \ch_{2}^{B}(f^{*}(P_{k}\rho_{k}))=\ch_{2}^{B}(P_{k})-k\ch_{1}^{B}(P_{k})\sC+\frac{k^{2}\ch_{0}^{B}(P_{k})\sC^{2}}{2}.
    $$
    By Proposition \ref{Q} and Theorem \ref{supp}, there are constants $a_{2}, a_{3}$, such that 
    $$\ch_{1}^{B}(P_{k})\sC \leq a_{2}|Z(E)|,\quad |\ch_{0}^{B}(P_{k})|\leq a_{3}|Z(E)|. $$
    Hence there is a constant $b$, such that 
    \begin{equation}\label{less}
    \ch_{2}^{B}(P_{k})\leq b |Z(E)|. 
    \end{equation}
    By Lemma \ref{chlink}, we have 
    $$
    \ch_{2}^{B}(P_{k})=\ch_{2}^{B}(P)+\sum_{j\neq k}(\ch_{2}^{B}(P)-\ch_{2}^{B}(P_{j}))-\sum_{j=0}^{n-1}\dfrac{n-2j-1}{2n^{2}}\ch_{1}^{B}(P_{k})C.
    $$
    By Proposition \ref{Q}, there are constants $a_{4}, a_{5}$ such that $\|\ch_{2}^{B}(P)\|\leq a_{4}|Z(E)|$, $\|\ch_{1}^{B}(P_{k})\| \leq a_{5}\| \ch_{1}^{B}(E_{k}) \|$. By Proposition \ref{solutionch1}, $\|\ch_{1}^{B}(E_{k})\|\leq M_{9} |Z(E)|$. Hence there is a constant $a_{6}$, such that 
    \begin{equation}\label{more}
    \ch_{2}^{B}(P_{k})\geq -a_{4}n|Z(E)|-(n-1)a_{5}M_{9}|Z(E)|- \sum_{j=0}^{n-1}\dfrac{n-2j-1}{2n^{2}}a_{6}|Z(E)|. 
\end{equation}
Combining (\ref{less}) and (\ref{more}), there is a constant $a_{7}$ such that $\|\ch_{2}^{B}(P_{k})\|\leq a_{7}|Z(E)|$. The proposition now follows from Proposition \ref{Q}.
\end{proof}

\begin{proof}[Proof of Proposition \ref{mainP}]
This follows by Proposition \ref{solutionch1} and Proposition \ref{solutionch2}.    
\end{proof}

\subsection{Support property on $\HH^{even}_{CR}(\uX,\mathbb{Q})$}
Now we prove the main theorem of this paper.
\begin{proof}[Proof of Theorem \ref{maintheorem}]
By Definition \ref{lattice}, we need to show the support property with respect to $\Lambda$. 
By Lemma \ref{normequivalence}, it suffices to bound $\| E \|_{B}$ for every $\sigma$-semistable $E\in \cA$.	By Theorem \ref{supp}, there is a constant $M_{11}$ such that $\|\ch^{B}(E)\|\leq M_{11} |Z(E)|$. By Proposition \ref{link}, we have $\ch^{B}(G_{0, k})=\ch^{B}(E_{k})-\ch^{B}(E_{0})$. By Proposition \ref{solutionP} we have $\|\ch^{B}(E_{j})\|\leq M_{E}|Z(E)|$ for every $0\leq j \leq n-1$. Hence 
	$$
	\|\ch^{B}(G_{0,k})\|=\|\ch^{B}(E_{k})-\ch^{B}(E_{0})\|\leq \|\ch^{B}(E_{k})\| + \|\ch^{B}(E_{0})\|
	\leq 2M_{E} |Z(E)|.
	$$
\end{proof}

Now we deform the stability conditions to the irrational case.

\begin{theorem}
Let $B, H\in \NS(\uX)_{\RR}$ and $H$ be an ample class. Then the tilt stability condition (Definition \ref{tilt stability}) is a Bridgeland stability that satisfies the support property with respect to $\HH^{even}_{CR}(\uX, \QQ)$.
\end{theorem}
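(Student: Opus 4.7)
The plan is to extend Theorem \ref{maintheorem} from rational $(B, H)$ to arbitrary real classes via Bridgeland's deformation theorem (Theorem \ref{deformation}). For real $(B, H)$ with $H$ ample, the heart $\cA_{B, H}$ of Definition \ref{heart} is still well-defined, and $Z_{B, H}$ remains a stability function on $\cA_{B, H}$: the proof of Proposition \ref{stability function} uses only the Bogomolov inequality (Theorem \ref{JK}) and the Hodge index theorem, both of which are valid over $\RR$. Thus only the Harder-Narasimhan property and the support property remain to be established.

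I would approximate $(B, H)$ by rational $(B_0, H_0)$ with $H_0$ ample, which is possible because $\Amp(\uX)$ is open in $\NS(\uX)_{\RR}$. By Theorem \ref{maintheorem}, $\sigma_{B_0, H_0}$ is a Bridgeland stability condition satisfying the support property with respect to $\HH^{even}_{CR}(\uX, \QQ)$, so by the equivalence in Definition \ref{quadratic} there is a quadratic form $Q_0$ on $\HH^{even}_{CR}(\uX, \QQ)_{\RR}$ that is negative definite on $\ker Z_{B_0, H_0}$ and nonnegative on $\sigma_{B_0, H_0}$-semistable objects. Since negative-definiteness on the kernel is an open condition on the central charge, $Q_0$ remains negative definite on $\ker Z_{B, H}$ whenever $(B_0, H_0)$ is sufficiently close to $(B, H)$.

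Bridgeland's deformation theorem then deforms $\sigma_{B_0, H_0}$ to a Bridgeland stability condition $\sigma' = (Z_{B, H}, \mathcal{P}')$ with central charge $Z_{B, H}$, whose support property is still witnessed by $Q_0$. To identify the heart $\mathcal{P}'((0, 1])$ with $\cA_{B, H}$, note that both are tilts of $\Coh(\uX)$; by choosing $(B_0, H_0)$ generically, so that no $\mu_H$-semistable torsion-free sheaf has slope lying between $B_0 \cdot H_0$ and $B \cdot H$ and the $\mu_H$- and $\mu_{H_0}$-HN filtrations of the relevant sheaves coincide, one arranges $\cA_{B_0, H_0} = \cA_{B, H}$, and the uniqueness part of Theorem \ref{deformation} then forces $\mathcal{P}'((0, 1]) = \cA_{B, H}$.

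The main obstacle is precisely this heart-matching step. Although $Z_{B, H}$ depends continuously on $(B, H)$, the heart $\cA_{B, H}$ changes discontinuously across a countable family of \emph{walls} where some torsion-free $\mu_H$-semistable sheaf attains slope $B \cdot H$. These walls are locally finite in $(B, H)$ by the Bogomolov inequality (Theorem \ref{JK}) and boundedness of families of $\mu_H$-semistable sheaves, so one can always find rational $(B_0, H_0)$ lying in the same chamber as $(B, H)$. Once the hearts are matched, the Harder-Narasimhan property and the support property for $\sigma_{B, H}$ follow at once from Bridgeland's deformation theorem, completing the proof.
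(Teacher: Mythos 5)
Your overall strategy---approximate $(B,H)$ by rational classes, apply Theorem \ref{maintheorem} there, and transport the stability condition to the irrational point via Bridgeland's deformation theorem, noting that the support property persists because negative-definiteness of the quadratic form on $\ker Z$ is an open condition---is exactly the paper's strategy. The gap is in your heart-matching step. You claim that the loci where $\cA_{B,H}$ changes are locally finite in $(B,H)$, so that a rational $(B_0,H_0)$ can be chosen in the same ``chamber'' as $(B,H)$ with $\cA_{B_0,H_0}=\cA_{B,H}$. This is false: the heart jumps whenever $B\cdot H$ crosses a value of the form $H\cdot \ch_1(F)/\ch_0(F)$ for some $\mu_H$-semistable torsion-free $F$, and this set of slopes is \emph{dense} in $\RR$ (for any divisor class $D$ and any rank $r$ there exist $\mu_H$-stable sheaves with $c_1=mD$ and rank $r$ for suitable second Chern class, giving slopes $m(H\cdot D)/r$). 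The local finiteness you invoke from Bogomolov plus boundedness applies to walls for objects of a \emph{fixed} numerical class, not to the union over all classes, which is what governs the torsion pair $(\mathcal{T}_{B,H},\mathcal{F}_{B,H})$. Consequently no rational $(B_0,H_0)\neq(B,H)$ satisfies $\cA_{B_0,H_0}=\cA_{B,H}$ in general, and the uniqueness argument you build on top of this identification does not get off the ground. (A secondary issue: even granting equal hearts, the deformed stability condition $\sigma'$ produced by Theorem \ref{deformation} need not have heart $\cA_{B_0,H_0}$---the slicing rotates---so uniqueness only helps once you already know $(Z_{B,H},\cA_{B,H})$ is a stability condition, which is what you are trying to prove; for irrational $B\cdot H$ even the HN property for this pair is not free, since the image of $\Im Z_{B,H}$ is no longer discrete.)

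The paper circumvents this by a different identification of the deformed heart: all skyscraper-type objects $\OO_q$ ($q\notin C$) and $\OO_p\rho_k$ ($p\in C$) are $\sigma_{B_0,H_0}$-stable of phase one; stability of a fixed object is open in $\operatorname{Stab}$, so they remain stable of phase one for the deformed condition $\sigma'$ with central charge $Z_{B,H}$; and the concluding lemma (proved as in \cite[Lemma 6.20]{MS17}) states that any stability condition with central charge $Z_{B,H}$ for which these skyscrapers are stable of phase one must have heart $\cA_{B,H}$. Replacing your chamber argument with this pointwise criterion is what your proof needs.
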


\begin{proof}
   By Lemma \ref{prestability condition} and Theorem \ref{supp}, for $B, H \in \NS(X)_{\mathbb{Q}}$, $\sigma_{B, H}=(Z_{B,H}, \cA_{B,H})$ is a Bridgeland stability condition. To deform $B, H$ to $\NS(X)_{\RR}$, the proof follows from  Theorem \ref{deformation} and the following lemma, which can be proven exactly as \cite[Lemma 6.20]{MS17}. 
\end{proof}

\begin{lemma}[\cite{MS17}]
    Let $(Z_{B,H}, \cA)$ be a stability condition for which all skyscraper sheaves $\mathcal{O}_{p}\rho_{k}$ for $p\in C, 0\leq k \leq n-1$ and $\mathcal{O}_{q}$ for $q \not \in C$ are stable of phase one. Then $\cA=\cA_{B, H}$.
\end{lemma}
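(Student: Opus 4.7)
The plan is to adapt the standard argument \cite[Lemma 6.20]{MS17} from the surface setting to our root stack setting, with the key geometric input being the full set of ``point-like'' simple objects: $\cO_q$ for $q \notin C$ and $\cO_p \rho_k$ for $p \in C$, $0 \leq k \leq n-1$.

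First I would show that every zero-dimensional coherent sheaf on $\uX$ lies in $\cA$. By hypothesis the named skyscrapers are stable of phase $1$ in $\cA$. Using the local description $\uX \cong [\mathrm{Spec}(A[x]/(x^n-a))/\mu_n]$ from Proposition \ref{rootstackDM}, any zero-dimensional coherent sheaf admits a composition series in $\Coh(\uX)$ whose simple factors are precisely these skyscrapers; closure of $\cA$ under extensions then places all such sheaves in $\cA$ at phase $1$.

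Next I would establish the two inclusions $\cT_{B,H} \subseteq \cA$ and $\cF_{B,H}[1] \subseteq \cA$. For the first, take $F \in \cT_{B,H}$ and reduce via its $\mu_H$-HN filtration to the case $F$ is $\mu_H$-semistable with $\mu_H(F) > B \cdot H$, so $\Im Z_{B,H}(F) > 0$. Consider the HN filtration of $F$ in $\cA$; any factor of phase $> 1$ would be a shift $G[1]$ of a sheaf $G$ mapping nontrivially to $F$, but then using that skyscrapers in $\cA$ have phase $1$ and the sign of $\Im Z_{B,H}$ of $G$, one derives a contradiction. Pure one-dimensional torsion sheaves have $\Im Z_{B,H} > 0$ and are handled analogously, combined with the zero-dimensional case from the first step via the torsion filtration. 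The argument for $\cF_{B,H}[1] \subseteq \cA$ is dual: for $F \in \cF_{B,H}$ torsion-free $\mu_H$-semistable, $\Im Z_{B,H}(F[1]) \geq 0$; if the boundary case $\mu_H(F) = B \cdot H$ occurs, the Bogomolov inequality (Theorem \ref{JK}) gives $\ch_2^B(F) \leq 0$ and hence $\Re Z_{B,H}(F[1]) < 0$, so $F[1]$ has phase exactly $1$, matching the skyscrapers. An HN filtration argument in $\cA$ then places $F[1]$ in $\cA$.

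Finally, since $\cA_{B,H} = \langle \cF_{B,H}[1], \cT_{B,H}\rangle$ and $\cA$ is closed under extensions, we conclude $\cA_{B,H} \subseteq \cA$. A standard fact says two hearts of bounded $t$-structures on the same triangulated category with one contained in the other must coincide, giving $\cA = \cA_{B,H}$.

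The main obstacle is the borderline real-axis case $\mu_H(F) = B \cdot H$, where both $\Im Z_{B,H}(F)$ and $\Im Z_{B,H}(F[1])$ vanish, so one must determine the sign of the real part carefully via the Bogomolov inequality on $\uX$ to situate the object at phase $1$ rather than at phase $0$ or $2$. The stack-specific subtlety is ensuring the collection of simple torsion sheaves used in the first step is complete, i.e.\ that every simple object in the subcategory of torsion sheaves supported in dimension zero on $\uX$ is isomorphic to some $\cO_q$ or $\cO_p \rho_k$, which follows from the local $[\mathrm{Spec}(A[x]/(x^n-a))/\mu_n]$ description and representation theory of $\mu_n$.
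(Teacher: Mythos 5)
Your proposal is correct and follows the same route as the paper, which simply invokes \cite[Lemma~6.20]{MS17} and states that the proof carries over verbatim; you have identified the only genuinely stack-specific point, namely that the simple zero-dimensional sheaves on $\uX$ are exactly $\cO_q$ for $q\notin C$ and $\cO_p\rho_k$ for $p\in C$, which follows from the local model $[\operatorname{Spec}(A[x]/(x^n-a))/\mu_n]$ and the representation theory of $\mu_n$.
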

We recall the notion of twisted stability.
Suppose $E$ is a torsion free sheaf on $\uX$, let $\nu_{B,H}(E):=\ch_{2}^{B}(E)/\operatorname{rk}(E)$.
\begin{definition}
    We say $E$ is $(B,H)$-twisted semistable if for every proper nonzero subsheaf $F\subset E$, we have
    $$ \mu^{B}(F)< \mu^{B}(E) \ \operatorname{or} \ ( \mu^{B}(F)=\mu^{B}(E) \ \operatorname{and} \  \nu_{B,H}(F) \leq \nu_{B, H}(E)). $$
\end{definition}
We have the following large volume limit theorem, similar to that on a surface.
\begin{theorem}[Large volume limit]\label{largevolumelimit}
    Let $B, H \in \NS(X)_{\RR}$ with $H$ ample. For $t>0$, let $ \sigma_{t}:=(Z_{B,tH}, \cA_{\beta,tH}) $. Suppose $E\in D^{b}(\uX)$ satisfies $r(E), \mu^{B}(E)> 0$. Then $E$ is semistable under $\sigma_{t}$ for all $t\gg 0$ precisely if $E$ is a shift of a $(B,H)$-twisted semistable sheaf on $\uX$.
\end{theorem}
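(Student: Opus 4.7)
The plan is to adapt the classical large-volume-limit argument from smooth projective surfaces (cf.~\cite{Bri08, MS17}) to the root stack $\uX$, using the Bogomolov inequality of Theorem \ref{JK} and the support property established in Theorem \ref{maintheorem}. The key calculation is to expand
\[
\mu_{\sigma_{t}}(E) = \frac{\ch_{2}^{B}(E) - \tfrac{t^{2} H^{2}}{2}\ch_{0}^{B}(E)}{t\,H\cdot \ch_{1}^{B}(E)}
\]
as a Laurent series in $t$ and to compare leading coefficients across any potential destabilizing short exact sequence in $\cA_{B,tH}$.

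For the direction ($\Leftarrow$), suppose $E$ is a $(B,H)$-twisted semistable sheaf with $r(E)>0$ and $\mu^{B}(E)>0$. Since twisted semistability entails $\mu_H$-semistability and $\mu^{B}(E)>0$ is equivalent to $\mu_{H}(E)>B\cdot H$, we have $E\in\mathcal{T}_{B,tH}\subset \cA_{B,tH}$ for every $t>0$. Assume for contradiction a destabilizing sequence $0\to F\to E\to G\to 0$ in $\cA_{B,tH}$. The long exact cohomology sequence and $\cH^{-1}(E)=0$ force $\cH^{-1}(F)=0$, so $F\subset E$ is a genuine subsheaf with $\mu_H(F)>B\cdot H$. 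Writing $\mu_{\sigma_t}(F)\ge \mu_{\sigma_t}(E)$ and clearing denominators, the $t^{2}$-coefficient forces $\mu^{B}(F)\ge \mu^{B}(E)$, and $\mu_H$-semistability of $E$ forces equality; the subleading coefficient then forces $\nu_{B,H}(F)>\nu_{B,H}(E)$, contradicting $(B,H)$-twisted semistability. To upgrade this asymptotic contradiction to one valid for all $t\gg 0$ uniformly, I would use that the possible destabilizing $F$ form a bounded family: the inequalities $0<H\ch_{1}^{B}(F)\leq H\ch_{1}^{B}(E)$, combined with Theorem \ref{JK} and the Hodge index theorem on $\uX$, bound $\ch_{2}^{B}(F)$ and $r(F)$, so only finitely many Chern characters arise, each destabilizing only on a bounded interval of $t$.

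For the direction ($\Rightarrow$), suppose $E$ is $\sigma_{t}$-semistable for every $t\gg 0$. Up to an even shift we may assume $E\in\cA_{B,tH}$ for such $t$, since $r(E)>0$. First I argue $E\in\Coh(\uX)$: if $\cH^{-1}(E)\ne 0$, then $\cH^{-1}(E)[1]\hookrightarrow E$ in $\cA_{B,tH}$; the $t^{2}$-term in $\mu_{\sigma_t}(\cH^{-1}(E)[1])-\mu_{\sigma_t}(E)$ is strictly positive because $\ch_{0}^{B}(\cH^{-1}(E)[1])<0$ while $\ch_{0}^{B}(E)>0$, violating semistability at large $t$. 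Next, I show $E$ is $\mu_{H}$-semistable: any subsheaf $G\subset E$ with $\mu_{H}(G)>\mu_{H}(E)>B\cdot H$ lies in $\mathcal{T}_{B,tH}\subset \cA_{B,tH}$ and injects into $E$ in $\cA_{B,tH}$, and the $t^{2}$-coefficient of $\mu_{\sigma_t}(G)-\mu_{\sigma_t}(E)$ is positive, contradicting semistability. Finally, among $\mu_{H}$-semistable sheaves, any subsheaf $G$ with $\mu^{B}(G)=\mu^{B}(E)$ and $\nu_{B,H}(G)>\nu_{B,H}(E)$ destabilizes $E$ at the next order in $t$, again for all $t\gg 0$, so no such $G$ exists and $E$ is $(B,H)$-twisted semistable.

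The main obstacle I anticipate is the uniform boundedness step in the ($\Leftarrow$) direction: one must exclude a sequence $F_{n}\subset E$ of destabilizers appearing at $t_{n}\to\infty$. On a surface this is classical via Bogomolov plus bounded slope range; here the lattice $\Lambda$ is enlarged by the Chen--Ruan components, so in principle a potential destabilizer could escape control through its orbifold Chern class. However, the support property of Theorem \ref{maintheorem} provides a linear bound $\|\ch_{\mathrm{orb}}(F)\|\le M|Z(F)|$ for $\sigma_{t}$-semistable objects, and combined with the bound $|Z(F)|\le |Z(E)|$ for subobjects this rules out an unbounded family of destabilizers. The deformation $B,H\in\NS(X)_{\RR}$ then follows from Theorem \ref{deformation} and continuity of the central charge, exactly as in the surface case.
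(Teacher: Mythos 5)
Your argument is the classical large-volume-limit computation of Bridgeland (expanding $\mu_{\sigma_t}$ in $t$ and comparing leading coefficients), which is exactly what the paper does — its proof is a one-line reference to \cite[Proposition 14.2]{Bri08}. One small remark on your anticipated obstacle: since $Z_{B,tH}$ factors through the ordinary Chern character, the boundedness of potential destabilizers follows from the classical Bogomolov/Hodge-index argument alone, so you do not need Theorem \ref{maintheorem} there (and the inequality $|Z(F)|\leq |Z(E)|$ for subobjects is not literally true — only the imaginary parts satisfy it).
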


\begin{proof}
    The proof is the same as \cite[Proposition 14.2]{Bri08}.
\end{proof}

We end the paper by the following example.
\begin{example}[Deforming to general directions]
By Theorem \ref{maintheorem} and Bridgeland's deformation result (Theorem \ref{deformation}), we obtain Bridgeland stability conditions that are not tilts. For instance we consider the following special classes of deformations of $Z_{t}$. For every $(n-1)$-tuple $\beps=(\epsilon_{1}, \cdots, \epsilon_{n-1})\in \RR^{n-1}$ and $\beps'=(\epsilon'_{1}, \cdots, \epsilon'_{n-1})\in \RR^{n-1}$, we define the following stability functions:
\begin{equation*}
	Z_{B,H, t, \beps, \beps'}(\vv)=-\left(\ch_{2}^{B}(\vv)+\sum_{k=1}^{n-1} \epsilon_{k}\ch_{2}(\mathrm{pr}_{k}(\vv))\right) + t\ch_{0}^{B}(\vv)
	+ i H \cdot \left(\ch_{1}^{B}(\vv)+ \sum_{k=1}^{n-1}\epsilon'_{k}\ch_{1}^{B}(\mathrm{pr}_{k}(\vv))\right). 
\end{equation*}

	For $\|\beps\|, \|\beps' \| \ll 1$, there exists the heart of a bounded $t$-structure $\cA_{B,H, t, \beps, \beps'}$, such that the pair 
 $$
 \sigma_{B,H, t, \beps, \beps'}=(Z_{B,H, t, \beps, \beps'}, \cA_{B,H, t, \beps, \beps'})
 $$ 
 is a Bridgeland stability condition. When $\beps\neq 0$ or $\beps'\neq 0$, $\sigma_{B,H, t, \beps, \beps'}$ is not a tilt stability condition (Definition \ref{tilt stability}). Note that for $p\in C$, the skyscraper sheaves $\OO_{p}$ can be unstable.

 To obtain the optimal bounds for $\beps, \beps'$ is an interesting question. 
\end{example}

\section*{References}

\bibliographystyle{alpha}
\renewcommand{\section}[2]{} 
\bibliography{ref}

\end{document}